\newtheorem{mainthm}{Theorem}
\DeclarePairedDelimiter{\abs}{\lvert}{\rvert}
\DeclarePairedDelimiter{\bra}{(}{)}
\DeclarePairedDelimiter{\set}{\{}{\}}
\newcommand{\R}{\mathbb{R}}
\newcommand{\T}{\mathbb{T}}
\newcommand{\Ind}{\mathbb{1}}
\newcommand{\Id}{\text{Id}}
\newcommand{\calB}{\mathcal{B}}
\newcommand{\calC}{\mathcal{C}}
\newcommand{\calD}{\mathcal{D}}
\newcommand{\calE}{\mathcal{E}}
\newcommand{\calF}{\mathcal{F}}
\newcommand{\calH}{\mathcal{H}}
\newcommand{\calI}{\mathcal{I}}
\newcommand{\calJ}{\mathcal{J}}
\newcommand{\calL}{\mathcal{L}}
\newcommand{\calM}{\mathcal{M}}
\newcommand{\calN}{\mathcal{N}}
\newcommand{\calO}{\mathcal{O}}
\newcommand{\calP}{\mathcal{P}}
\newcommand{\calR}{\mathcal{R}}
\newcommand{\calS}{\mathcal{S}}
\newcommand{\calT}{\mathcal{T}}
\newcommand{\calV}{\mathcal{V}}
\newcommand{\calW}{\mathcal{W}}
\newcommand{\calX}{\mathcal{X}}
\newcommand{\Bern}{\mathfrak{b}}
\newcommand{\ssubset}{\subset\joinrel\subset}
\newcommand{\supp}{\text{supp}}
\newcommand{\dnabla}{\overline{\nabla}}
\DeclareMathAlphabet{\mathup}{OT1}{\familydefault}{m}{n}
\newcommand{\dd}[1]{\mathop{}\!\mathup{d} #1}
\newcommand{\ddiv}{\overline{\text{div}}\,}
\newtheorem{theorem}{Theorem}[section]
\newtheorem{lemma}[theorem]{Lemma}
\newtheorem{proposition}[theorem]{Proposition}
\theoremstyle{definition}
\newtheorem{definition}[theorem]{Definition}
\newtheorem{remark}[theorem]{Remark}
\newtheorem{example}[theorem]{Example}
\numberwithin{equation}{section}
\title[Variational convergence of the Scharfetter--Gummel scheme]{Variational convergence of the Scharfetter--Gummel scheme to the aggregation-diffusion equation and\\ vanishing diffusion limit}
\author{Anastasiia Hraivoronska}
\address{Anastasiia Hraivoronska, Universite Claude Bernard Lyon 1, ICJ UMR5208, CNRS, \'Ecole Centrale de Lyon, INSA Lyon, Universit\'e Jean Monnet, 69622 Villeurbanne cedex, France} 
\email{hraivoronska\,@\,math.univ-lyon1.fr}
\author{Andr\'e Schlichting}
\address{Andr\'e Schlichting, Institute for Applied Analysis, Ulm University, Germany} 
\email{andre.schlichting\,@\,uni-ulm.de}
\author{Oliver Tse}
\address{Oliver Tse, Department of Mathematics and Computer Science, University of Technology, 5600 MB Eindhoven, The Netherlands} 
\email{o.t.c.tse\,@\,tue.nl}
\date{\today}
\begin{document}

\begin{abstract}
	In this paper, we explore the convergence of the semi-discrete Scharfetter--Gummel scheme for the aggregation-diffusion equation using a variational approach. Our investigation involves obtaining a novel gradient structure for the finite volume space discretization that works consistently for any non-negative diffusion constant. This allows us to study the discrete-to-continuum and zero-diffusion limits simultaneously. The zero-diffusion limit for the Scharfetter--Gummel scheme corresponds to the upwind finite volume scheme for the aggregation equation. In both cases, we establish a convergence result in terms of gradient structures, recovering the Otto gradient flow structure for the aggregation-diffusion equation based on the 2-Wasserstein distance.
\end{abstract}

\maketitle

\tableofcontents

\section{Introduction}
In this paper, we study the convergence of the semi-discrete Scharfetter--Gummel numerical approximation for the aggregation-diffusion equation
\begin{equation}\label{eq:drift-aggregation-diffusion-equation}\tag{{\bf ADE}}
    \partial_t \rho_t = \text{div} \bigl( \epsilon \nabla \rho_t + \rho_t \nabla V + \rho_t \nabla (W * \rho_t) \bigr) \qquad \text{in } (0, T)\times\Omega,
\end{equation}
which describes the evolution of a curve of Borel probability measures $t\mapsto \rho_t\in\calP(\Omega)$ on a bounded convex domain $\Omega\subset\R^d$, where $\epsilon> 0$ is a diffusion coefficient, $V:\R^d\to \R$ is an external potential, and $W:\R^d\to\R$ is an interaction potential. In addition, we impose the no-flux boundary condition
\begin{equation*}
    \epsilon \partial_\nu \rho_t + \rho_t \partial_\nu (V + W * \rho_t) = 0 \qquad \text{on } \partial \Omega,
\end{equation*}
where $\nu$ denotes the outer normal vector on $\partial \Omega$. 

Our strategy employs a variational approach that not only provides the convergence of the semi-discrete (discrete in space, continuous in time) Schar\-fetter--Gummel scheme but also a generalized gradient structure for the cases $\epsilon>0$ and $\epsilon=0$. In particular, the method allows us to prove the convergence of the Scharfetter--Gummel ($\epsilon>0$) and upwind ($\epsilon=0$) approximation to the Otto gradient flow solutions of \eqref{eq:drift-aggregation-diffusion-equation}, which we outline in detail below.

\medskip

The Scharfetter--Gummel flux approximation originates from \cite{scharfetter1969large}, where the authors construct a numerical scheme for a system modeling semiconductor devices. Their objective was to develop a robust scheme for the system of equations with discontinuities or rapid variations in the potential. Independently, the same type of flux is introduced in \cite{il1969difference} for finite-difference schemes. Thereafter, the Scharfetter--Gummel scheme became the preferred finite-volume scheme for the drift-diffusion or convection-diffusion equations. While the original scheme deals with the one-dimensional problem, it has been generalized to higher dimensional problems \cite{farrell1991scharfetter} and the flux discretization approach became the basis for numerous other generalizations, e.g.\ for equations with nonlinear diffusion \cite{bessemoulin2012finite, eymard2006finite, jungel1995numerical} and to systems with source terms \cite{cheng2021generalised, ten2011finite}. Other works and approaches for the numerical solution are found in \cite{Bailo2020-1,Bailo2020-2,Filbet2012,Carillo2015,Duan2021,Lorenzo2018,Zeng2024}.

 To introduce the Scharfetter--Gummel scheme, we first introduce some common notations for finite-volume methods. Let $\{(\calT^h,\Sigma^h)\}_{h>0}$ be a family of finite (admissible) tessellations of a bounded and convex set $\Omega\subset\R^d$, where $\calT^h$ is the family of cells and $\Sigma^h\subset\calT^h\times\calT^h$ contains pairs $(K, L)$ that share a face, i.e.\ when $K,L\in\calT^h$ share a part of their boundary with positive $(d-1)$-dimensional Hausdorff measure, which we denote by $(K|L)$. With a slight abuse of notation, we adopt the notation $K|L$ to denote adjacent cells $(K,L)\in\Sigma^h$ to distinguish from pairs $(K,L)\in \calT^h\times\calT^h$. We further define $\calT^h_K$ to be the set of cells adjacent $K$. Let $\{x_K\}_{K\in\calT^h}$ be a family of points such that $x_K \in K$ to be selected later. The parameter $h>0$ is the maximal diameter of the cells. We make the definitions precise in Section~\ref{sec:assumptions-SG-tesselation}. For now, one can keep a Voronoi tessellation with its generating points in mind as an example of an admissible tessellation.

We illustrate how the Scharfetter--Gummel flux appears in the finite-volume space discretization of \eqref{eq:drift-aggregation-diffusion-equation}. First, consider the case without interaction potential, i.e.\ $W\equiv 0$. Rewriting \eqref{eq:drift-aggregation-diffusion-equation} as
\[
	\partial_t\rho_t + \text{div} j_t = 0,\qquad j_t = -\epsilon \nabla \rho_t - \rho_t \nabla V,
\]
integrating the first equation over a control volume $K\in\calT^h$, and then applying the divergence theorem yields the discrete continuity equation
\begin{equation}\label{eq:finite-volume-CE}\tag{{\bf CE$_h$}}
    \partial_t \rho^h_K + \ddiv \calJ^{h,\rho}_K  = 0, \qquad\text{with}\qquad  \ddiv \calJ^{h,\rho}_K \coloneq  \sum_{L\in\calT_K^h} \calJ^{h,\rho}_{K|L},
\end{equation}
where the numerical approximation for the flux $\calJ^{h,\rho}_{K|L}$ should be well chosen to approximate the continuous flux $j$. The idea of the Scharfetter--Gummel flux discretization is to solve a cell problem for two adjacent cells $K$ and $L$. Then, the cell problem is the one-dimensional boundary value problem: Find $u\in C^2([x_K,x_L])$ satisfying
\begin{equation}\label{eq:def:cell-problem}
	\begin{cases}
		-\partial_x \bigl(\epsilon \partial_x u + u \, q_{K|L}^h \bigr) = 0 \quad \text{on } [x_K, x_L] \\
		u(x_K) = \rho^h_K /|K|,\;\; u(x_L) = \rho^h_L/|L| 
	\end{cases}
	\qquad \text{for all $(K,L)\in\Sigma^h$},
\end{equation}
where $q_{K|L}^h$ is an approximation for the gradient of the potential term $\nabla V$ in~\eqref{eq:drift-aggregation-diffusion-equation} along a segment connecting $x_K$ and $x_L$. The solution of \eqref{eq:def:cell-problem}, which can be explicitly computed, is then used to define the Scharfetter--Gummel flux \cite{eymard2006finite}, defined for all $(K,L)\in\Sigma^h$ as 
\begin{equation}\label{eq:SG-flux}
    \calJ_{K|L}^{h,\rho} \coloneq \epsilon \tau_{K|L}^h \bigl( \Bern(q_{K|L}^h / \epsilon)\, u^h_K - \Bern(- q_{K|L}^h / \epsilon)\, u^h_L \bigr), \qquad u^h_K \coloneq \frac{\rho^h_K}{|K|},
\end{equation}
where $\tau_{K|L}^h \coloneq |(K|L)| / |x_L - x_K|$ is called the transmission coefficient and $\Bern(s) \coloneq s / (e^s - 1)$ is the {\em Bernoulli} function. The Scharfetter--Gummel scheme then reads
\begin{equation}\label{eq:SG}\tag{{\bf SGE$_h$}}
	\partial_t \rho^h_K + \sum_{L\in\calT_K^h} \calJ^{h,\rho}_{K|L} = 0,\qquad  \calJ^{h,\rho}_{K|L}=\epsilon \tau_{K|L}^h \bigl( \Bern(q_{K|L}^h / \epsilon)\, u^h_K-\Bern(- q_{K|L}^h / \epsilon)\, u^h_L \bigr).
\end{equation}

We are interested in a generalization of semi-discrete the Scharfetter--Gummel scheme \eqref{eq:SG} for \eqref{eq:drift-aggregation-diffusion-equation} that includes the interaction term $W$, which was considered in \cite{schlichting2022scharfetter}. In this case, the form of the flux is the same as in \eqref{eq:SG-flux}, but we include a discrete approximation of the interaction force $\nabla (W * \rho) = \int_\Omega \nabla W (\cdot - y) \rho (\dd y)$ of the form
\[
	q_{K|L}^h \coloneq V^h_L - V^h_K + \sum_{M\in\calT^h} \rho^h_M (W^h_{ML} - W^h_{MK}),\qquad (K,L)\in \Sigma^h,
\]
where $W^h_{MK} \coloneq W(x_K - x_M)$ for any $K, M \in \calT^h\times \calT^h$ such that $K\neq M$. 

The important property of the numerical flux \eqref{eq:SG-flux} is that the Bernoulli function interpolates between appropriate discretizations of the pure diffusion and pure drift problems. In the absence of the potential, i.e., $q_{K|L}^h = 0$, the flux becomes $\epsilon \tau_{K|L}^h ( u_K - u_L )$. More interestingly, in the vanishing diffusion limit $\epsilon\to 0$, the Scharfetter--Gummel scheme converges to
\begin{equation}\label{eq:upwind-scheme}\tag{{\bf Up$_h$}}
    \partial_t \rho^h_K + \sum_{L\in \calT_K^h}\calJ_{K|L}^{h,\rho,\text{Up}} =0,\qquad \calJ_{K|L}^{h,\rho,\text{Up}}= \tau_{K|L}^h \bigl( q_{K|L}^{h,+}u^h_K - q_{K|L}^{h,-} u^h_L \bigr),
\end{equation}
which is the upwind flux discretization for the aggregation equation 
\begin{equation}\label{eq:aggregation}\tag{{\bf AE}}
    \partial_t \rho = \text{div} (\rho \nabla (V + W * \rho))\qquad \text{in } (0, T) \times \Omega.
\end{equation}

The convergence of the Scharfetter--Gummel scheme \eqref{eq:SG} to the weak solutions of \eqref{eq:drift-aggregation-diffusion-equation} in the absence of an external potential is proven in \cite{schlichting2022scharfetter}. Moreover, it was shown there that the discrete solutions satisfy an energy-dissipation inequality along the evolution, which is an important structure-preserving property. We aim to go one step further and prove the convergence of a variational structure for \eqref{eq:SG} to the Otto gradient-flow structure for \eqref{eq:drift-aggregation-diffusion-equation}.

The convergence of \eqref{eq:drift-aggregation-diffusion-equation} to \eqref{eq:aggregation} was also shown to hold in \cite{santambrogio2023} with and without rates, a recent article which we only became aware of at the end of writing this manuscript. The authors proved the convergence with rate $\sqrt{\epsilon}$ when the interaction potential $W$ is $\lambda$-convex  with $\lambda\le 0$. We include proof of the convergence without rates for completeness and for outlining the similarity of the proof with the other proofs within this manuscript.

\subsubsection*{Strategy and outline}
The goal of this paper is to complete the commutative diagram in Figure~\ref{diagram:S-G} below, where the convergence results correspond to the convergence of gradient-flow structures. To make the goal clear, we briefly explain the gradient structures involved and the type of convergences we are interested in.

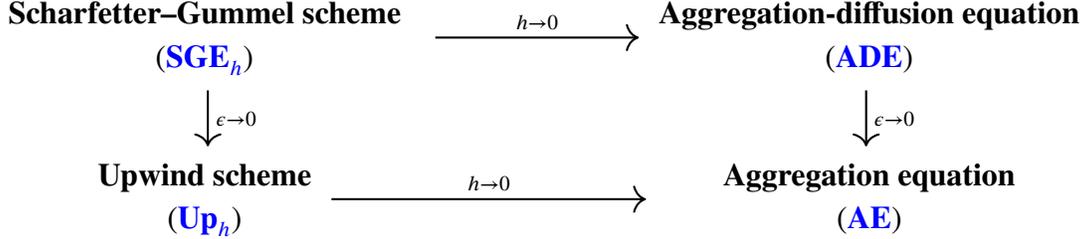
\begin{figure}[ht]
\centering
\begin{tikzcd}
\begin{array}{c}
\textbf{Scharfetter--Gummel scheme} \\[0.2em]
    \eqref{eq:SG}
\end{array}\qquad
\arrow[r, "h\to 0", start anchor={[xshift=-4ex]}, end anchor={[xshift=5ex]}] 
\arrow[d, "\epsilon\to 0", start anchor={[xshift=-2ex]}, end anchor={[xshift=-2ex]}]
& 
\qquad \begin{array}{c}
\textbf{Aggregation-diffusion equation} \\[0.2em]
\eqref{eq:drift-aggregation-diffusion-equation}
\end{array} 
\arrow[d, "\epsilon\to 0", start anchor={[xshift=2ex]}, end anchor={[xshift=2ex]}] \\
\begin{array}{c}
     \textbf{Upwind scheme} \\[0.2em]
     \eqref{eq:upwind-scheme}
     
\end{array}\qquad \arrow[r, "h\to 0", start anchor={[xshift=-5ex]}, end anchor={[xshift=1ex]}] 
&
\qquad\begin{array}{c}
    \textbf{Aggregation equation} \\[0.2em]
     \eqref{eq:aggregation}
\end{array}
\end{tikzcd}
\caption{We are interested in the convergence results indicated by the arrows in the sense of EDP convergence. The arrows with the label "$h\to 0$" indicate the convergence of the discrete approximation to the continuous problem. The arrows with the label "$\epsilon\to 0$" indicate the vanishing diffusion limit.} \label{diagram:S-G}
\end{figure}

The right-hand side of Figure~\ref{diagram:S-G} corresponds to the continuous setting that is rather well understood. The Otto-Wassertein gradient-flow theory \cite{ambrosio2008gradient,JKO1998,Otto2001} provides a gradient-flow formulation for the aggregation-diffusion equation \eqref{eq:drift-aggregation-diffusion-equation} with respect to the $L^2$-Wasserstein metric and the \emph{driving energy} 
\begin{equation}\label{eq:energy-cont-with-interaction}
    \calP(\Omega)\ni\rho\mapsto \calE_\epsilon(\rho) = \begin{cases}
        \displaystyle \epsilon \int_\Omega \phi\biggl( \frac{\dd\rho}{\dd\calL^d} \biggr) \dd\calL^d + \int_\Omega V \dd\rho + \frac{1}{2} \int_\Omega ( W * \rho ) \dd\rho  &  \text{if } \rho \ll \calL^d, \\
        +\infty & \text{otherwise,}
    \end{cases}
\end{equation}
where $\phi(s)=s \log s -s +1$ for $s\in\R_+$, $\calL^d$ denotes the Lebesgue measure on $\R^d$, and $\calE_\epsilon$ is assumed w.l.o.g.\ to be non-negative (cf.\ Section~\ref{sec:assumptions-SG}). Here, we consider gradient flow solutions to \eqref{eq:drift-aggregation-diffusion-equation} in terms of the {\em Energy-Dissipation Balance} (EDB), which we now describe. We begin by recalling that \eqref{eq:drift-aggregation-diffusion-equation} can be expressed as
\begin{align}
	\partial_t\rho_t + \text{div} j_t &= 0\qquad \text{in } (0, T) \times \Omega,\label{eq:CE}\tag{{\bf CE}}\\
	 j_t &= -\rho_t \nabla \calE_\epsilon'(\rho_t),\label{eq:FF}\tag{{\bf KR}}
\end{align}
where \eqref{eq:CE} suggests that  $(\rho,j)$ satisfies the {\em continuity equation}, while \eqref{eq:FF} describes the relationship between the force $-\nabla \calE_\epsilon'(\rho_t)$ and the flux $j_t$, known as the {\em kinetic relation}. Here, $\calE_\epsilon'$ denotes the variational derivative of $\calE_\epsilon$.

By introducing a {\em dual dissipation potential} $\calR^* : \calP(\Omega) \times C_b(\Omega;\R^d) \to \R_+$,
\begin{equation*}
    \calR^*(\rho, \xi) = \frac{1}{2} \int_\Omega |\xi|^2 \dd\rho,
\end{equation*}
the kinetic relation \eqref{eq:FF} may be further expressed as
\[
	j_t = D_2\calR^*(\rho_t, -\nabla \calE_\epsilon'(\rho_t)).
\]
Via Legendre-Fenchel duality, we obtain a variational characterization of the kinetic relation:
\begin{align}\label{eq:force-flux}
	\calR(\rho_t, j_t) + \calR^*(\rho_t, -\nabla \calE_\epsilon'(\rho_t)) = \langle j_t,-\nabla \calE_\epsilon'(\rho_t)\rangle,
\end{align}
where the {\em dissipation potential} $\calR$ is the Legendre dual of $\calR^*$ w.r.t.\ its second argument, i.e.,
\begin{equation}\label{eq:dissipation-potential}
	(\rho,j)\in \calP(\Omega)\times \calM(\Omega;\R^d)\mapsto \calR(\rho,j) = \frac{1}{2}\int_\Omega \biggl|\frac{\dd j}{\dd \rho}\biggr|^2 \dd\rho,
\end{equation}
where $\calM(\Omega;\R^d)$ is the space of finite $\R^d$-valued Radon measures. Under the {\em chain rule}
\begin{align}\label{eq:chain-rule}\tag{{\bf CR}}
	-\frac{\dd }{\dd t}\calE_\epsilon(\rho_t) = \langle j_t,-\nabla \calE_\epsilon'(\rho_t)\rangle, 
\end{align}
along density-flux pairs $(\rho,j)$ satisfying the continuity equation \eqref{eq:CE}, one arrives at a variational expression for the solution of \eqref{eq:drift-aggregation-diffusion-equation}. Indeed, integrating \eqref{eq:force-flux} over arbitrary intervals $[s,t]\in[0,T]$ and employing the chain rule \eqref{eq:chain-rule}, one obtains the Energy-Dissipation Balance:
\begin{equation}\label{eq:Otto-edf}\tag{{\bf EDB}}
   \calI_\epsilon^{[s,t]} (\rho, j) \coloneq \int_s^t \calR(\rho_r,j_r) + \calR^*(\rho_r, - \nabla \calE'_\epsilon(\rho_r))\dd r + \calE_\epsilon(\rho_t) - \calE_\epsilon(\rho_s) = 0.
\end{equation}
Morally, any pair $(\rho,j)$ satisfying the continuity equation \eqref{eq:CE} and \eqref{eq:Otto-edf} is said to be a gradient flow solution of \eqref{eq:drift-aggregation-diffusion-equation} with respect to the structure $(\calE, \calR, \calR^*)$ if it satisfies, additionally, the chain rule \eqref{eq:chain-rule} ensuring the non-negativity of the functional $\calI_\epsilon^{[s,t]}$. Although there are other ways of defining gradient flow solutions to \eqref{eq:drift-aggregation-diffusion-equation}, we choose to use the definition based on EDB since this works well in the generalized gradient flow setting \cite{PRST2022} as seen below.

For $\lambda$-convex functionals $\calE_\epsilon$ w.r.t.\ the Wasserstein distance $W_2$, it is a standard result of evolutionary $\varGamma$-convergence for gradient flows \cite{serfaty2011gamma} that, as $\epsilon\to 0$, the gradient flow solutions of \eqref{eq:drift-aggregation-diffusion-equation} converge to the gradient flow solutions of the corresponding aggregation equation \eqref{eq:aggregation}.

\medskip
The left-hand side of Figure~\ref{diagram:S-G} corresponds to the discrete setting for which the gradient structure is not well understood. For this reason, our first objective is to present a generalized gradient-flow (GGF) formulation for the Scharfetter--Gummel scheme \eqref{eq:SG}. In particular, we show in Section~\ref{sec:cosh-structure-SG} that the scheme fits into the (by now, common) `cosh' gradient-structure framework with the discrete driving energy $\calE_{\epsilon,h}: \calP(\calT^h) \to \R_+$,
\begin{equation}\label{eq:energy-with-interaction}
    \calE_{\epsilon,h}(\rho^h) = \epsilon \sum_{K\in\calT^h} \phi(u^h_K)|K|  + \sum_{K\in\calT^h} V^h_K \rho^h_K + \frac{1}{2} \sum_{(K, L)\in\calT^h\times\calT^h} W^h_{KL} \rho^h_K \rho^h_L, \qquad u^h_K \coloneq \frac{\rho^h_K}{|K|},
\end{equation}
and discrete dual dissipation potential $\calR_{\epsilon,h}^*: \calP(\calT^h) \times \calB(\Sigma^h) \to \R_+$ defined in \eqref{eq:def:cosh:Rstar}, where $\calB(A)$ denotes the set of bounded functions on $A$.

That being said, the `cosh' gradient structure turns out to be ill-suited for proving the desired convergence due to the inclusion of the interaction potential $W$, which gives rise to a dissipation potential that depends on $W$ and $\rho^h$. Such phenomenon is known as \emph{tilt-dependence} of gradient systems and was recently discussed in detail in \cite{peletier2023cosh}, where it was established that \emph{tilt-independent} gradient structures give rise to better convergence properties. Using the de-tilting technique~\cite{peletier2023cosh}, we introduce a \emph{new} tilt-independent gradient structure for the Scharfetter--Gummel scheme in the presence of both external and interaction potentials (cf.\ Section~\ref{sec:tilted-gradient-structure}) and allows us to pass to the $h\to 0$ and $\epsilon\to 0$ limits.

We show in Section~\ref{sec:gf-SG} that the Scharfetter--Gummel scheme \eqref{eq:SG} possesses a gradient structure with driving energy $\calE_{\epsilon,h}$ (cf.\ \eqref{eq:energy-with-interaction}) and the \emph{tilt-independent dual dissipation potential} $\calR_{\epsilon,h}^*$ given by
	\begin{equation}\label{eq:tilted-Rstar}
		\calR_{\epsilon,h}^*(\rho^h, \xi^h) \coloneq 2\sum_{(K,L)\in\Sigma^h} \tau_{K|L}^h \,\alpha_\epsilon^* \bra*{ u^h_K, u^h_L, \frac{\xi^h_{K|L}}{2} }, \qquad u^h_K \coloneq \frac{\rho^h_K}{\abs{K}},
	\end{equation}
	where $\alpha_\epsilon^*:\R_+\times \R_+\times\R \to \R_+$  is defined (see Lemma~\ref{lem:properties-alpha} for more details) for any $\epsilon>0$ by
	\begin{equation}\label{eq:def:alpha*}
		\alpha_\epsilon^*(a, b, \xi) \coloneq \epsilon \int_0^\xi \sinh\bra[\Big]{\frac{x}{\epsilon}} \Lambda_H\bra*{a e^{-x/\epsilon}, b e^{x/\epsilon}} \dd x= \epsilon^2 \alpha_1^* \bra[\Big]{a, b, \frac{\xi}{\epsilon} }.
	\end{equation}
Hereby the \emph{harmonic-logarithmic mean} $\Lambda_H : \R_+ \times \R_+ \to \R_+$ (see also Lemma~\ref{lem:harm_log_mean}) is given as
\begin{equation}\label{eq:def:HarmLogMean}
	\Lambda_H (s, t) \coloneq \frac{1}{\Lambda \left( 1/s, 1/t \right)} \qquad\text{with}\qquad \Lambda(s, t) = \frac{s - t}{\log s - \log t} \qquad\text{for } s\ne t.
\end{equation}
Based on these definitions, the two equations in~\eqref{eq:SG} become a discrete continuity equation for the density-flux pair $(\rho^h,j^h)$ and a kinetic relation providing a force-flux relation:
\begin{align}
    \partial_t \rho^h_t + \overline{\text{div}} &j^h_t = 0 \qquad \text{in } (0,T) \times \calT^h, \label{eq:CEh} \tag{{\bf CE$_h$}}\\
    &j^h_t = D_2 \calR_{\epsilon,h}^* (\rho^h_t, -\dnabla \calE_{\epsilon,h}'(\rho^h_t)), \label{eq:FFh}\tag{{\bf KR$_h$}}
\end{align}
where $\dnabla \varphi(K,L) = \varphi(L)-\varphi(K)$ is the discrete gradient. Together with the discrete chain rule
\begin{align}\label{eq:CRh}\tag{{\bf CR$_h$}}
	-\frac{\dd }{\dd t}\calE_{\epsilon,h}(\rho_t^h) = \langle j_t^h,-\dnabla \calE_{\epsilon,h}'(\rho_t^h)\rangle, 
\end{align}
the pair $(\rho^h,j^h)$ is shown to satisfy the discrete Energy-Dissipation Balance:
\begin{equation}\label{eq:EDPh}\tag{{\bf EDB$_h$}}
    \calI_{\epsilon,h}^{[s,t]} (\rho^h, j^h) \coloneq \int_s^t \calR_{\epsilon,h}(\rho^h_r, j^h_r) + \calR_{\epsilon,h}^*(\rho^h_r, -\dnabla \calE_{\epsilon,h}'(\rho_t^h)) \dd r + \calE_{\epsilon,h}(\rho^h_t) - \calE_{\epsilon,h}(\rho^h_s) = 0,
\end{equation}
for any interval $[s,t]\subset[0,T]$.

\medskip
Our main interest lies in establishing discrete-to-continuum convergence results that connect the left-hand and the right-hand sides of Figure~\ref{diagram:S-G}. For the convergence of \eqref{eq:SG} to \eqref{eq:drift-aggregation-diffusion-equation} (top horizontal arrow), we define the GGF solutions to \eqref{eq:SG} as the minimizers of the energy-dissipation functional $\calI_{\epsilon,h}$ corresponding to the tilt-independent structure defined through~\eqref{eq:tilted-Rstar} (cf.\ Section~\ref{sec:discrete-structures}). We then follow a similar strategy as in \cite{hraivoronska2023diffusive}, which studies the diffusive limit of random walks on tessellations using variational techniques. However, every step of the strategy requires an adaptation to the new gradient structure. The main challenge here is to prove a $\varGamma$-convergence result for the Fisher information, which takes the form
\begin{equation*}
    \calD_{\epsilon,h}(\rho^h) \coloneq \calR_{\epsilon,h}^*(\rho^h, -\dnabla \calE_{\epsilon,h}'(\rho^h)) =\sum_{(K,L)\in\Sigma^h} \beta_\epsilon (u^h_K, u^h_L)\, \tau_{K|L}^h + \calD_{\epsilon,h}^1 (\rho^h) + \calD_{\epsilon,h}^2 (\rho^h),
\end{equation*}
where $\beta_\epsilon(a,b)\coloneq \alpha_\epsilon^* (a, b, -\epsilon \log \sqrt{b/a})$ with $\alpha_\epsilon^*$ from~\eqref{eq:def:alpha*}, and  $\calD_{\epsilon,h}^1$, $\calD_{\epsilon,h}^2$ are defined in Section~\ref{sec:EDP-convergence-tilt}.
The splitting mimics the expanded form of the continuous Fisher information:
\[
 \calD_\epsilon(\rho) \coloneq \calR^*(\rho, - \nabla \calE'_\epsilon(\rho)) = 2\epsilon^2 \int \abs*{ \nabla \sqrt{u} }^2\dd x + \epsilon\int \nabla u \cdot \nabla \mathsf{Q}(\rho)\dd x + \frac{1}{2} \int \abs*{ \nabla \mathsf{Q}(\rho) }^2\, u\dd x,
\]
where $\mathsf{Q}(\rho) = V + W\ast\rho$.
The function $\beta_\epsilon$ depending on $\alpha_\epsilon^*$ in~\eqref{eq:def:alpha*} is only defined by an integral, which makes it more difficult to work with as compared to the Fisher information for the `cosh' structure studied in \cite{hraivoronska2023diffusive}. Nevertheless, it satisfies (see Lemma~\ref{lem:properties-alpha}) the bounds
\[
    \frac{\epsilon^2}{4} \frac{(a - b)^2}{a + b} \leq \beta_\epsilon (a, b) \leq \frac{\epsilon^2}{2} \bra*{\sqrt{b} - \sqrt{a}}^2, \qquad a, b \geq 0, 
\]
thereby allowing us to prove a $\varGamma$-convergence result for $\beta_\epsilon$ (cf.\ Section~\ref{sec:gamma-convergence-fisher-SG}), albeit under more stringent assumptions on the tessellations compared to \cite{hraivoronska2023diffusive}. Additionally, we will need to establish new convergence results for the other parts of $\calD^h$ that depend on the interaction term $q_{K|L}^h$.

The arrow with $\epsilon\to 0$ on the left side of Figure~\ref{diagram:S-G} refers to the convergence of the Scharfetter--Gummel scheme \eqref{eq:SG} to the upwind approximation \eqref{eq:upwind-scheme} as $\epsilon \to 0$ in terms of the generalized gradient structure. Since the state space is a fixed finite tessellation, this result is not difficult to obtain. On the contrary, the {\em direct} convergence of the upwind scheme \eqref{eq:upwind-scheme} to the aggregation equation \eqref{eq:aggregation} appears to be very challenging. The difficulty is described in the literature but is still not well studied. The intuitive idea is that the structure of the tessellation can lead to strong oscillations in the solutions of the discrete continuity equation. More specifically, unlike in the 1-dimensional case, one can not expect propagation of the $\text{BV}$-bound, assuming that the initial data is in $\text{BV}$. Indeed, there is a simple example of a 2-dimensional tessellation consisting of lines of squares with size $h$ alternating with lines of squares with size $h/2$, for which the total variation of the discrete solutions blows up as $h^{-1/2}$ even for a constant velocity field (see details in \cite{despres2004explicit}). 

On the other hand, the convergence results in the strong topology are available on general tessellations for Lipschitz velocity fields \cite{aguillon2018error, merlet2008and, merlet2007error}. When one treats general tessellations and rough velocity fields simultaneously, the convergence is proven in the weak topology \cite{SchlichtingSeis2017,schlichting2018analysis} for time-explicit upwind schemes on Cartesian grids and time-implicit upwind schemes on regular general meshes. A first variational method for Fokker-Planck equations based on upwind dissipation functionals is contained in~\cite{Cances2020}. See also~\cite{EspositoHeinzeSchlichting2023,esposito2024class,EPSS2021} for a study on general graphs and their continuum limits.

A new method for proving regularity estimates for solutions of the discrete continuity equations with non-Lipschitz velocity field and non-Cartesian but periodic tessellations is found in \cite{jabin2022discretizing}, which is significant for future research in this area. Given the state-of-art, at the moment, we cannot expect to prove the discrete-to-continuum convergence of the gradient structure for \eqref{eq:upwind-scheme} for general tessellations. Nevertheless, we obtain a convergence result for the Cartesian grid. We believe that this result is already worthwhile since it does not require any assumptions on the integrability of the initial data, allowing us to include atomic measures as initial data. 

To summarize, the rest of the paper is organized as follows. In Section~\ref{sec:assumptions-and-results}, we specify the assumptions on tessellations and potentials and present the main results. We introduce the gradient structure for \eqref{eq:drift-aggregation-diffusion-equation} and two generalized gradient structures for the Scharfetter--Gummel scheme in Section~\ref{sec:gradient-structures}. The subsequent sections contain the proofs of the convergence results. Section~\ref{sec:EDP-convergence-tilt} is dedicated to the discrete-to-continuum convergence of \eqref{eq:SG} to \eqref{eq:drift-aggregation-diffusion-equation} for $\epsilon>0$. The vanishing diffusion limit $\epsilon\to 0$ from \eqref{eq:SG} to \eqref{eq:upwind-scheme} and from \eqref{eq:drift-aggregation-diffusion-equation} to \eqref{eq:aggregation} are presented in Section~\ref{sec:vanishing-diffusion}. We deal with the convergence of \eqref{eq:upwind-scheme} to \eqref{eq:aggregation} in Section~\ref{sec:upwind-to-aggregation}.

\subsection*{Acknowledgments} A.H.\ and O.T.\ acknowledge support from NWO Vidi grant 016.Vidi.189.102 on "Dynamical-Variational Transport Costs and Application to Variational Evolution". A.S.\ is supported by the Deutsche Forschungsgemeinschaft (DFG, German Research Foundation) under Germany's Excellence Strategy EXC 2044 -- 390685587, \emph{Mathematics M\"unster: Dynamics--Geometry--Structure}.

\section{Assumptions and Main Results}\label{sec:assumptions-and-results}
We specify our assumptions on the family of tessellations in Section~\ref{sec:assumptions-SG-tesselation} and the external and interaction potentials in Section~\ref{sec:assumptions-SG}. The main results of this paper are summarized in Section~\ref{sec:main-results-SG}. 

\subsection{Assumptions on tessellations}\label{sec:assumptions-SG-tesselation}
Let $\Omega\subset\R^d$ be an open bounded convex set. A tessellation $(\calT^h,\Sigma^h)$ covering $\Omega$ consists of a family $\calT^h$ of mutually disjoint cells (usually denoted by $K$ or $L$) that are open convex sets and $\Omega\subset \bigcup_{K\in\calT^h} K$, and a family $\Sigma^h =\{ (K, L)\in\calT^h\times\calT^h : \mathscr{H}^{d-1} (\overline{K}\cap\overline{L}) > 0  \}$ of pairs of cells with a common face. Here, $\mathscr{H}^{d-1}$ denotes the $(d-1)$-dimensional Hausdorff measure.
The common face of a pair $(K,L)\in \Sigma^h$ is denoted by $(K|L)$. We further define $\calT^h_K$ to be the set of cells adjacent $K$. The characterizing size of a tessellation is its maximum diameter:
$$
    h \coloneq \max \bigl\{\text{diam}(K),\, K\in \calT^h\bigr\}.
$$ 
The maximum diameter $h>0$ gives an upper bound on the volumes of the cells $|K|\le C_d h^d$ and faces $|(K|L)| \le C_{d-1} h^{d-1}$, where $C_d$, $C_{d-1}>0$ are universal constants depending only on the spatial dimension $d\ge 1$. In our work, it is also necessary to assume lower bounds on the volumes of the cells to prevent the degeneration of cells, which is guaranteed by the following non-degeneracy assumption. 

\vspace{0.3cm}
\framebox{ \centering \begin{minipage}{0.92\linewidth}
\textbf{Non-degeneracy.}\label{def_zeta_regularity}
    There exists $\zeta \in (0, 1)$ such that
    \begin{enumerate}[label=(\roman*)]
        \item  For each $K\in\calT^h$, there is an inner ball $B(b_K, \zeta h) \subset K$
        with $b_K = \intbar_K x \dd x$;
        \item For every $(K,L)\in\Sigma^h$ it holds that $|(K|L)| \geq \zeta h^{d-1}$.
    \end{enumerate}
\end{minipage}}
\vspace{0.3cm}

A standard assumption, often embedded in the definition of admissible tessellations in the finite-volume setup, is the following orthogonality assumption.

\vspace{0.3cm}
\framebox{ \centering \begin{minipage}{0.92\linewidth}
\textbf{Orthogonality.} There exists a family of points $\calX^h = \{x_K\}_{K\in\calT^h}$ such that $x_K \in K$ and for all $(K, L)\in \Sigma^h$, the face $(K|L)$ is orthogonal to the vector $x_L - x_K$, i.e.,
    \begin{equation}\tag{Ort}\label{ass:orthogonality}
        (K|L) \perp (x_L - x_K).
    \end{equation}
\end{minipage}}
\vspace{0.3cm}

We now summarize the assumptions on the tessellations used within this paper.

\vspace{0.3cm}
\framebox{ \centering \begin{minipage}{0.92\linewidth}
\textbf{Admissible tesselations}. The family of tessellations $\{(\calT^h, \Sigma^h)\}_{h>0}$ satisfy 
\begin{align}\label{ass:tessellation}\tag{A$_\calT$}\hspace{-1em}\left\{\;
\begin{aligned}
    &\text{for any $h>0$, all cells $K\in\calT^h$ are open, convex, and mutually disjoint;} \\
    &\text{$\{(\calT^h, \Sigma^h)\}_{h>0}$ is non-degenerate with some $\zeta\in(0, 1)$ independent of $h$;} \\
    &\text{$\{(\calT^h, \Sigma^h)\}_{h>0}$ satisfies \eqref{ass:orthogonality} for any $h>0$.}
    \end{aligned}\right.
\end{align}
\end{minipage}}
\vspace{0.3cm}

\begin{remark}
    \begin{enumerate}
        \item Voronoi tessellations satisfy \eqref{ass:orthogonality} with $\calX^h$ being the generating points of the Voronoi cells. 
        
        \item As a consequence of the non-degeneracy assumption, the transmission coefficients $\tau_{K|L}^h = |(K|L)| / |x_L - x_K|$ for a family of admissible tesselations $\{(\calT^h, \Sigma^h)\}_{h>0}$ satisfy 
\begin{equation}\label{eq:tau:bound}
    c_\tau \leq \inf_{h>0} \inf_{K\in \calT^h} h^2 \sum_{L: (K,L)\in\Sigma^h} \tau_{K|L}^h \leq \sup_{h>0} \sup_{K\in \calT^h} h^2 \sum_{L: (K,L)\in\Sigma^h} \tau_{K|L}^h 
    \leq C_\tau \,.
\end{equation}
with two universal constants $0<c_\tau\leq C_\tau< +\infty$.

    \item The non-degeneracy assumption implies that the number of adjacent cells is uniformly bounded. More precisely, there exists a universal constant $C_\calN > 0$ such that
    \begin{equation}\label{eq:univ-bound-adjacent}
        \sup_{h>0} \sup_{K\in\calT^h} |\calT^h_K| \leq C_\calN < \infty.
    \end{equation}
    
    \item An important consequence of the orthogonality assumption is the following equality for any $K\in\calT^h$ away from the boundary:
\begin{equation}\label{eq:zero-average}
    \sum_{L\in\calT^h_K} \tau^h_{K|L} (x_L - x_K)
    = \sum_{L\in\calT^h_K} |(K|L)| n_{K|L} 
    = \int_{\partial K} n_{\partial_K} \dd \calH^{d-1} = 0,
\end{equation}
where $n_{K|L} \coloneq (x_L - x_K) / |x_L - x_K|$ and $n_{\partial K}$ is the outer normal vector for the cell $K$.
    \end{enumerate}
\end{remark}

\subsection{Assumptions on potentials}\label{sec:assumptions-SG}

We assume the following properties for the potentials.

\vspace{0.3cm}
\framebox{
\centering
\begin{minipage}{0.92\linewidth}

\textbf{Assumptions on $V$.}\label{ass_potential}
    The external potential $V\in \text{Lip}(\R^d)\cap C^1(\R^d)$ is non-negative.\\[-0.5em]
 
\textbf{Assumptions on $W$.}\label{ass_interaction}
    The interaction potential $W\colon \R^d \to \R$ non-negative, i.e.\ $W(x) \geq 0$ for all $x\in\R^d$ and is symmetric, i.e.\ $W(x) = W(-x)$. In addition, we assume the interaction potential to be either a pointy potential
    \begin{equation}\label{ass:pointy-potential}\tag{Pointy}
        W \in \text{Lip}(\R^d) \cap C^1(\R^d\backslash\{0\}),
    \end{equation}
    or a continuously differential potential
    \begin{equation}\label{ass:smooth-potential}\tag{C$^1$}
        W \in \text{Lip}(\R^d)\cap C^1(\R^d).
    \end{equation}
\end{minipage}}

\medskip

\begin{example} A typical example of interaction potentials appearing in mathematical models of the collective behavior of individuals is the Morse potential
$$
    W(x) = C_r e^{-|x|/\ell_r} - C_a e^{-|x|/\ell_a},
$$
where $\ell_a$ and $\ell_r$ represent the attractive and
repulsive potential ranges and $C_a$ and $C_r$ represent their
respective amplitudes. With the choice $C_r \geq C_a > 0$ and $\ell_a > \ell_r$, it holds that $W(x) \geq 0$ for all $x\in\R^d$ and $W$ satisfies \eqref{ass:pointy-potential}.  
\end{example}

As mentioned above we define the discrete potentials accordingly as 
\[
\begin{aligned}
	 V^h_K &\coloneq V(x_K)\quad\text{for $K\in \calT^h$,\; and}\\
	 W^h_{KL} &\coloneq W(x_L - x_K)\quad \text{for $(K,L)\in\calT^h \times \calT^h$.}
\end{aligned}
\]
We claim in Lemma~\ref{lem:properties-potentials} that the assumptions on $V$ and $W$ indicated above imply
that
\begin{equation}\label{eq:q-to-potentials}
    q_{K|L}^h = \nabla (V + W * \hat{\rho}^h )(x_K) \cdot (x_L - x_K) + o(h)|_{h\to 0}.
\end{equation}
This estimate will play an important role in several statements of this paper. Due to the assumptions on the potentials $V$ and $W$, we further deduce that
\begin{equation}\label{eq:Cw-bound}
    |q_{K|L}^h| \leq c_\text{pot} h\qquad\text{for all $(K,L)\in\Sigma^h$},
\end{equation}
with $c_\text{pot}\coloneq\text{Lip}(V) + \text{Lip}(W)$.

\begin{remark}
We could have also defined 
\[
    V^h_K \coloneq \intbar_K V(x) \dd x,\quad K\in\calT^h,\qquad W^h_{KL} \coloneq \intbar_K \intbar_L W(x - y) \dd x \dd y,\quad (K,L) \in \calT^h\times \calT^h.
\]
One can verify that \eqref{eq:q-to-potentials} remains true. As a consequence, all the results of this paper hold also with these definitions. 
\end{remark}

\subsection{Main results}\label{sec:main-results-SG}
To see the scope of the main results, we indicate the corresponding statements on the arrows in Figure~\ref{diagram:with-theorems}.  

Our first statement is that the Scharfetter--Gummel scheme \eqref{eq:SG} has the generalized gradient structure. This allows us to define the GGF solution to \eqref{eq:finite-volume-CE} as a pair $(\rho^h, j^h)$ satisfying the continuity equation \eqref{eq:CEh}, which is a minimizer for the energy-dissipation functional \eqref{eq:EDPh}. All components of the energy-dissipation functional are made precise in Section~\ref{sec:tilted-gradient-structure} and Lemma~\ref{lem:flux-force-for-tilt} proving that the structure is indeed correct.

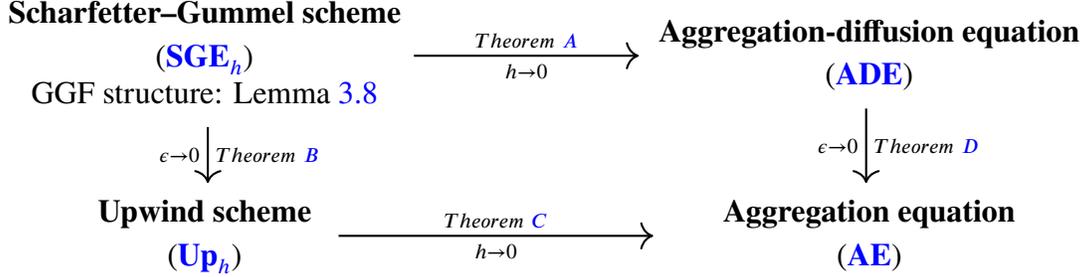
\begin{figure}[ht]
\centering
\begin{tikzcd}
\begin{array}{c}
\textbf{Scharfetter--Gummel scheme} \\[0.2em]
    \eqref{eq:SG} \\
    \text{GGF structure: Lemma~\ref{lem:flux-force-for-tilt}}
\end{array}\qquad
\arrow[r, "h\to 0"', "Theorem~\ref{th:EDP-convergence-tilt}", start anchor={[xshift=-5.5ex]}, end anchor={[xshift=5ex]}] 
\arrow[d, "\epsilon\to 0"', "Theorem~\ref{th:EDP-vanishing-diffusion}", start anchor={[xshift=-2ex]}, end anchor={[xshift=-2ex]}]
& 
\qquad \begin{array}{c}
\textbf{Aggregation-diffusion equation} \\[0.2em]
     \eqref{eq:drift-aggregation-diffusion-equation}
\end{array} 
\arrow[d, "\epsilon\to 0"', "Theorem~\ref{th:vanishing-diffusion-continuous}", start anchor={[xshift=2ex]}, end anchor={[xshift=2ex]}] \\
\begin{array}{c}
     \textbf{Upwind scheme} \\[0.2em]
     \eqref{eq:upwind-scheme}
\end{array}\qquad \arrow[r, "h\to 0"', "Theorem~\ref{th:upwind-to-aggregation}", start anchor={[xshift=-4.5ex]}, end anchor={[xshift=1.5ex]}] 
&
\qquad\begin{array}{c}
    \textbf{Aggregation equation} \\[0.2em]
     \eqref{eq:aggregation}
\end{array}
\end{tikzcd}
\caption{This diagram depicts the main results of this paper. The generalized gradient structure for the Scharfetter--Gummel scheme is established in Lemma~\ref{lem:flux-force-for-tilt}. The labels on the arrows indicate the corresponding convergence statements in the sense of the EDP convergence.} \label{diagram:with-theorems}
\end{figure}

Section~\ref{sec:EDP-convergence-tilt} is devoted to the discrete-to-continuum convergence of the Scharfetter--Gummel scheme as $h\to 0$ for a fixed diffusion coefficient $\epsilon > 0$. To relate the discrete objects with the continuum, we employ the following reconstruction procedure for a density-flux pair $(\rho^h, j^h)$ satisfying \eqref{eq:CEh}
\begin{equation}\label{eq:lifted-pair}
    \frac{\dd\hat{\rho}^h}{\dd\calL^d} \coloneq \sum_{K\in\calT^h} \frac{\rho^h(K)}{|K|} \Ind_K, \qquad \hat{\jmath}^h \coloneq \sum_{(K,L) \in \Sigma^h} j^h_{K|L}\,\sigma_{K|L}^h,
\end{equation}
where $\sigma_{K|L}^h\in\calM(\Omega; \R^d)$ are chosen in a way such that for any $(\rho^h, j^h)$ satisfying the discrete continuity equation \eqref{eq:CEh} the lifted pair $(\hat{\rho}^h, \hat{\jmath}^h)$ satisfies the continuous continuity equation \eqref{eq:CE}. The existence of such measures $\sigma_{K|L}^h\in\calM(\Omega; \R^d)$ was shown in \cite[Lemma~4.1]{hraivoronska2023diffusive}.

\medskip

The main theorems are the following.

\begin{mainthm}\label{th:EDP-convergence-tilt}
Let $\{(\calT_h,\Sigma_h)\}_{h>0}$ be a family of tessellations satisfying \eqref{ass:tessellation}, and assume \eqref{ass:pointy-potential} to hold for the interaction potential $W$. Further, let $\{(\rho^h,j^h)\}_{h>0}$ be a family of GGF-solutions \eqref{eq:SG} with initial data $\{\rho_{\text{in}}^h\}_{h>0}$ having $\sup_{h>0} \calE_h(\rho_{\text{in}}^h) < \infty$, such that there exists $\rho_{\text{in}}\in\text{dom}\,\calE$ with
\[
\frac{\dd\hat\rho_{\text{in}}^h}{\dd\calL^d} \to \frac{\dd\rho_{\text{in}}}{\dd\calL^d} \quad \text{in } L^1(\Omega)
 \quad \text{and } \quad \lim_{h\to 0} \calE_h(\rho_{\text{in}}^h) = \calE(\rho_{\text{in}}).
\]
Then there exists a (not relabelled) subsequence of admissible continuous reconstructions $\{(\hat{\rho}^h, \hat{\jmath}^h)\}_{h>0}$ and a limit pair $(\rho,j)$ such that
\begin{enumerate}[label=(\arabic*)]
    \item $(\rho,j)$ satisfies \eqref{eq:CE} with the density $u \coloneq \dd\rho/\dd\calL^d \in L^1((0, T)\times\Omega)$ and
        \begin{enumerate}[label=(\roman*)]
            \item  $\dd\hat{\rho}^h_t/\dd\calL^d \to u_t$  in $L^1(\Omega)$ for every $t\in [0, T]$;
            \item $\int_\cdot \hat{\jmath}^h_t \dd t \rightharpoonup^* \int_\cdot j_t \dd t$ weakly-$*$ in $\calM((0, T)\times \Omega)$.
        \end{enumerate}
    \item the following liminf estimate holds: For any $[s,t]\subset[0,T]$,
        $$
            \calI_{\epsilon}^{[s,t]}(\rho, j) \le \liminf_{h\to 0} \calI_{\epsilon,h}^{[s,t]}(\rho^h, j^h),
        $$
        where the energy-dissipation functional $\calI_\epsilon$ is given by
        $$
        \calI^{[s,t]}_\epsilon (\rho, j) = \int_s^t \bigl\{\calR(\rho_r,j_r) +  \calD_\epsilon(\rho_r)\bigr\} \dd r + \calE_\epsilon(\rho_t) - \calE_\epsilon(\rho_s),
    $$
    with the dissipation potential $\calR$ given in \eqref{eq:dissipation-potential} and Fisher information $\calD_\epsilon:\calP(\Omega)\to[0,+\infty]$,
    $$
        \calD_\epsilon(\rho) = 2\epsilon^2\int_\Omega \Bigl| \nabla \sqrt{u}\Bigr|^2\dd x + \epsilon\int_\Omega \nabla u \cdot \nabla \mathsf{Q}(\rho)\dd x + \frac{1}{2} \int_\Omega \lvert\nabla \mathsf{Q}(\rho)\rvert^2 \dd\rho
    $$
    if $\rho\ll \calL^d$ with $u=\dd\rho/\dd\calL^d$ and $+\infty$ otherwise. Recall that $\mathsf{Q}(\rho) = V + W\ast\rho$.
        
    \item $(\rho,j)$ is the gradient flow solution of \eqref{eq:drift-aggregation-diffusion-equation} with the energy-dissipation functional $\calI_\epsilon$.
    \end{enumerate}
\end{mainthm}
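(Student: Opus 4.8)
## Proof proposal for Theorem~\ref{th:EDP-convergence-tilt}

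\textbf{Overall structure.} The proof follows the now-standard EDP-convergence pattern: compactness, liminf inequality for the dissipation functional, and a closing argument identifying the limit as a gradient-flow solution. Since the energy-dissipation functionals satisfy $\calI_{\epsilon,h}^{[0,T]}(\rho^h,j^h)=0$ by assumption (GGF solutions), and since $\sup_h \calE_h(\rho_{\text{in}}^h)<\infty$ with convergence of the initial energies, the plan is to extract an a~priori bound of the form $\int_0^T \calR_{\epsilon,h}(\rho^h_r,j^h_r)\dd r + \int_0^T \calD_{\epsilon,h}(\rho^h_r)\dd r \le \calE_h(\rho_{\text{in}}^h) - \inf\calE_{\epsilon,h} \le C$ uniformly in $h$, using lower-boundedness of $\calE_{\epsilon,h}$ (which follows from $V$ bounded below, $W\ge 0$, and $\phi\ge 0$). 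This bound drives everything.

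\textbf{Step 1: Compactness.} From the uniform bound on $\int_0^T \calR_{\epsilon,h}(\rho^h_r,j^h_r)\dd r$ one controls the discrete action, hence (via the reconstruction \eqref{eq:lifted-pair} and the properties of $\sigma_{K|L}^h$ from \cite[Lemma~4.1]{hraivoronska2022diffusive}) a bound on $\int_0^T \calR(\hat\rho^h_r,\hat\jmath^h_r)\dd r$ up to vanishing errors. This yields $\int_\cdot \hat\jmath^h_t\dd t$ weakly-$*$ compact in $\calM((0,T)\times\Omega)$ and, combined with the continuity equation \eqref{eq:CE} satisfied by $(\hat\rho^h,\hat\jmath^h)$, an equicontinuity-in-time estimate for $t\mapsto \hat\rho^h_t$ in a Wasserstein-type or negative Sobolev metric. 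Coupling this with the Fisher-information bound $\int_0^T \calD_{\epsilon,h}(\rho^h_r)\dd r\le C$ and the lower bound $\beta_\epsilon(a,b)\ge \tfrac{\epsilon^2}{4}\tfrac{(a-b)^2}{a+b}$ (Lemma~\ref{lem:properties-alpha}) gives spatial compactness: a discrete $H^1$-type bound on $\sqrt{u^h}$, enough for an Aubin--Lions/Arzelà--Ascoli argument producing $\dd\hat\rho^h_t/\dd\calL^d\to u_t$ in $L^1(\Omega)$ for every $t$, along a subsequence. This establishes claim (1).

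\textbf{Step 2: Liminf inequality.} One must show $\calI_\epsilon^{[s,t]}(\rho,j)\le\liminf_h \calI_{\epsilon,h}^{[s,t]}(\rho^h,j^h)$ term by term. For the energy difference, lower semicontinuity of $\calE_\epsilon$ along the $L^1$-convergence (at each fixed time) handles $\calE_\epsilon(\rho_t)-\calE_\epsilon(\rho_s)$, using convergence of the initial energy to pin the behaviour at the endpoints — actually one needs $\liminf \calE_h(\rho^h_t)\ge\calE_\epsilon(\rho_t)$ and, symmetrically, an upper bound at $s$ obtained by running the argument from a slightly different time, or by exploiting the energy-dissipation structure. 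For $\int_s^t\calR$, one uses Ioffe-type lower semicontinuity of the action functional under joint weak convergence of $(\hat\rho^h,\hat\jmath^h)$, as in \cite{hraivoronska2022diffusive}. The genuinely new work is the liminf for the Fisher information $\int_s^t\calD_{\epsilon,h}(\rho^h_r)\dd r$: this is precisely the $\varGamma$-convergence result for $\beta_\epsilon$ announced in Section~\ref{sec:gamma-convergence-fisher-SG}, combined with the convergence of $\calD^1_{\epsilon,h}$ and $\calD^2_{\epsilon,h}$ toward $\epsilon\int\nabla u\cdot\nabla\mathsf{Q}(\rho)\dd x$ and $\tfrac12\int|\nabla\mathsf{Q}(\rho)|^2\dd\rho$ respectively, using \eqref{eq:q-to-potentials} to pass $q^h_{K|L}/h \to \nabla\mathsf{Q}(\rho)\cdot(x_L-x_K)/|x_L-x_K|$ and the continuity of $\rho\mapsto\nabla(W*\rho)$ under the $L^1$-convergence. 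The orthogonality assumption \eqref{ass:orthogonality} and the pointy-potential hypothesis enter here to make the consistency estimate \eqref{eq:q-to-potentials} and the discrete-gradient reconstruction compatible with the continuum gradient.

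\textbf{Step 3: From liminf to gradient-flow solution.} Given claims (1) and (2), one concludes $\calI_\epsilon^{[0,T]}(\rho,j)\le\liminf_h\calI_{\epsilon,h}^{[0,T]}(\rho^h,j^h)=0$. To upgrade this to ``$(\rho,j)$ is the gradient-flow solution'', invoke the chain-rule inequality \eqref{eq:chain-rule} for $\calE_\epsilon$ along solutions of \eqref{eq:CE} with finite action and finite Fisher information — this is where $\lambda$-convexity of $\calE_\epsilon$ along Wasserstein geodesics (or a direct chain-rule lemma under the regularity $\sqrt u\in L^2(0,T;H^1)$, $u\in L^1$, obtained in Step~1) is used — to get $\calI_\epsilon^{[0,T]}(\rho,j)\ge 0$. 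Hence $\calI_\epsilon^{[0,T]}(\rho,j)=0$, and together with the chain rule this forces the kinetic relation \eqref{eq:FF} for a.e.\ $t$, i.e.\ $(\rho,j)$ is the $(\calE,\calR,\calR^*)$-gradient-flow solution of \eqref{eq:drift-aggregation-diffusion-equation}. Claim (3) follows.

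\textbf{Main obstacle.} I expect the hardest single step to be the liminf for the Fisher information — specifically, the $\varGamma$-$\liminf$ for $\sum_{(K,L)}\beta_\epsilon(u^h_K,u^h_L)\tau^h_{K|L}$ toward $2\epsilon^2\int|\nabla\sqrt u|^2$. Because $\beta_\epsilon$ is only defined through an integral (not an explicit formula) and only squeezed between $\tfrac{\epsilon^2}{4}\tfrac{(a-b)^2}{a+b}$ and $\tfrac{\epsilon^2}{2}(\sqrt b-\sqrt a)^2$, one cannot argue by an exact algebraic identity as in the `cosh' case; instead one needs a careful two-sided asymptotic analysis of $\beta_\epsilon$ near the diagonal $a=b$, a localization/blow-up argument on the tessellation exploiting the full strength of \eqref{ass:tessellation} and \eqref{ass:orthogonality} (this is why the theorem requires more stringent tessellation hypotheses than \cite{hraivoronska2022diffusive}), and a matching recovery-sequence/upper-bound construction to ensure the asymptotically sharp constant $2\epsilon^2$. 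The cross term $\calD^1_{\epsilon,h}$, which has no sign, is a secondary difficulty: its convergence must be handled by combining the $H^1$-bound on $\sqrt{u^h}$ with the $L^\infty$-type bound \eqref{eq:Cw-bound} on $q^h$, and care is needed because it must converge (not merely have a liminf) so that it can be subtracted cleanly.
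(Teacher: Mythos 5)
Your proposal matches the paper's proof in structure and in all essential ideas: the a priori bound from \eqref{eq:EDPh}, flux compactness plus a spatial bound from $\calD^0_{\epsilon,h}$ (the paper derives a BV bound on $\hat u^h$ via $\beta_\epsilon(a,b)\ge\tfrac{\epsilon^2}{4}\tfrac{(a-b)^2}{a+b}$, Lemma~\ref{lemma:BV-bound}, rather than an $H^1$ bound on $\sqrt{u^h}$, but to the same end), the splitting $\calD_{\epsilon,h}=\calD^0_{\epsilon,h}+\calD^1_{\epsilon,h}+\calD^2_{\epsilon,h}$ with a localization-method $\varGamma$-limit for $\calD^0_{\epsilon,h}$ and continuous convergence for the other two, a duality argument for the liminf of $\calR_{\epsilon,h}$, and the chain rule of Ambrosio--Gigli--Savar\'e to close $\calI_\epsilon^{[s,t]}(\rho,j)\ge 0$. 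You also correctly single out the $\varGamma$-liminf for the $\beta_\epsilon$-term as the main difficulty and identify where \eqref{ass:orthogonality} and \eqref{ass:pointy-potential} enter, so this is essentially the paper's own argument.
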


In Section~\ref{sec:vanishing-diffusion}, we fix a tessellation $(\calT^h, \Sigma^h)$ with some $h>0$ and consider the dependence of the discrete energy-dissipation functional
\begin{equation*}
     \calI_{\epsilon,h}^{[s,t]} (\rho^{h}, j^{h}) = \int_s^t \calR_{\epsilon,h}(\rho^{h}_r, j^{h}_r) + \calD_{\epsilon,h}(\rho^{h}_r) \dd r + \calE_{\epsilon,h}(\rho^{h}_t) - \calE_{\epsilon,h}(\rho^{h}_s),
\end{equation*}
on the diffusion coefficient $\epsilon > 0$. We have the following convergence statement.

\begin{mainthm}\label{th:EDP-vanishing-diffusion}
    Let $(\calT^h, \Sigma^h)$ be a non-degenerate tessellation with a fixed $h>0$. Let $\{ (\rho^{\epsilon,h}, j^{\epsilon,h}) \}_{\epsilon>0}$ be a family of GGF-solutions to \eqref{eq:SG} with initial data $\{\rho_{\text{in}}^{\epsilon,h}\}_{\epsilon>0}$ having $\sup_{\epsilon>0} \calE_{\epsilon,h}(\rho_{\text{in}}^{\epsilon,h}) < \infty$, such that there exists $\rho_{\text{in}}^h\in\text{dom}\,\calE_{\text{up},h}$ with
\[
	\rho_{\text{in}}^{\epsilon,h}(K) \to \rho_{\text{in}}^h(K) \quad \text{for every $K\in\calT^h$}
 \quad \text{and } \quad \lim_{\epsilon\to 0} \calE_{\epsilon,h}(\rho_{\text{in}}^{\epsilon,h}) = \calE_{\text{up},h}(\rho_{\text{in}}^h),
\]
where $\calE_{\text{up},h}:\calP(\calT^h)\to \R$ is given by
\[
	\calE_{\text{up},h}(\rho) = \sum_{K\in\calT^h} V^h_K \rho_K + \frac{1}{2} \sum_{(K,L)\in\calT^h\times\calT^h} W^h_{KL} \rho_K \rho_L.
\]
Then there exists a (not relabelled) subsequence of measure-flux pairs $\{ (\rho^{\epsilon,h}, j^{\epsilon,h}) \}_{\epsilon>0}$ and the limit pair $(\rho^{\text{up},h}, j^{\text{up},h})$ such that
    \begin{enumerate}[label=(\arabic*)]
        \item $(\rho^{\text{up},h}, j^{\text{up},h})$ satisfies \eqref{eq:CEh} and
        \begin{enumerate}[label=(\roman*)]
            \item $\rho^{\epsilon,h}_t \rightharpoonup \rho^{\text{up},h}_t$ weakly in $\calP(\calT^h)$ for all $t\in [0,T]$;
            \item $\int_{\cdot} j^{\epsilon,h}_t \dd t \rightharpoonup^* \int_{\cdot} j^{\text{up},h}_t \dd t$ weakly-$*$ in $\calM((0, T) \times \Sigma^h)$.
        \end{enumerate}
        \item the following liminf estimate holds: For any $[s,t]\subset[0,T]$,
        $$
            \calI_{\text{up},h}^{[s,t]}(\rho^{\text{up},h}, j^{\text{up},h}) \leq \liminf_{\epsilon \to 0} \calI_{\epsilon,h}^{[s,t]}(\rho^{\epsilon,h}, j^{\epsilon,h}),
        $$
        where the energy-dissipation functional $I_{\text{up},h}$ is given by
        \begin{equation}\label{eq:EDP-upwind}
            \calI_{\text{up},h}^{[s,t]}(\rho^{h}, j^{h}) \coloneq \int_s^t \bigl\{\calR_{\text{up},h} (\rho^{h}_r, j^{h}_r) + \calD_{\text{up},h} (\rho^{h}_r)\bigr\} \dd r + \calE_{\text{up},h} (\rho^{h}_t) - \calE_{\text{up},h} (\rho^{h}_s),
        \end{equation}
        with driving energy $\calE_{\text{up},h}$, dissipation potential
        $$
            \calR_{\text{up},h} (\rho^h, j^h) = \sum_{(K,L)\in\Sigma^h} \tau_{K|L}^h\left( u^h_K \left| \frac{j^{h,+}_{K|L}}{\tau_{K|L}^hu^h_K } \right|^2 + u^h_L \left| \frac{j^{h,-}_{K|L}}{\tau_{K|L}^hu^h_L } \right|^2 \right) ,
        $$
        and Fisher information
        $$
            \calD_{\text{up},h} (\rho^h) = \sum_{(K,L)\in\Sigma^h} \tau_{K|L}^h\left( u^h_K \left| \frac{q_{K|L}^{h,+}}{2} \right|^2 + u^h_L \left| \frac{q_{K|L}^{h,-}}{2} \right|^2 \right).
        $$ 
        
        \item $(\rho^{\text{up},h}, j^{\text{up},h})$ is the GGF-solution to the upwind scheme \eqref{eq:upwind-scheme}.
    \end{enumerate}
\end{mainthm}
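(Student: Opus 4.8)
The plan is to follow the standard three-step recipe for EDP convergence (compactness, liminf, limit characterization) but now in the much simpler situation where the state space is the \emph{fixed} finite tessellation $\calT^h$, so that no reconstruction operators or Sobolev-type compactness on $\Omega$ are needed. First I would establish compactness: from the energy bound $\sup_{\epsilon>0}\calE_{\epsilon,h}(\rho_{\mathrm{in}}^{\epsilon,h})<\infty$ and the GGF structure, $\calI_{\epsilon,h}^{[0,T]}(\rho^{\epsilon,h},j^{\epsilon,h})=0$ gives a uniform-in-$\epsilon$ bound on $\int_0^T \calR_{\epsilon,h}(\rho^{\epsilon,h}_r,j^{\epsilon,h}_r)\,\dd r$ and on $\calE_{\epsilon,h}(\rho^{\epsilon,h}_t)$ for all $t$; since $\epsilon\,\phi(u^h_K)|K|\ge 0$ and the potential part of the energy is bounded (finitely many cells, bounded potentials), the entropic part $\epsilon\sum_K\phi(u^h_K)|K|$ stays bounded, which on the compact simplex $\calP(\calT^h)$ is automatic but also shows the entropy contributions to the energy vanish as $\epsilon\to0$. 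The dissipation bound, combined with the lower bound $\calR_{\epsilon,h}\ge$ (something comparable to) $\calR_{\mathrm{up},h}$ — or more directly via a superlinearity/de la Vall\'ee--Poussin argument on $\Sigma^h$ — yields equi-integrability of $\{j^{\epsilon,h}\}$ in $L^1((0,T)\times\Sigma^h)$, hence weak-$*$ compactness of $\int_\cdot j^{\epsilon,h}_t\,\dd t$ in $\calM((0,T)\times\Sigma^h)$; the curves $t\mapsto\rho^{\epsilon,h}_t\in\calP(\calT^h)$ are equi-Lipschitz (in a suitable metric) by the $\calR_{\epsilon,h}$-bound via the continuity equation, so Arzel\`a--Ascoli gives pointwise-in-$t$ convergence $\rho^{\epsilon,h}_t\rightharpoonup\rho^{\mathrm{up},h}_t$. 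Passing to the limit in \eqref{eq:CEh} (a linear relation among finitely many scalars) is immediate, giving item (1).

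Next I would prove the liminf inequality for each term of $\calI_{\epsilon,h}^{[s,t]}$ separately. For the energy: $\calE_{\epsilon,h}\to\calE_{\mathrm{up},h}$ pointwise on $\calP(\calT^h)$ as $\epsilon\to0$ since $\epsilon\phi(u^h_K)|K|\to0$, and in fact this convergence is uniform on $\calP(\calT^h)$ because $\phi$ is bounded on the bounded range of $u^h_K$; hence $\calE_{\epsilon,h}(\rho^{\epsilon,h}_t)\to\calE_{\mathrm{up},h}(\rho^{\mathrm{up},h}_t)$ using continuity of $\calE_{\mathrm{up},h}$ and $\rho^{\epsilon,h}_t\rightharpoonup\rho^{\mathrm{up},h}_t$, with the initial-data hypothesis handling the $s=0$ endpoint. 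For the dissipation potential and the Fisher information, the key is a $\Gamma$-liminf / lower-semicontinuity statement for the integral functionals $\int_s^t\calR_{\epsilon,h}\,\dd r$ and $\int_s^t\calD_{\epsilon,h}\,\dd r$. I would use the dual (variational) representation of $\calR_{\epsilon,h}$ — $\calR_{\epsilon,h}(\rho,j)=\sup_{\xi}\{\langle j,\xi\rangle-\calR_{\epsilon,h}^*(\rho,\xi)\}$ with $\calR_{\epsilon,h}^*$ from \eqref{eq:tilted-Rstar} — together with the scaling identity $\alpha_\epsilon^*(a,b,\xi)=\epsilon^2\alpha_1^*(a,b,\xi/\epsilon)$ and the asymptotics of $\alpha_\epsilon^*$ and of $\Lambda_H$ from Lemma~\ref{lem:properties-alpha} and Lemma~\ref{lem:harm_log_mean} to identify the pointwise limit: as $\epsilon\to0$, $\calR_{\epsilon,h}^*(\rho,\xi)\to\calR_{\mathrm{up},h}^*(\rho,\xi)=\sum_{(K,L)}\tau_{K|L}^h\tfrac14\big(u^h_K|\xi_{K|L}^+|^2+u^h_L|\xi_{K|L}^-|^2\big)$ (the dual of the stated $\calR_{\mathrm{up},h}$), and similarly $\calD_{\epsilon,h}\to\calD_{\mathrm{up},h}$ using $\beta_\epsilon(a,b)=\alpha_\epsilon^*(a,b,-\epsilon\log\sqrt{b/a})$ and the sandwich bound $\tfrac{\epsilon^2}{4}\tfrac{(a-b)^2}{a+b}\le\beta_\epsilon(a,b)\le\tfrac{\epsilon^2}{2}(\sqrt b-\sqrt a)^2$, which shows $\beta_\epsilon=O(\epsilon^2)\to0$ so the only surviving contributions to $\calD_{\epsilon,h}$ are $\calD_{\epsilon,h}^1+\calD_{\epsilon,h}^2$, whose limit matches $\calD_{\mathrm{up},h}$. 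With pointwise convergence of the (continuous, convex in $j$) integrands and the weak-$*$ convergence of $j^{\epsilon,h}$, a standard lower-semicontinuity argument (e.g.\ Ioffe's theorem, or Fatou applied after extracting a.e.-in-$t$ convergent subsequences, exploiting finiteness of $\Sigma^h$) gives the liminf for the time integrals.

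Finally, for item (3) I would show that the limit $(\rho^{\mathrm{up},h},j^{\mathrm{up},h})$ is a GGF-solution of \eqref{eq:upwind-scheme}, i.e.\ $\calI_{\mathrm{up},h}^{[0,T]}(\rho^{\mathrm{up},h},j^{\mathrm{up},h})=0$. The liminf inequality from item (2) on $[0,T]$ combined with $\calI_{\epsilon,h}^{[0,T]}(\rho^{\epsilon,h},j^{\epsilon,h})=0$ gives $\calI_{\mathrm{up},h}^{[0,T]}(\rho^{\mathrm{up},h},j^{\mathrm{up},h})\le0$; the reverse inequality $\calI_{\mathrm{up},h}^{[0,T]}\ge0$ is the chain-rule/lower-bound for the upwind gradient system, which holds because on the fixed finite state space $\calP(\calT^h)$ the energy $\calE_{\mathrm{up},h}$ is smooth (a polynomial in $\rho$) and one may integrate \eqref{eq:CRh} (its upwind analogue) and apply Young's inequality $\langle j,-\dnabla\calE_{\mathrm{up},h}'\rangle\le\calR_{\mathrm{up},h}(\rho,j)+\calR_{\mathrm{up},h}^*(\rho,-\dnabla\calE_{\mathrm{up},h}')$; this forces equality everywhere, i.e.\ the EDB, identifying $(\rho^{\mathrm{up},h},j^{\mathrm{up},h})$ as the gradient-flow solution and in particular recovering \eqref{eq:upwind-scheme}. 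The main obstacle I anticipate is not compactness (which is easy here) but the \emph{identification of the $\Gamma$-limit of the dissipation terms}: one must carefully track the $\epsilon^2$-scaling in $\alpha_\epsilon^*$, the behaviour of $\Lambda_H(ae^{-x/\epsilon},be^{x/\epsilon})$ as $\epsilon\to0$ in the defining integral, and in particular the splitting $\calD_{\epsilon,h}=\sum\beta_\epsilon\tau^h+\calD_{\epsilon,h}^1+\calD_{\epsilon,h}^2$ so as to show the "pure-diffusion" piece $\beta_\epsilon$ disappears while the cross term $\calD_{\epsilon,h}^1$ (linear in $\epsilon$ times $\nabla u\cdot q$) and the drift piece $\calD_{\epsilon,h}^2$ converge to the upwind Fisher information $\sum_{(K,L)}\tau_{K|L}^h\big(u^h_K|q_{K|L}^{h,+}/2|^2+u^h_L|q_{K|L}^{h,-}/2|^2\big)$ — equivalently, proving that the dual potentials $\calR_{\epsilon,h}^*$ converge (locally uniformly in $\xi$) to the upwind dual potential, which then yields both the $\calR$-liminf by duality and the $\calD$-liminf by evaluation at $\xi=-\dnabla\calE_{\mathrm{up},h}'$.
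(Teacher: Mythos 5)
Your proposal is correct and follows essentially the same route as the paper: compactness from the uniform dissipation bound (an $L^2$-in-time estimate on the fluxes plus Arzel\`a--Ascoli for the curves on the fixed finite simplex), pointwise convergence of the dual potentials $\calR_{\epsilon,h}^*\to\calR_{\text{up},h}^*$ via the properties of $\alpha_\epsilon^*$ followed by Legendre duality for the liminf of $\calR_{\epsilon,h}$, the splitting $\calD_{\epsilon,h}=\calD_{\epsilon,h}^0+\calD_{\epsilon,h}^1+\calD_{\epsilon,h}^2$ with the first two terms vanishing and $\calD_{\epsilon,h}^2$ converging through the function $\mathfrak{h}$, and the chain rule for $\calE_{\text{up},h}$ to close the energy-dissipation balance. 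The only cosmetic imprecision is the phrasing suggesting $\calD_{\epsilon,h}^1$ contributes to the limit Fisher information --- it is $O(\epsilon)$ and vanishes, as you in fact indicate --- so the argument as outlined matches the paper's proof.
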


In Section~\ref{sec:upwind-to-aggregation}, we make a first step towards a convergence result from the upwind scheme \eqref{eq:upwind-scheme} to the aggregation equation \eqref{eq:aggregation}. Here, we use a more restrictive class of tessellations, namely Cartesian grids. Yet, we allow for low regularity of the initial data $\rho_\text{in}$, which can be a measure.

\begin{mainthm}\label{th:upwind-to-aggregation}	
    Let $\{(\calT^h,\Sigma^h)\}_{h>0}$ be a family of Cartesian tessellations with edges of length $h>0$. Let the interaction potential $W$ satisfy \eqref{ass:smooth-potential}. Further, let $\{(\rho^h,j^h)\}_{h>0}$ be a family of GGF-solutions to the upwind scheme \eqref{eq:upwind-scheme} with initial data $\{\rho_{\text{in}}^{h}\}_{h>0}$ having $\sup_{h>0} \calE_{\text{up},h}(\rho_{\text{in}}^{h}) < \infty$, such that there exists $\rho_{\text{in}}\in\text{dom}\,\calE_{\text{agg}}$ with
\[
	\hat{\rho}^h_\text{in} \rightharpoonup ^*\rho_\text{in} \text{\;\;weakly-$*$ in $\calP(\Omega)$}
 \quad \text{and } \quad \lim_{h\to 0} \calE_{\text{up},h}(\rho_{\text{in}}^{h}) = \calE_{\text{agg}}(\rho_{\text{in}}),
\]
where $\calE_{\text{agg}}:\calP(\Omega)\to \R$ is given by
\[
	\calE_{\text{agg}}(\rho) = \int_\Omega V \dd\rho + \frac{1}{2} \int_\Omega (W*\rho) \dd\rho.
\]
Then there exists a (not relabelled) subsequence of admissible continuous reconstructions $\{(\hat{\rho}^h, \hat{\jmath}^h)\}_{h>0}$ and a limit pair $(\rho,j)$ such that
\begin{enumerate}[label=(\arabic*)]
    \item $(\rho, j)$ satisfies \eqref{eq:CE} and
        \begin{enumerate}[label=(\roman*)]
            \item $\hat{\rho}^h_t \rightharpoonup^* \rho_t$ weakly-$*$ in $\calP(\Omega)$ for any $t\in [0, T]$;
            \item $\int_\cdot \hat{\jmath}^h_t \dd t \rightharpoonup^* \int_\cdot j_t \dd t$ weakly-$*$ in $\calM((0, T)\times \Omega)$.
        \end{enumerate}
        
    \item the following liminf estimate holds for any $[s,t]\subset[0,T]$,
        $$
            \calI_{\text{agg}}^{[s,t]}(\rho, j) \le \liminf_{h\to 0} \calI_{\text{up},h}^{[s,t]}(\rho^h, j^h),
        $$
        where the energy-dissipation functional is given by
        \begin{equation}\label{eq:EDF-aggregation}
            \calI_{\text{agg}}^{[s,t]}(\rho, j) = \int_s^t \bigl\{ \calR(\rho_r,j_r) + \calD_{\text{agg}}(\rho_r)  \bigr\} \dd r + \calE_{\text{agg}}(\rho_t) - \calE_{\text{agg}}(\rho_s),
        \end{equation}
    with driving energy $\calE_{\text{agg}}$, dissipation potential $\calR$ given in \eqref{eq:dissipation-potential} and Fisher information 
    \[
    	\calD_{\text{agg}}(\rho) \coloneq \frac{1}{2} \int_\Omega | \nabla\mathsf{Q}(\rho) |^2 \dd\rho,\qquad \mathsf{Q}(\rho) = V + W\ast\rho.
    \]
        
    \item $(\rho,j)$ is the gradient flow solution to the aggregation equation \eqref{eq:aggregation}.
    \end{enumerate}
\end{mainthm}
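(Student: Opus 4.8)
The plan is to follow the now-standard three-step variational scheme for EDP convergence: (i) establish compactness of the discrete density-flux pairs from a uniform bound on the energy-dissipation functional $\calI_{h,\text{up}}$, (ii) prove a $\liminf$-inequality separately for the dissipation-potential integral, the Fisher-information integral, and the energy difference, and (iii) combine with the chain rule for the limiting Otto structure to identify $(\rho,j)$ as the EDB gradient-flow solution of \eqref{eq:aggregation}. First I would note that GGF-solutions of the upwind scheme satisfy $\calI_{h,\text{up}}^{[0,T]}(\rho^h,j^h)=0$, hence $\sup_h \int_0^T\calR_{\text{up},h}(\rho^h_r,j^h_r)\dd r<\infty$ and $\sup_h\int_0^T\calD_{\text{up},h}(\rho^h_r)\dd r<\infty$ by the uniform energy bound on the initial data together with the lower bound on $\calE_{\text{up},h}$ (which is bounded below because $V$ is bounded below and $W\ge0$). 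From the bound on $\calR_{\text{up},h}$ one extracts, via the discrete continuity equation and the reconstruction \eqref{eq:lifted-pair}, a uniform bound on the reconstructed fluxes $\hat\jmath^h$ in $\calM((0,T)\times\Omega;\R^d)$ and an equicontinuity estimate in time for $t\mapsto\hat\rho^h_t$ in a suitable dual (e.g.\ bounded-Lipschitz) metric on $\calP(\Omega)$; a refined Arzelà--Ascoli argument then gives the pointwise-in-$t$ weak-$*$ convergence $\hat\rho^h_t\rightharpoonup^*\rho_t$ and the weak-$*$ convergence of the time-integrated fluxes, and the limit $(\rho,j)$ satisfies \eqref{eq:CE} by the compatibility of $\sigma^h_{K|L}$ established in \cite[Lemma~4.1]{hraivoronska2022diffusive}.

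For the $\liminf$-inequality the energy term is handled by noting $\calE_{\text{up},h}(\rho^h_t)\to\calE_{\text{agg}}(\rho_t)$: the linear part passes to the limit because $V\in C(\R^d)$, and the quadratic interaction part because $W\in C(\R^d)\cap\text{Lip}(\R^d)$ and $\hat\rho^h_t\otimes\hat\rho^h_t\rightharpoonup^*\rho_t\otimes\rho_t$, with the discretization error $W^h_{KL}=W(x_L-x_K)$ versus $\iint W(x-y)$ controlled at rate $O(h)$ by the Lipschitz bound on $W$ and the diameter bound on cells. The dissipation-potential $\liminf$, $\int_s^t\calR(\rho_r,j_r)\dd r\le\liminf\int_s^t\calR_{\text{up},h}(\rho^h_r,j^h_r)\dd r$, is obtained by the familiar convexity/duality argument: $\calR_{\text{up},h}$ is jointly convex and lower semicontinuous, it dominates (up to the reconstruction) a discrete Benamou--Brenier functional, and one tests against smooth vector fields; the asymmetry of the upwind form (splitting $j^{h,\pm}$ onto $u^h_K$ and $u^h_L$) only strengthens the bound since $u^h_K|j^{h,+}/(\tau u^h_K)|^2+u^h_L|j^{h,-}/(\tau u^h_L)|^2\ge$ the symmetrized quantity whose limit is $|\dd j/\dd\rho|^2$. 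The Fisher-information term is where one uses the key identity \eqref{eq:q-to-potentials}: since $q^{h,\pm}_{K|L}/h\to$ (the positive/negative part of) $\nabla\mathsf{Q}(\rho)(x_K)\cdot\n_{K|L}$ and $\tau^h_{K|L}|x_L-x_K|\approx|(K|L)|$, the sum $\sum_{(K,L)}\tau^h_{K|L}(u^h_K|q^{h,+}/2|^2+u^h_L|q^{h,-}/2|^2)$ is, upon pairing faces $K|L$ and $L|K$, a Riemann sum converging (by lower semicontinuity, since $\mathsf{Q}(\rho)$ may only be BV when $W$ is merely Lipschitz — but here $W\in C^1$, so $\nabla\mathsf{Q}(\rho)$ is continuous) to $\tfrac12\int_\Omega|\nabla\mathsf{Q}(\rho)|^2\dd\rho$, using $(q^+)^2+(q^-)^2=q^2$.

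Finally, step (iii): having $\calI_{\text{agg}}^{[s,t]}(\rho,j)\le\liminf\calI_{h,\text{up}}^{[s,t]}(\rho^h,j^h)=0$ on every subinterval, the chain-rule inequality for the Otto structure associated to $(\calE_{\text{agg}},\calR,\calR^*)$ — valid because $\calE_{\text{agg}}$ is $\lambda$-convex along $W_2$-geodesics under assumption \eqref{ass:smooth-potential} — forces $\calI_{\text{agg}}^{[0,T]}(\rho,j)\ge0$, hence $=0$, which is precisely the EDB characterization of the gradient-flow solution of \eqref{eq:aggregation}. The main obstacle I anticipate is the Fisher-information $\liminf$: one must show that no mass of $\rho^h$ concentrates on the boundary faces, that the one-sided upwind weights $u^h_K$ (respectively $u^h_L$) appearing with $q^{h,+}$ (respectively $q^{h,-}$) both converge to the same limiting density against the test function — which on a Cartesian grid follows from $|x_L-x_K|=h\to0$ and the weak continuity of $\rho$, but is exactly the step that fails on irregular meshes — and that the $o(h)$ error in \eqref{eq:q-to-potentials}, after being squared and multiplied by $\tau^h_{K|L}\sim h^{d-2}\cdot h = h^{d-1}\cdot h^{-1}$... more carefully $\tau^h_{K|L}|q^h|^2\sim h^{d-1}\cdot h^{-1}\cdot h^2 = h^{d}$ per face times $O(h^{-d})$ faces, so the cross term with the $o(h)$ remainder vanishes — making the Cartesian restriction essential precisely here and in the compactness/oscillation control, consistent with the discussion of \cite{despres2004explicit} in the introduction.
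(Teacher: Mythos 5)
Your proposal is correct and follows essentially the same route as the paper: compactness from the uniform action bound plus an Arzel\`a--Ascoli argument in a weak metric on $\calP(\Omega)$ (Lemma~\ref{lem:compactness-upwind}), convergence of the upwind dual dissipation potential and Fisher information by exploiting that on a Cartesian grid the upwind-selected half of the neighbour directions reconstructs the full quadratic form $|\nabla\mathsf{Q}(\rho)|^2$ --- your ``pair opposite faces and use $(q^+)^2+(q^-)^2=q^2$'' bookkeeping is exactly the paper's indicator--tensor computation in Theorem~\ref{th:fisher-upwind-to-aggregation} --- followed by duality for the primal dissipation liminf and the chain rule to close the energy-dissipation balance. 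The only cosmetic differences are that the paper obtains the $\calR_{\text{up},h}$ liminf purely by Legendre duality against the limit of $\calR^*_{\text{up},h}$ rather than via your pointwise domination of a symmetrized Benamou--Brenier functional, and it invokes the chain rule of Ambrosio--Gigli--Savar\'e directly rather than through $\lambda$-convexity; neither changes the substance.
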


Finally, and to close the commutative diagram in Figure~\ref{diagram:with-theorems}, we present the vanishing diffusion limit on the continuous level. 
\begin{mainthm}\label{th:vanishing-diffusion-continuous}
    Let the interaction potential $W$ satisfy \eqref{ass:smooth-potential}. Let $\{(\rho^\epsilon, j^\epsilon)\}_{\epsilon>0}$ be a family of the gradient flow solutions to the aggregation-diffusion equation \eqref{eq:drift-aggregation-diffusion-equation} the diffusion coefficients $\epsilon>0$ with initial data $\{\rho_{\text{in}}^{\epsilon}\}_{\epsilon>0}$ having $\sup_{\epsilon>0} \calE_{\epsilon}(\rho_{\text{in}}^{\epsilon}) < \infty$, such that there exists $\rho_{\text{in}}\in\text{dom}\,\calE_{\text{agg}}$ with
\[
	\rho^\epsilon_\text{in} \rightharpoonup ^*\rho_\text{in} \text{\;\;weakly-$*$ in $\calP(\Omega)$}
 \quad \text{and } \quad \lim_{\epsilon\to 0} \calE_{\epsilon}(\rho_{\text{in}}^{\epsilon}) = \calE_{\text{agg}}(\rho_{\text{in}}),
\]
Then there exists a limit pair $(\rho, j)$ and a (not relabelled) subsequence such that
    \begin{enumerate}[label=(\arabic*)]
        \item $(\rho, j)$ satisfies \eqref{eq:CE} and
        \begin{enumerate}[label=(\roman*)]
            \item $\rho^\epsilon_t \rightharpoonup^* \rho_t$ weakly-* in $\calP(\Omega)$ for any $t\in [0,T]$;
            \item $\int_. j^\epsilon_t \dd t \rightharpoonup^* \int_. j_t \dd t$ in $\calM((0, T)\times\Omega)$.
        \end{enumerate}
        \item the following liminf estimate holds for any $[s,t]\subset[0,T]$
        $$
            \calI_{\text{agg}}^{[s,t]} (\rho, j) \leq \liminf_{\epsilon\to 0} \calI_\epsilon^{[s,t]} (\rho^\epsilon, j^\epsilon),
        $$
        with $\calI_{\text{agg}}^{[s,t]}$ defined in \eqref{eq:EDF-aggregation}.
        \item $(\rho,j)$ is the gradient flow solution to the aggregation equation \eqref{eq:aggregation}.
    \end{enumerate}
\end{mainthm}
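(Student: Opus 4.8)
The plan is to follow the standard EDP-convergence (evolutionary $\Gamma$-convergence) recipe: establish compactness of the gradient-flow solutions $(\rho^\epsilon,j^\epsilon)$, pass to a limit $(\rho,j)$ satisfying \eqref{eq:CE}, prove a $\liminf$ inequality for the energy-dissipation functional, and then close the argument with the chain rule to conclude that the limit is a gradient-flow solution to \eqref{eq:aggregation}.

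First I would extract a priori bounds. Since each $(\rho^\epsilon,j^\epsilon)$ is a gradient-flow solution, $\calI_\epsilon^{[0,T]}(\rho^\epsilon,j^\epsilon)=0$, which combined with $\sup_\epsilon\calE_\epsilon(\rho^\epsilon_{\mathrm{in}})<\infty$ and the lower bound on $\calE_\epsilon$ (using $\phi\ge 0$, $V$ bounded below, $W\ge 0$) yields a uniform bound $\sup_\epsilon\int_0^T\{\calR(\rho^\epsilon_r,j^\epsilon_r)+\calD_\epsilon(\rho^\epsilon_r)\}\dd r<\infty$ and $\sup_{\epsilon,t}\calE_\epsilon(\rho^\epsilon_t)<\infty$. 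The bound on $\int_0^T\calR(\rho^\epsilon_r,j^\epsilon_r)\dd r$ gives, via the action--metric-derivative identification for the Benamou--Brenier functional, equicontinuity of $t\mapsto\rho^\epsilon_t$ in $W_2$ (hence $W_1$), so by Arzel\`a--Ascoli on $\calP(\Omega)$ (compact, $\Omega$ bounded) a subsequence converges, $\rho^\epsilon_t\rightharpoonup^*\rho_t$ for all $t$; the $L^1_t$-bound on $\calR$ also gives weak-$*$ compactness of $\int_\cdot j^\epsilon_t\dd t$ in $\calM((0,T)\times\Omega)$, and standard lower-semicontinuity arguments show the limit $(\rho,j)$ satisfies \eqref{eq:CE}. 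This establishes part~(1).

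For part~(2), the $\liminf$ inequality splits into three pieces. The energy term: since $W$ satisfies \eqref{ass:smooth-potential} the map $\rho\mapsto\int V\dd\rho+\tfrac12\int(W*\rho)\dd\rho$ is weak-$*$ continuous, and $\epsilon\int\phi(u^\epsilon)\dd x\ge 0$, so $\liminf_\epsilon\calE_\epsilon(\rho^\epsilon_t)\ge\calE_{\mathrm{agg}}(\rho_t)$; at the initial time the well-preparedness hypothesis gives $\calE_\epsilon(\rho^\epsilon_{\mathrm{in}})\to\calE_{\mathrm{agg}}(\rho_{\mathrm{in}})$, which is what is needed (we want $\calE_{\mathrm{agg}}(\rho_t)-\calE_{\mathrm{agg}}(\rho_s)\le\liminf(\calE_\epsilon(\rho^\epsilon_t)-\calE_\epsilon(\rho^\epsilon_s))$, and one uses the energy convergence at $s$ together with lower semicontinuity at $t$, or more robustly one works on $[0,t]$ from the start). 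The dissipation potential $\int_0^t\calR(\rho_r,j_r)\dd r\le\liminf\int_0^t\calR(\rho^\epsilon_r,j^\epsilon_r)\dd r$ follows from joint lower semicontinuity of the Benamou--Brenier functional under weak-$*$ convergence of $(\rho^\epsilon,j^\epsilon)$. The Fisher-information term is the one requiring care: $\calD_\epsilon(\rho^\epsilon_r)=2\epsilon^2\int|\nabla\sqrt{u^\epsilon}|^2+\epsilon\int\nabla u^\epsilon\cdot\nabla\mathsf{Q}(\rho^\epsilon)+\tfrac12\int|\nabla\mathsf{Q}(\rho^\epsilon)|^2\dd\rho^\epsilon$. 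Completing the square, $\calD_\epsilon(\rho^\epsilon)=\tfrac12\int|\,2\epsilon\nabla\sqrt{u^\epsilon}+\sqrt{u^\epsilon}\,\nabla\mathsf{Q}(\rho^\epsilon)|^2\dd x=\tfrac12\int|w^\epsilon|^2\dd x$ where $w^\epsilon\dd x$ is precisely the ``de Giorgi'' slope measure; since $\mathsf{Q}(\rho^\epsilon)\to\mathsf{Q}(\rho)$ with $\nabla\mathsf{Q}(\rho^\epsilon)\to\nabla\mathsf{Q}(\rho)$ uniformly (as $W\in C^1$, $V\in C^1$, both Lipschitz, and $\rho^\epsilon\rightharpoonup^*\rho$), and $\epsilon\nabla\sqrt{u^\epsilon}\to 0$ in an appropriate weak sense while $\rho^\epsilon\rightharpoonup^*\rho$, a convexity/lower-semicontinuity lemma (as in the theory of Fisher information along Wasserstein flows, or via duality with the linear functional $\langle\cdot,\operatorname{div}(\cdot\sqrt\rho)\rangle$) gives $\liminf\int_0^t\tfrac12\int|w^\epsilon|^2\ge\int_0^t\tfrac12\int|\nabla\mathsf{Q}(\rho_r)|^2\dd\rho_r=\int_0^t\calD_{\mathrm{agg}}(\rho_r)\dd r$. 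Summing the three pieces yields part~(2).

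For part~(3), I would invoke the chain rule for $\calE_{\mathrm{agg}}$ along solutions of \eqref{eq:CE} with finite action (valid since $W,V\in C^1\cap\mathrm{Lip}$), which gives $\calE_{\mathrm{agg}}(\rho_t)-\calE_{\mathrm{agg}}(\rho_s)=-\int_s^t\langle j_r,\nabla\mathsf{Q}(\rho_r)\rangle\dd r\ge-\int_s^t\{\calR(\rho_r,j_r)+\calD_{\mathrm{agg}}(\rho_r)\}\dd r$ by Young's inequality; hence $\calI_{\mathrm{agg}}^{[s,t]}(\rho,j)\ge 0$ always. Combined with the $\liminf$ estimate and $\calI_\epsilon^{[0,T]}(\rho^\epsilon,j^\epsilon)=0$, we get $\calI_{\mathrm{agg}}^{[0,T]}(\rho,j)\le 0$, so $\calI_{\mathrm{agg}}^{[0,T]}(\rho,j)=0$, and tracing through the equality cases in Young's inequality (pointwise in $r$) gives the kinetic relation $j_r=-\rho_r\nabla\mathsf{Q}(\rho_r)=-\rho_r\nabla\calE_{\mathrm{agg}}'(\rho_r)$, i.e.\ $(\rho,j)$ is the gradient-flow solution of \eqref{eq:aggregation}.

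The main obstacle I anticipate is the $\liminf$ for the Fisher information: one must handle the cross term $\epsilon\int\nabla u^\epsilon\cdot\nabla\mathsf{Q}(\rho^\epsilon)$ — which has no sign and only makes sense after an integration by parts as $-\epsilon\int u^\epsilon\,\Delta\mathsf{Q}(\rho^\epsilon)$ or, better, kept inside the squared norm $\tfrac12\int|2\epsilon\nabla\sqrt{u^\epsilon}+\sqrt{u^\epsilon}\nabla\mathsf{Q}(\rho^\epsilon)|^2$ so that no second derivatives of $\mathsf{Q}$ are needed — and then pass to the limit without any uniform Sobolev bound on $u^\epsilon$ itself (only on $\epsilon\nabla\sqrt{u^\epsilon}$, which degenerates). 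The clean route is to use the dual formulation: $\calD_\epsilon(\rho)=\sup_{\varphi\in C_c^\infty}\{\int(2\epsilon\nabla\sqrt{u}+\sqrt u\nabla\mathsf{Q})\cdot\varphi - \tfrac12\int|\varphi|^2\}$ rewritten purely in terms of $\rho$ (not $\nabla u$) by moving the derivative onto $\varphi$, after which weak-$*$ convergence of $\rho^\epsilon$ plus uniform convergence of $\nabla\mathsf{Q}(\rho^\epsilon)$ suffices to pass to the $\liminf$; this is structurally parallel to how $\calD_{\epsilon,h}$ is handled in the discrete-to-continuum theorems and should be adaptable with minor effort. Everything else is routine once this lemma is in place.
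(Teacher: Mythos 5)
Your proposal is correct and follows essentially the same route as the paper: compactness from the action bound via $W_1$-equicontinuity and Arzel\`a--Ascoli, completing the square to write $\calD_\epsilon(\rho^\epsilon)=\calR(\rho^\epsilon,g^\epsilon)$ with $g^\epsilon=(\epsilon\nabla u^\epsilon+u^\epsilon\nabla\mathsf{Q}(\rho^\epsilon))\calL^d$, identifying the weak-$*$ limit of $g^\epsilon$ by moving the derivative onto the test function (so that $\epsilon\int\varphi\cdot\nabla u^\epsilon\,\dd x=-\epsilon\int\operatorname{div}\varphi\,\dd\rho^\epsilon=O(\epsilon)$), invoking joint lower semicontinuity of $\calR$, and closing with the chain rule. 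The ``clean route'' you describe in your final paragraph is precisely the paper's Lemma~\ref{lem:FI-vanishing-diffusion-continuous}.
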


\section{Gradient Structures: Discrete and Continuous}\label{sec:gradient-structures}

This section is devoted to defining our notion of (generalized) gradient flow solution to each equation of interest. We begin with the continuous case in Section~\ref{sec:otto}, which is the well-known Otto-Wasserstein gradient structure (see \cite{ambrosio2008gradient} for a more extensive study on this). We then introduce, in a similar fashion to the continuous case, generalized gradient structures for general finite volume schemes in Section~\ref{sec:discrete-structures}, and proceed with providing two such structures for the Scharfetter--Gummel scheme in Section~\ref{sec:gf-SG}. We end this section with a summary of the discrete structure we consider in the rest of the article.

\subsection{Otto-Wasserstein gradient structure for diffusion-type equations}\label{sec:otto}

\begin{definition}\label{def_CE} A pair $(\rho, j)$ is said to be in $\mathcal{CE}(0, T)$ if
\begin{itemize}
    \item $\rho\in\calC([0,T];\calP(\Omega))$ is a curve of non-negative finite Radon measures defined on $\Omega$, and
    \item $j=(j_t)_{t\in[0,T]}\subset \calM(\Omega;\R^d)$ is a measurable family of {\em fluxes} with finite {\em action}
    \[
        \int_0^T \int_\Omega \left|\frac{\dd j_t }{\dd\rho_t} \right|^2 \dd\rho_t \dd t < \infty,
    \]
\end{itemize}
    satisfy the continuity equation \eqref{eq:CE} in the following sense: For any $[s,t]\subset [0,T]$,
    \begin{equation}\label{eq_CE_distr}
       \langle \varphi,\rho_t\rangle - \langle \varphi,\rho_s\rangle = \int_s^t \langle \nabla \varphi, j_r\rangle \dd r \qquad\text{for all $\varphi\in \calC_c^1(\R^d)$.}
    \end{equation}
\end{definition}

\begin{remark}\label{rem_W2AC}
    It is known that if $\rho$ solves \eqref{eq:CE} with finite action, then $\rho$ is an absolutely continuous curve in $\calP(\Omega)$ w.r.t.\ the 2-Wasserstein distance \cite[Chapter 8]{ambrosio2008gradient}. 
\end{remark}

\begin{definition}[GF solutions]\label{def_GF_solution}
    A curve $\rho\in\calC([0,T];\calP(\Omega))$ is said to be an $(\calE, \calR, \calR^*)$-gradient flow solution of \eqref{eq:drift-aggregation-diffusion-equation} or \eqref{eq:aggregation} with initial data $\rho_{\text{in}}\in \calP(\Omega)\cap\text{dom}(\calE)$ if
    \begin{enumerate}[label=(\roman*)]
        \item $\rho_0=\rho_{\text{in}}$ in $\calP(\Omega)$;
        \item there is a measurable family $j=(j_t)_{t\in[0, T]} \subset \calM(\Omega; \R^d)$ such that $(\rho, j) \in \mathcal{CE}(0, T)$ with
        $$
            \int_s^t \int_\Omega \calR(\rho_r, j_r) + \calD(\rho_r) \dd r + \calE(\rho_t) = \calE(\rho_s) \quad \text{for all } [s,t]\subset [0,T],
        $$
        where
        \[
            \calD(\rho) \coloneq \inf\Bigl\{ \liminf_{n\to\infty} \calR^*(\rho_n,-\nabla \calE'(\rho_n)) \,:\, \rho_n\rightharpoonup \rho\;\text{weakly in $\calP(\Omega)$},\; \sup\nolimits_{n\ge 0} \calE(\rho_n) <\infty\Bigr\},
        \]
        i.e.\ $\calD$ is a lower-semicontinuous envelope of $\rho\mapsto \calR^*(\rho,-\nabla \calE'(\rho))$;
        \item the following chain rule inequality holds:
        $$
            -\frac{\dd}{\dd t} \calE(\rho_t) \le \calR(\rho_t, j_t) + \calD(\rho_t)\qquad\text{for almost every $t\in(0,T)$.}
        $$
    \end{enumerate}
\end{definition}

\subsection{Generalized gradient structure for finite volume schemes}\label{sec:discrete-structures}
We take the point of view that finite volume schemes can be seen as random walks on the graph induced by tessellations. Hence, we consider a random walk on a graph that corresponds to a tessellation $(\calT^h, \Sigma^h)$. Given an initial law $\rho_0^h = \rho_{\text{in}}^h\in\calP(\calT^h)$, the time marginal law of a random walk satisfies the forward Kolmogorov equation
\begin{equation}\label{eq:Kolmogorov}\tag{{\bf FKE}$_h$}
    \partial_t \rho_t^h = Q^*_h \rho_t^h,
\end{equation}
where $Q^*_h$ is the dual of the generator $Q_h$ defined for all bounded functions $\varphi \in \calB(\calT^h)$ as
$$
    (Q_h \varphi) (K) = \sum_{(K,L)\in\Sigma^h} (\dnabla \varphi) (K,L)\,\kappa^h_{K|L}, \qquad K \in \calT^h,
$$
where $\kappa : \Sigma^h \to \R_+$ is a bounded jump kernel. We restrict ourselves to random walks satisfying detailed balance, i.e.\ random walks admitting a stationary measure $\pi^h\in\calP(\calT^h)$ such that
\begin{equation}\label{eq:detailed-balance}
    \pi^h_K \kappa^h_{K|L} = \pi^h_L \kappa^h_{L|K} \qquad \text{for all } (K, L) \in \Sigma^h.
\end{equation}
We note that the detailed balance implies, by the ergodic theorem for continuous-time Markov chains, the uniqueness of the stationary measure $\pi^h$ (see, for instance, \cite[6.10 (15) Theorem]{grimmett2020probability}).

\begin{definition}\label{def_CE_discrete} 
A pair $(\rho^h, j^j)$ is said to be in $\mathcal{CE}_h(0, T)$ if
\begin{itemize}
    \item $\rho^h\in\calC([0,T];\calP(\calT^h))$ is a curve of finite measures defined on the graph $\calT^h$, and
    \item $j^h = (j_t^h)_{t\in[0,T]} \subset \calM(\Sigma^h)$ is a measurable family of {\em discrete fluxes} with finite action
    \[
    	\int_0^T | j^h_t | ( \Sigma^h) \dd t < \infty,
    \]
\end{itemize}
satisfy the discrete continuity equation \eqref{eq:CEh} in the following sense: For any $[s, t] \subset [0, T]$,
    \begin{equation}\label{eq:theory:CE-discrete-distr}
        \sum_{K\in \calT^h} \varphi^h_K \rho^h_K(t) - \sum_{K\in \calT^h} \varphi^h_K \rho^h_K(s) = \int_{s}^{t} \sum_{(K,L)\in \Sigma^h} (\dnabla \varphi^h)(K, L)\,j^h_{K|L}(r) \dd r\qquad \text{for all $\varphi^h\in \calB(\calT^h)$.}
    \end{equation}
\end{definition}

\begin{definition}[GGF solutions]\label{def_GGF_solution}
    A curve $\rho^h\in\calC([0,T]; \calP(\calT^h))$ is an $(\calE_h, \calR_h, \calR_h^*)$-generalized gradient flow solution of \eqref{eq:Kolmogorov} with initial data $\rho_{\text{in}}^h\in \calP(\calT^h)\cap \text{dom}(\calE_h)$ if
    \begin{enumerate}[label=(\roman*)]
        \item $\rho^h_0= \rho^h_{\text{in}}$ in $\calP(\calT^h)$;
        \item there is a measurable family $j^h=(j^h_t)_{t\in[0, T]} \subset \calM(\Sigma^h)$ such that $(\rho^h, j^h) \in \mathcal{CE}_h(0, T)$ with
        $$
            \int_s^t \calR_h(\rho^h_r, j^h_r) + \calD_h(\rho^h_r) \dd r + \calE_h(\rho^h_t) = \calE_h(\rho^h_s) \quad \text{for all } [s,t]\subset [0,T];
        $$
        where
        \[
            \calD_h(\rho^h) \coloneq \inf\Bigl\{ \liminf_{n\to\infty} \calR_h^*(\rho^h_n,-\dnabla \calE'_h(\rho^h_n)) \,:\, \rho^h_n\rightharpoonup \rho^h\;\text{weakly in $\calP(\calT^h)$}, \; \sup\nolimits_{n\ge 0} \calE_h(\rho^h_n) <\infty \Bigr\},
        \]
        i.e.\ $\calD_h$ is a lower-semicontinuous envelope of $\rho^h\mapsto \calR_h^*(\rho^h,-\dnabla \calE'_h(\rho^h))$.
        
        \item the chain rule inequality holds, i.e.\
        \begin{align*}
            -\frac{\dd}{\dd t}\calE_h(\rho^h_t) \le \calR_h(\rho_t^h,j_t^h) + \calD_h(\rho_t^h)\qquad\text{for almost every $t\in (0,T)$}.
        \end{align*}
    \end{enumerate}
\end{definition}

\subsection{A new gradient structure for the Scharfetter--Gummel scheme}\label{sec:gf-SG}

Since the Scharfetter--Gummel scheme is a finite volume scheme, it defines a random walk on the state space $\calT^h$. Moreover, \eqref{eq:finite-volume-CE} possesses a generalized gradient flow structure if the Scharfetter--Gummel flux \eqref{eq:SG-flux} can be recast as the force-flux relation \eqref{eq:FFh} induced by a dual dissipation potential, i.e.\ if we can express the discrete flux for all $K\in\calT^h$ and $(K, L)\in\Sigma^h$ as
\begin{equation}\label{eq:flux-force-SG}
    \calJ^{h,\rho}_{K|L} = D_2\calR_h^* \bigl(\rho^h,-\dnabla \calE'_{\epsilon,h}(\rho^h) \bigr) (K,L)
\end{equation}
with an appropriate dual dissipation potential $\calR_{\epsilon,h}^*$ and the  driving energy $\calE_{\epsilon,h}$ defined in~\eqref{eq:energy-with-interaction}.

In this section, we introduce the tilt-independent gradient structure encoding the flux-force relationship~\eqref{eq:flux-force-SG}, which we will study in this manuscript and is one of the main contributions of this article. The gist of this structure is that the dual dissipation potential does not depend on potentials $V^h$ and $W^h$ and more importantly also does not degenerate for small diffusivity $\epsilon\ll 1$. 

Based on the cell formula~\eqref{eq:def:cell-problem}, the Scharfetter--Gummel flux in~\eqref{eq:SG-flux} was recast as a kinetic relation for a general force $\xi^h\in\calB(\Sigma^h)$ in~\cite[(2.23)]{schlichting2022scharfetter}, for which we can derive a suitable dual dissipation potential $\calR_{\epsilon,h}^*$. For doing so, we notice that along a solution of the scheme, we have the force
\begin{equation}
\xi^h_{K|L} = - \dnabla \calE'_{\epsilon,h}(\rho^h)(K,L) = - \bra[\bigg]{\epsilon \log \frac{u^h_L}{u^h_K} + q_{K|L}^h }, \qquad (K,L)\in\Sigma^h,
\end{equation}
and therefore, we find the relation 
\begin{equation}\label{eq:qKL:energy}
q_{K|L}^h = \epsilon \log \frac{u^h_K}{u^h_L} - \xi^h_{K|L} = \epsilon \left( \log\bra[\big]{u^h_K e^{-\xi^h_{K|L} / 2\epsilon}} - \log\bra[\big]{u^h_L e^{\xi^h_{K|L} / 2\epsilon} } \right).
\end{equation}
By substituting this relation into~\eqref{eq:SG-flux}, we arrive, after some simplifications, at the identity
\begin{equation}\label{eq:SG:KR}
\calJ^{h,\rho}_{K|L}
= \epsilon \sinh\bra*{\frac{\xi^h_{K|L}}{2\epsilon}} \Lambda_H\bra*{u_K^h e^{-\frac{\xi^h_{K|L}}{2\epsilon}},u_L^h e^{\frac{\xi^h_{K|L}}{2\epsilon}}} |K|
\stackrel{!}{=} D_2 \calR_{\epsilon,h}^* (\rho^h, \xi^h) (K,L)  ,
\end{equation}
where the last equality is a requirement for the new dual dissipation potential and $\Lambda_H$ denotes the harmonic-logarithmic mean defined in~\eqref{eq:def:HarmLogMean}. From the kinetic relation~\eqref{eq:SG:KR} relating the force $\xi^h$ with the flux, one obtains the dissipation potential $\calR_h^*$ as given in~\eqref{eq:tilted-Rstar} with the function $\alpha_\epsilon^*$ in~\eqref{eq:def:alpha*}, by simply integrating over the force. Although $\alpha_\epsilon^*$ is only defined as an integral, it has many beneficial properties, which are essential for the analysis that we collect Lemma~\ref{lem:properties-alpha} in Appendix~\ref{appendix:function-alpha}.

Since the derivation of the kinetic relation~\eqref{eq:SG:KR} might seem to look ad-hoc, we provide a different derivation of the dissipation potential $\calR_{\epsilon,h}^*$ from a `cosh' dissipation potential $\overline\calR_{\epsilon,h}^*$ similar to the one used in~\cite{hraivoronska2023diffusive}. To do so, we perform a \emph{de-tilting} technique from~\cite[Remark 1.17]{peletier2023cosh} and provide the details in Appendix~\ref{appendix:cosh-tilting}. This way, we can show that we arrived at a tilt-independent gradient structure for the Scharfetter--Gummel scheme (see Lemma~\ref{lem:flux-force-for-tilt}).

\section{Variational Convergence for the Tilt-Independent Structure}\label{sec:EDP-convergence-tilt}

The strategy of proving the discrete-to-continuum EDP convergence comprises two main steps:
\begin{enumerate}
    \item Prove compactness for the family of the GGF solutions $(\rho^h, j^h)$ of \eqref{eq:finite-volume-CE} defined in Defintion~\ref{def_GGF_solution}. This allows us to extract a subsequence converging to a limiting pair $(\rho, j)$. 
    \item Prove liminf inequalities for all the functionals in the energy-dissipation functional $\calI_h$ and recover a limiting energy-dissipation functional $\calI$:
    $$
        \calI(\rho, j)\le \liminf_{h\to 0} \calI_h(\rho^h, j^h).
    $$
\end{enumerate}
In Section~\ref{sec:compactness-SG}, we prove the compactness results required by (1). To establish the liminf inequality for $\calI_h$ from (2), the main effort relates to the Fisher information. Thus, Section~\ref{sec:gamma-convergence-fisher-SG} is dedicated to the $\varGamma$-convergence of the Fisher information. We conclude with the proof of Theorem~\ref{th:EDP-convergence-tilt} in Section~\ref{sec:EDP-summary}.

\subsection{Compactness}\label{sec:compactness-SG} We consider a family $\{(\rho^h,j^h)\}_{h>0}$ of $(\calE_h, \calR_h, \calR_h^*)$-generalized gradient flow solutions to \eqref{eq:SG}, where the corresponding functionals are defined in \eqref{eq:energy-with-interaction}, \eqref{eq:tilted-R}, and \eqref{eq:tilted-Rstar} respectively. We also assume the initial data $\{\rho^h_\text{in}\}_{h>0}$ to be well-prepared. We set $J^h\coloneq\int_\cdot \hat\jmath_t^h\dd t$.

\begin{lemma}[Compactness for flux]\label{lemma:compactness-for-flux-tilted}
    The family $\{ J^h \}_{h>0}$ is weakly-$*$ compact in $\calM([0,T]\times\Omega; \R^d)$ and the family $\{ t \mapsto | \hat{\jmath}^h_t | (\Omega) \}_{h>0}$ is equi-integrable. 
    
    In particular, there exists a Borel family $(j_t)_{t\in[0,T]}\subset \calM(\Omega;\R^d)$ such that
    \[
        J^h=\int_{\cdot}\,\hat{\jmath}_t^h\dd t \rightharpoonup^* \int_{\cdot}\, j_t\dd t\quad\text{weakly-$*$ in $\calM([0,T]\times\Omega; \R^d)$}
    \]
    for a (not relabelled) subsequence.
\end{lemma}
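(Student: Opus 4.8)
The plan is to derive a uniform-in-$h$ bound on a suitable total-mass functional of the fluxes from the energy-dissipation bound, and then upgrade this to weak-$*$ compactness and equi-integrability via a de~la~Vallée-Poussin/superlinearity argument. First, since the $(\rho^h,j^h)$ are GGF-solutions with well-prepared initial data, the Energy-Dissipation Balance \eqref{eq:EDPh} together with $\sup_h \calE_h(\rho^h_{\text{in}})<\infty$ and the lower bound on $\calE_{\epsilon,h}$ (coming from $\phi\ge 0$, $V$ bounded below, $W\ge 0$) gives
\[
    \sup_{h>0}\int_0^T \calR_{\epsilon,h}(\rho^h_r,j^h_r)\dd r \;\le\; C < \infty .
\]
The key point is that $\calR_{\epsilon,h}$ controls a convex, superlinear function of the fluxes: using the definition~\eqref{eq:tilted-R} and the lower bounds on $\alpha_\epsilon$ dual to the upper bound $\beta_\epsilon(a,b)\le \tfrac{\epsilon^2}{2}(\sqrt b-\sqrt a)^2$ from Lemma~\ref{lem:properties-alpha}, one obtains an estimate of the type
\[
    \calR_{\epsilon,h}(\rho^h,j^h) \;\ge\; c\sum_{(K,L)\in\Sigma^h}\tau^h_{K|L}\,\Phi\!\left(\frac{|j^h_{K|L}|}{\tau^h_{K|L}(u^h_K+u^h_L)}\right)(u^h_K+u^h_L)
\]
for a superlinear $\Phi$ (morally $\Phi(s)\sim s^2$ for bounded $s$ and $\Phi(s)\sim s\log s$ for large $s$, as is typical for `cosh'-type costs). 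Combined with $\rho^h\in\calP(\calT^h)$ and the non-degeneracy assumption \eqref{ass:tessellation}, which controls the cell volumes $|K|\sim h^d$, the face areas $|(K|L)|\sim h^{d-1}$, the transmission coefficients $\tau^h_{K|L}\sim h^{d-2}$, and the number of neighbours $\#\calT^h_K$, this yields a bound on a superlinear functional of $\hat\jmath^h$. I will piggyback on the construction of the reconstruction measures $\sigma^h_{K|L}$ from \cite[Lemma~4.1]{hraivoronska2022diffusive}, whose total variation is comparable to the geometric volume of the associated `diamond' cell, to transfer this discrete bound to a bound of the form
\[
    \sup_{h>0}\int_0^T \int_\Omega \Theta\!\left(\frac{\dd|\hat\jmath^h_t|}{\dd m^h}\right)\dd m^h\,\dd t \;\le\; C,
\]
where $m^h$ is a fixed reference measure (e.g.\ the reconstructed density plus Lebesgue) with uniformly bounded mass, and $\Theta$ is superlinear.

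From here the conclusions follow from standard functional-analytic principles. The superlinear bound gives, first, that $\{J^h\}_h$ has uniformly bounded total variation on $[0,T]\times\Omega$ (take $\Theta(s)\ge s$ for $s$ large, absorb the bounded part), hence is weak-$*$ precompact in $\calM([0,T]\times\Omega;\R^d)$ by Banach--Alaoglu; second, via the de~la~Vallée--Poussin criterion, it gives equi-integrability of the family $\{t\mapsto|\hat\jmath^h_t|(\Omega)\}_h$ with respect to $\calL^1$ on $[0,T]$, because the time-integrand is dominated by an $L^1$-bounded family composed with a superlinear function. The equi-integrability in $t$ is precisely what allows disintegration of any weak-$*$ limit of $J^h$ as $\int_\cdot j_t\dd t$ with $(j_t)$ a Borel family in $\calM(\Omega;\R^d)$: along a subsequence $J^h\rightharpoonup^* J$, the equi-integrability prevents concentration of mass in time, so $J\ll \dd t\otimes(\text{something})$ in the $t$-variable and the Radon--Nikodym/measurable-selection argument produces the Borel family $(j_t)_{t\in[0,T]}$.

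The main obstacle I anticipate is the second step of the first paragraph: converting the discrete superlinear dissipation bound into a genuinely superlinear continuum bound that is uniform in $h$. The difficulty is that $\alpha_\epsilon$ is only defined implicitly as a Legendre transform of the integral~\eqref{eq:def:alpha*}, so one must extract workable lower bounds on $\alpha_\epsilon(a,b,\cdot)$ — in particular its growth rate and its behaviour as $a,b\to0$ — from the bounds on $\beta_\epsilon$ and the structural identity $\alpha_\epsilon^*(a,b,\xi)=\epsilon^2\alpha_1^*(a,b,\xi/\epsilon)$ in Lemma~\ref{lem:properties-alpha}; and one must do so in a way that survives the (volume-weighted) passage through the $\sigma^h_{K|L}$. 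A secondary technical point is bookkeeping the geometric factors ($\tau^h_{K|L}$, $|K|$, $|(K|L)|$, $\#\calT^h_K$) so that all $h$-powers cancel and the final constant $C$ is genuinely $h$-independent; this is routine given \eqref{ass:tessellation} but must be done carefully. I expect the rest — Banach--Alaoglu, de~la~Vallée--Poussin, and the disintegration — to be essentially the same as in \cite{hraivoronska2022diffusive} and to require only cosmetic adaptation.
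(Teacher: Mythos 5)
Your plan follows essentially the same route as the paper: a uniform action bound from \eqref{eq:EDPh} and the well-prepared data, superlinear control of $|\hat{\jmath}^h_t|(\Omega)$ via convex duality, de la Vall\'ee-Poussin for equi-integrability, Banach--Alaoglu for weak-$*$ compactness, and disintegration for the Borel family $(j_t)$. The one step you flag as the main obstacle is resolved in the paper more directly than you anticipate: instead of extracting lower bounds on $\alpha_\epsilon$ itself, one tests the Legendre representation of $\calR_{\epsilon,h}$ with $\xi^h_{K|L}=\beta\,\mathrm{sign}(j^h_{K|L})\,|\sigma^h_{K|L}|(\Omega)$ and uses the bound $\alpha_\epsilon^*(a,b,\xi)\le\tfrac14\sqrt{ab}\,\Psi^*_\epsilon(2\xi)$ from Lemma~\ref{lem:properties-alpha}\ref{alpha-cosh} together with $|\sigma^h_{K|L}|(\Omega)\le 2dh$ and \eqref{eq:scheme-cosh:bounded}, so that all $h$-powers cancel and $\int_0^T\tilde\Psi_\epsilon\bigl(|\hat{\jmath}^h_t|(\Omega)/(dc_\kappa)\bigr)\dd t$ is bounded uniformly (note the relevant estimate is the $\xi$-dependence of $\alpha_\epsilon^*$, not the bound on $\beta_\epsilon$, which enters only in the Fisher-information analysis).
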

\begin{proof} The proof is similar to the proof of the related compactness statement for the `cosh' gradient structure \cite[Lemma~4.4]{hraivoronska2023diffusive}. For completeness, we present the full proof here.

For almost every $t\in(0,T)$, the reconstruction of the flux is defined as
$$
    \hat{\jmath}_t^h = \sum_{(K,L) \in \Sigma^h} j^h_{K|L}(t) \,\sigma_{K|L}^h, 
$$
with $\sigma_{K|L}^h\in\calM(\Omega; \R^d)$ such that $|\sigma_{K|L}^h|(\Omega) \le 2dh$. The existence of the required $\sigma_{K|L}^h$ is proven in \cite[Lemma~4.1]{hraivoronska2023diffusive}. We begin by noticing that for almost every $t\in(0,T)$ and any $\beta\in\R$,
\begin{align*}
	\calR_{\epsilon,h}(\rho_t^h, j_t^h) &= \sup_{\xi^h\in \calB(\Sigma^h)} \left\{ \sum_{(K,L)\in\Sigma^h} \xi_{K|L}^h j_{K|L}^h(t) - 2\sum_{(K,L)\in\Sigma^h} \tau_{K|L}^h \,\alpha_\epsilon^* \bra*{ u^h_K(t), u^h_L(t), \frac{\xi^h_{K|L}}{2} } \right\} \\
	&\ge \beta\, |\hat{\jmath}_t^h|(\Omega) - 2\sum_{(K,L)\in\Sigma^h} \tau_{K|L}^h \,\alpha_\epsilon^* \bra*{ u^h_K(t), u^h_L(t), \frac{\beta\,\text{sign}(j_{J|L}^h)|\sigma_{K|L}^h|(\Omega)}{2}},
\end{align*}
where we simply take $\xi_{K|L}^h=\beta\, \text{sign}(j_{K|L}^h)|\sigma_{K|L}^h|(\Omega)$. Due to Lemma~\ref{lem:properties-alpha}\ref{alpha-cosh}, we obtain
\[
	\alpha_\epsilon^* \bra*{ u^h_K(t), u^h_L(t), \frac{\beta\,\text{sign}(j_{K|L}^h)|\sigma_{K|L}^h|(\Omega)}{2}} \le \frac{1}{4}\sqrt{u_K^h(t)\,u_L^h(t)}\,\Psi_\epsilon^*\bra*{\beta|\sigma_{K|L}^h|(\Omega)},
\]
where $\Psi_\epsilon^*(s) = 4 \epsilon^2 (\cosh(s/2 \epsilon) - 1)$, and consequently,
\[
	\calR_{\epsilon,h}(\rho_t^h, j_t^h) \ge \beta\, |\hat{\jmath}_t^h|(\Omega) - \frac{C_\tau}{2h^2} \Psi_\epsilon^*\bigl(2\beta dh\bigr),
\]
with the constant $C_\tau>0$ as defined in \eqref{eq:tau:bound}.

Since $\Psi_\epsilon^*(s r) \leq s^2 r^2 \cosh (sr / 2\epsilon)$ for $s,r\in\R$, which can be shown using the Taylor series for the $\cosh$ function, for $|r| \leq 1$, we obtain $\Psi_\epsilon^*(s r) \le r^2 \widetilde\Psi_\epsilon^*(s)$ with $\widetilde\Psi_\epsilon^*(s) = s^2 \cosh (s / 2\epsilon)$. Note that $\widetilde\Psi_\epsilon^*$ is a convex function having superlinear growth at infinity. Minimizing the previous inequality over $\beta\in\R$, we obtain
\[
	\calR_{\epsilon,h}(\rho_t^h, j_t^h) \ge \frac{C_\tau}{2} \sup_{\beta\in\R} \left\{ \beta\, \frac{|\hat{\jmath}_t^h|(\Omega)}{d C_\tau} - \widetilde\Psi_\epsilon^*(\beta )\right\} = \frac{C_\tau}{4} \widetilde\Psi_\epsilon\bra*{\frac{|\hat{\jmath}_t^h|(\Omega)}{d C_\tau}},
\]
where $\widetilde\Psi_\epsilon$ is the Legendre dual of $\widetilde\Psi_\epsilon^*$, which is again a convex function having superlinear growth. Since $(j_t^h)_{t\in[0,T]}$ has uniform-in-$h$ finite action, we then obtain
\[
	\sup_{h>0}\int_0^T \widetilde\Psi_\epsilon\bra*{\frac{|\hat{\jmath}_t^h|(\Omega)}{d C_\tau}}\,\dd t \le \frac{2}{C_\tau}\sup_{h>0}\int_0^T\calR_{\epsilon,h}(\rho_t^h, j_t^h)\,\dd t \le \frac{2}{C_\tau}\sup_{h>0} \calE_{\epsilon,h}(\rho^h_\text{in})<\infty,
\]
therewith deducing the equi-integrability of the family $\{ t \mapsto | \hat{\jmath}^h_t | (\Omega) \}_{h>0}$. 

One also easily deduces from the previous inequality that
\[
	\sup_{h> 0}|J^h|([0,T]\times\Omega)  \le 2 d \bra*{\sup_{h> 0}\int_0^T \calR_{\epsilon,h}(\rho_t^h, j_t^h)\dd t + \frac{C_\tau T}{2} \widetilde\Psi_\epsilon^*(1)} <\infty,
\]
which implies the existence of some $J\in\calM((0, T)\times\Omega)$ and some subsequence for which $J^h \rightharpoonup^* J$ weakly-$*$ in $\calM((0, T)\times\Omega)$. Finally, Due to the equi-integrablity of $\{ t \mapsto | \hat{\jmath}^h_t | (\Omega) \}_{h>0}$, we deduce that $J$ has the representation $J = \int_\cdot j_t\dd t$ for a Borel family $(j_t)\subset\calM(\Omega; \R^d)$.
\end{proof}

\begin{lemma}\label{lemma:BV-bound}
    Let $\rho^h \in \calP(\calT^h)$ with $\calD^0_h(\rho^h) < \infty$, where
    \begin{align}\label{eq:def:D0}
    	\calD_{\epsilon,h}^0 (\rho^h) \coloneq 2\sum_{(K,L)\in\Sigma^h} \beta_\epsilon ( u^h_K, u^h_L )\, \tau_{K|L}^h,\qquad u_K^h = \frac{\rho_K^h}{|K|}.
    \end{align}
    Then the reconstructed density $\hat{u}^h$ satisfies
    $$
        | D \hat{u}^h | (\Omega) \leq C \sqrt{\calD^0_{\epsilon,h}(\rho^h)},
    $$
    for some constant $C>0$ independent of $h>0$.
\end{lemma}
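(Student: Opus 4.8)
The plan is to bound the total variation of the piecewise-constant reconstruction $\hat u^h = \sum_{K} u^h_K \Ind_K$ by its "discrete total variation" along faces, and then control the latter by $\calD^0_{\epsilon,h}(\rho^h)$ using the lower bound on $\beta_\epsilon$ from Lemma~\ref{lem:properties-alpha}. Recall that for a piecewise-constant function on the tessellation, the distributional gradient $D\hat u^h$ is a measure concentrated on the union of faces $(K|L)$, and a standard computation (cf.\ the analogous estimate in \cite{hraivoronska2022diffusive}) gives
\[
    | D \hat u^h | (\Omega) \leq \sum_{(K,L)\in\Sigma^h} |(K|L)|\, \bigl| u^h_L - u^h_K \bigr| .
\]
Actually one must be slightly careful at $\partial\Omega$, but since $\hat u^h$ is extended by zero outside and the cells cover $\Omega$, the boundary contributions are controlled by the same face terms (or can be absorbed using the non-degeneracy assumption \eqref{ass:tessellation}); I would handle this with the convention that $\Omega \subset \bigcup_K K$ and only interior faces contribute to $D\hat u^h \restriction \Omega$.

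**Key steps.** First I would invoke the lower bound
\[
    \beta_\epsilon(a,b) \geq \frac{\epsilon^2}{4}\frac{(a-b)^2}{a+b}, \qquad a,b\geq 0,
\]
stated in the excerpt (from Lemma~\ref{lem:properties-alpha}), so that
\[
    \calD^0_{\epsilon,h}(\rho^h) \geq \frac{\epsilon^2}{2}\sum_{(K,L)\in\Sigma^h} \frac{(u^h_K - u^h_L)^2}{u^h_K + u^h_L}\, \tau_{K|L}^h .
\]
Second, I would estimate the discrete total variation by Cauchy--Schwarz, splitting $|u^h_K - u^h_L| = \frac{|u^h_K-u^h_L|}{\sqrt{u^h_K+u^h_L}}\cdot\sqrt{u^h_K+u^h_L}$:
\[
    \sum_{(K,L)\in\Sigma^h} |(K|L)|\,|u^h_K - u^h_L|
    \leq \Bigl( \sum_{(K,L)} \frac{(u^h_K-u^h_L)^2}{u^h_K+u^h_L}\,\tau_{K|L}^h \Bigr)^{1/2}
    \Bigl( \sum_{(K,L)} \frac{|(K|L)|^2}{\tau_{K|L}^h}\,(u^h_K+u^h_L) \Bigr)^{1/2} .
\]
Third, for the second factor I would use $\tau_{K|L}^h = |(K|L)|/|x_L-x_K|$ so that $|(K|L)|^2/\tau_{K|L}^h = |(K|L)|\,|x_L - x_K| \leq C_{d-1}h^{d-1}\cdot 2h = 2C_{d-1}h^d$ (the diameter bound $|x_L - x_K| \leq 2h$), and then $\sum_{(K,L)}(u^h_K + u^h_L)h^d \leq 2\sup_{K}\#\calT^h_K \cdot \sum_K u^h_K h^d \lesssim \sum_K \rho^h_K = 1$ by non-degeneracy (which bounds $h^d \lesssim |K|/\zeta$ and the coordination number uniformly, as recalled after \eqref{eq:scheme-cosh:bounded}). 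Combining the three steps yields $|D\hat u^h|(\Omega) \leq C\,\epsilon^{-1}\sqrt{\calD^0_{\epsilon,h}(\rho^h)}$ with $C$ depending only on $d$ and $\zeta$; absorbing $\epsilon^{-1}$ into $C$ (for fixed $\epsilon$, as in the statement) gives the claim.

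**Main obstacle.** The routine parts are the Cauchy--Schwarz splitting and the volume/face bookkeeping via non-degeneracy; the one genuinely delicate point is the first inequality, namely that the distributional total variation of the piecewise-constant reconstruction is bounded by the face-sum $\sum |(K|L)|\,|u^h_K - u^h_L|$. For general admissible tessellations this requires that every face $(K|L)$ is a flat $(d-1)$-dimensional piece and that the jump of $\hat u^h$ across it is exactly $u^h_L - u^h_K$, together with the fact that the cells are convex and mutually disjoint with $\Omega\subset\bigcup_K \overline K$; this is where one leans on \eqref{ass:tessellation} and on the corresponding lemma already used in \cite{hraivoronska2022diffusive}. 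I would state this reduction as a short lemma (or cite it) rather than reprove it. A secondary subtlety is the behaviour near $\partial\Omega$ — cells meeting the boundary contribute a term $\sum_{K:\,\overline K\cap\partial\Omega\neq\emptyset} \mathscr H^{d-1}(\partial K\cap\partial\Omega)\,u^h_K$ to $|D\hat u^h|(\R^d)$ if we extend by zero — but since $\Omega$ is bounded and convex and the cells are non-degenerate, this is again dominated (up to a constant) by interior face terms, or one simply works with $|D\hat u^h|(\Omega)$ as written, in which case boundary faces do not enter at all.
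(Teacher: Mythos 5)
Your proposal is correct and follows essentially the same route as the paper: represent $D\hat u^h$ as a jump measure on the faces, bound $|D\hat u^h|(\Omega)$ by $\sum_{(K,L)}|(K|L)|\,|u^h_K-u^h_L|$, apply Cauchy--Schwarz with the weight $\sqrt{u^h_K+u^h_L}$, and control the two factors by the lower bound on $\beta_\epsilon$ from Lemma~\ref{lem:properties-alpha}\ref{item:beta:bound} and by the non-degeneracy bookkeeping, respectively. Your explicit remarks on the boundary faces and on the $\epsilon^{-1}$ hidden in the constant (harmless, since the statement only claims $h$-independence) are consistent with, if slightly more careful than, the paper's argument.
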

\begin{proof}
    Since $\hat{u}^h$ is a piece-wise constant function on the cells $\calT^h$, one can show that
    $$
        D\hat{u}^h = \sum_{(K, L)\in\Sigma^h} u^h_K n_{K|L} \calH^{d-1}|_{(K|L)} = \frac{1}{2}\sum_{(K, L)\in\Sigma^h} (u^h_K - u^h_L) n_{K|L} \calH^{d-1}|_{(K|L)}.
    $$
    Therefore, using the Cauchy-Schwarz inequality and Lemma~\ref{lem:properties-alpha}\ref{item:beta:bound} yields
    \begin{align*}
        |D\hat{u}^h| (\Omega) &\leq \frac{1}{2}\sum_{(K, L)\in\Sigma^h} |u^h_K - u^h_L| |(K|L)|
        \leq \frac{1}{2}\sum_{(K, L)\in\Sigma^h} |u^h_K - u^h_L| h \tau_{K|L}^h \\
        &\leq \Bigg( \sum_{(K,L)\in\Sigma^h} \frac{\left|u^h_L - u^h_K\right|^2}{u^h_L + u^h_K} \tau_{K|L}^h \Bigg)^{1/2} \bra*{ \sum_{(K,L)\in\Sigma^h} (u^h_K + u^h_L)\, h^2 \tau_{K|L}^h }^{1/2} 
        \leq C \sqrt{\calD_{\epsilon,h}^0(\rho^h)},
    \end{align*}
    for some constant $C>0$ independent of $h>0$.
\end{proof}

With Lemma~\ref{lemma:compactness-for-flux-tilted} and Lemma~\ref{lemma:BV-bound} at hand, we can prove the strong compactness result.
\begin{proposition}[Strong compactness]\label{th_compactness}
Let the family of curves $\{\rho^h\}_{h>0}$ be the GGF-solutions of \eqref{eq:SG} with $(\calE_h, \calR_h, \calR_h^*)$ defined in \eqref{eq:energy-with-interaction}, \eqref{eq:tilted-R}, and \eqref{eq:tilted-Rstar} respectively. Let $ \sup_{h>0} \calE_h(\rho^h_\text{in}) < \infty$. Then there exists $u \in L^1( (0, T); L^1(\Omega))$ and a (not relabelled) subsequence such that 
\[
    \hat{u}^h_t \to u_t \quad\text{in $L^1(\Omega)$\; for almost every $t\in(0,T)$.}
\]
\end{proposition}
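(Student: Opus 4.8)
The plan is to combine the spatial $\mathrm{BV}$-bound from Lemma~\ref{lemma:BV-bound} with a time-regularity estimate coming from the finite action of the flux, and then invoke a version of the Aubin--Lions--Simon compactness theorem adapted to curves of measures. First I would establish the uniform spatial bound: since the GGF-solutions satisfy the energy-dissipation balance \eqref{eq:EDPh} and the initial energy is uniformly bounded, one has $\sup_h \int_0^T \calD_{\epsilon,h}(\rho^h_t)\,\dd t < \infty$; because $\calD_{\epsilon,h} \geq \calD_{\epsilon,h}^0$ (the other two contributions $\calD_{\epsilon,h}^1,\calD_{\epsilon,h}^2$ are controlled via the potential bound \eqref{eq:Cw-bound} and the bounds in Lemma~\ref{lem:properties-alpha}, so that at worst $\calD^0_{\epsilon,h} \le C(\calD_{\epsilon,h} + 1)$), Lemma~\ref{lemma:BV-bound} gives
\[
    \sup_{h>0}\int_0^T |D\hat u^h_t|(\Omega)\,\dd t < \infty.
\]
Together with the uniform mass bound $\|\hat u^h_t\|_{L^1(\Omega)} = 1$, this means $\{\hat u^h_t\}$ lies in a bounded set of $\mathrm{BV}(\Omega)$ for a.e.\ $t$, and $\mathrm{BV}(\Omega)\hookrightarrow\hookrightarrow L^1(\Omega)$ since $\Omega$ is bounded.

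Next I would control the time oscillations. Using the discrete continuity equation \eqref{eq:CEh} in its distributional form \eqref{eq:theory:CE-discrete-distr}, one has for $\varphi\in C_c^1(\R^d)$ and $s<t$
\[
    \skp*{\varphi, \hat\rho^h_t - \hat\rho^h_s} = \int_s^t \skp*{\nabla\varphi, \hat\jmath^h_r}\,\dd r,
\]
so $|\skp{\varphi,\hat\rho^h_t-\hat\rho^h_s}| \le \|\nabla\varphi\|_\infty \int_s^t |\hat\jmath^h_r|(\Omega)\,\dd r$; by the equi-integrability of $\{t\mapsto|\hat\jmath^h_t|(\Omega)\}$ obtained in Lemma~\ref{lemma:compactness-for-flux-tilted}, the right-hand side goes to $0$ uniformly in $h$ as $t-s\to 0$. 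This yields an equicontinuity-in-time estimate for $t\mapsto\hat\rho^h_t$ in a negative Sobolev norm (e.g.\ the dual of $W^{1,\infty}$, or $W^{-1,1}$), which is the second ingredient.

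Then I would apply the Aubin--Lions--Simon lemma in the form suitable here (as in \cite[Chapter 5]{ambrosio2008gradient} or the version used in \cite{hraivoronska2022diffusive}): the embeddings $\mathrm{BV}(\Omega)\hookrightarrow\hookrightarrow L^1(\Omega)\hookrightarrow W^{-1,1}(\Omega)$ together with the uniform $L^1_t(\mathrm{BV}_x)$ bound and the uniform time-equicontinuity in $W^{-1,1}$ give relative compactness of $\{\hat u^h\}$ in $L^1((0,T);L^1(\Omega))$; passing to a subsequence, $\hat u^h\to u$ in $L^1((0,T)\times\Omega)$, hence $\hat u^h_t\to u_t$ in $L^1(\Omega)$ for a.e.\ $t\in(0,T)$ along a further subsequence, and $u\in L^1((0,T);L^1(\Omega))$ with $u_t\ge 0$, $\int_\Omega u_t = 1$. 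The nonnegativity and unit-mass are preserved under $L^1$-limits, so $u_t\in\calP(\Omega)$ is absolutely continuous for a.e.\ $t$.

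The main obstacle I expect is the first step — verifying that the \emph{full} dissipation $\calD_{\epsilon,h}$ indeed dominates $\calD^0_{\epsilon,h}$ (so that the $\mathrm{BV}$-estimate can be triggered), since $\calD_{\epsilon,h}$ contains the cross-term $\calD^1_{\epsilon,h}$ and the drift term $\calD^2_{\epsilon,h}$ that involve $q^h_{K|L}$ and are not manifestly sign-definite; one must absorb $\calD^1_{\epsilon,h}$ using Young's inequality against $\calD^0_{\epsilon,h}$ and $\calD^2_{\epsilon,h}$, exploiting $|q^h_{K|L}|\le c_{\mathrm{pot}}h$ from \eqref{eq:Cw-bound} and the mass normalization to keep $\calD^2_{\epsilon,h}$ uniformly bounded. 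A secondary technical point is choosing the right pair of spaces for Aubin--Lions so that the time-regularity in a weak norm is genuinely compatible with the $\mathrm{BV}$-in-space bound; this is routine but needs care because $\hat u^h_t$ is only piecewise constant and the flux reconstruction lives on faces.
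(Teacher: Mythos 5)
Your proposal is correct and follows essentially the same route as the paper, which states that the proof rests on Lemma~\ref{lemma:compactness-for-flux-tilted} and Lemma~\ref{lemma:BV-bound} and then defers the Aubin--Lions--Simon-type argument to \cite[Theorem~4.8]{hraivoronska2022diffusive}. The one point the paper leaves implicit and you rightly identify as the main obstacle --- absorbing the non-sign-definite cross-term $\calD^1_{\epsilon,h}$ into $\calD^0_{\epsilon,h}$ via Cauchy--Schwarz, \eqref{eq:Cw-bound} and Young's inequality --- is handled by exactly the estimate the paper uses at the start of the proof of Lemma~\ref{lem:convergence-D1}, so your argument goes through.
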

The proof of the proposition can be found in \cite[Theorem~4.8]{hraivoronska2023diffusive}.

\subsection{\texorpdfstring{$\varGamma$}{Γ}-convergence of the Fisher information}\label{sec:gamma-convergence-fisher-SG}

The aim of this section is to prove a $\varGamma$-convergence result for the discrete Fisher information $\rho^h \mapsto \calD_{\epsilon,h}(\rho^h)\coloneq\calR^*_{\epsilon,h}(\rho^h,-\dnabla \calE'_{\epsilon,h}(\rho^h))$, where
\[
-\dnabla \calE'_{\epsilon,h}(\rho^h)(K,L) = 2 \epsilon \log\sqrt{u^h_K / u^h_L} - q_{K|L}^h.
\] 
It will be crucial that we have the decomposition of $\alpha_\epsilon^*$ from Lemma~\ref{lem:properties-alpha}\ref{alpha-decomposition} to get the representation of $\calD_h$ as the sum of three terms
\begin{align}\label{eq:tilted-Fisher}
	\calD_{\epsilon,h}(\rho^h) &= \calD_{\epsilon,h}^0(\rho^h) + \calD_{\epsilon,h}^1 (\rho^h) + \calD_{\epsilon,h}^2 (\rho^h),
\intertext{where $\calD_{\epsilon,h}^0$ is given in~\eqref{eq:def:D0} and}
	\calD_{\epsilon,h}^1 (\rho^h) &\coloneq \frac{\epsilon}{2} \sum_{(K, L)\in\Sigma^h} (u^h_L - u^h_K) \, q_{K|L}^h \tau_{K|L}^h, \notag\\
	\calD_{\epsilon,h}^2 (\rho^h) &\coloneq \frac{1}{2} \sum_{(K, L)\in\Sigma^h} |q_{K|L}^h|^2 \, \mathbb{h}_\epsilon (u^h_K, u^h_L, q_{K|L}^h) \,\tau_{K|L}^h. \notag
\end{align}
This representation resembles the expansion of the continuous counterpart. Indeed, we expect the limit functional to be
\begin{align*}
    \calD_\epsilon(\rho) &= \calR^*\bra[\big]{\rho, -\nabla (\epsilon \log u + \mathsf{Q}(\rho) )}
    = \frac{\epsilon^2}{2} \int \abs*{\nabla \log \bra*{u e^{\mathsf{Q}(\rho)/\epsilon}} }^2 \dd\rho \\
    &= 2\epsilon^2 \int \abs*{ \nabla \sqrt{u} }^2\dd x + \epsilon\int \nabla u \cdot \nabla \mathsf{Q}(\rho)\dd x + \frac{1}{2} \int \abs*{ \nabla \mathsf{Q}(\rho) }^2 u\dd x \\
    &\eqcolon \calD_\epsilon^0(\rho) + \calD_\epsilon^1(\rho) + \calD^2(\rho),
\end{align*}
where we use the notation $\mathsf{Q}(\rho) = V + W*\rho$ as in the introduction.

\medskip
The main result of this section is the following theorem.
\begin{theorem}\label{th:convergence-tilted-Fisher} Assume that a family of tessellations $\{(\calT^h, \Sigma^h)\}_{h>0}$ satisfies the orthogonality \eqref{ass:orthogonality}.
Up to passing to a subsequence, the family of functionals $\{\calD_{\epsilon,h}\}_{h>0}$ has a $\varGamma$-limit $\calD_\epsilon$ w.r.t.\ the $L^2$-topology taking the form
\begin{equation*}
    \calD_\epsilon(\rho) = \begin{cases}\displaystyle
       2\epsilon^2 \int_\Omega \left| \nabla \sqrt{u}\right|^2 \dd x + \epsilon \int_\Omega \nabla u \cdot \nabla \mathsf{Q}(\rho)\dd x + \frac{1}{2} \int_\Omega \left| \nabla \mathsf{Q}(\rho) \right|^2 \dd\rho  & \text{if } \sqrt{u} \in H^1(\Omega), \\
        +\infty & \text{otherwise.}
    \end{cases}
\end{equation*}  
\end{theorem}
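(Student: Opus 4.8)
The plan is to prove the $\varGamma$-convergence by establishing separately the $\varGamma$-$\liminf$ and $\varGamma$-$\limsup$ inequalities, handling the three pieces $\calD^0_{\epsilon,h}$, $\calD^1_{\epsilon,h}$, $\calD^2_{\epsilon,h}$ of the decomposition~\eqref{eq:tilted-Fisher} according to their analytic nature. The leading term $\calD^0_{\epsilon,h}$, which by Lemma~\ref{lem:properties-alpha}\ref{item:beta:bound} is comparable to $\sum \tfrac{|u^h_L-u^h_K|^2}{u^h_K+u^h_L}\tau^h_{K|L}$, is the genuinely nonlinear and nonlocal object and carries the burden of the argument; the cross term $\calD^1_{\epsilon,h}$ is linear in the (reconstructed) gradient of $u^h$ once one inserts the asymptotics~\eqref{eq:q-to-potentials} for $q^h_{K|L}$, and the quadratic term $\calD^2_{\epsilon,h}$ is a lower-order, continuous perturbation since $|q^h_{K|L}|^2 = O(h^2)$ and $\mathbb{h}_\epsilon(u^h_K,u^h_L,q^h_{K|L})$ is (by Lemma~\ref{lem:properties-alpha}) comparable to the arithmetic mean $\tfrac{u^h_K+u^h_L}{2}$, so that $\calD^2_{\epsilon,h}(\rho^h) \to \tfrac12\int|\nabla\mathsf{Q}(\rho)|^2\dd\rho$ along any $L^1$-converging sequence with a compactly supported limit density. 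The compactness input from Lemma~\ref{lemma:BV-bound}, namely $|D\hat u^h|(\Omega)\le C\sqrt{\calD^0_{\epsilon,h}(\rho^h)}$, guarantees that along a sequence with bounded Fisher information one has $\hat u^h\to u$ in $L^1$ (hence in $L^2$ after a further reduction, using boundedness of $\Omega$ and the entropy bound to upgrade integrability) with $u\in\mathrm{BV}(\Omega)$, so that all limit objects make sense.

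\textbf{The $\liminf$ inequality.} Given $\rho^h\to\rho$ in $L^2$ with $\liminf_h\calD_{\epsilon,h}(\rho^h)<\infty$, I would first discard the case where the $\liminf$ is $+\infty$. Assuming finiteness, Lemma~\ref{lemma:BV-bound} plus the entropy bound give $\hat u^h\to u$ in $L^1(\Omega)$ with $u\in\mathrm{BV}$; the crux is to promote this to $\sqrt u\in H^1(\Omega)$ together with the lower bound $2\epsilon^2\int|\nabla\sqrt u|^2\le\liminf_h\calD^0_{\epsilon,h}(\rho^h)$. This is precisely the discrete-to-continuum lower-semicontinuity statement for a Dirichlet-type form on tessellations under the orthogonality assumption~\eqref{ass:orthogonality}: writing $g^h_K=\sqrt{u^h_K}$, the lower bound $\beta_\epsilon(a,b)\ge\tfrac{\epsilon^2}{4}\tfrac{(a-b)^2}{a+b}\ge\tfrac{\epsilon^2}{2}(\sqrt a-\sqrt b)^2\cdot\tfrac12$ (the elementary inequality $\tfrac{(a-b)^2}{a+b}\ge (\sqrt a-\sqrt b)^2$ up to a constant) reduces $\calD^0_{\epsilon,h}$ to a quadratic form in the discrete gradient of $\sqrt{u^h}$; here orthogonality is what makes $\tau^h_{K|L}$ the correct weight so that the piecewise-constant reconstruction of $g^h$ has $\varGamma$-$\liminf$ bounded below by $2\epsilon^2\int|\nabla g|^2$, as in the Fisher-information $\varGamma$-convergence of \cite{hraivoronska2022diffusive} adapted to the new integrand $\beta_\epsilon$. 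For the cross term I would localize: test against smooth vector fields, write $\calD^1_{\epsilon,h}(\rho^h)=\tfrac{\epsilon}{2}\sum(u^h_L-u^h_K)q^h_{K|L}\tau^h_{K|L}$, substitute~\eqref{eq:q-to-potentials} and a discrete integration by parts to recognize $-\epsilon\int u^h\,\mathrm{div}(\nabla\mathsf{Q}(\rho^h))$-type expressions converging (using $u^h\to u$ strongly, $\nabla\mathsf{Q}(\rho^h)\to\nabla\mathsf{Q}(\rho)$ strongly under the potential assumptions) to $\epsilon\int\nabla u\cdot\nabla\mathsf{Q}(\rho)$; since this limit is an exact limit rather than a one-sided bound, it combines with the $\liminf$ of $\calD^0$ and the limit of $\calD^2$ to yield the claimed inequality. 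One must also check that the cross and quadratic terms do not spoil finiteness, i.e.\ that $\calD^1_{\epsilon,h}$ is controlled by $\tfrac12\calD^0_{\epsilon,h}$ plus a lower-order term via Young's inequality, using $|q^h_{K|L}|\le c_{\mathrm{pot}}h$.

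\textbf{The $\limsup$ (recovery sequence).} For $u$ with $\sqrt u\in H^1(\Omega)$ I would construct the recovery sequence by cell-averaging, $u^h_K := \intbar_K u\,\dd x$ (equivalently $\rho^h_K=\int_K u$), which manifestly satisfies $\hat u^h\to u$ in $L^2$ and, by standard Poincaré/Sobolev estimates on the non-degenerate cells together with orthogonality, makes the discrete Dirichlet energies converge: $\calD^0_{\epsilon,h}(u^h)\to 2\epsilon^2\int|\nabla\sqrt u|^2$ (upper bound via $\beta_\epsilon(a,b)\le\tfrac{\epsilon^2}{2}(\sqrt b-\sqrt a)^2$ and a chain-rule/Sobolev estimate controlling $\sum(\sqrt{u^h_K}-\sqrt{u^h_L})^2\tau^h_{K|L}$ by $\int|\nabla\sqrt u|^2+o(1)$, the matching lower bound from the $\liminf$ part already proved). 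For this step a density reduction is convenient: first prove the $\limsup$ for $u$ bounded away from $0$ and $\infty$ and smooth, where the cell-average commutes cleanly with $\sqrt{\cdot}$ up to $O(h)$, then recover the general case by the standard diagonal argument using lower semicontinuity of the limit functional and continuity of $u\mapsto 2\epsilon^2\int|\nabla\sqrt u|^2+\epsilon\int\nabla u\cdot\nabla\mathsf{Q}(\rho)+\tfrac12\int|\nabla\mathsf{Q}(\rho)|^2\dd\rho$ along such approximations. The terms $\calD^1_{\epsilon,h},\calD^2_{\epsilon,h}$ converge to their continuous counterparts along this sequence by the same arguments as in the $\liminf$ part (these are honest limits, not inequalities), so the three contributions add up to exactly $\calD_\epsilon(\rho)$.

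\textbf{Main obstacle.} The hard part is the $\varGamma$-$\liminf$ for $\calD^0_{\epsilon,h}$: showing that boundedness of the discrete harmonic-logarithmic Dirichlet energy forces $\sqrt u\in H^1$ and gives the sharp constant $2\epsilon^2$ in the limit. Unlike the `cosh' case of \cite{hraivoronska2022diffusive}, here $\beta_\epsilon$ is only defined through the integral~\eqref{eq:def:alpha*}, so one cannot manipulate the integrand explicitly; one must route everything through the two-sided bounds $\tfrac{\epsilon^2}{4}\tfrac{(a-b)^2}{a+b}\le\beta_\epsilon(a,b)\le\tfrac{\epsilon^2}{2}(\sqrt b-\sqrt a)^2$ from Lemma~\ref{lem:properties-alpha}, and verify that these sandwich the same quadratic form in $\nabla\sqrt u$ in the limit — which is exactly why the stronger orthogonality assumption~\eqref{ass:orthogonality} (beyond mere non-degeneracy) is needed, to align discrete difference quotients along $x_L-x_K$ with the Euclidean gradient and make the transmission coefficients $\tau^h_{K|L}$ the geometrically correct weights.
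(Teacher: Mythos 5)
Your decomposition into $\calD^0_{\epsilon,h}+\calD^1_{\epsilon,h}+\calD^2_{\epsilon,h}$ and the division of labour ($\varGamma$-convergence for the first piece, honest limits for the other two) matches the paper. But there is a genuine gap in your treatment of the leading term. The two-sided bounds of Lemma~\ref{lem:properties-alpha}\ref{item:beta:bound} read $\frac{\epsilon^2}{4}(\sqrt a-\sqrt b)^2\le\frac{\epsilon^2}{4}\frac{(a-b)^2}{a+b}\le\beta_\epsilon(a,b)\le\frac{\epsilon^2}{2}(\sqrt a-\sqrt b)^2$: the lower and upper bounds in terms of $(\sqrt a-\sqrt b)^2$ differ by a factor of $2$, so routing the $\varGamma$-$\liminf$ through ``the elementary inequality $\frac{(a-b)^2}{a+b}\ge(\sqrt a-\sqrt b)^2$'' yields only $\epsilon^2\int_\Omega|\nabla\sqrt u|^2\dd x$ as a lower bound --- half the claimed constant. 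The sharp constant is recovered only because $\frac{(a-b)^2}{a+b}\to 2(\sqrt a-\sqrt b)^2$ \emph{on the diagonal} $a\approx b$, and exploiting this requires more than pointwise sandwiching: the paper runs the localization/integral-representation method (inner regularity, subadditivity, locality, Sobolev bounds as in Lemma~\ref{lemma:sobolev-bounds-tilted}, then the representation theorem), shows that the affine functions $\phi^h_{x,w,\xi}$ are almost minimizers, and proves that the error $e_\epsilon(a,b)=\frac{\epsilon^2}{2}(a-b)^2-\beta_\epsilon(a^2,b^2)$ is \emph{quartic} in $a-b$ (bounded by $\frac{\epsilon^2}{4}\frac{(a-b)^4}{a^2+b^2}$) and hence vanishes in the limit along these test functions; the tensor $\T^h$ is then identified as $2\,\Id$ via \eqref{ass:orthogonality}. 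Your sketch names this as ``the hard part'' but does not supply the mechanism that closes the factor-of-two gap, and as written the $\liminf$ inequality for $\calD^0_{\epsilon,h}$ would fail to produce the stated limit.

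A second, more easily repaired problem is your route for $\calD^1_{\epsilon,h}$: the proposed discrete integration by parts onto $\operatorname{div}(\nabla\mathsf{Q}(\rho^h))$ requires second derivatives of $V$ and $W$, which are not available under the standing assumptions ($V\in\mathrm{Lip}\cap C^1$, $W$ possibly only \eqref{ass:pointy-potential}). The paper keeps the derivative on the density instead: it rewrites $\calD^1_{\epsilon,h}(\rho^h)=\epsilon\int_\Omega\nabla\mathsf{Q}(\hat\rho^h)\cdot D\hat u^h(\dd x)+o(1)$ using the face-averaged representation \eqref{eq:q-representation-KL} of $q^h_{K|L}$ and the explicit form of the distributional derivative $D\hat u^h$ of the piecewise-constant reconstruction (here \eqref{ass:orthogonality} identifies $n_{KL}$ with $-(x_L-x_K)/|x_L-x_K|$), and then passes to the limit via $D\hat u^h\rightharpoonup^*\nabla u$ and the uniform convergence of $\nabla\mathsf{Q}(\hat\rho^h)$. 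You should adopt this formulation; it also makes transparent why the hypothesis $u\in W^{1,1}(\Omega)$ with $D\hat u^h\rightharpoonup^*\nabla u$ enters. Your description of $\calD^2_{\epsilon,h}$ is essentially correct, though the identification of the limit again passes through the tensor $\T^h\rightharpoonup^* 2\,\Id$ rather than a direct arithmetic-mean comparison.
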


The proof of Theorem~\ref{th:convergence-tilted-Fisher} consists of the $\varGamma$-convergence result for $\calD_{\epsilon,h}^0$ and continuous convergence results for $\calD_{\epsilon,h}^1$ and $\calD_{\epsilon,h}^2$. Although we use the orthogonality assumption \eqref{ass:orthogonality} to get the complete result, the convergence of $\calD_{\epsilon,h}^0$ and $\calD_{\epsilon,h}^2$ can be established without \eqref{ass:orthogonality} at the cost of the tensor $\T$ appearing in the limit. Unfortunately, it is not clear how to identify the limit of $\calD_{\epsilon,h}^1$ without \eqref{ass:orthogonality}.

We begin with $\calD_{\epsilon,h}^0$. According to Lemma~\ref{lem:properties-alpha}\ref{item:beta:bound} the function $\beta$ satisfies the following bounds
\begin{equation}\label{eq:beta:bound}
    \frac{1}{8} \frac{(a - b)^2}{\tfrac{a + b}{2}}\leq \beta_\epsilon(a, b) \leq \frac{1}{2} (\sqrt{a} - \sqrt{b})^2 \qquad\text{for $a,b>0$}.
\end{equation}
The appearance of such bounds is possible to understand intuitively by noting that in the continuous setting, thanks to the chain rule the following two formulations are equivalent
\begin{align*}
	\frac{1}{8}  \frac{|\nabla u |^2 }{u} 
	=
	 \frac{1}{2} \big| \nabla \sqrt{u} \big|^2  \qquad\text{for $\sqrt{u}\in H^1(\Omega)$}.
\end{align*}
We now recognize the lower bound for $\beta_\epsilon$ as a discretization for the second formulation. We can also expect that \eqref{eq:general-functional} has the same $\varGamma$-limit as the quadratic functional. 

The proof of $\varGamma$-convergence for $\calD_{\epsilon,h}^0$ follows the localization method. The corresponding theory is covered in \cite[Chapter~16-20]{maso1993introduction}, and for the application of the localization method in the setting close to ours, see \cite{alicandro2004general, forkert2022evolutionary, hraivoronska2023diffusive}. The method is based on considering the localized version of the functional $\calD_{\epsilon,h}^0$ restricted to an open set $A\subset\Omega$
\begin{equation}\label{eq:general-functional}
    \calF_{\epsilon,h} (v^h, A) \coloneq \sum_{(K, L)\in\Sigma^h|_A} \beta_\epsilon \left( (v^h_K)^2, (v^h_L)^2 \right) \tau_{K|L}^h,
\end{equation}
where $\Sigma^h|_A \coloneq \left\{ (K, L) \in \Sigma^h:\, K,L\in\calT^h|_A \right\}$ and $\calT^h|_A \coloneq \left\{ K \in \calT^h : K\cap A \neq \emptyset \right\}$. 

We define for any open set $A\subset\Omega$
\begin{equation}\label{eq:sup-functional}
    \calF_{\epsilon,\sup}(v, A) \coloneq \varGamma\text{-}\limsup_{h\to 0} \calF_{\epsilon,h} (v, A) = \inf \left\{ \limsup_{h\to 0} \calF_{\epsilon,h} (v_h, A) ~: ~ v_h \to v \right\}.
\end{equation}
In the next lemma, we summarize the properties of $\calF_{\epsilon,\sup}$, which is necessary to apply the representation theorem from \cite[Theorem~2]{bouchitte2002global}. Specifically, we prove that $\calF_{\epsilon,\sup}$ is an inner regular, subadditive, and local functional satisfying the lower and upper Sobolev bounds. The proof follows very closely the strategy from~\cite{hraivoronska2023diffusive} and leverages the quadratic comparison of the function $\beta_\epsilon$ noted above in~\eqref{eq:beta:bound}.

\begin{lemma}\label{lemma:sobolev-bounds-tilted} The functional $\calF_{\epsilon,\sup}$ defined in \eqref{eq:sup-functional} has the following properties
\begin{enumerate}[label=(\roman*)]
    \item Inner regularity: For any $v\in H^1(\Omega, \mu)$ and for any $A\in\calO$ it holds that
    $$
        \sup_{A'\ssubset A} \calF_{\epsilon,\sup}(v, A') = \calF_{\epsilon,\sup}(v, A);
    $$
    \item Subadditivity: For any $v\in H^1(\Omega, \mu)$ and for any $A, A', B, B' \in \calO$ such that $A'\ssubset A$ and $B'\ssubset B$ it holds that:
    $$
        \calF_{\epsilon,\sup}(v, A'\cup B') \leq \calF_{\epsilon,\sup}(v, A) + \calF_{\epsilon,\sup}(v, B);
    $$
    \item Locality: For any $A\in \calO$ and any $v, \psi \in H^1(\Omega, \mu)$ such that $v=\psi$ $\mu$-a.e. on $A$ there holds
    $$
        \calF_{\epsilon,\sup}(v, A) = \calF_{\epsilon,\sup}(\psi, A).
    $$
    \item Sobolev bounds: For any $v\in H^1(\Omega)$ and an open set $A\subset\Omega$
    $$
        c\int_A \left| \nabla v \right|^2 \dd x \leq \calF_{\epsilon,\sup}(v, A) \leq C\int_A \left| \nabla v \right|^2 \dd x,
    $$
for some $c, C>0$ independent of $v$ and $A$.
\end{enumerate}
\end{lemma}
\begin{proof} In this proof, we drop the subscript $\epsilon$ to simplify the notation.

The proof of (\textit{i}), (\textit{ii}), and (\textit{iii}) follows very closely the corresponding proofs in \cite{alicandro2004general, forkert2022evolutionary, hraivoronska2023diffusive}.

We now turn to the proof of (\textit{iv)}. First, we consider the upper bound. By the upper bound shown in Lemma~\ref{lem:properties-alpha}(f), it holds that
    $$
        \calF_{\sup}(v, A) \leq \frac{\epsilon^2}{2} \sum_{(K,L)\in\Sigma^h|_A} \left( v^h_L - v^h_K \right)^2 \tau_{K|L}^h.
    $$
    Then the required upper bound follows from \cite[Lemma~5.8]{hraivoronska2023diffusive}.

Next, we show the lower bound from (\textit{iv}). Let $\{v_h\}_{h>0}\in L^2(\Omega)$ be a sequence with $v_h \to v$ in $L^2(\Omega)$ such that
    $$
        \calF_{\sup}(v, A) = \limsup_{h\to 0} \calF_h(v_h, A).
    $$
    We fix an arbitrary $r>0$ and denote $A_r \coloneq \left\{ x\in A \,:\, \text{dist}(x, \partial A) > r \right\}$. Let $\eta\in\R^d$ be such that $|\eta| < r$, then by the argument as in \cite[Lemma~5.17]{hraivoronska2023diffusive}.
    $$
        \int_{A_r} | v_h(x+\eta) - v_h(x) |^2 \dd x \leq C |\eta|^2 \sum_{(K, L)\in\Sigma^h|_{A_r}} \left| v_h(L) - v_h(K) \right|^2 \tau_{K|L}^h.
    $$
    Using the lower bound for $\beta_\epsilon$ from Lemma~\ref{lem:properties-alpha}\ref{item:beta:bound} 
    \begin{align*}
        \calF(v_h,A_r) =\sum_{(K, L)\in\Sigma^h|_{A_r}} \beta_\epsilon \bigl((v_h(K))^2, (v_h(L))^2\bigr)\, \tau_{K|L}^h \geq \frac{\epsilon^2}{4} \sum_{(K, L)\in\Sigma^h|_{A_r}} | v_h(L) - v_h(K) |^2 \tau_{K|L}^h
    \end{align*}
    and passing to the limit superior as $h\to0$ then yields
    $$
        \calF_{\sup}(v, A_r) \geq c \frac{\left\|v(\cdot+\eta) - v \right\|^2_{L^2(A_r)}}{|\eta|^2} \geq c\int_{A_r} \left| \nabla v \right|^2 \dd x \qquad \text{for } v\in H^1(\Omega).
    $$
    Due to the inner regularity property, we conclude $\calF_{\sup}(v, A) \geq c\int_{A} | \nabla v |^2 \dd x$ for  $v\in H^1(\Omega)$.
\end{proof}

We aim to find an integral representation for $\calF_{\sup}$ in the form
$$
    \calF_{\epsilon,\sup}(v, A) = \int_A f_\epsilon(x, v(x), \nabla v(x)) \dd x, \qquad v\in H^1(A).
$$
We will prove that the functions $\phi^h_{x,w,\xi} (K) = w + \langle \xi, x_K - x \rangle$ with some fixed $x\in\Omega, w\in\R, \xi\in\R^d$ are almost minimizers for $\calF_{\epsilon,h}$. For this reason, we set
\[
    M_{\epsilon,h} (v^h, A) \coloneq \inf \Bigl\{ \calF_{\epsilon,h} (w^h, A) ~ : ~ w^h \text{ on } \calT^h|_A \quad\text{with}\quad  w^h = v^h \text{ on } \calT^h|_{A^c} \Bigr\}.
\]

\begin{lemma}\label{lemma:almost-minimizers} The family of functions $\{\phi^h_{x,w,\xi} \}_{h>0}$ with $x\in\Omega, w\in\R, \xi\in\R^d$ are almost minimizers for $\calF_{\epsilon,h}$, i.e.
\[
    \lim_{h\to 0} \left( \calF_{\epsilon,h} (\phi^h_{x,w,\xi}, Q_r(x)) - M_{\epsilon,h} (\phi^h_{x,w,\xi}, Q_r(x)) \right) = 0,
\]
for a cube $Q_r(x)$ with the edge length $r>0$ and the center in $x\in\Omega$.
\end{lemma}
\begin{proof}
Let $\psi^h$ be the minimizer for $M_{\epsilon,h} (\phi^h_{x,w,\xi}, Q_r(x))$. The convexity of $\calF_{\epsilon,h}$ yields
\begin{align*}
    0 \leq \calF_{\epsilon,h} (\phi^h_{x,w,\xi}, Q_r(x)) - \calF_{\epsilon,h} (\psi^h, Q_r(x))
    \leq D\calF_{\epsilon,h}(\phi^h_{x,w,\xi}, Q_r(x)) [\phi^h_{x,w,\xi} - \psi^h].
\end{align*}
We now calculate the variation of $\calF_{\epsilon,h}(\cdot, A)$ at some $v^h\in\R^{\calT^h}$, for a fixed open set $A\subset\Omega$ in the directions $w^h\in\R^{\calT^h}$ such that $w^h_K = 0$ for $K\in\calT^h|_{A^c}$. Here, we use Lemma~\ref{lem:properties-alpha}\ref{item:beta:bound} that states the existence of the directional derivatives for $\R_+ \times \R_+ \ni (a, b) \mapsto \beta_\epsilon (a^2, b^2)$:
\begin{align*}
    D\calF_{\epsilon,h} (v^h, A)[w^h]
    &= 2\sum_{(K,L)\in\Sigma^h|_A} \Big[ \partial_1 \beta_\epsilon \bigl((v^h_K)^2, (v^h_L)^2\bigr) v^h_K w^h_K + \partial_2 \beta_\epsilon \bigl((v^h_K)^2, (v^h_L)^2\bigr)  v^h_L w^h_L \Big] \tau_{K|L}^h \\
    &= 4 \sum_{(K,L)\in\Sigma^h|_A} w^h_K v^h_K \partial_1 \beta_\epsilon \bigl((v^h_K)^2, (v^h_L)^2\bigr) \tau_{K|L}^h.
\end{align*}
where we used the fact that
$$
    \partial_1 \beta_\epsilon (a, b) = \frac{\epsilon^2}{4} \int_b^a \frac{b}{z \Lambda(z, b)} \dd z = \partial_2 \beta_\epsilon (b, a).
$$
We denote for the moment
$$
    \gamma(a, b) \coloneq a \partial_1 \beta_\epsilon (a^2, b^2)
    = \frac{\epsilon^2}{4} \int_{b^2}^{a^2} \frac{a b^2}{z \Lambda(z, b^2)} \dd z
$$
and perform Taylor expansion in the first variable
\begin{align*}
    \gamma(a, b) = \gamma(b, b) + \partial_1 \gamma(a, b)|_{a=b} (b - a) + \partial_1^2 \gamma(a, b)|_{a=b} (b - a)^2 + o\left( (a-b)^2\right).
\end{align*}
Direct calculations provide
\begin{align*}
    \partial_1 \gamma(a, b) &= \frac{\epsilon^2}{4} \left(\int_{b^2}^{a^2} \frac{b^2}{z \Lambda(z, b^2)} \dd z - \frac{a b^2}{a^2 \Lambda(a^2, b^2)} 2a \right)
    = \frac{\epsilon^2}{4} \left(\int_{b^2}^{a^2} \frac{b^2}{z \Lambda(z, b^2)} \dd z - \frac{2 b^2}{\Lambda(a^2, b^2)} \right),
\end{align*}
and thus, $\partial_1 \gamma(a, b)|_{a=b} = - \epsilon^2 / 2$. Calculating the second derivative, we obtain
\begin{align*}
    \partial_1^2 \gamma(a, b) &= \frac{\epsilon^2}{4} \left( \frac{b^2}{a^2 \Lambda(a^2, b^2)} 2a + \frac{2 b^2}{\Lambda^2(a^2, b^2)} \partial_1 \Lambda(a^2, b^2) 2a \right) \\
    &= \frac{\epsilon^2}{4} \frac{2 b^2}{a \Lambda(a^2, b^2)} \left( 1 - 2 \frac{a^2 - \Lambda(a^2, b^2)}{a^2 - b^2} \right) \overset{a \to b}{\longrightarrow} 0.
\end{align*}
Therefore,
$$
    \gamma(a, b) = -\frac{\epsilon^2}{2} (b - a) + o\left( (a-b)^2 \right).
$$
Inserting this expansion into the variation of $\calF_{\epsilon,h}$ yields
\begin{align*}
    D\calF_{\epsilon,h}(\phi^h_{x,w,\xi}, Q_r(x))[w^h] 
    &= 4 \sum_{(K,L)\in\Sigma^h|_{Q_r(x)}} w^h_K \left( -\frac{\epsilon^2}{2} \langle \xi, x_L - x_K \rangle + o( h^2 ) \right) \tau_{K|L}^h.
\end{align*}
Since for any admissible tessellation, $ \sum_{L\in\calT^h_K} (x_L - x_K)\,\tau_{K|L}^h = 0$ for $K\in\calT^h|_A \backslash \calT^h|_{A^c}$ (see \eqref{eq:zero-average}), we obtain
$$
    \left| D\calF_{\epsilon,h}(\phi^h_{x,w,\xi}, Q_r(x))[w^h] \right| \leq o(1)_{h\to 0} \sum_{K\in\calT^h|_{Q_r(x)}} |w^h_K | |K|,
$$
which proves the assertion.
\end{proof}

We now split the functional $\calF_{\epsilon,h}$ into the quadratic part and the error term, i.e.
\begin{equation}\label{eq:split-general-functional}
    \calF_{\epsilon,h}(v^h) = \frac{\epsilon^2}{2} \sum_{(K,L)\in\Sigma^h} \left( v^h_L - v^h_K \right)^2 \tau_{K|L}^h - \sum_{(K,L)\in\Sigma^h} e_\epsilon (v^h_K, v^h_L) \,\tau_{K|L}^h,
\end{equation}
where we denote $e_\epsilon(a,b)=\frac{\epsilon^2}{2} \left( a - b \right)^2 - \beta_\epsilon(a^2, b^2)$. The first observation to make is that the error term vanishes in the $\varGamma$-limit.
\begin{lemma}\label{lemma:error-term} Let $x\in\Omega, w\in\R, \xi\in\R^d$ be fixed. For the discrete functions $\phi^h_{x,w,\xi} (K) = w + \langle \xi, x_K - x \rangle$ for all $K\in\calT^h$, the following convergence holds
    $$
        \lim_{h\to 0} \sum_{(K,L)\in\Sigma^h|_{Q_r(x)}} e \big( \phi^h_{x,w,\xi}(K), \phi^h_{x,w,\xi}(L) \big) \tau_{K|L}^h = 0,
    $$
for any $r > 0$.
\end{lemma}
\begin{proof} 
We recall that $e_\epsilon(a, b) = \frac{\epsilon^2}{2} (a - b)^2 - \beta_\epsilon (a^2, b^2)$. Lemma~\ref{lem:properties-alpha}\ref{item:beta:bound}  yield the following bound \begin{align*}
    e_\epsilon(a, b) \leq \frac{\epsilon^2}{2} (a - b)^2 - \frac{\epsilon^2}{4} \frac{(a^2 - b^2)^2}{a^2 + b^2}
    = \frac{\epsilon^2}{4} (a - b)^2 \frac{2(a^2 + b^2) - (a + b)^2}{a^2 + b^2}
    = \frac{\epsilon^2}{4} \frac{(a - b)^4}{a^2 + b^2}.
\end{align*}
W.l.o.g., we assume that $w=0$. If $\phi^h_{x,\xi}(K) = \phi^h_{x,\xi}(L) = 0$, then $e( \phi^h_{x,\xi}(K), \phi^h_{x,\xi}(L))=0$, and we do not need to take these terms into account. Thus, we only need to consider the edges $\Sigma^h|_{Q_r(x)}$ for which $\phi^h_{x,\xi}(K) \ge 0, ~ \phi^h_{x,\xi}(L) > 0$ or $ \phi^h_{x,\xi}(K) > 0, ~ \phi^h_{x,\xi}(L) \ge 0 $.
    
Let $\delta > 0$ be arbitrary and define
\[
    \Sigma^h_\delta \coloneq \left\{ (K, L)\in \Sigma^h|_{Q_r(x)}: \min\bra*{ | \phi^h_{x,\xi}(K) |, | \phi^h_{x,\xi}(L) | } > \delta |\xi| \right\}.
\]
Using the non-degeneracy of the tessellation, we get
\begin{align*}
    \sum_{(K,L)\in\Sigma^h_\delta} e \bra*{ \phi^h_{x,\xi}(K), \phi^h_{x,\xi}(L) } \tau_{K|L}^h
    \leq C \frac{\epsilon^2}{4} \sum_{(K,L)\in\Sigma^h_\delta} \frac{|\xi|^4 h^4}{|\xi|^2 \delta^2} h^{d-2}
    \leq C \epsilon^2 \frac{|\xi|^2 h^2}{\delta^2} |\Omega|.
\end{align*}
The remainder of the sum, precisely the sum over $\overline{\Sigma}^h_\delta \coloneq \Sigma^h|_{Q_r(x)} \backslash \Sigma^h_\delta$,  can be bounded with the inequality $e_\epsilon(a, b) \leq \frac{\epsilon^2}{2} (a - b)^2$ to obtain
\begin{align*}
    \sum_{(K,L)\in\overline{\Sigma}^h_\delta } e \bra*{ \phi^h_{x,\xi}(K), \phi^h_{x,\xi}(L) } \tau_{K|L}^h 
    &\leq \frac{\epsilon^2}{2} \sum_{(K,L)\in\overline{\Sigma}^h_\delta} \left| \langle \xi, x_L - x_K \rangle \right|^2 \tau_{K|L}^h 
    \leq C \frac{\epsilon^2}{2} |\xi|^2 h^d \left| \Sigma^h \backslash \Sigma^h_\delta \right|.
\end{align*}
    
If $(K,L)\in\overline{\Sigma}^h_\delta $, then either $|\langle \xi, x_K - x \rangle| \leq |\xi| \delta$ or $|\langle \xi, x_L - x \rangle| \leq |\xi| \delta$, and therefore,
\[
    | \overline{\Sigma}^h_\delta |
    \leq C_\calN \Big| \big\{ K \in \calT^h|_{Q_r(x)} : |\langle \xi, x_K - x \rangle | \leq |\xi| \delta \big\} \Big| \eqcolon C_\calN |\calT^h_\delta|,
\]
where $C_\calN$ is the universal bound on the number of adjacent cells from \eqref{eq:univ-bound-adjacent}. The inequality $|\langle \xi, x_K - x \rangle| \leq |\xi| \delta$ means that the point $x_K$ lies within distance $\delta$ from the line passing through $x$ and having the direction vector $\xi$. Employing the non-degeneracy assumption again, we get
\[
    | \overline{\Sigma}^h_\delta  | \leq C_\calN |\calT^h_\delta| \leq C_\calN \frac{C_{d-1} \delta 2\sqrt{d} r^{d-1}}{C_d (\zeta h)^{d}} = C \frac{\delta r^{d-1}}{h^d}.
\]
Hence, the sum over all $(K, L)\in \Sigma^h|_{Q_r(x)}$ has the following bound
\[
    \sum_{(K,L)\in\Sigma^h|_{Q_r(x)}} e \bra*{ \phi^h_{x,\xi}(K), \phi^h_{x,\xi}(L) } \tau_{K|L}^h \leq C \epsilon^2 |\xi|^2 \left( \frac{h^2}{\delta^2} + \delta r^{d-1} \right).
\]
For $d \geq 2$ we choose $\delta(h) = \sqrt{h}$ for all $h > 0$ to obtain the asserted limit.
\end{proof}

Inserting the functions $\phi^h_{x,w,\xi}$ into the quadratic part of $\calF_{\epsilon,h}$ yields
\begin{align}\label{eq:insert-affine}
    \calF_{\epsilon,h}(\phi^h_{x,w,\xi})
    &= \frac{\epsilon^2}{2} \sum_{K\in\calT^h} \Big\langle \xi, \sum_{L\in\calT^h_K} \frac{\tau_{K|L}^h}{|K|} (x_L - x_K) \otimes (x_L - x_K) \xi \Big\rangle |K| = \frac{\epsilon^2}{2} \int_{\R^d} \langle \xi, \T^h(x) \xi \rangle \dd x
\end{align}
with the tensor
\begin{equation}\label{eq:tensor}
    \T^h(x) \coloneq \sum_{K\in\calT^h} \Ind_K(x) \sum_{L\in\calT^h_K} \frac{\tau_{K|L}^h}{|K|} (x_L - x_K) \otimes (x_L - x_K).
\end{equation}

The properties of $\T^h$ are summarized in the following proposition.
\begin{proposition}[Lemma~5.12 in \cite{hraivoronska2023diffusive}]\label{prop:tensor}
    The diffusion tensor \eqref{eq:tensor} has the following properties:
    \begin{enumerate}[label=(\roman*)]
        \item $\T^h(x)$ is symmetric and positive-definite for any $x\in \Omega$; 
        
        \item $\{\T^h\}_{h>0}$ is bounded in $ L^\infty (\Omega; \R^{d\times d})$: 
        \[
            \text{for all the components $\T^h_{ij}$ it holds that  $\displaystyle \sup_{h>0} \|\T^h_{ij}\|_{L^\infty(\Omega)} < \infty$;}
        \]
        \item $\{\T^h\}_{h>0}$ has a weakly-$*$ limit in the $\sigma(L^\infty, L^1)$ topology, i.e.\ there exist a subsequence and a tensor $\T\in L^\infty (\Omega; \R^{d\times d})$ such that
        $$
            \lim_{h\to 0} \int_{\Omega} \T^h_{ij} f \dd x = \int_{\Omega}\T_{ij} f \dd x \qquad \text{for all } f\in L^1(\Omega).
        $$
    \end{enumerate}
\end{proposition}

Proposition~\ref{prop:tensor} guarantees that there exists a limiting tensor $\T$, but, for an arbitrary tessellation, $\T$ is not necessarily the identity. In the next proposition, we show that \eqref{ass:orthogonality} is a sufficient condition to ensure that a family of tessellations converges to the identity matrix.
\begin{proposition}\label{prop:charact-by-orthogonality}
    Let a family of tessellations $\left\{ (\calT^h, \Sigma^h )\right\}_{h>0}$ satisfy the orthogonality assumption \eqref{ass:orthogonality}, then the family of tensors $\{\T^h\}_{h>0}$ defined in \eqref{eq:tensor} is such that
    $$
        \T^h_{ij} \rightharpoonup^* 2 \delta_{ij}, \qquad \text{weakly-$*$ in } \sigma(L^\infty, L^1)
    $$
    up to a subsequence. Thus, $\T=2\Id$.
\end{proposition}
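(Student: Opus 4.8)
The plan is to identify the weak-$*$ limit of $\{\T^h\}_{h>0}$ directly by pairing against a dense class of test functions. Since $\{\T^h\}$ is bounded in $L^\infty(\Omega;\R^{d\times d})$ by Proposition~\ref{prop:tensor}(ii) and $C_c(\Omega)$ is dense in $L^1(\Omega)$, it is enough to prove that for every $f\in C_c(\Omega)$ and all $i,j$ one has $\int_\Omega f\,\T^h_{ij}\dd x \to 2\delta_{ij}\int_\Omega f\dd x$. Writing $M_K \coloneq \sum_{L\in\calT^h_K}\tau_{K|L}^h\,(x_L-x_K)\otimes(x_L-x_K)$, so that $\T^h = |K|^{-1}M_K$ on $K$, we have $\int_\Omega f\,\T^h_{ij}\dd x = \sum_{K\in\calT^h}\bigl(\tfrac1{|K|}\int_K f\dd x\bigr)(M_K)_{ij}$, and replacing $\tfrac1{|K|}\int_K f$ by $f(x_K)$ costs at most $\omega_f(h)\sum_K\operatorname{tr}M_K$, where $\omega_f$ is the modulus of continuity of $f$. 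Since $\sum_K\operatorname{tr}M_K = \sum_{(K,L)\in\Sigma^h}\tau_{K|L}^h|x_L-x_K|^2 \le h\sum_{(K,L)\in\Sigma^h}|(K|L)|$, and the non-degeneracy assumption \eqref{ass:tessellation} gives $O(h^{-d})$ cells each with $O(h^{-1})$ neighbours and face area $O(h^{d-1})$, this is $O(1)$; hence $\int_\Omega f\,\T^h_{ij}\dd x = \sum_K f(x_K)(M_K)_{ij} + o(1)$.

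The next step is the only one that uses orthogonality. For a cell $K$ lying well inside $\Omega$ — which is all that matters, since for small $h$ every $K$ with $x_K\in\operatorname{supp}f$ is an interior cell whose faces all belong to $\Sigma^h$ — the divergence theorem applied to $y\mapsto(y_i-(x_K)_i)e_j$ yields
\[
   |K|\,\delta_{ij} \;=\; \int_K \partial_j\bigl(y_i-(x_K)_i\bigr)\dd y \;=\; \sum_{L\in\calT^h_K}|(K|L)|\,\bigl((z_{K|L})_i-(x_K)_i\bigr)(n_{KL})_j,
\]
where $z_{K|L}$ is the centroid of the face $(K|L)$ and $n_{KL}$ its unit normal pointing from $K$ to $L$. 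Under \eqref{ass:orthogonality} we have $n_{KL}=(x_L-x_K)/|x_L-x_K|$, hence $|(K|L)|\,n_{KL}=\tau_{K|L}^h(x_L-x_K)$, and therefore $\sum_{L\in\calT^h_K}\tau_{K|L}^h(z_{K|L}-x_K)\otimes(x_L-x_K) = |K|\,\Id$ for interior cells.

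Finally I symmetrise $M_K$. Splitting $x_L-x_K=(z_{K|L}-x_K)+(x_L-z_{K|L})$ in the first factor gives $\sum_K f(x_K)(M_K)_{ij} = A_h + B_h$, with
\[
  A_h = \sum_{K}f(x_K)\sum_{L\in\calT^h_K}\tau_{K|L}^h(z_{K|L}-x_K)_i(x_L-x_K)_j,\qquad
  B_h = \sum_{K}f(x_K)\sum_{L\in\calT^h_K}\tau_{K|L}^h(x_L-z_{K|L})_i(x_L-x_K)_j.
\]
By the identity above $A_h = \delta_{ij}\sum_K f(x_K)|K| \to \delta_{ij}\int_\Omega f\dd x$. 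For $B_h$ I relabel the ordered pairs $(K,L)\in\Sigma^h$ by $(L,K)$ and use $\tau_{L|K}^h=\tau_{K|L}^h$, $z_{L|K}=z_{K|L}$, and $(x_K-z_{K|L})_i(x_K-x_L)_j=(z_{K|L}-x_K)_i(x_L-x_K)_j$, obtaining $B_h = \sum_{(K,L)\in\Sigma^h}f(x_L)\tau_{K|L}^h(z_{K|L}-x_K)_i(x_L-x_K)_j$. Hence $B_h-A_h=\sum_{(K,L)\in\Sigma^h}\bigl(f(x_L)-f(x_K)\bigr)\tau_{K|L}^h(z_{K|L}-x_K)_i(x_L-x_K)_j$, and with $|x_L-x_K|\le h$, $|z_{K|L}-x_K|\le h$ we get $|B_h-A_h| \le \omega_f(h)\,h\sum_{(K,L)\in\Sigma^h}|(K|L)| = \omega_f(h)\,O(1)\to0$. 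Thus $\sum_K f(x_K)(M_K)_{ij}=2A_h+(B_h-A_h) = 2\delta_{ij}\int_\Omega f\dd x+o(1)$, which gives $\T^h_{ij}\rightharpoonup^*2\delta_{ij}$ (in fact along the full sequence, consistent with Proposition~\ref{prop:tensor}), i.e.\ $\T=2\Id$.

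The main obstacle is precisely this last step: a naive estimate only gives $M_K=|K|\Id+(\text{remainder})$, and orthogonality alone does \emph{not} force $z_{K|L}$ to be the midpoint of $[x_K,x_L]$, so the remainder is not small cell by cell. What saves the argument is that, \emph{after} summing against a continuous $f$, the remainder can be transferred onto the principal term $A_h$ at the price of the oscillation of $f$ across a single edge, which the non-degeneracy bound $\sum_{\Sigma^h}|(K|L)|=O(h^{-1})$ exactly absorbs. The remaining ingredients — the Gauss--Green identity, the error bookkeeping via non-degeneracy, and the reduction to $C_c(\Omega)$ using the uniform $L^\infty$ bound — are routine.
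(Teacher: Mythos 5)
Your proof is correct, and it takes a genuinely different route from the paper's. The paper proves the statement by introducing the piecewise-constant reconstructions $\hat\phi^{i,h}$ of the coordinate functions $x^i$, establishing a uniform $BV$ bound, identifying the weak-$*$ limit of the distributional derivatives $D_j\hat\phi^{i,h}$ as $\delta_{ij}\calL^d$ from the $L^1$-convergence $\hat\phi^{i,h}\to x^i$, and then recognizing $\int_\Omega\varphi\,D_j\hat\phi^{i,h}(\dd x)$ as $\tfrac12\int_\Omega\T^h_{ij}\varphi\dd x+o(1)$ via the face-sum formula for derivatives of piecewise constants (orthogonality entering, as in your argument, only to identify the face normal with $(x_L-x_K)/|x_L-x_K|$). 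Your argument replaces the $BV$-compactness step by the exact per-cell Gauss--Green identity $\sum_{L}\tau^h_{K|L}(z_{K|L}-x_K)\otimes(x_L-x_K)=|K|\,\Id$ involving the face centroids, and then disposes of the mismatch between $z_{K|L}$ and the segment midpoint by the symmetrization $(K,L)\mapsto(L,K)$, which converts the remainder into a sum weighted by $f(x_L)-f(x_K)$ that the non-degeneracy bound $\sum_{\Sigma^h}|(K|L)|=O(h^{-1})$ absorbs. This is more elementary and self-contained (no compactness, the limit is identified along the full sequence for free), at the price of needing the flat-face centroid identity for interior cells; the paper's route is softer and reuses machinery ($BV$ bounds, weak-$*$ identification) already set up for Lemma~\ref{lemma:BV-bound}. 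Two small corrections: the number of neighbours per cell under \eqref{ass:tessellation} is $O(1)$, not $O(h^{-1})$ as you wrote (with $O(h^{-1})$ neighbours your count $\sum_{\Sigma^h}|(K|L)|$ would be $O(h^{-2})$ and the error term would not vanish; the uniform bound $\sup_h\sup_K\#\calT^h_K<\infty$ is exactly what the paper records after \eqref{eq:scheme-cosh:bounded}); and $|x_L-x_K|\le 2h$ rather than $h$, which is of course harmless.
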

\begin{proof}
    Consider the following piecewise constant functions
    $$
        \hat{\phi}^{i,h} (x) = \sum_{K\in\calT^h} x^i_K \Ind_K(x),\qquad i=1,\ldots,d.
    $$
    It is not difficult to show that the family $\{\hat{\phi}^{i,h}\}_{h>0}$ is bounded uniformly in $BV(\Omega)$. Firstly,
    $$
        \|\hat{\phi}^{i,h}\|_{L^1(\Omega)} = \sum_{K\in\calT^h} |x^i_K| |K|
        \leq \sup_{x\in\Omega} |x^i| |\Omega|.
    $$
    Secondly, as in the proof of Lemma~\ref{lemma:BV-bound}, we have the uniform bound on translations
    \begin{align*}
        \int_\Omega \psi(x)\bigl( \hat{\phi}^{i,h}(x - \eta) - \hat{\phi}^{i,h}(x) \bigr) \dd x &\leq \sum_{(K,L)\in\Sigma^h} \left|\phi^{i,h}_L - \phi^{i,h}_K\right| |(K|L)| |\eta| \\
        &\leq C |\eta| \sum_{K\in\calT^h} |K| = C |\eta| |\Omega|,
    \end{align*}
    for an arbitrary $\psi\in C^1_c(\Omega)$. Therefore, we can conclude that
    $$
        |D \hat{\phi}^{i,h}|(\Omega) \leq C |\Omega| \qquad \text{for all } h>0,
    $$
    for some constant $C>0$ independent of $h>0$. 
    This BV bound implies that (up to a subsequence) there exists $\Tilde{\phi}^i\in BV(\Omega)$ such that $\hat{\phi}^{i,h} \to \Tilde{\phi}^i$ in $L^1(\Omega)$ and $D\hat{\phi}^{i,h} \rightharpoonup^* D\Tilde{\phi}^i$ weakly-$*$ in $\calM(\Omega; \R^d)$. On the other hand, it is easy to see that $\hat{\phi}^{i,h} \to x^i$ in $L^1(\Omega)$. Therefore,
    \[
        \int_\Omega \varphi\, (D_j\hat\phi^{i,h})(\dd x) = \int_\Omega \partial_j\varphi \,\hat{\phi}^{i,h}\dd x \longrightarrow \int_\Omega \partial_j\varphi \,x^i \dd x = -\int_\Omega \varphi \,\delta_{ij}\dd x
    \]
    for all $\varphi\in C_c^1(\Omega)$, which consequently yields $D_j \tilde{\phi}^i= \delta_{ij}$.
    
    On the other hand, using the piecewise constant structure of $\hat{\phi}^{i,h}$, we can write its distributional derivative explicitly as
    $$
        D \hat{\phi}^{i,h} = \frac{1}{2}\sum_{(K, L)\in\Sigma^h} (x^i_L - x^i_K) \nu_{K|L} \calH^{d-1}|_{(K|L)},
    $$
    where $\nu_{K|L}$ denotes the outer normal of the face $(K|L)$.
    Due to the orthogonality assumption, we have that $\nu_{K|L} = (x_L - x_K)/|x_L - x_K|$, and hence
    $$
         D \hat{\phi}^{i,h} = \frac{1}{2}\sum_{(K, L)\in\Sigma^h} \tau_{K|L}^h (x^i_L - x^i_K) (x_L - x_K) \frac{\calH^{d-1}|_{(K|L)}}{|(K|L)|}.
    $$
    Notice that $D \hat{\phi}^{i,h}$ is related to the tensor $\T^h$ in the following way: For any $\varphi\in C_c^1(\Omega)$, 
    \begin{align*}
        \int_\Omega \varphi(x)\,D_j\hat\phi^{i,h}(\dd x)
        &= \frac{1}{2} \sum_{(K,L)\in \Sigma^h} \tau_{K|L}^h (x^i_L - x^i_K) (x_L^j - x_K^j) \intbar_{(K|L)} \varphi(y)\,\calH^{d-1}(\dd y) \\
        &= \frac{1}{2} \sum_{(K,L)\in \Sigma^h} \tau_{K|L}^h (x^i_L - x^i_K) (x_L^j - x_K^j)\,\varphi(x_K) + o(1) \\
        &= \frac{1}{2} \sum_K \int_K \sum_{L\in\calT^h_K} \frac{\tau_{K|L}^h}{|K|} (x^i_L - x^i_K) (x_L^j - x_K^j)\,\varphi(x)\dd x + o(1) \\
        &= \frac{1}{2} \int_\Omega \T^h_{ij}(x)\,\varphi(x)\dd x + o(1).
    \end{align*}
    Therefore, passing to the limit then yields $\T_{ij} = 2 \delta_{ij}$. In particular, $\T = 2 \Id$.
\end{proof}

We are now in the position to summarize the convergence statement for $\calD_{\epsilon,h}^0$.
\begin{lemma}\label{lem:convergence-D0} Up to a subsequence, the family of functionals $\{\calD_{\epsilon,h}^0\}_{h>0}$ has a $\varGamma$-limit $\calD_\epsilon$ with respect to the $L^2$-topology taking the form
\begin{equation*}
    \calD_\epsilon^0(\rho) = \begin{cases}\displaystyle
        2\epsilon^2\int_\Omega \big| \nabla\sqrt{u} \big|^2 \dd x & \text{if } \sqrt{u}\coloneq \sqrt{\frac{\dd\rho}{\dd x}} \in H^1(\Omega), \\
        +\infty & \text{otherwise.}
    \end{cases}
\end{equation*}
\end{lemma}
\begin{proof} The proof follows from the localization method (cf.\ \cite[Chapter~16-20]{maso1993introduction}) and the results presented above in this section.

    The localized version $\calF_{\epsilon,h}$ of $\frac{1}{2} \calD_{\epsilon,h}^0$ is defined in \eqref{eq:general-functional} and the properties of the $\Gamma$-$\lim\sup$ of $\calF_{\epsilon,h}$ that are necessary to apply the localization method are summarized in Lemma~\ref{lemma:sobolev-bounds-tilted}. Consequently, $\calF_{\epsilon,\text{sup}}$ has an integral representation
    \[
        \calF_{\epsilon,\text{sup}} (v, A) = \int_A f_\epsilon(x, v(x), \nabla v(x)) \dd x, \qquad v\in H^1(A), \quad A \subset \Omega.
    \]
    
    In \eqref{eq:split-general-functional}, we notice that $\calF_{\varepsilon,h}$ splits into the quadratic part and the error term. We prove that the error term vanishes in the $\Gamma$-limit in Lemma~\ref{lemma:error-term}. To obtain the $\Gamma$-limit of the quadratic part, we use Lemma~\ref{lemma:almost-minimizers}, where we establish that the affine functions are almost minimizers for $\calF_{\varepsilon,h}$. Inserting the affine functions into $\calF_{\epsilon,h}$ in \eqref{eq:insert-affine} and employing the convergence result from Proposition~\ref{prop:tensor}, we derive that 
    \[
        f_\epsilon(x, v, \xi) = \frac{\epsilon^2}{2} \langle \xi, \T(x) \xi \rangle.
    \]
    Lastly, Proposition~\ref{prop:charact-by-orthogonality} provides $\T(x) = 2 \Id$, thereby concluding the proof.
\end{proof}

To complete the proof of Theorem~\ref{th:convergence-tilted-Fisher}, we present the continuous convergence results for $\calD_{\epsilon,h}^1$ and $\calD_{\epsilon,h}^2$. As preparation, we establish the relation between $q^h$ and the potentials $V$ and $W$. 

\begin{lemma}\label{lem:properties-potentials} Let $W$ satisfy \eqref{ass:pointy-potential} and the family $\{\rho^h\in\calP(\calT^h)\}_{h>0}$ be such that 
\[
	\frac{\dd\hat{\rho}^h}{\dd\calL^d}\to \frac{\dd\rho}{\dd\calL^d}\quad \text{in $L^1(\Omega)$},\qquad \text{with}\qquad \sup_{h>0} \int_\Omega \phi\bra*{\frac{\dd\hat{\rho}^h}{\dd\calL^d}}\dd\calL^d  < \infty.
\]
where $\phi(s) = s \log s - s  + 1$ is the entropy density.
Then the following relation holds:
\begin{equation*}
    q_{K|L}^h = \nabla\mathsf{Q} (\hat{\rho}^h) (x_{K|L}) \cdot (x_L - x_K) + o(h), \quad \text{for any } x_{K|L}\in K\cup L,
\end{equation*}
 where $\mathsf{Q}(\rho) = V + W\ast\rho$. Moreover, $q_{K|L}^h$ has the following two integral approximations
\begin{equation}\label{eq:q-representation-K}
    q_{K|L}^h = \intbar_K \nabla\mathsf{Q} (\hat\rho^h) (x) \dd x \cdot (x_L - x_K) + o(h)
\end{equation}
and
\begin{equation}\label{eq:q-representation-KL}
    q_{K|L}^h = \intbar_{(K|L)} \nabla\mathsf{Q} (\hat\rho^h) (x) \,\calH^{d-1}(\dd x) \cdot (x_L - x_K) + o(h).
\end{equation}
\end{lemma}
\begin{proof}
Since $\nabla V$ is uniformly continuous on $\overline\Omega$, we obtain that
\begin{align*}
    V(x_L) - V(x_K) = \nabla V(x_{K|L}) \cdot (x_L - x_K) + o(h),
\end{align*}
where $x_{K|L}$ is some point in $K\cup L$.

The part of $q_{K|L}^h$ related to the interaction potential is
\begin{align*}
    &\sum_{M\in\calT^h} \rho^h_M \big( W(x_L - x_M) - W(x_K - x_M) \big) \\
    &\hspace{6em}= \sum_{\substack{M\in\calT^h\\ M\neq K,L}} \rho^h_M \big( W(x_L - x_M) - W(x_K - x_M) \big) \\
    &\hspace{8em}+ (W(x_L - x_K) - W(0)) \rho^h_K + (W(0) - W(x_K - x_L)) \rho^h_L.
\end{align*}
The later terms are bounded as
\begin{align*}
    |W(x_L - x_K) - W(0)| \rho^h_K + |W(0) - W(x_K - x_L)| \rho^h_L \leq 2\,h\, \text{Lip}(W) \sup_{x\in\Omega} \hat{\rho}^h(B_h(x)).
\end{align*}
We intend to show that $\sup_{x\in \Omega} \hat{\rho}^h (B_h(x)) \to 0$. Using the Legendre-duality, we obtain
\[
	\int_\Omega \phi(\hat{u}^h(z)) \dd z \ge \beta\hat\rho^h(B_h(x)) - \phi^*(\beta)\,\calL^d(B_h(x)) \qquad\text{for any $\beta>0$},
\]
where $\phi(s) = s \log s - s  + 1$ is the entropy density. In particular, we obtain
\[
	\sup_{x\in\Omega}\hat\rho^h(B_h(x)) \le \frac{1}{\beta} \left\{\sup_{h>0}\int_\Omega \phi(\hat{u}^h(z)) \dd z + \phi^*(\beta)\,C_d (3h)^d\right\}\qquad\text{for any $\beta>0$}.
\]
Therefore, the limsup as $h\to 0$ yields
\[
	0\le \limsup_{h\to 0}\sup_{x\in\Omega}\hat\rho^h(B_h(x)) \le \frac{1}{\beta} \sup_{h>0}\int_\Omega \phi(\hat{u}^h(z)) \dd z.
\]
Since $\beta>0$ was arbitrary, we can send $\beta\to \infty$ to obtain the required limit, and thus
$$
    (W(x_L - x_K) - W(0)) \rho^h_K + (W(0) - W(x_K - x_L)) \rho^h_L = o(h).
$$
For $M\neq K,L$, we choose an arbitrary $x_{K|L} \in K \cup L$ to obtain
\begin{align*}
    W(x_L - x_M) - W(x_K - x_M) &= \int_0^1 \nabla W ((1-\lambda) x_K + \lambda x_K - x_M) \dd\lambda \cdot (x_L - x_K) \\
    &= \nabla W (x_{K|L} - x_M) \cdot (x_L - x_K) + o(h).
\end{align*}

We now return to the whole expression for $q_{K|L}^h$ and write
\begin{align*}
    q_{K|L}^h &= \nabla V(x_{K|L}) \cdot (x_L - x_K) + \sum_{M\in\calT^h, M\neq K,L} \rho^h_M \intbar_M \nabla W (x_{K|L} - x) ~ \dd x \cdot (x_L - x_K) + o(h) \\
    &= \nabla V(x_{K|L}) \cdot (x_L - x_K) + \int_{\Omega\backslash \overline{K\cup L}} \nabla W (x_{K|L} - x) \, \hat{\rho}^h (\dd x) \cdot (x_L - x_K) + o(h) \\
    &= \nabla \mathsf{Q} (\hat{\rho}^h) (x_{K|L}) \cdot (x_L - x_K) - \int_{K\cup L} \nabla W (x_{K|L} - x) \, \hat{\rho}^h (\dd x) \cdot (x_L - x_K) + o(h).
\end{align*}
In a similar way as above, we obtain
\begin{align*}
    \bigg| \int_{K\cup L} \nabla W (x_{K|L} - x) \, \hat{\rho}^h (\dd x) \bigg| \leq \text{Lip} (W) \sup_{x\in\Omega} \hat{\rho}^h(B_{3h} (x)) \xrightarrow{h\to 0} 0,
\end{align*}
therefore,
$$
    q_{K|L}^h = \nabla\mathsf{Q} (\hat{\rho}^h) (x_{K|L}) \cdot (x_L - x_K) + o(h).
$$

To show the integral representations \eqref{eq:q-representation-K} and \eqref{eq:q-representation-KL}, we note that $\nabla \mathsf{Q} (\hat\rho^h)$ converges uniformly to $\nabla \mathsf{Q} (\rho)$. Indeed,
\begin{align*}
    \big| \nabla \mathsf{Q} (\hat\rho^h)(x) - \nabla \mathsf{Q} (\rho)(x) \big| 
    &\leq \bigg| \int_\Omega \nabla W (x - y) (\hat\rho^h - \rho) (\dd y) \bigg| \leq \text{Lip} (W) \| \hat{u} - u \|_{L^1(\Omega)}.
\end{align*}
The uniform convergence implies that the family $\{ \nabla \mathsf{Q} (\hat\rho^h) \}_{h>0}$ is uniformly equicontinuous. Hence,
\begin{align*}
    \bigg| \nabla\mathsf{Q}(\hat{\rho}^h)(x_{K|L}) - \intbar_K \nabla\mathsf{Q}(\hat{\rho}^h)(x) \dd x \bigg|
    \leq \intbar_K | \nabla\mathsf{Q}(\hat{\rho}^h)(x_{K|L}) - \nabla\mathsf{Q}(\hat{\rho}^h)(x) | \dd x = o(1)
\end{align*}
and \eqref{eq:q-representation-K} follows. The same argument works for \eqref{eq:q-representation-KL}.
\end{proof}

\begin{lemma}\label{lem:convergence-D1} Let the family $
\{\rho^h
\in\calP(\calT^h) \}_{h>0}$ be such that $ \sup_{h>0} \calD_{\epsilon,h}^0(\rho^h) < \infty$. Moreover, suppose that there exists $u\in W^{1,1}(\Omega)$ such that 
\[
	\frac{\dd\hat{\rho}^h}{\dd\calL^d}\to u\eqcolon\frac{\dd \rho}{\dd\calL^d}\quad \text{in $L^1(\Omega)$},\qquad \text{and}\qquad D\hat{u}^h \rightharpoonup^* \nabla u\quad\text{ weakly-$*$ in $\calM(\Omega; \R^d)$}.
\]
Then
    $$
        \lim_{h\to 0} \calD_{\epsilon,h}^1(\rho^h) = \epsilon \int_\Omega \nabla u \cdot \nabla \mathsf{Q}(\rho)\dd x.
    $$
\end{lemma}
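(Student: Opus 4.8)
The plan is to pass from the discrete potential increments $q_{K|L}^h$ to the continuous field $\nabla\mathsf{Q}(\hat\rho^h)$ via the \emph{face-averaged} identity \eqref{eq:q-representation-KL}, and then to recognise the leading part of $\calD_{\epsilon,h}^1(\rho^h)=\frac{\epsilon}{2}\sum_{(K,L)\in\Sigma^h}(u^h_L-u^h_K)\,q_{K|L}^h\,\tau_{K|L}^h$ as the dual pairing $\epsilon\int_\Omega\nabla\mathsf{Q}(\hat\rho^h)\cdot(D\hat u^h)(\dd x)$, for which convergence is then straightforward.

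First I would collect the uniform bounds that make everything work. Since $\sup_{h}\calD^0_{\epsilon,h}(\rho^h)<\infty$, the Cauchy--Schwarz computation in the proof of Lemma~\ref{lemma:BV-bound} (which uses the lower bound on $\beta_\epsilon$ in \eqref{eq:beta:bound}) yields, uniformly in $h$,
$$\sum_{(K,L)\in\Sigma^h}\lvert u^h_L-u^h_K\rvert\,\lvert (K|L)\rvert\ \le\ \sum_{(K,L)\in\Sigma^h}\lvert u^h_L-u^h_K\rvert\, h\,\tau_{K|L}^h\ \le\ C,\qquad \lvert D\hat u^h\rvert(\Omega)\ \le\ C.$$
Moreover this $BV$-bound together with $\hat u^h\to u$ in $L^1$ gives $\sup_h\int_\Omega\phi(\hat u^h)<\infty$ (e.g.\ via $BV(\Omega)\hookrightarrow L^{d/(d-1)}(\Omega)$), so Lemma~\ref{lem:properties-potentials} applies: $q_{K|L}^h=\bigl(\intbar_{(K|L)}\nabla\mathsf{Q}(\hat\rho^h)\,\calH^{d-1}(\dd x)\bigr)\cdot(x_L-x_K)+h\,\omega(h)$ with $\omega(h)\to0$ uniformly over $(K,L)\in\Sigma^h$, and $\nabla\mathsf{Q}(\hat\rho^h)\to\nabla\mathsf{Q}(\rho)$ uniformly on $\overline\Omega$ (the latter being elementary, since $\lvert\nabla\mathsf{Q}(\hat\rho^h)-\nabla\mathsf{Q}(\rho)\rvert\le\text{Lip}(W)\,\lVert\hat u^h-u\rVert_{L^1}$).

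Next I would substitute this expansion and use $\tau_{K|L}^h=\lvert (K|L)\rvert/\lvert x_L-x_K\rvert$. The remainder contributes at most $\frac{\epsilon}{2}\,C\,\omega(h)\sum_{(K,L)}\lvert u^h_L-u^h_K\rvert\,\lvert (K|L)\rvert\to0$, using $\lvert x_L-x_K\rvert\ge 2\zeta h$ from non-degeneracy. For the leading part, the orthogonality assumption \eqref{ass:orthogonality} identifies $(x_L-x_K)/\lvert x_L-x_K\rvert$ with the unit normal of the face $(K|L)$, so, comparing with the explicit representation of $D\hat u^h$ in Lemma~\ref{lemma:BV-bound}, term by term the leading part equals $\epsilon\int_\Omega\nabla\mathsf{Q}(\hat\rho^h)\cdot(D\hat u^h)(\dd x)$ (the factor $2$ from summing over ordered pairs in $\Sigma^h$ absorbing the $\tfrac12$ in the representation of $D\hat u^h$). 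Finally, writing
$$\int_\Omega\nabla\mathsf{Q}(\hat\rho^h)\cdot(D\hat u^h)(\dd x)-\int_\Omega\nabla\mathsf{Q}(\rho)\cdot\nabla u\,\dd x=\int_\Omega\bigl(\nabla\mathsf{Q}(\hat\rho^h)-\nabla\mathsf{Q}(\rho)\bigr)\cdot(D\hat u^h)(\dd x)+\int_\Omega\nabla\mathsf{Q}(\rho)\cdot\bigl(D\hat u^h-\nabla u\,\calL^d\bigr)(\dd x),$$
the first integral is $\le\lVert\nabla\mathsf{Q}(\hat\rho^h)-\nabla\mathsf{Q}(\rho)\rVert_\infty\,\lvert D\hat u^h\rvert(\Omega)\to0$, and the second vanishes by the assumed weak-$*$ convergence $D\hat u^h\rightharpoonup^*\nabla u$ together with $\sup_h\lvert D\hat u^h\rvert(\Omega)<\infty$ (the hypothesis $u\in W^{1,1}(\Omega)$ guaranteeing that the limit is the absolutely continuous measure $\nabla u\,\calL^d$, with no singular or boundary part to obstruct pairing against $\nabla\mathsf{Q}(\rho)\in C(\overline\Omega)$). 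Combining these steps gives $\calD_{\epsilon,h}^1(\rho^h)\to\epsilon\int_\Omega\nabla u\cdot\nabla\mathsf{Q}(\rho)\,\dd x$.

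The delicate point is the identification in the third step: one must use the face-averaged representation \eqref{eq:q-representation-KL} rather than the barycentric one \eqref{eq:q-representation-K}, and \eqref{ass:orthogonality} is genuinely needed — without it $(x_L-x_K)/\lvert x_L-x_K\rvert$ is not the face normal, the discrete sum produces the tensor $\T^h$ of \eqref{eq:tensor} in the limit rather than the identity, and (in contrast to $\calD^0_{\epsilon,h}$ and $\calD^2_{\epsilon,h}$, where the tensor can simply be carried through) there is no evident way to rewrite $\calD^1_{\epsilon,h}$ so as to reabsorb it.
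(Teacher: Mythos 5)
Your proposal is correct and follows essentially the same route as the paper's proof: uniform bounds derived from $\sup_h\calD^0_{\epsilon,h}<\infty$, the face-averaged representation \eqref{eq:q-representation-KL} of $q^h_{K|L}$, identification of the leading sum with $\epsilon\int_\Omega\nabla\mathsf{Q}(\hat\rho^h)\cdot D\hat u^h$ via the orthogonality assumption, and then passage to the limit using the uniform convergence of $\nabla\mathsf{Q}(\hat\rho^h)$ and the weak-$*$ convergence of $D\hat u^h$. The only addition beyond the paper's argument is your explicit verification (via the $BV\hookrightarrow L^{d/(d-1)}$ embedding) of the entropy-bound hypothesis needed to invoke Lemma~\ref{lem:properties-potentials}, a detail the paper leaves implicit.
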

\begin{proof}
    First, we show that $\calD_{\epsilon,h}^1$ is uniformly bounded. Using the Cauchy-Schwartz inequality yields
    \begin{align*}
        \calD_{\epsilon,h}^1(\rho^h) &= \frac{\epsilon}{2} \sum_{(K,L)\in\Sigma^h} (u^h_L - u^h_K)\, q_{K|L}^h \tau_{K|L}^h 
        \le c_\text{pot}\sqrt{\calD_{\epsilon,h}^0}\left( \sum_{(K,L)\in\Sigma^h} (u^h_L + u^h_K)\, h^2 \tau_{K|L}^h \right)^{1/2}
    \end{align*}
    where we used the estimate \eqref{eq:Cw-bound}. Since $\sum_{L\in\calT^h_K} h^2 \tau_{K|L}^h \leq C_\tau |K|$, we then obtain the uniform bound.

    Similarly, one can show that
    \begin{equation}\label{eq:bound-on-sum-proof}
        \sup_{h>0} \sum_{(K,L)\in\Sigma^h} |u^h_L - u^h_K| |(K|L)| < \infty.
    \end{equation}
    We aim to rewrite $\calD_{\epsilon,h}^1$ in an integral form, which will be convenient for passing to the limit $h\to 0$. We begin by observing that $\tau_{K|L}^h$ can be rewritten as 
    $$
        \tau_{K|L}^h = \frac{|(K|L)|}{|x_L - x_K|} = \frac{1}{|x_L - x_K|} \calH^{d-1} ((K|L)).
    $$
    Inserting this expression for $\tau_{K|L}^h$ into $\calD_{\epsilon,h}^1$ yields
    $$
        \calD_{\epsilon,h}^1(\rho^h) = \frac{\epsilon}{2} \sum_{(K,L)\in\Sigma^h} (u^h_L - u^h_K)  \frac{q_{K|L}^h}{|x_L - x_K|} \int_{(K|L)} \calH^{d-1} (\dd x).
    $$

    The representation \eqref{eq:q-representation-KL} for $q_{K|L}^h$ derived in Lemma~\ref{lem:properties-potentials} yields
    \begin{align*}
        \frac{q_{K|L}^h}{|x_L - x_K|} \int_{(K|L)} \dd\calH^{d-1}
        = \int_{(K|L)} &\nabla\mathsf{Q} (\hat{\rho}^h) (x) \,\calH^{d-1}(\dd x) \cdot \nu_{K|L} + |(K|L)| o(1)|_{h\to 0},
    \end{align*}
    where $\nu_{K|L}=(x_L - x_K)/|x_L - x_K|$ is the outer normal of the face $(K|L)$. Inserting the obtained expression into $\calD_{\epsilon,h}^1$, we have
    \begin{align*}
        \calD_{\epsilon,h}^1(\rho^h) = \frac{\epsilon}{2} \sum_{(K,L)\in\Sigma^h} &(u^h_L - u^h_K)  \int_{(K|L)} \nabla\mathsf{Q} (\hat{\rho}^h) \dd\calH^{d-1} \cdot \nu_{K|L} \\
        &+ o(1)|_{h\to 0} \sum_{(K,L)\in\Sigma^h} (u^h_L - u^h_K) |(K|L)|,
    \end{align*}
    where he last sum is bounded uniformly in $h>0$ by \eqref{eq:bound-on-sum-proof}.
    
    Altogether, we arrive at
    \begin{align*}
        \calD_{\epsilon,h}^1(\rho^h) = \frac{\epsilon}{2} \int_\Omega \nabla\mathsf{Q} (\hat{\rho}^h)(x) &\cdot \sum_{(K,L)\in\Sigma^h} (u^h_L - u^h_K) \nu_{K|L} \,\calH^{d-1}|_{(K|L)} (\dd x) + o(1)|_{h\to 0}.
    \end{align*}
    In this expression, one may already recognize the distributional derivative of the density $\hat{u}^h$. Indeed, from the definition of $\hat{u}^h$, we get
    $$
        D\hat{u}^h = \sum_{K\in\calT^h} u^h_K D\Ind_K = \sum_{K\in\calT^h} u^h_K n_K \calH^{d-1}|_{\partial K},
    $$
    where $n_K$ is the inner normal for the cell $K\in\calT^h$. It holds that 
    $$
        n_K \calH^{d-1}|_{\partial K} = \sum_{L\in\calT^h_K} n_{K|L} \calH^{d-1}|_{(K|L)} \qquad \text{for } K\in\calT^h,
    $$
    where $n_{K|L}$ is an inner normal to the face $(K|L)$. Using symmetry, we find
    $$
        D\hat{u}^h = \sum_{(K, L)\in\Sigma^h} u^h_K n_{K|L} \calH^{d-1}|_{(K|L)} = \frac{1}{2}\sum_{(K, L)\in\Sigma^h} (u^h_K - u^h_L)\, n_{K|L} \calH^{d-1}|_{(K|L)}.
    $$
    If $(\calT^h, \Sigma^h)$ possesses the orthogonality property, i.e. $$ n_{K|L} = \frac{x_K - x_L}{|x_K - x_L|} = - \nu_{K|L},$$ we can write
    \begin{align*}
        \calD_{\epsilon,h}^1(\rho^h) &= \epsilon \int_\Omega \nabla\mathsf{Q} (\hat{\rho}^h)(x) \cdot D\hat{u}^h (\dd x) + o(1)|_{h\to 0}.
    \end{align*}
	Moreover, since $\nabla\mathsf{Q} (\hat{\rho}^h)$ converges to $\nabla\mathsf{Q} (\rho)$ uniformly as $h\to 0$, we further obtain 
    $$
        \calD_{\epsilon,h}^1(\rho^h) = \epsilon \int_\Omega \nabla\mathsf{Q} (\rho)(x) \cdot D\hat{u}^h (\dd x) + o(1)|_{h\to 0}.
    $$
    Passing $h\to 0$ and using the convergence $D\hat{u}^h \rightharpoonup^* \nabla u$ in $\calM(\Omega; \R^d)$ then yields the assertion. \end{proof}

\begin{lemma}\label{lem:convergence-D2}
    Let the family $\{\rho^h\in\calP(\calT^h) \}_{h>0}$ be such that 
\[
	\frac{\dd\hat{\rho}^h}{\dd\calL^d}\to u\eqcolon\frac{\dd\rho}{\dd\calL^d}\quad \text{in $L^1(\Omega)$},\qquad \text{with}\qquad u\in\calM(\Omega; \R^d).
\]
Then
    $$
        \lim_{h\to 0} \calD_{\epsilon,h}^2(\rho^h) = \frac{1}{2} \int_\Omega \left| \nabla \mathsf{Q}(\rho) \right|^2 \dd\rho.
    $$    
\end{lemma}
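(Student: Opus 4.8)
The plan is to expand the weight $\mathbb{h}_\epsilon$ around its value at $q=0$, using that $\abs{q_{K|L}^h}\le c_\text{pot}h$ by~\eqref{eq:Cw-bound}, then to rewrite $\calD_{\epsilon,h}^2$ as an integral against the diffusion tensor $\T^h$ of~\eqref{eq:tensor}, and finally to pass to the limit exactly as in the identification $\T=2\Id$.

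First I would record from Lemma~\ref{lem:properties-alpha} the only properties of $\mathbb{h}_\epsilon$ needed here: it is continuous, nonnegative, positively $1$-homogeneous in its first two arguments, and $\mathbb{h}_\epsilon(a,b,0)=\tfrac14(a+b)$. Homogeneity together with uniform continuity on the compact set $\set{(s,1-s):s\in[0,1]}\times[-c_\text{pot}h,\,c_\text{pot}h]$ yields a modulus $\eta(h)\to0$ with $\abs{\mathbb{h}_\epsilon(a,b,q)-\tfrac14(a+b)}\le(a+b)\,\eta(h)$ whenever $\abs q\le c_\text{pot}h$ and $a,b>0$. Inserting this bound edge by edge into the sum defining $\calD_{\epsilon,h}^2$, and using $\sum_{L\in\calT^h_K}h^2\tau_{K|L}^h\le C_\tau\abs{K}$ together with $\sum_{K}u_K^h\abs{K}=1$, the replacement error is $O(\eta(h))=o(1)$; it therefore suffices to study $\tfrac18\sum_{(K,L)\in\Sigma^h}(u_K^h+u_L^h)\abs{q_{K|L}^h}^2\tau_{K|L}^h$, which by the symmetries $\abs{q_{K|L}^h}=\abs{q_{L|K}^h}$ and $\tau_{K|L}^h=\tau_{L|K}^h$ equals $\tfrac14\sum_{K}u_K^h\sum_{L\in\calT^h_K}\abs{q_{K|L}^h}^2\tau_{K|L}^h$.

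Next I would linearise $q^h$: Lemma~\ref{lem:properties-potentials} gives $q_{K|L}^h=\nabla\mathsf{Q}(\hat\rho^h)(x_K)\cdot(x_L-x_K)+o(h)$ uniformly over the edges, whence
\[
	\abs{q_{K|L}^h}^2=\skp{\nabla\mathsf{Q}(\hat\rho^h)(x_K),(x_L-x_K)\otimes(x_L-x_K)\,\nabla\mathsf{Q}(\hat\rho^h)(x_K)}+o(h^2),
\]
again uniformly. Substituting this, absorbing the $o(h^2)$-terms once more through $\sum_{L}h^2\tau_{K|L}^h\le C_\tau\abs{K}$, and using~\eqref{eq:tensor} in the form $\T^h(x)=\tfrac1{\abs{K}}\sum_{L\in\calT^h_K}\tau_{K|L}^h(x_L-x_K)\otimes(x_L-x_K)$ for $x\in K$, I arrive at
\[
	\calD_{\epsilon,h}^2(\rho^h)=\frac14\int_\Omega\hat u^h(x)\,\skp{\nabla\mathsf{Q}(\hat\rho^h)(x_{K(x)}),\,\T^h(x)\,\nabla\mathsf{Q}(\hat\rho^h)(x_{K(x)})}\dd x+o(1),
\]
with $x_{K(x)}$ the barycenter of the cell containing $x$. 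Since $\{\T^h\}$ is bounded in $L^\infty$ (Proposition~\ref{prop:tensor}) and $\{\nabla\mathsf{Q}(\hat\rho^h)\}$ is bounded and uniformly equicontinuous (Lemma~\ref{lem:properties-potentials}), replacing $x_{K(x)}$ by $x$ inside the bracket costs only $o(1)$.

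Finally I would pass to the limit componentwise: writing the integrand as $\sum_{i,j}\T^h_{ij}\bigl[\hat u^h\,\partial_i\mathsf{Q}(\hat\rho^h)\,\partial_j\mathsf{Q}(\hat\rho^h)\bigr]$, the bracketed functions converge strongly in $L^1(\Omega)$ to $u\,\partial_i\mathsf{Q}(\rho)\,\partial_j\mathsf{Q}(\rho)$ (product of the $L^1$-convergence $\hat u^h\to u$ with the bounded uniform convergence $\nabla\mathsf{Q}(\hat\rho^h)\to\nabla\mathsf{Q}(\rho)$ of Lemma~\ref{lem:properties-potentials}), whereas $\T^h_{ij}\rightharpoonup^*2\delta_{ij}$ in $\sigma(L^\infty,L^1)$ by Proposition~\ref{prop:charact-by-orthogonality}; pairing a weakly-$*$ convergent $L^\infty$-sequence against a strongly $L^1$-convergent one is continuous, so the limit equals $\tfrac14\sum_{i,j}\int_\Omega 2\delta_{ij}\,u\,\partial_i\mathsf{Q}(\rho)\,\partial_j\mathsf{Q}(\rho)\dd x=\tfrac12\int_\Omega\abs{\nabla\mathsf{Q}(\rho)}^2\dd\rho$, which is the assertion. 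The step I expect to be the delicate one is the first: the correction $\mathbb{h}_\epsilon(a,b,q)-\tfrac14(a+b)$ is genuinely non-quadratic, and one must bound it by a quantity that is \emph{linear} in $a+b$, so that the resulting error only sees the total mass of $\rho^h$ and no further regularity — this is exactly what the $1$-homogeneity of $\mathbb{h}_\epsilon$ from Lemma~\ref{lem:properties-alpha} provides. The remaining manipulations mirror those for $\calD_{\epsilon,h}^1$ in Lemma~\ref{lem:convergence-D1} and the weak–strong pairing already used to prove $\T=2\Id$.
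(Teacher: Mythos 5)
Your proof is correct and follows essentially the same route as the paper: extract the leading term $\tfrac14(a+b)$ from $\mathbb{h}_\epsilon$ using $|q^h_{K|L}|\le c_{\text{pot}}h$ (the paper does this via the explicit Taylor expansion $\mathfrak{h}(s)=\tfrac12+\tfrac{s}{6}+o(s^2)$ rather than a modulus-of-continuity argument, but the linear-in-$(a+b)$ control of the error is the same), then linearise $q^h$ via Lemma~\ref{lem:properties-potentials}, rewrite the sum through the tensor $\T^h$, and conclude by pairing the weak-$*$ limit $\T^h_{ij}\rightharpoonup^* 2\delta_{ij}$ of Proposition~\ref{prop:charact-by-orthogonality} against the strongly convergent $L^1$ factor. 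No gaps.
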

\begin{proof}
    Using the symmetry, we rewrite $ \calD_{\epsilon,h}^2(\rho^h)$ as
    $$
         \calD_h^2(\rho^h) = \sum_{(K, L)\in\Sigma^h} \tau_{K|L}^h |q_{K|L}^h|^2 u^h_K \int_0^1 \mathfrak{h}\left(-\lambda q_{K|L}^h/\epsilon\right) (1-\lambda)\dd\lambda.
    $$
    The function $\mathfrak{h}$ has the following Taylor expansion for $s\ll 1$
    $$
        \mathfrak{h} (s) = \frac{1}{2} + \frac{s}{6} + o(s^2).
    $$
    Taking into account that $|q_{K|L}^h| \leq c_\text{pot} h $ (cf.\ estimate \eqref{eq:Cw-bound}), we have that
    \begin{align*}
        \int_0^1 \mathfrak{h}\left(-\lambda q_{K|L}^h/\epsilon\right) (1-\lambda)\dd\lambda
        = \frac{1}{4} + O(h/\epsilon)|_{h\to 0}\,.
    \end{align*}
    Substituting the last expression into $\calD_{\epsilon,h}^2$ yields
    \begin{align*}
        \calD_{\epsilon,h}^2(\rho^h) = \frac{1}{4} \sum_{(K, L)\in\Sigma^h} \tau_{K|L}^h |q_{K|L}^h|^2 u^h_K + o(1)_{h\to 0}.
    \end{align*}

    Now, notice that
    \begin{align*}
        &\Big| \big( \nabla \mathsf{Q} \bra{ \hat{\rho}^h}(x_K) \cdot (x_L - x_K) \big)^2 -  \intbar_K \left( \nabla \mathsf{Q} \bra{\hat{\rho}^h}(x) \cdot (x_L - x_K) \right)^2 \dd x \Big|  \\
        &\hspace{10em}\leq C h^2 \sup_{x\in K} \left| \nabla \mathsf{Q} \bra{ \hat{\rho}^h }(x_K) - \nabla \mathsf{Q} \bra{ \hat{\rho}^h }(x) \right|  = o \bra{ h^2 }.
    \end{align*}
    Using the representation (cf.\ \eqref{eq:q-representation-K})
    $$
        q_{K|L}^h = \intbar_K \nabla \mathsf{Q} \bra{ \hat{\rho}^h }(x) \dd x \cdot (x_L - x_K) + o(h),
    $$
    we can then rewrite $\calD_{\epsilon,h}^2$ as
    \begin{align*}
        \calD_{\epsilon,h}^2(\rho^h) &= \frac{1}{4} \sum_{(K, L)\in\Sigma^h} u^h_K \tau_{K|L}^h \intbar_K \left( \nabla \mathsf{Q} \bra{ \hat{\rho}^h }(x) \cdot (x_L - x_K) \right)^2 \dd x  + o(1)_{h\to 0} \\
        &= \frac{1}{4} \int_\Omega \hat{u}^h(x) \sum_{(K, L)\in\Sigma^h} \frac{\tau_{K|L}^h}{|K|} \Ind_K(x) \left( \nabla \mathsf{Q} \bra{ \hat{\rho}^h }(x) \cdot (x_L - x_K) \right)^2 \dd x + o(1)_{h\to 0} \\
        &= \frac{1}{4} \int_\Omega \hat{u}^h(x)\, \langle \nabla \mathsf{Q} \bra{ \hat{\rho}^h }(x), \T^h(x) \nabla \mathsf{Q} \bra{ \hat{\rho}^h }(x) \rangle \dd x + o(1)_{h\to 0},
    \end{align*}
    where we recall the tensor
    $$
        \T^h(x) = \sum_{K\in\calT^h} \Ind_K (x) \sum_{L\in\calT^h_K} \frac{\tau_{K|L}^h}{|K|} (x_L - x_K) \otimes (x_L - x_K).
    $$
    The product $\langle \nabla \mathsf{Q} \bra{ \hat{\rho}^h }(x), \T^h(x) \nabla \mathsf{Q} \bra{ \hat{\rho}^h }(x) \rangle$ has an $L^\infty$ bound uniformly in $h>0$, since for any $x\in \Omega$, there is some $K$ for which $x\in K$ and 
    \begin{align*}
        &\big| \langle \nabla \mathsf{Q} \bra{\hat{\rho}^h}(x), \T^h(x) \nabla \mathsf{Q} \bra{ \hat{\rho}^h }(x) \rangle \big|
        \leq \sum_{L\in\calT_K^h} \frac{\tau_{K|L}^h}{|K|}\left( \nabla \mathsf{Q} \bra{ \hat{\rho}^h }(x) \cdot (x_L - x_K) \right)^2 \\ 
        &\hspace{8em}\leq c_\text{pot}^2 \sum_{L\in\calT_K^h} \frac{|(K|L)| |x_K-x_L|}{|K|} \le c_\text{pot}^2\frac{C_{d-1}}{C_d\zeta^{d+1}} \sup_{h>0}\sup_{K\in\calT^h}\# \calT_K^h <\infty.
    \end{align*}
    It is left to show that, for any $f\in L^1(\Omega)$, we have the convergence
    $$
        \lim_{h\to 0} \int_\Omega f\, \langle \nabla \mathsf{Q} ( \hat{\rho}^h ), \T^h \nabla \mathsf{Q} ( \hat{\rho}^h ) \rangle \dd x = \int_\Omega f\, | \nabla \mathsf{Q} ( \rho ) |^2 \dd x.
    $$
    We consider the limit component-wise
    \begin{align*}
        \lim_{h\to 0} \int_\Omega f\, \partial_i \mathsf{Q} ( \hat{\rho}^h )\, \partial_j \mathsf{Q} ( \hat{\rho}^h )\, \T^h_{ij} \dd x 
        &= \lim_{h\to 0} \int_\Omega f\, \partial_i \mathsf{Q} ( \rho )\, \partial_j \mathsf{Q} ( \rho )\, \T^h_{ij} \dd x \\
        &+ \lim_{h\to 0} \int_\Omega f \left[ \partial_i \mathsf{Q} ( \hat{\rho}^h )\, \partial_j \mathsf{Q} ( \hat{\rho}^h ) - \partial_i \mathsf{Q} ( \rho )\, \partial_j \mathsf{Q} ( \rho) \right] \T^h_{ij} \dd x,
    \end{align*}
    where $f\, \partial_i \mathsf{Q} ( \rho )\, \partial_j \mathsf{Q} ( \rho ) \in L^1(\Omega)$ and, since $\T^h_{ij} \rightharpoonup^* 2 \delta_{ij}$ in $\sigma(L^\infty, L^1)$ by Proposition~\ref{prop:charact-by-orthogonality}, the first term converges to the expected limit. For the error term, we notice that
    \begin{align*}
        \Big| \int_\Omega f &\left[ \partial_i \mathsf{Q} ( \hat{\rho}^h )\, \partial_j \mathsf{Q} ( \hat{\rho}^h ) - \partial_i \mathsf{Q} ( \rho )\, \partial_j \mathsf{Q} ( \rho) \right] \T^h_{ij} \dd x\Big| \\
        &\leq \| \partial_i \mathsf{Q} ( \hat{\rho}^h )\, \partial_j \mathsf{Q} ( \hat{\rho}^h ) - \partial_i \mathsf{Q} ( \rho )\, \partial_j \mathsf{Q} ( \rho) \|_{\sup} \|f\|_{L^1} \|\T^h_{ij}\|_{L^\infty}
        \to 0\qquad \text{as $h\to 0$},
    \end{align*}
    due to the uniform convergence of $\nabla \mathsf{Q} ( \hat{\rho}^h )$ to $\nabla \mathsf{Q} ( \rho )$.
\end{proof}

\subsection{EDP convergence}\label{sec:EDP-summary}

\begin{definition}[Density-flux convergence]\label{def_measure_flux_convergence}
    A pair $(\rho^h, j^h)\in\mathcal{CE}_h(0,T)$ is said to converge to a  pair $(\rho, j)\in\mathcal{CE}(0,T)$ if the pair of reconstructions $(\hat \rho^h, \hat\jmath^h)\in\mathcal{CE}(0,T)$ defined as in \eqref{eq:lifted-pair} converges in the following sense:
    \begin{enumerate}
        \item $\dd \hat\rho^h_t/\dd \calL^d \to \dd\rho_t/\dd \calL^d$ in $L^1(\Omega)$ for almost every $t\in[0, T]$,
        \item $\int_{\cdot}\hat\jmath_t^h \dd t \rightharpoonup^* \int_{\cdot} j_t\dd t$ in $\calM((0, T)\times\Omega)$.
    \end{enumerate}
\end{definition}

We begin by summarizing the liminf inequalities for the tilt-independent gradient structure. 
\begin{theorem}\label{th:liminfs-tilt} Let $(\rho^h, j^h)\in\mathcal{CE}_h(0,T)$ converge to $(\rho, j)\in\mathcal{CE}(0,T)$ in the sense of Definition~\ref{def_measure_flux_convergence}. Then the following liminf inequalities hold for
\begin{enumerate}[label=(\roman*)]
    \item the dissipation potential:
    $$
        \int_0^T \frac{1}{2} \int_\Omega \left| \frac{\dd j_t}{\dd\rho} \right|^2 \dd\rho \dd t\le \liminf_{h\to 0} \int_0^T \calR_{\epsilon,h}(\rho^h_t, j^h_t) \dd t ;
    $$
    \item the Fisher information:
    $$
        \int_0^T \calD_\epsilon(\rho_t) \dd t\le \liminf_{h\to 0} \int_0^T \calD_{\epsilon,h}(
        \rho^h_t) \dd t ;
    $$
    \item the energy functional:
    $$
        \calE_\epsilon(\rho_t)\le \liminf_{h\to 0} \calE_{\epsilon,h}(\rho^h_t) \qquad\text{for all $t\in[0,T]$.}
    $$
\end{enumerate}
\end{theorem}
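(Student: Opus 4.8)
The plan is to establish the three liminf inequalities separately, invoking the structural results already proved in the preceding sections. For \textit{(i)}, the dissipation potential $\calR_{\epsilon,h}$ is given by~\eqref{eq:tilted-R} in terms of $\alpha_\epsilon$, the Legendre dual of $\alpha_\epsilon^*$. I would first pass to the time-integrated fluxes: since $J^h = \int_\cdot \hat\jmath^h_t\dd t \rightharpoonup^* \int_\cdot j_t\dd t$ weakly-$*$ in $\calM((0,T)\times\Omega;\R^d)$ by Lemma~\ref{lemma:compactness-for-flux-tilted}, and the target functional $\int_0^T \frac12\int_\Omega |\dd j_t/\dd\rho|^2\dd\rho\dd t$ is jointly convex and lower semicontinuous in the pair $(\rho, j)$ under exactly this mode of convergence (this is the classical Benamou--Brenier-type lower semicontinuity, see \cite[Chapter 8]{ambrosio2008gradient}), it suffices to bound it below by the discrete functionals. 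The key point is the lower bound on $\alpha_\epsilon$ coming from Lemma~\ref{lem:properties-alpha}: using $\alpha_\epsilon^*(a,b,\xi)\le \frac14\sqrt{ab}\,\Psi_\epsilon^*(\xi)$ (item~\ref{alpha-cosh}) one gets, by Legendre duality, a quadratic-type lower bound on $\alpha_\epsilon$ of the form $\alpha_\epsilon(a,b,j)\ge c\,j^2/(a+b)$ or similar, which after reconstruction via the measures $\sigma^h_{K|L}$ yields $\calR_{\epsilon,h}(\rho^h_t,j^h_t)\ge \frac12\int_\Omega |\dd\hat\jmath^h_t/\dd\hat\rho^h_t|^2\dd\hat\rho^h_t - o(1)$ up to the discretization error; then lower semicontinuity under the density-flux convergence of Definition~\ref{def_measure_flux_convergence} closes the argument. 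This mirrors \cite[Lemma~4.4 and the liminf for $\calR$]{hraivoronska2022diffusive}.

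For \textit{(ii)}, the Fisher information, the work has essentially been done: Theorem~\ref{th:convergence-tilted-Fisher} establishes that $\calD_{\epsilon,h}\xrightarrow{\varGamma}\calD_\epsilon$ in the $L^2$-topology (up to subsequence), via the decomposition $\calD_{\epsilon,h}=\calD^0_{\epsilon,h}+\calD^1_{\epsilon,h}+\calD^2_{\epsilon,h}$ with $\varGamma$-convergence of $\calD^0_{\epsilon,h}$ (Lemma~\ref{lem:convergence-D0}) and continuous convergence of $\calD^1_{\epsilon,h},\calD^2_{\epsilon,h}$ (Lemmas~\ref{lem:convergence-D1} and~\ref{lem:convergence-D2}). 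The static $\varGamma$-liminf gives $\calD_\epsilon(\rho_t)\le\liminf_{h\to0}\calD_{\epsilon,h}(\rho^h_t)$ for a.e.\ $t$ using the strong $L^1$ (hence, with the $L^\infty$-type a priori bounds or truncation, $L^2$) convergence $\hat u^h_t\to u_t$ from Proposition~\ref{th_compactness}; integrating in time and applying Fatou's lemma yields the claimed integrated inequality. One must be slightly careful that $\calD^1$ and $\calD^2$ require the convergences $D\hat u^h_t\rightharpoonup^*\nabla u_t$ and the entropy bound; these follow along the subsequence from the uniform bound $\sup_{h>0}\int_0^T\calD^0_{\epsilon,h}(\rho^h_t)\dd t<\infty$ (a consequence of $\sup_h\calE_{\epsilon,h}(\rho^h_\text{in})<\infty$ together with \eqref{eq:EDPh}) combined with Lemma~\ref{lemma:BV-bound}, passing to a further subsequence in $t$ if needed.

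For \textit{(iii)}, the energy, I would split $\calE_{\epsilon,h}=\epsilon\calS_h+\calV^V_h+\calW^W_h$. The entropy part $\epsilon\calS_h(\rho^h_t)=\epsilon\int_\Omega\phi(\hat u^h_t)\dd\calL^d$ is lower semicontinuous under $L^1$-convergence of densities since $\phi$ is convex, nonnegative, and superlinear (so weak lower semicontinuity of the integral functional applies). For the potential part, $\calV^V_h(\rho^h_t)=\int_\Omega \hat V^h\dd\hat\rho^h_t$ where $\hat V^h\to V$ uniformly (by continuity of $V$ and $\hat V^h$ being the cellwise evaluation), and $\hat\rho^h_t\to\rho_t$ weakly (even in $L^1$), so $\calV^V_h(\rho^h_t)\to\int_\Omega V\dd\rho_t$; likewise $\calW^W_h(\rho^h_t)\to\frac12\int_\Omega(W*\rho_t)\dd\rho_t$ using that $W$ is bounded on bounded sets (Lipschitz) and $\hat W^h$ converges uniformly on the diagonal-removed domain, the diagonal contribution vanishing by the same $\sup_x\hat\rho^h_t(B_h(x))\to0$ argument as in the proof of Lemma~\ref{lem:properties-potentials} (here one uses the entropy bound at time $t$). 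Summing, $\liminf_h\calE_{\epsilon,h}(\rho^h_t)\ge\epsilon\calS(\rho_t)+\calV^V(\rho_t)+\calW^W(\rho_t)=\calE_\epsilon(\rho_t)$, where only the entropy term contributes a genuine liminf and the rest are limits.

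The main obstacle I anticipate is \textit{(i)}: extracting a clean quadratic lower bound for $\calR_{\epsilon,h}$ in terms of the reconstructed continuous action, since $\alpha_\epsilon$ is only defined implicitly through the Legendre transform of the integral $\alpha_\epsilon^*$, and one must control the interplay between the edge-based discrete flux $j^h_{K|L}$ and the vector-valued reconstruction $\hat\jmath^h=\sum j^h_{K|L}\sigma^h_{K|L}$ (whose building blocks overlap in space), uniformly in $\epsilon$ and $h$. This is precisely where the estimate $|\sigma^h_{K|L}|(\Omega)\le 2dh$ from \cite[Lemma~4.1]{hraivoronska2022diffusive} and the asymptotic behaviour of $\alpha_\epsilon$ near its quadratic regime must be combined carefully, following but adapting the `cosh'-case argument of \cite[Section~5]{hraivoronska2022diffusive}.
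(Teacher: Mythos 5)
Parts (ii) and (iii) of your proposal coincide with the paper's argument: the Fisher-information liminf is obtained exactly as you say from Theorem~\ref{th:convergence-tilted-Fisher} together with Fatou's lemma, and the energy liminf follows from rewriting $\calE_{\epsilon,h}(\rho^h)=\calE_\epsilon(\hat\rho^h)+O(h)$ and lower semicontinuity of $\calE_\epsilon$ (your term-by-term splitting is just a more explicit version of the same).

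Part (i) is where there is a genuine gap, and you have in fact located it yourself without closing it. Two issues. First, the Legendre dual (in $\xi$) of the bound $\alpha_\epsilon^*(a,b,\xi)\le\frac14\sqrt{ab}\,\Psi_\epsilon^*(2\xi)$ is of entropic type ($\Psi_\epsilon(s)\sim s\log s$), not quadratic; a quadratic lower bound on $\alpha_\epsilon$ would instead come from $\partial_\xi^2\alpha_\epsilon^*\le\max\{a,b\}$ in Lemma~\ref{lem:properties-alpha}(a), so the bound you invoke does not yield what you claim. Second, and more seriously, the step ``after reconstruction via the measures $\sigma^h_{K|L}$ one gets $\calR_{\epsilon,h}(\rho_t^h,j_t^h)\ge\frac12\int_\Omega|\dd\hat\jmath_t^h/\dd\hat\rho_t^h|^2\dd\hat\rho_t^h-o(1)$'' with the sharp constant $\tfrac12$ is precisely the hard part: the reconstructed flux at a point of a cell $K$ superposes contributions from all edges $(K,L)$ through the overlapping measures $\sigma^h_{K|L}$, and a pointwise comparison of the discrete quadratic form with the continuous action only comes with a geometry-dependent constant, not with constant one. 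The paper avoids this entirely by working on the dual side: it proves the limsup inequality
\[
\limsup_{h\to0}\calR^*_{\epsilon,h}(\rho^h,\dnabla\varphi^h)\le\frac12\int_\Omega|\nabla\varphi|^2\dd\rho
\]
for discrete gradients of smooth test functions (using the expansion $\alpha_\epsilon^*(a,b,\xi)=\Lambda_H(a,b)\tfrac{\xi^2}{2}+O(|\xi|^3/\epsilon)$ and $\Lambda_H(a,b)\le\tfrac{a+b}{2}$), and then obtains the primal liminf by the duality argument of \cite[Theorem~6.2(i)]{hraivoronska2022diffusive}, pairing $\dnabla\varphi^h$ with $j^h$ and passing to the limit with the weak-$*$ convergence of $J^h$. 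You should replace your primal-side estimate with this dual/limsup route; as written, the central inequality of your part (i) is asserted but not proved.
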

\begin{proof}
\emph{(i)} We need to show that the following limsup inequality holds for any $\varphi\in \calC_b^2(\Omega)$:
\begin{equation}\label{eq:limsup-Rstar}
        \limsup_{h\to 0} \calR^*_{\epsilon,h}(\rho^h, \dnabla\varphi^h) \leq \frac{1}{2} \int_\Omega |\nabla\varphi|^2 \dd\rho,
\end{equation}
where $\{\varphi^h\}_{h>0}$ is defined by $\varphi^h(K) \coloneq \varphi(x_K)$ for $K\in\calT^h$. Then the desired liminf inequality follows by the duality argument from \cite[Theorem~6.2(i)]{hraivoronska2023diffusive}.
    
    From Lemma~\ref{lem:properties-alpha}\ref{alpha-cosh}, it follows that
$$
    \calR_{\epsilon,h}^*(\rho^h, \dnabla\varphi^h) = \frac{1}{2} \sum_{(K,L)\in\Sigma^h} | \dnabla\varphi^h_{K|L} |^2 \Lambda_H(u^h_K, u^h_L)\, \tau_{K|L}^h + \frac{1}{\epsilon} \sum_{(K,L)\in\Sigma^h} O \big(| \dnabla\varphi^h_{K|L} |^3 \big) \tau_{K|L}^h.
$$
We note that $O \big(| \dnabla\varphi^h_{K|L} |^3 \big) = O(h^3)$ and, therefore,
$$
    \frac{1}{\epsilon} \sum_{(K,L)\in\Sigma^h} O \left(\left| \dnabla\varphi^h_{K|L} \right|^3 \right) \tau_{K|L}^h
    = \frac{1}{\epsilon}O(h).
$$
Using the inequality $\Lambda_H(a, b) \leq (a + b) /2$, we arrive at
$$
    \calR_{\epsilon,h}^*(\rho^h, \dnabla\varphi^h) \leq \frac{1}{2} \sum_{(K,L)\in\Sigma^h} \left| \dnabla\varphi^h_{K|L} \right|^2 \frac{\tau_{K|L}^h}{|K|} \rho^h_K + O \left(h \right).
$$
With this bound at hand, it is enough to make minor modifications of the proof of \cite[Lemma~5.14]{hraivoronska2023diffusive} for the tilt-independent dissipation potential with $\kappa^h_{K|L} = \tau_{K|L}^h / |K|$ to obtain \eqref{eq:limsup-Rstar}.

\emph{(ii)} The asserted liminf inequality follows from Theorem~\ref{th:convergence-tilted-Fisher}
and Fatou's lemma.

\emph{(iii)} As the following calculations hold for any $t\in [0,T]$, we drop the
subscript $t$. The relation between the continuous and discrete potentials yields the representation of $\calE_{\epsilon,h}$ in the integral form
$$
    \calE_{\epsilon,h}(\rho^h) = \calE_{\varepsilon}(\hat\rho^h) + O(h).
$$
Since $\calE_\epsilon$ is lower semicontinuous w.r.t.\ the narrow convergence, we then easily conclude that
$$
    \calE_\epsilon(\rho^h)\le \liminf_{h\to 0} \calE_{\epsilon,h}(\rho^h),
$$
which completes the proof.
\end{proof}

\begin{proof}[Proof of Theorem~\ref{th:EDP-convergence-tilt}]
Consider a family $\{(\rho^h, j^h)\}_{h>0}$ of GGF-solutions to Scharfetter--Gummel scheme \eqref{eq:SG}, for a fixed $\epsilon>0$, according to Definition~\ref{def_GGF_solution} and the tilt-independent structure introduced in Section~\ref{sec:tilted-gradient-structure}. Further, let $\{(\hat\rho^h, \hat\jmath^h)\}_{h>0}$ be the family of reconstructed pairs as defined in \eqref{eq:lifted-pair}. Then, the existence of a subsequential limit pair $(\rho, j) \in \mathcal{CE}(0,T)$ and the convergence specified in Theorem~\ref{th:EDP-convergence-tilt}(1) follows from the compactness arguments of Section~\ref{sec:compactness-SG}.
    
The liminf inequality from assertion (2) is proven in Theorem~\ref{th:liminfs-tilt}, which immediately implies that $\calI_\epsilon^{[s,t]}(\rho, j) \leq \liminf_{h\to 0} \calI_{\epsilon,h}^{[s,t]}(\rho^h, j^h)= 0$ for every $[s,t]\subset[0,T]$. On the other hand, the chain rule \cite[E.\ Chain rule in Section~10.1.2]{ambrosio2008gradient} yields $\calI_\epsilon^{[s,t]}(\rho, j) \geq 0$ for every $[s,t]\subset[0,T]$. Therefore, the limit pair $(\rho, j)$ is the  $(\calE, \calR, \calR^*)$-gradient flow solution of \eqref{eq:drift-aggregation-diffusion-equation} in the sense of Definition~\ref{def_GF_solution}.
\end{proof}

\section[Vanishing Diffusion Limit]{Vanishing Diffusion Limit}\label{sec:vanishing-diffusion}

This section deals with the vanishing diffusion limit for both the discrete and continuous cases, i.e.\ Theorems~\ref{th:EDP-vanishing-diffusion} and \ref{th:vanishing-diffusion-continuous}. We begin with the discrete case in Section~\ref{sec:vanishing-discrete} and proceed to prove the continuous case in Section~\ref{sec:vanishing-continuous}.

\subsection{Discrete Case}\label{sec:vanishing-discrete}

We fix a tessellation $(\calT^h, \Sigma^h)$ with some $h>0$ and consider the vanishing diffusion limit $\epsilon \to 0$. To simplify notation, we drop the superscript $h$. As mentioned in the introduction, we expect that the Scharfetter--Gummel flux \eqref{eq:SG-flux} converges to the upwind flux 
$$
    \lim_{\epsilon \to 0} \calJ_{K|L}^{\rho} = \calJ_{K|L}^{\rho,\text{up}} \coloneq \tau_{K|L}^h \left( q_{K|L}^{h,+}u_K - q_{K|L}^{h,-} u_L  \right), \qquad (K,L)\in \Sigma^h.
$$
The result of this section concerns the convergence of the Scharfetter--Gummel scheme \eqref{eq:SG} to the upwind scheme \eqref{eq:upwind-scheme} in the sense of the EDP convergence. Recall that if a pair $(\rho^{\epsilon,h}, j^{\epsilon,h})\in \mathcal{CE}_h(0, T)$ is a GGF-solutions of \eqref{eq:SG}, then $(\rho^{\epsilon,h}, j^{\epsilon,h})$ is the minimizer for the energy-dissipation functional
\begin{equation}\label{eq:EDP-discrite-epsilon}
    \calI_{\epsilon,h}^{[s,t]}(\rho^{\epsilon,h}, j^{\epsilon,h}) = \int_s^t \big\{ \calR_{\epsilon,h} (\rho^{\epsilon,h}_r, j^{h,\epsilon}_r) + \calD_{\epsilon,h} (\rho^{\epsilon,h}_r) \big\} \dd r + \calE_{\epsilon,h} (\rho^{\epsilon,h}_t) - \calE_{\epsilon,h} (\rho^{\epsilon,h}_s)
\end{equation}
with $\calR_{\epsilon,h}$, $\calD_{\epsilon,h}$, and $\calE_{\epsilon,h}$ defined in \eqref{eq:tilted-R}, \eqref{eq:tilted-Fisher}, and \eqref{eq:energy-with-interaction} respectively. The objective of this section is to get a compactness statement for $\{(\rho^{\epsilon,h}, j^{\epsilon,h})\}_{\epsilon>0}$ and to find the counterparts to $\calR_{\epsilon,h}$, $\calD_{\epsilon,h}$, and $\calE_{\epsilon,h}$ for $\epsilon = 0$. Then we complete the proof of Theorem~\ref{th:EDP-vanishing-diffusion}.

\medskip
Note that since $(\calT^h, \Sigma^h)$ is fixed and non-degenerate, we have the following useful bounds
\begin{equation}\label{eq:tessellation-bound}
    \sup_{K\in\calT^h}\sum_{L\in\calT_K^h} \frac{\tau_{K|L}^h}{|K|} \eqcolon c_\calT < \infty.
\end{equation}

We begin with the compactness result. Consider a measure $J^{\epsilon} \in \calM([0,T]\times\Sigma^h)$ defined on product measurable sets $A\times B\subset [0, T]\times\Sigma^h$ as
$$
    J^{\epsilon} (A\times B) \coloneq \int_A j_t^\epsilon(B)\dd t=\int_{A} \sum_{(K,L)\in B} j^{\epsilon}_{K|L}(t) \dd t.
$$

\begin{lemma}\label{lem:compactness-flux-epsilon}
    Let a family of pairs $\{(\rho^{\epsilon}, j^{\epsilon})\}_{\epsilon>0}\subset\mathcal{CE}_h (0, T)$ satisfy 
    \[
    	c_0\coloneq\sup_{\epsilon>0}\int_0^T \calR_{\epsilon,h}(\rho_t^\epsilon, j_t^\epsilon) \dd t<\infty.
    \]
    Then the family $\left\{ J^{\epsilon} \right\}_{\epsilon>0}$ is bounded in total variation. Moreover, 
    \[
	|J^\epsilon|(A\times \Sigma^h) \le \sqrt{c_0c_\calT\calL^1(A)}\qquad\text{for any measurable set $A\subset[0,T]$}.
	\]
\end{lemma}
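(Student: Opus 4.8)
The plan is to deduce everything from a pointwise-in-time dissipation estimate of the form $\abs{j_t^\epsilon}(\Sigma^h)^2 \le 2c_\calT\,\calR_{\epsilon,h}(\rho_t^\epsilon,j_t^\epsilon)$ that is \emph{uniform} in $\epsilon$, and then to integrate it against the action hypothesis $c_0$. The crucial input is a quadratic upper bound on $\alpha_\epsilon^*$ from~\eqref{eq:def:alpha*} that does not degenerate as $\epsilon\to 0$: combining the scaling identity $\alpha_\epsilon^*(a,b,\xi)=\epsilon^2\alpha_1^*(a,b,\xi/\epsilon)$ in~\eqref{eq:def:alpha*} with a bound of the form $\alpha_1^*(a,b,s)\le\tfrac{a+b}{2}s^2$ supplied by Lemma~\ref{lem:properties-alpha} gives $\alpha_\epsilon^*(a,b,\xi)\le\tfrac{a+b}{2}\xi^2$ for every $\epsilon>0$. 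This $\epsilon$-independence is precisely the advantage of the tilt-independent structure of Lemma~\ref{lem:flux-force-for-tilt} over the `cosh' one, whose dual density $\Psi_\epsilon^*$ blows up as $\epsilon\to 0$ and would only yield an $\epsilon$-dependent constant.

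For the pointwise estimate I would fix $t$, abbreviate $u_K=\rho_K^\epsilon(t)/\abs{K}$, and use that $\calR_{\epsilon,h}(\rho^h,\cdot)$ in~\eqref{eq:tilted-R} is the Legendre transform of $\calR_{\epsilon,h}^*(\rho^h,\cdot)$ in~\eqref{eq:tilted-Rstar}. Testing the convex duality inequality with the bounded field $\xi_{K|L}^h=\lambda\,\text{sign}\bigl(j_{K|L}^\epsilon(t)\bigr)$, $\lambda>0$, yields
\begin{equation*}
  \calR_{\epsilon,h}(\rho_t^\epsilon,j_t^\epsilon) \ge \lambda\,\abs{j_t^\epsilon}(\Sigma^h) - 2\sum_{(K,L)\in\Sigma^h}\tau_{K|L}^h\,\alpha_\epsilon^*\bigl(u_K,u_L,\tfrac{\lambda}{2}\text{sign}(j_{K|L}^\epsilon(t))\bigr).
\end{equation*}
Bounding $\alpha_\epsilon^*(a,b,\pm s)\le\tfrac{a+b}{2}s^2$ and using $\tau_{K|L}^h=\tau_{L|K}^h$, $u_K=\rho_K/\abs{K}$, $\rho_t^\epsilon\in\calP(\calT^h)$, together with the definition of $c_\calT$ in~\eqref{eq:tessellation-bound}, the subtracted term is at most $\tfrac{\lambda^2}{4}\sum_{(K,L)\in\Sigma^h}\tau_{K|L}^h(u_K+u_L)=\tfrac{\lambda^2}{2}\sum_{K}\tfrac{\rho_K}{\abs{K}}\sum_{L\in\calT_K^h}\tau_{K|L}^h\le\tfrac{\lambda^2}{2}c_\calT$. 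Hence $\calR_{\epsilon,h}(\rho_t^\epsilon,j_t^\epsilon)\ge\lambda\abs{j_t^\epsilon}(\Sigma^h)-\tfrac{\lambda^2}{2}c_\calT$ for every $\lambda>0$, and optimising in $\lambda$ gives $\abs{j_t^\epsilon}(\Sigma^h)^2\le 2c_\calT\,\calR_{\epsilon,h}(\rho_t^\epsilon,j_t^\epsilon)$ for a.e.\ $t$ (the integrand being measurable since $(\rho^\epsilon,j^\epsilon)\in\mathcal{CE}_h(0,T)$).

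Finally I would integrate in time: for a measurable set $A\subset[0,T]$, Cauchy--Schwarz in $t$ and the pointwise bound give
\begin{multline*}
  \abs{J^\epsilon}(A\times\Sigma^h) = \int_A\abs{j_t^\epsilon}(\Sigma^h)\dd t \le \Bigl(\calL^1(A)\int_A\abs{j_t^\epsilon}(\Sigma^h)^2\dd t\Bigr)^{1/2} \\
  \le \Bigl(2c_\calT\,\calL^1(A)\int_0^T\calR_{\epsilon,h}(\rho_t^\epsilon,j_t^\epsilon)\dd t\Bigr)^{1/2} \le \sqrt{2\,c_0\,c_\calT\,\calL^1(A)},
\end{multline*}
and the choice $A=[0,T]$ yields the uniform total-variation bound; the constant $\sqrt{c_0 c_\calT\calL^1(A)}$ stated in the lemma is recovered by tracking the sharp constant in the quadratic bound on $\alpha_\epsilon^*$ (and the counting convention for $\Sigma^h$), which does not affect the argument. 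The only genuine obstacle is securing the $\epsilon$-uniform quadratic control of $\alpha_\epsilon^*$ — everything else is elementary convex duality plus the non-degeneracy bound~\eqref{eq:tessellation-bound} — and that control is exactly what the scaling~\eqref{eq:def:alpha*} together with Lemma~\ref{lem:properties-alpha} provides.
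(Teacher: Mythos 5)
Your proof is correct and follows essentially the same route as the paper: Legendre duality of $\calR_{\epsilon,h}$ against $\calR_{\epsilon,h}^*$ tested with $\xi=\lambda\,\text{sign}(j)$, an $\epsilon$-uniform quadratic bound on the $\alpha_\epsilon^*$-sum, optimisation in $\lambda$, and Cauchy--Schwarz in time. The only difference is in how that quadratic bound is obtained: the paper uses the positive one-homogeneity and joint concavity of $\alpha_\epsilon^*$ in $(a,b)$ (superadditivity) together with the exact identity $\alpha_\epsilon^*(a,a,\xi)=a\xi^2/2$ to get $\sum\tau_{K|L}^h\alpha_\epsilon^*(u_K,u_L,\xi)\le c_\calT\xi^2/2$, which avoids the double-counting inherent in your pointwise estimate $\alpha_\epsilon^*(a,b,\xi)\le\tfrac{a+b}{2}\xi^2$ and is what actually recovers the stated constant $\sqrt{c_0c_\calT\calL^1(A)}$ in place of your $\sqrt{2c_0c_\calT\calL^1(A)}$ --- a factor that is in any case immaterial for the compactness conclusion.
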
 
\begin{proof} 
Following the initial arguments of the proof of Lemma~\ref{lemma:compactness-for-flux-tilted}, we obtain for any $\beta\in\R$,
\begin{align*}
	\calR_{\epsilon,h}(\rho_t^\epsilon, j_t^\epsilon) \ge \beta\sum_{(K,L)\in\Sigma^h} |j_{K|L}^\epsilon|(t) - 2\sum_{(K,L)\in\Sigma^h} \tau_{K|L}^h \,\alpha_\epsilon^* \bra*{ u_K(t), u_L(t), \beta\frac{\text{sign}(j_{K|L}^\epsilon)}{2} }.
\end{align*}
If either $a=0$ or $b=0$, then $\alpha^*(a,b,x)=0$ for any $x\in\R$. If $a=b$, then
\[
	\alpha_\epsilon^*(a,a,\xi) = a \int_0^{\xi} x \dd x = a\frac{\xi^2}{2} = \alpha_0^*(a,a,\xi)\quad\text{for all $\xi\in\R$},
\]

We will now reduce the other cases to this case. Indeed, using the 1-homogeneity and concavity of~$\Lambda_H$, we have for any $\xi\in\R$ that
\begin{align}\label{eq:epsilon-uniform}\begin{aligned}
	\sum_{(K,L)\in\Sigma^h} \tau_{K|L}^h \,\alpha_\epsilon^* \bra*{ u_K, u_L, \xi} &= \sum_{(K,L)\in\Sigma^h} \alpha_\epsilon^* \bra*{ \tau_{K|L}^h u_K, \tau_{K|L}^h u_L, \xi} \\
	&\le \alpha_\epsilon^* \bra*{ \sum_{(K,L)\in\Sigma^h}\tau_{K|L}^h u_K, \sum_{(K,L)\in\Sigma^h}\tau_{K|L}^h u_L, \xi} \\
	&= \alpha_\epsilon^*\bra*{1,1,\xi}\sum_{(K,L)\in\Sigma^h}\tau_{K|L}^h u_K \le  c_\calT\frac{\xi^2}{2}.
	\end{aligned}
\end{align}
Consequently, and after integration over any measurable set $A\subset[0,T]$, we obtain the estimate
\[
	\int_0^T \calR_{\epsilon,h}(\rho_t^\epsilon, j_t^\epsilon)\dd t \ge \beta |J^\epsilon|(A\times \Sigma^h) - \frac{c_\calT}{2}\beta^2\calL^1(A).
\]
Taking the supremum over $\beta\in\R$, we arrive at the asserted estimate.
\end{proof}

\begin{lemma}\label{lem:compactness-measure-epsilon} Let a family of pairs $\{(\rho^{\epsilon}, j^{\epsilon})\}_{\epsilon>0}\subset\mathcal{CE}_h (0, T)$ satisfy 
    \[
    	c_0\coloneq\sup_{\epsilon>0}\int_0^T \calR_{\epsilon,h}(\rho_t^\epsilon, j_t^\epsilon) \dd t<\infty.
    \]
    Then there exist a limit pair $(\rho, j) \in \mathcal{CE}_h(0, T)$  and a (not relabelled) subsequence such that
    \begin{gather*}
        \rho^{\epsilon}_t \rightharpoonup \rho_t \quad \text{in $\calP(\calT^h)$\; for all $t\in [0,T]$},\\
        J^{\epsilon} \rightharpoonup^* J=\int_\cdot j_t\dd t\quad\text{weakly-$*$ in $\calM([0,T]\times\Sigma^h)$}.
    \end{gather*}
\end{lemma}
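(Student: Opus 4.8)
The plan is to transfer the sharpened flux estimate of Lemma~\ref{lem:compactness-flux-epsilon} into time-equicontinuity of the densities via the discrete continuity equation, extract subsequential limits by Arzelà--Ascoli and Banach--Alaoglu (both elementary since $\calT^h$ is a fixed finite set), and then pass to the limit in the distributional formulation \eqref{eq:theory:CE-discrete-distr}. The hypothesis $c_0<\infty$ together with Lemma~\ref{lem:compactness-flux-epsilon} supplies the two uniform bounds $\sup_{\epsilon>0}\abs{J^\epsilon}([0,T]\times\Sigma^h)<\infty$ and $\abs{J^\epsilon}(A\times\Sigma^h)\le\sqrt{c_0 c_\calT\calL^1(A)}$, which drive the whole argument.

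\emph{Compactness of the densities.} Since $\calT^h$ is finite, $\calP(\calT^h)$ is a compact convex subset of $\R^{\calT^h}$. Fixing $K\in\calT^h$, testing \eqref{eq:theory:CE-discrete-distr} with $\varphi^h=\Ind_{\{K\}}$, and using $\abs{J^\epsilon}([s,t]\times\Sigma^h)=\int_s^t\abs{j^\epsilon_r}(\Sigma^h)\,\dd r$ together with the second bound above, I obtain for all $0\le s\le t\le T$
\[
  \abs[\big]{\rho_K^\epsilon(t)-\rho_K^\epsilon(s)} \le \abs{J^\epsilon}\bra[\big]{[s,t]\times\Sigma^h} \le \sqrt{c_0\,c_\calT\,(t-s)} .
\]
Hence $\{\rho^\epsilon\}_{\epsilon>0}$ is bounded and equi-$\tfrac12$-Hölder continuous as a family in $C([0,T];\calP(\calT^h))$; by Arzelà--Ascoli there is a (not relabelled) subsequence and a curve $\rho\in C([0,T];\calP(\calT^h))$ with $\rho^\epsilon\to\rho$ uniformly on $[0,T]$, in particular $\rho_t^\epsilon\rightharpoonup\rho_t$ in $\calP(\calT^h)$ for every $t\in[0,T]$.

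\emph{Compactness of the fluxes.} By the first bound, $\{J^\epsilon\}_{\epsilon>0}$ is bounded in $\calM([0,T]\times\Sigma^h)$, so along a further subsequence $J^\epsilon\rightharpoonup^* J$ weakly-$*$. Since $\Sigma^h$ is finite, $J$ amounts to a finite collection of signed measures $(J_{K|L})_{(K,L)\in\Sigma^h}$ on $[0,T]$, each satisfying $\abs{J_{K|L}}(A)\le\abs{J}(A\times\Sigma^h)\le\sqrt{c_0 c_\calT\calL^1(A)}$ by lower semicontinuity of the total variation on open sets together with outer regularity; in particular each $J_{K|L}\ll\calL^1$, say $J_{K|L}=j_{K|L}\,\calL^1$ with $j_{K|L}\in L^1(0,T)$. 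Setting $j_t\coloneq(j_{K|L}(t))_{(K,L)\in\Sigma^h}\in\calM(\Sigma^h)$ defines a Borel family with $J=\int_\cdot j_t\,\dd t$ and $\int_0^T\abs{j_t}(\Sigma^h)\,\dd t=\abs{J}([0,T]\times\Sigma^h)<\infty$.

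\emph{Passing to the limit and conclusion.} Fix $\varphi^h\in\calB(\calT^h)$ and $[s,t]\subset[0,T]$. The left-hand side of \eqref{eq:theory:CE-discrete-distr} converges thanks to the uniform convergence of $\rho^\epsilon$. For the right-hand side one has to evaluate $J^\epsilon$ against $(r,(K,L))\mapsto\Ind_{[s,t]}(r)(\dnabla\varphi^h)(K,L)$, which is discontinuous in time; approximating $\Ind_{[s,t]}$ from inside and outside by continuous cutoffs differing from it only on intervals of length $\delta$ around $s$ and $t$, the resulting error is bounded uniformly in $\epsilon$ by
\[
  \norm{\dnabla\varphi^h}_\infty\,\abs{J^\epsilon}\Bigl(\bigl([s,s+\delta]\cup[t-\delta,t]\bigr)\times\Sigma^h\Bigr)\le 2\norm{\dnabla\varphi^h}_\infty\sqrt{c_0\,c_\calT\,\delta},
\]
and the same bound holds for $J$. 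Letting $\epsilon\to0$ for fixed $\delta$ and then $\delta\to0$ yields
\[
  \int_s^t \sum_{(K,L)\in\Sigma^h} (\dnabla\varphi^h)(K,L)\,j_{K|L}^\epsilon(r)\,\dd r \;\longrightarrow\; \int_s^t \sum_{(K,L)\in\Sigma^h} (\dnabla\varphi^h)(K,L)\,j_{K|L}(r)\,\dd r ,
\]
so $(\rho,j)$ satisfies the discrete continuity equation in the sense of Definition~\ref{def_CE_discrete}; combined with $\rho\in C([0,T];\calP(\calT^h))$ and $\int_0^T\abs{j_t}(\Sigma^h)\,\dd t<\infty$ this gives $(\rho,j)\in\mathcal{CE}_h(0,T)$, as claimed. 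The only step that is not entirely routine is the last one — pushing $J^\epsilon\rightharpoonup^* J$ through the time-indicator $\Ind_{[s,t]}$ — and it is exactly the refined estimate $\abs{J^\epsilon}(A\times\Sigma^h)\le\sqrt{c_0 c_\calT\calL^1(A)}$ of Lemma~\ref{lem:compactness-flux-epsilon} (rather than the mere total-variation bound) that provides the equi-absolute-continuity in time needed to control the cutoff error uniformly in $\epsilon$.
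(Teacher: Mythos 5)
Your proof is correct and follows essentially the same route as the paper: the refined estimate $\abs{J^\epsilon}(A\times\Sigma^h)\le\sqrt{c_0c_\calT\calL^1(A)}$ from Lemma~\ref{lem:compactness-flux-epsilon} yields $\tfrac12$-H\"older equicontinuity of the densities through the continuity equation (the paper tests with arbitrary $\varphi\in\calB(\calT^h)$, $\norm{\varphi}_\infty\le 1$, to get a total-variation bound, while you test with single-cell indicators --- equivalent on a finite state space), and Arzel\`a--Ascoli together with weak-$*$ compactness and absolute continuity of the limiting flux measure do the rest. Your additional verification that the limit pair actually satisfies the discrete continuity equation --- pushing $J^\epsilon\rightharpoonup^* J$ through the discontinuous time-indicator $\Ind_{[s,t]}$ via continuous cutoffs controlled by the equi-absolute-continuity in time --- is carried out correctly and makes explicit a step the paper's proof leaves implicit.
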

\begin{proof} The convergence for $J^\epsilon$ follows the same lines as in the proof of Lemma~\ref{lemma:compactness-for-flux-tilted}.

We now prove the convergence for $\{\rho^{\epsilon}\}_{\epsilon>0}$. Since $(\rho^{\epsilon}, j^{\epsilon})\in \mathcal{CE}_h(0, T) $, then
    \begin{align*}
        \left| \sum_{K\in\calT^h} \varphi_K \bigl(\rho^{\epsilon}_K(t) - \rho^{\epsilon}_K(s) \bigr) \right| &= \left| \int_s^t \sum_{(K,L)\in\Sigma^h} \dnabla \varphi (K,L)\, j^{\epsilon}_{K|L}(r) \dd r \right| \\
        &\leq 2\|\varphi\|_\infty |J^\epsilon|([s,t]\times\Sigma^h) \qquad\text{for any $[s, t] \subset [0, T]$.}
    \end{align*}
    Taking supremum over all $\varphi \in \calB(\calT^h)$ with $\|\varphi\|_\infty \leq 1$, we make use of Lemma~\ref{lem:compactness-flux-epsilon} to obtain
    $$
        \| \rho_t^{\epsilon} - \rho_s^{\epsilon}\|_{TV} \leq C \sqrt{|t - s|}.
    $$
	By the  Ascoli-Arzel\'a theorem, there exists a (not relabelled) subsequence of $\{\rho^{\epsilon}\}_{\epsilon>0}$ and a limit curve $\rho\in \calC([0,T];\calP(\calT^h))$, such that the asserted convergence holds.
\end{proof}

\begin{remark}\label{rem:pointwise}
Since $\calT^h$ and $\Sigma^h$ are finite discrete spaces, the weak and strong topologies coincide. In particular, the narrow convergence stated in Lemma~\ref{lem:compactness-measure-epsilon} implies the pointwise convergence. We will use this property in the proofs of the following results.
\end{remark}

In the next lemma, we establish the convergence of the Fisher information.
\begin{lemma}\label{lem:lim-fisher-epsilon} Let the family of measures $\{\rho^{\epsilon}\}_{\epsilon> 0}$ be such that $\rho^{\epsilon} \rightharpoonup \rho$ in $\calP(\calT^h)$ as $\epsilon\to 0$, then
    $$
        \lim_{\epsilon\to 0} \calD_{\epsilon,h} (\rho^{\epsilon}) = \calD_{\text{up},h} (\rho) = 2\sum_{(K, L)\in\Sigma^h} \tau_{K|L}^h\alpha_0^* \bra*{u_K, u_L, \frac{q_{K|L}^h}{2} },
    $$
    where
    $$
        \alpha_0^*(a, b, q) = \frac{1}{2} \bigl( a|q^+|^2 + b|q^-|^2 \bigr).
    $$
\end{lemma}
\begin{proof}
The limit Fisher information contains only the limit of $\calD_{\epsilon,h}^2$, since $\lim_{\epsilon\to 0} \bigl(\calD_{\epsilon,h}^0 + \calD_{\epsilon,h}^1 \bigr)= 0$. Recall that 
\begin{align*}
    \calD_{\epsilon,h}^2 (\rho^{\epsilon}) &= \frac{1}{2} \sum_{(K, L)\in\Sigma^h} \tau_{K|L}^h |q_{K|L}^h|^2\, \mathbb{h}_\epsilon \bra*{ u^{\epsilon}_K, u^{\epsilon}_L, q_{K|L}^h},
\end{align*}
with $\mathbb{h}_\epsilon$ being
$$
    \mathbb{h}_\epsilon (a, b, q) = \int_0^1 \Bigl[a\, \mathfrak{h}\left(\lambda q/\epsilon\right) + b\,\mathfrak{h}\left(-\lambda q/\epsilon\right)\Bigr](1-\lambda)\dd\lambda,\qquad \mathfrak{h}(s) = \frac{1}{4}\frac{e^s-1-s}{\sinh^2(s/2)}.
$$
It is uniformly bounded by the following argument. Since $0 \leq \mathfrak{h}\left(s\right) \leq 1$, $s\in\R$, we have that
\begin{align*}
    \calD_h^2 (\rho^{\epsilon}) &\leq \frac{1}{4}  \sum_{(K, L)\in\Sigma^h} \tau_{K|L}^h |q_{K|L}^h|^2 \bigl(u^{\epsilon}_K + u^{\epsilon}_L\bigr) \le \frac{1}{2} c_\text{pot}^2 c_\calT .
\end{align*}
Moreover, we notice that
$$
    \lim_{\epsilon\to 0} \mathfrak{h}\left(s/\epsilon\right) =  \Ind_{(0,\infty)}(s) + \frac{1}{2} \Ind_{\{0\}} (s),
$$
and, hence,
$$
    \lim_{\epsilon\to 0} \int_0^1 \mathfrak{h}\left(\lambda q/\epsilon\right) (1-\lambda)\dd\lambda = \frac{1}{2} \left( \Ind_{(0,\infty)}(q) + \frac{1}{2} \Ind_{\{0\}} (q) \right) \eqcolon \mathfrak{h}_0(q).
$$
Now we define
$$
    \Tilde{u}^{\epsilon}_{K|L} \coloneq \int_0^1 \Bigl[u^{\epsilon}_K\, \mathfrak{h}\left(\lambda q_{K|L}^h/\epsilon\right) + u^{\epsilon}_L\,\mathfrak{h}\left(-\lambda q_{K|L}^h/\epsilon\right)\Bigr](1-\lambda)\dd\lambda.
$$
Since $u^{\epsilon} \to u$ pointwise on $\calT^h$, we get
$$
    \lim_{\epsilon\to 0} \Tilde{u}^{\epsilon}_{K|L} = u_K \,\mathfrak{h}_0(q_{K|L}^h) + u_L \,\mathfrak{h}_0(-q_{K|L}^h),
$$
which concludes the proof.
\end{proof} 

Finally, we prove the convergence of the dissipation potential.
\begin{lemma}\label{lem:lim-dissipation-epsilon}
    Let the family of measure-flux pairs $\{ (\rho^{\epsilon}, j^{\epsilon})\}_{\epsilon>0}\subset\mathcal{CE}_h (0,T)$ satisfying
    \begin{enumerate}[label=(\roman*)]
        \item $\rho^{\epsilon}_t \rightharpoonup \rho_t$ in $\calP(\calT^h)$ for all $t\in [0, T]$,
        \item $\int_\cdot j_t^{\epsilon} \dd t \rightharpoonup^* \int_\cdot j_t \dd t$ weakly-$*$ in $\calM((0, T)\times \Sigma^h)$.
    \end{enumerate}
    Then,
    \[
    	\int_s^t \calR_{\text{up},h}(\rho_r,j_r)\dd r \le \liminf_{\epsilon \to 0} \int_s^t \calR_{\epsilon,h} (\rho^{\epsilon}_r, j^{\epsilon}_r)\dd r\qquad \text{for any $[s,t]\subset[0,T]$},
    \]
    where
    $$
        \calR_{\text{up},h}(\rho,j) = \sum_{(K,L)\in\Sigma^h} \tau_{K|L}^h\bra*{ u_K \Bigg| \frac{j^{+}_{K|L}}{\tau_{K|L}^hu_K} \Bigg|^2 + u_L \Bigg| \frac{j^{-}_{K|L}}{\tau_{K|L}^hu_L} \Bigg|^2 } ,\qquad u_K=\frac{\rho_K}{|K|}.
    $$
\end{lemma}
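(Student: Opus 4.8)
The plan is to argue by convex duality, mirroring the liminf argument for the tilt-independent dissipation potential in Theorem~\ref{th:liminfs-tilt}(i) and \cite[Theorem~6.2(i)]{hraivoronska2022diffusive}. Since $\calR_{\epsilon,h}(\rho^h,j^h)=2\sum_{(K,L)\in\Sigma^h}\tau_{K|L}^h\,\alpha_\epsilon\bigl(u^h_K,u^h_L,j^h_{K|L}/\tau^h_{K|L}\bigr)$ with $\alpha_\epsilon$ the Legendre dual of $\alpha_\epsilon^*$ in its last variable, the Fenchel inequality gives, for every $r$ and every $\xi^h\in\calB(\Sigma^h)$,
\[
\calR_{\epsilon,h}(\rho^\epsilon_r,j^\epsilon_r)\ \ge\ \sum_{(K,L)\in\Sigma^h}\xi^h_{K|L}\,j^\epsilon_{K|L}(r)-\calR_{\epsilon,h}^*(\rho^\epsilon_r,\xi^h),\qquad \calR_{\epsilon,h}^*(\rho^h,\xi^h)=2\sum_{(K,L)}\tau^h_{K|L}\,\alpha_\epsilon^*\bigl(u^h_K,u^h_L,\tfrac{\xi^h_{K|L}}{2}\bigr),
\]
the dual dissipation potential of \eqref{eq:tilted-Rstar}. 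By the duality and measurable-selection argument of \cite[Theorem~6.2(i)]{hraivoronska2022diffusive} (harmless since $\Sigma^h$ is finite), the asserted liminf inequality follows once I show that for every continuous test field $r\mapsto\xi^h(r)\in\calB(\Sigma^h)$ compactly supported in $(s,t)$ one has $\int_s^t\calR_{\epsilon,h}^*(\rho^\epsilon_r,\xi^h(r))\,\dd r\to\int_s^t\calR_{\text{up},h}^*(\rho_r,\xi^h(r))\,\dd r$, where $\calR_{\text{up},h}^*(\rho,\xi)\coloneq 2\sum_{(K,L)}\tau^h_{K|L}\,\alpha_0^*\bigl(u_K,u_L,\tfrac{\xi_{K|L}}{2}\bigr)$ with $\alpha_0^*$ as in Lemma~\ref{lem:lim-fisher-epsilon}; the flux term $\int_s^t\sum_{(K,L)}\xi^h_{K|L}(r)\,j^\epsilon_{K|L}(r)\,\dd r\to\int_s^t\sum_{(K,L)}\xi^h_{K|L}(r)\,j_{K|L}(r)\,\dd r$ is immediate from hypothesis (ii).

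For the convergence of the dual dissipation potentials I would fix $r$ and use that $u^\epsilon_K(r)\to u_K(r)$ for every $K$ by hypothesis (i) and Remark~\ref{rem:pointwise}, together with $\alpha_\epsilon^*(a,b,\xi)\to\alpha_0^*(a,b,\xi)=\tfrac12\bigl(a\,|\xi^+|^2+b\,|\xi^-|^2\bigr)$ locally uniformly as $\epsilon\to0$ — which follows from the scaling $\alpha_\epsilon^*(a,b,\xi)=\epsilon^2\alpha_1^*(a,b,\xi/\epsilon)$ and the quadratic growth of $\alpha_1^*$ at infinity, both collected in Lemma~\ref{lem:properties-alpha} — so that $\calR_{\epsilon,h}^*(\rho^\epsilon_r,\xi^h(r))\to\calR_{\text{up},h}^*(\rho_r,\xi^h(r))$ pointwise in $r$. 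To pass this to the time integral I would use the $\epsilon$-uniform bound $\alpha_\epsilon^*(a,b,\xi)\le\tfrac12\max\{a,b\}\,\xi^2$, which comes from the monotonicity and $1$-homogeneity of $\Lambda_H$ (so that $\Lambda_H(ae^{-x/\epsilon},be^{x/\epsilon})\le\Lambda_H(ce^{-x/\epsilon},ce^{x/\epsilon})$ with $c=\max\{a,b\}$) combined with the explicit evaluation $\alpha_\epsilon^*(c,c,\xi)=c\,\xi^2/2$, and which is recorded in Lemma~\ref{lem:properties-alpha}. Together with $\sum_{(K,L)}\tau^h_{K|L}\max\{u^\epsilon_K,u^\epsilon_L\}\le C$ (a constant depending only on the fixed tessellation, via \eqref{eq:tessellation-bound}) this yields $\calR_{\epsilon,h}^*(\rho^\epsilon_r,\xi^h(r))\le C\sup_r\|\xi^h(r)\|_\infty^2$, a bound integrable in $r$ and uniform in $\epsilon$, so dominated convergence gives the required integral limit and hence $\liminf_{\epsilon\to0}\int_s^t\calR_{\epsilon,h}(\rho^\epsilon_r,j^\epsilon_r)\,\dd r\ge\int_s^t\bigl\{\sum_{(K,L)}\xi^h_{K|L}(r)\,j_{K|L}(r)-\calR_{\text{up},h}^*(\rho_r,\xi^h(r))\bigr\}\,\dd r$.

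Finally I would take the supremum over such test fields $\xi^h$; by the duality lemma \cite[Theorem~6.2(i)]{hraivoronska2022diffusive} (a Fenchel--Moreau identity plus a measurable selection), the right-hand side equals $\int_s^t\calR_{\text{up},h}(\rho_r,j_r)\,\dd r$ with $\calR_{\text{up},h}(\rho,j)=2\sum_{(K,L)}\tau^h_{K|L}\,\alpha_0(u_K,u_L,j_{K|L}/\tau^h_{K|L})$, where $\alpha_0(a,b,\cdot)$ is the Legendre dual of $\alpha_0^*(a,b,\cdot)$; a one-line computation gives $\alpha_0(a,b,\zeta)=\tfrac12|\zeta^+|^2/a+\tfrac12|\zeta^-|^2/b$ (with the usual conventions when $a$ or $b$ vanishes), which turns $\calR_{\text{up},h}$ into exactly the functional stated in the lemma. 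I expect the only genuinely nontrivial ingredients to be the two analytic facts about $\alpha_\epsilon^*$ — the locally uniform limit $\alpha_\epsilon^*\to\alpha_0^*$ and, above all, the $\epsilon$-uniform quadratic upper bound needed for the dominated convergence — but both are consequences of the properties of $\Lambda_H$ and $\alpha_\epsilon^*$ gathered in Lemma~\ref{lem:properties-alpha}, while the duality/measurable-selection step is the same one already used in Theorem~\ref{th:liminfs-tilt}(i) and requires no new idea.
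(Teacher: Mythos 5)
Your proposal is correct and follows essentially the same route as the paper: Fenchel duality against time-dependent test forces, pointwise convergence $\calR_{\epsilon,h}^*(\rho^\epsilon,\xi)\to\calR_{\text{up},h}^*(\rho,\xi)$ from $\alpha_\epsilon^*\to\alpha_0^*$ (Lemma~\ref{lem:properties-alpha}\ref{alpha-epsilon}) combined with an $\epsilon$-uniform quadratic bound on $\alpha_\epsilon^*$ for dominated convergence, and finally a Legendre back-transform identifying the supremum with $\int_s^t\calR_{\text{up},h}(\rho_r,j_r)\dd r$. The only divergence is cosmetic: where you cite \cite[Theorem~6.2(i)]{hraivoronska2022diffusive} for the last duality/measurable-selection step, the paper cannot quote that result verbatim (it concerns the cosh-type functional) and instead performs the supremum computation explicitly, introducing the measures $\Theta_\rho^\pm$ and splitting $\eta$ and the flux into positive and negative parts supported on $\{q\gtrless 0\}$ — exactly the computation your closed-form Legendre dual $\alpha_0(a,b,\zeta)=\tfrac12|\zeta^+|^2/a+\tfrac12|\zeta^-|^2/b$ encodes, so no idea is missing.
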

\begin{proof}
We begin by proving the convergence
\[
	\lim_{\epsilon\to 0} \calR_{\epsilon,h}^* (\rho^{\epsilon}, \xi) = \calR_{\text{up},h}^*(\rho,\xi) = \frac{1}{4}\sum_{(K,L)\in\Sigma^h} \tau_{K|L}^h \bra*{u_K |\xi_{K|L}^+|^2 + u_L |\xi_{K|L}^-|^2}
\]
for any $\xi\in\calB(\Sigma^h)$.
Since $\rho^\epsilon$ converges pointwise to $\rho$ (cf.\ Remark~\ref{rem:pointwise}) and estimate \eqref{eq:epsilon-uniform} provides
\begin{align*}
	 \sum_{(K,L)\in\Sigma^h} \tau_{K|L}^h\alpha_\epsilon^* \bra*{u^{\epsilon}_K, u^{\epsilon}_L, \xi_{K|L}} 
	\le 2\sum_{(K,L)\in\Sigma^h} \tau_{K|L}^h\alpha_\epsilon^* \bra*{u^{\epsilon}_K, u^{\epsilon}_L, \|\xi\|_\infty}
	\le \|\xi\|_\infty^2 c_\calT ,
\end{align*}
we obtain the asserted convergence by means of Lemma~\ref{lem:properties-alpha}\ref{alpha-epsilon} and the dominated convergence.
    
We now use the Legendre duality to infer the asserted liminf inequality for the dissipation potential. From the convergence result established in the first part of the proof, it follows that 
    \begin{align*}
        \int_s^t &\sum_{(K,L)\in\Sigma^h} \chi_r \xi_{K|L} j_{K|L}(r) \dd r - \int_0^T 2\sum_{(K,L)\in\Sigma^h} \tau_{K|L}^h\alpha_0^* \bra*{ u_K(r), u_L(r), \chi_r \frac{\xi_{K|L}}{2} }  \dd r \\
        &\hspace{6em}\leq \lim_{\epsilon\to 0} \int_s^t \sum_{(K,L)\in\Sigma^h} \chi_r \xi_{K|L} j^{\epsilon}_{K|L}(r) \dd r  - \limsup_{\epsilon\to 0} \int_s^t \calR_{\epsilon,h}^* \big(\rho^{\epsilon}_r, \chi_r \xi \big) \dd r \\
        &\hspace{6em}\leq \liminf_{\epsilon\to 0} \int_s^t \bigg\{ \sum_{(K,L)\in\Sigma^h} \chi_r \xi_{K|L} j^{\epsilon}_{K|L}(r) - \calR_{\epsilon,h}^* \big(\rho^{\epsilon}_r, \chi_r \xi \big) \bigg\} \dd r \\
        &\hspace{6em}\leq \liminf_{\epsilon\to 0} \int_s^t  \calR_{\epsilon,h} (\rho^{\epsilon}_r, j^{\epsilon}_r ) \dd r\qquad \text{for any $\chi\in\calB([0,T])$, $\xi\in\calB(\Sigma^h)$.}
    \end{align*}
    Now let $\eta\in \calB([0,T]\times\Sigma^h)$. We introduce the measures $\Theta_\rho^\pm$, $\Theta\in \calM([0,T]\times\Sigma^h)$ in the way that for any measurable $A \subset [0,T]$ and $B \subset \Sigma^h$ it holds that
    \begin{gather*}
        \Theta(A\times B) = \int_A \sum_{(K,L)\in B} \tau_{K|L}^h \dd t, \\ 
        \Theta_\rho^+ (A\times B) = \int_A \sum_{(K,L)\in B} \tau_{K|L}^h u_K(t)  \dd t,\qquad \Theta_\rho^- (A\times B) = \int_A \sum_{(K,L)\in B} \tau_{K|L}^h u_L(t)  \dd t.
    \end{gather*}
    Then, we rewrite
    \begin{align*}
        &\int_s^t \sum_{(K,L)\in\Sigma^h} \eta_{K|L}(r) J_{K|L}(r) \dd r - \int_s^t 2\sum_{(K,L)\in\Sigma^h} \tau_{K|L}^h\alpha_0^* \bra*{ u_K(r), u_L(r), \frac{\eta_{K|L}(r)}{2}}  \dd r \\
        &\hspace{6em}= \iint_{[s,t]\times\Sigma^h} \eta \dd J - \iint_{[s,t]\times\Sigma^h} 2\,\alpha_0^* \Bigg( \frac{\dd\Theta^+_{\rho}}{\dd\Theta}, \frac{\dd\Theta^-_{\rho}}{\dd\Theta},\frac{\eta}{2} \Bigg) \dd\Theta \eqcolon I_0^{[s,t]} (\eta).
    \end{align*}
 
    It is left to determine $\sup_{\eta\in \calB([s,t]\times\Sigma^h)} I_0^{[s,t]} (\eta)$. We note that
   \begin{align*}
       \iint_{[s,t]\times\Sigma^h} \eta \dd J 
       = \iint_{[s,t]\times\Sigma^h} \eta^+  \bigg( \bigg[ \frac{\dd J}{\dd\Theta_\rho^+} \bigg]^+ \!\!- \bigg[ \frac{\dd J}{\dd\Theta_\rho^+} \bigg]^-\bigg) \dd\Theta_\rho^+ + \eta^- \bigg( \bigg[ \frac{\dd J}{\dd\Theta_\rho^-} \bigg]^- \!\!- \bigg[ \frac{\dd J}{\dd\Theta_\rho^-} \bigg]^+ \bigg) \dd\Theta_\rho^-.
   \end{align*}
    The two negative terms can only decrease the total value, therefore the supremum over $\calB([0,T]\times\Sigma^h)$ is equivalent to taking supremum over $\eta\in \calB([0,T]\times\Sigma^h)$ satisfying
    $$
        \eta^\pm \equiv 0 \qquad \text{on } \supp\, (J^{0})^\mp.
    $$
    Because of the structure of $\alpha_0^*$ with one part depending on $\eta^+$ and the other part depending on $\eta^-$, the expression under the supremum splits into two independent parts with the supremum over $\eta^+$ and the supremum over $\eta^-$. The first part is
    \begin{align*}
        \sup_{\eta\in \calB([s,t]\times\Sigma^h)} \bigg\{  \iint_{[s,t]\times\Sigma^h} \eta^+ \bigg[ \frac{\dd J}{\dd\Theta_\rho^+} \bigg]^+ \dd\Theta_\rho^+ - \left\|\frac{\eta^+}{2}\right\|^2_{L^2([s,t]\times\Sigma^h, \Theta_\rho^+)} \bigg\} = \Bigg\| \bigg[ \frac{\dd J}{\dd\Theta_\rho^+} \bigg]^+ \Bigg\|^2_{L^2([s,t]\times\Sigma^h, \Theta_\rho^+)}.
    \end{align*}
    and the second part is
    \begin{align*}
        \sup_{\eta\in \calB([s,t]\times\Sigma^h)} \bigg\{  \iint_{[s,t]\times\Sigma^h} \eta^- \bigg[ \frac{\dd J}{\dd\Theta_\rho^-} \bigg]^- \dd\Theta_\rho^- - \left\|\frac{\eta^-}{2}\right\|^2_{L^2([s,t]\times\Sigma^h, \Theta_\rho^-)} \bigg\} = \Bigg\| \bigg[ \frac{\dd J}{\dd\Theta_\rho^-} \bigg]^- \Bigg\|^2_{L^2([s,t]\times\Sigma^h, \Theta_\rho^-)}.
    \end{align*}
    In both parts, we imply that if the supremum is finite then it equals the $L^2$-norm of the corresponding flux densities. Combining the two, we obtain
    \begin{align*}
        \sup_{\eta\in \calB([s,t]\times\Sigma^h)} I^{[s,t]}(\eta) 
        &= \Bigg\| \bigg[ \frac{\dd J}{\dd\Theta_\rho^+} \bigg]^+ \Bigg\|^2_{L^2([s,t]\times\Sigma^h, \Theta_\rho^+)} + \Bigg\| \bigg[ \frac{\dd J}{\dd\Theta_\rho^-} \bigg]^- \Bigg\|^2_{L^2([s,t]\times\Sigma^h, \Theta_\rho^-)} \\
        &= \int_s^t \sum_{(K,L)\in\Sigma^h} \tau_{K|L}^h\bra*{ u_K(r) \Bigg| \frac{j^+_{K|L}(r)}{\tau_{K|L}^h u_K(r)} \Bigg|^2 + u_L(r) \Bigg| \frac{j^-_{K|L}(r)}{\tau_{K|L}^h u_L(r)} \Bigg|^2 }  \dd r \\
        &= \int_s^t \calR_{\text{up},h} (\rho_r, j_r) \dd r,
    \end{align*}
    therewith concluding the proof.
\end{proof}

To summarize, the energy-dissipation functional corresponding to the upwind scheme comprises the driving energy 
\begin{equation}\label{eq:upwind:energy}
   \calP(\calT^h)\ni \rho\mapsto \calE_{\text{up},h} (\rho) = \sum_{K\in\calT^h} V^h_K \rho_K + \frac{1}{2} \sum_{K,L\in\calT^h\times\calT^h} W^h_{KL} \rho_K \rho_L,
\end{equation}
the dissipation potential $\calR_{\text{up},h}: \calP(\calT^h) \times \calM(\Sigma^h) \to \R_+ \cup \{+\infty\}$
\begin{equation}\label{eq:upwind:dissipation}
   \calP(\calT^h) \times \calM(\Sigma^h)\ni (\rho,j)\mapsto \calR_{\text{up},h}(\rho, j) = \sum_{(K,L)\in\Sigma^h} \tau_{K|L}^h \bra*{ u_K \Bigg|\frac{j^{+}_{K|L}}{\tau_{K|L}^hu_K} \Bigg|^2  + u_L \Bigg| \frac{j^{-}_{K|L}}{\tau_{K|L}^hu_L} \Bigg|^2 } ,
\end{equation}
and the Fisher information 
\begin{equation}\label{eq:upwind:fisher}
    \calP(\calT^h)\ni\rho\mapsto \calD_{\text{up},h} (\rho) = \sum_{(K,L)\in \Sigma^h} \tau_{K|L}^h\bra*{ u_K\Bigg| \frac{q^{+}_{K|L}}{2} \Bigg|^2  + u_L\Bigg| \frac{q^{-}_{K|L}}{2} \Bigg|^2  } .
\end{equation}
For completeness, we point out that the dual dissipation potential in this case is
\begin{equation}\label{eq:upwind:dual}
    \calP(\calT^h) \times \calB(\Sigma^h)\ni (\rho,\xi)\mapsto \calR_{\text{up},h}^* (\rho, \xi) = \sum_{(K,L)\in \Sigma^h} \tau_{K|L}^h\bra*{ u_K\Bigg|\frac{\xi_{K|L}^{+}}{2} \Bigg|^2  + u_L\Bigg| \frac{\xi_{K|L}^{-}}{2} \Bigg|^2 } .
\end{equation}

\begin{proof}[Proof of Theorem~\ref{th:EDP-vanishing-diffusion}]

Consider a family $\{(\rho^{\epsilon,h}, j^{\epsilon,h})\}_{\epsilon>0}$ of GGF-solutions to \eqref{eq:SG} according to Definition~\ref{def_GGF_solution} and the tilt-independent structure introduced in Section~\ref{sec:tilted-gradient-structure}. Lemma~\ref{lem:compactness-flux-epsilon} and Lemma~\ref{lem:compactness-measure-epsilon} provide the existence of a subsequential limit pair $(\rho^{\text{up},h}, j^{\text{up},h}) \in \mathcal{CE}_h (0, T)$ and the convergence specified in Theorem~\ref{th:EDP-vanishing-diffusion}(1).
    
The liminf inequality for the energy-dissipation functionals from assertion (2) is proven in Lemma~\ref{lem:lim-fisher-epsilon} and Lemma~\ref{lem:lim-dissipation-epsilon}. With a simple chain rule, we easily deduce $\calI_{\text{up},h}^{[s,t]}(\rho^{\text{up},h}, j^{\text{up},h})\ge 0$ for every $[s,t]\subset[0,T]$, and hence, the limit pair $(\rho^{h,\epsilon}, j^{h,\epsilon})$ is the GGF solution of the upwind scheme \eqref{eq:upwind-scheme}.
\end{proof}

\subsection{Continuous case}\label{sec:vanishing-continuous} Recall that for $\epsilon>0$, a gradient flow solution $(\rho^\epsilon,j^\epsilon)$ of \eqref{eq:drift-aggregation-diffusion-equation} satisfies
\[
	\calI^{[s,t]}_\epsilon (\rho^\epsilon, j^\epsilon)=\int_s^t \bigl\{\calR(\rho_r^\epsilon,j_r^\epsilon) +  \calD_\epsilon(\rho_r^\epsilon)\bigr\} \dd r + \calE_\epsilon(\rho_t^\epsilon) - \calE_\epsilon(\rho_s^\epsilon) = 0\qquad\text{for all $[s,t]\subset[0,T]$,}
\]
with Fisher information
\[
	\calD_\epsilon(\rho) = 2\epsilon^2\int_\Omega \Bigl| \nabla \sqrt{u}\Bigr|^2\dd x + \epsilon\int_\Omega \nabla u \cdot \nabla \mathsf{Q}(\rho)\dd x + \frac{1}{2} \int_\Omega \lvert\nabla \mathsf{Q}(\rho)\rvert^2 \dd\rho, \qquad u = \frac{\dd \rho}{\dd\calL^d}.
\]
In particular, $\sqrt{u^\epsilon}\in H^1(\Omega)$ for every $\epsilon>0$.

As in the previous results, we will pass to the liminf in each of the terms in the energy-dissipation functional $\calI_\epsilon$. Due to the joint lower semicontinuity of the dissipation potential $\calR$ w.r.t.\ weak-$*$ convergence and the fact that $\calE_{\text{agg}}\le \calE_\epsilon$, the only difficulty here is in proving the liminf inequality for the Fisher information $\calD_\epsilon$, as it is unclear that the first two terms vanish in the limit.

However, since the chain rule $\nabla v^2 = 2 v\,\nabla v\in L^1(\Omega)$ holds for $v\in H^1(\Omega)$, the Fisher information $\calD_\epsilon$ takes the alternative form
\[
	\calD_\epsilon(\rho) = \frac{1}{2} \int_\Omega \left|2\epsilon \nabla \sqrt{u} + \sqrt{u}\,\nabla \mathsf{Q}(\rho)\right|^2 \dd x,\qquad u = \frac{\dd \rho}{\dd\calL^d},\;\sqrt{u}\in H^1(\Omega).
\]
Moreover, by defining the $\R^d$-valued measure
\begin{equation}\label{eq:g-FI-agg}
	g_t^\epsilon := \sqrt{u_t^\epsilon}\Bigl(2\epsilon \nabla \sqrt{u_t^\epsilon} + \sqrt{u_t^\epsilon}\nabla \mathsf{Q}(\rho_t^\epsilon)\Bigr)\calL^d = \bigl(\epsilon \nabla u_t^\epsilon + u_t^\epsilon\nabla \mathsf{Q}(\rho_t^\epsilon) \bigr)\calL^d\in \calM(\Omega;\R^d),
\end{equation}
for every $t\in[0,T]$, we can further express $\calD_\epsilon(\rho^\epsilon)$ as
\[
	\calD_\epsilon(\rho_t^\epsilon) = \frac{1}{2}\int_\Omega \left|\frac{\dd g_t^\epsilon}{\dd \rho_t^\epsilon} \right|^2 \dd\rho_t^\epsilon = \calR(\rho_t^\epsilon,g_t^\epsilon).
\]
Therefore, if $\rho_t^\epsilon \rightharpoonup^* \rho_t$ weakly-$*$ in $\calP(\Omega)$ and $g_t^\epsilon \rightharpoonup^* g_t$ weakly-$*$ in $\calM(\Omega;\R^d)$ for every $t\in[0,T]$, then the weak-$*$ lower semicontinuity of $\calR$ yields
\[
	\calR(\rho_t,g_t) \le \liminf_{\epsilon\to 0} \calR(\rho_t^\epsilon,g_t^\epsilon) = \liminf_{\epsilon\to 0} \calD_\epsilon(\rho_t^\epsilon).
\]
Hence, it suffices to show that $g_t^\epsilon \rightharpoonup^* \rho_t \nabla \mathsf{Q}(\rho_t)$ weakly-$*$ in $\calM(\Omega;\R^d)$ for every $t\in[0,T]$.

\begin{lemma}\label{lem:FI-vanishing-diffusion-continuous}
	Let $\{\rho^\epsilon\}_{\epsilon>0}\subset\calC([0,T];\calP(\Omega))$, $\rho\in\calC([0,T];\calP(\Omega))$ be such that $\rho_t^\epsilon\rightharpoonup^* \rho_t$ weakly-$*$ in $\calP(\Omega)$ for every $t\in[0,T]$ and the interaction potential $W$ satisfy \eqref{ass:smooth-potential}. Then for every $t\in[0,T]$, the sequence $\{g_t^\epsilon\}_{\epsilon>0}\subset \calM(\Omega;\R^d)$ defined in \eqref{eq:g-FI-agg} satisfies
	\[
		g_t^\epsilon \rightharpoonup^* g_t:=\rho_t  \nabla \mathsf{Q}(\rho_t)\quad\text{weakly-$*$ in $\calM(\Omega;\R^d)$}.
	\]
	In particular, we have
	\[
		\int_s^t \calD_{\text{agg}}(\rho_r) \dd r\le \liminf_{\epsilon\to 0} \int_s^t \calD_\epsilon(\rho_r^\epsilon)\dd r\qquad\text{for every $[s,t]\subset[0,T]$}.
	\]
\end{lemma}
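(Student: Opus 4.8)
The plan is to establish the weak-$*$ convergence $g_t^\epsilon\rightharpoonup^* g_t$ by splitting $g_t^\epsilon = \epsilon\nabla u_t^\epsilon\,\calL^d + u_t^\epsilon\,\nabla\mathsf Q(\rho_t^\epsilon)\,\calL^d$ and treating the ``entropic'' and the ``drift'' part separately, and then to deduce the integrated $\liminf$ inequality from Fatou's lemma together with the weak-$*$ lower semicontinuity of $\calR$. Two elementary facts are used throughout: since $V,W\in\mathrm{Lip}(\R^d)$ one has $\|\nabla\mathsf Q(\rho)\|_{\infty}\le c_{\text{pot}}=\mathrm{Lip}(V)+\mathrm{Lip}(W)$ for every $\rho\in\calP(\Omega)$ (this is the constant in \eqref{eq:Cw-bound}); and, whenever $\sqrt u\in H^1(\Omega)$, the identity $\calD_\epsilon(\rho)=\calR(\rho,g)$ recorded just before the lemma together with Cauchy--Schwarz yields the mass bound $|g|(\Omega)\le\bigl(2\,\calD_\epsilon(\rho)\bigr)^{1/2}$.

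The decisive ingredient is the \emph{uniform} convergence $\nabla\mathsf Q(\rho_t^\epsilon)\to\nabla\mathsf Q(\rho_t)$ on $\overline\Omega$, and this is precisely where the regularity assumption \eqref{ass:smooth-potential} enters (for a pointy potential, where $\nabla W$ is discontinuous at the origin, it would fail). Under \eqref{ass:smooth-potential}, $\nabla W$ is continuous, hence bounded and uniformly continuous on the compact set $\overline\Omega-\overline\Omega$; consequently the family $\{\nabla W*\mu:\mu\in\calP(\Omega)\}$ is equibounded and equicontinuous on $\overline\Omega$. By Arzelà--Ascoli, every subsequence of $\{\nabla W*\rho_t^\epsilon\}_\epsilon$ has a uniformly convergent sub-subsequence, whose limit is necessarily $\nabla W*\rho_t$ because $y\mapsto\nabla W(x-y)\in C_b(\overline\Omega)$ and $\rho_t^\epsilon\rightharpoonup^*\rho_t$; hence $\nabla W*\rho_t^\epsilon\to\nabla W*\rho_t$ uniformly, and, $\nabla V$ being fixed, $\nabla\mathsf Q(\rho_t^\epsilon)\to\nabla\mathsf Q(\rho_t)$ uniformly on $\overline\Omega$.

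Next, fix $t$ and work along a subsequence $\epsilon_k\to0$ with $\sup_k\calD_{\epsilon_k}(\rho_t^{\epsilon_k})<\infty$ (for a gradient-flow solution this holds for a.e.\ $t$, which is all that is needed below). The mass bound above gives $\sup_k|g_t^{\epsilon_k}|(\Omega)<\infty$, so after passing to a further subsequence $g_t^{\epsilon_k}\rightharpoonup^*\bar g$ in $\calM(\Omega;\R^d)$. Testing against $\varphi\in C_c^1(\Omega;\R^d)$ and integrating by parts, the entropic part becomes $-\epsilon_k\int_\Omega\mathrm{div}\,\varphi\,\dd\rho_t^{\epsilon_k}$, which is $O(\epsilon_k)$ because $\rho_t^{\epsilon_k}$ has unit mass, while the drift part $\int_\Omega\varphi\cdot\nabla\mathsf Q(\rho_t^{\epsilon_k})\,\dd\rho_t^{\epsilon_k}$ converges to $\int_\Omega\varphi\cdot\nabla\mathsf Q(\rho_t)\,\dd\rho_t$ by the uniform convergence of the previous step together with $\rho_t^{\epsilon_k}\rightharpoonup^*\rho_t$. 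Hence $\bar g=\rho_t\,\nabla\mathsf Q(\rho_t)=g_t$, and uniqueness of the limit upgrades this to $g_t^{\epsilon_k}\rightharpoonup^* g_t$ along the whole subsequence.

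For the ``in particular'' statement I would fix a subsequence realising $\liminf_\epsilon\int_s^t\calD_\epsilon(\rho_r^\epsilon)\,\dd r$ (finite, else nothing to prove), apply Fatou's lemma (recall $\calD_\epsilon\ge0$) to reduce to the pointwise inequality $\calD_{\text{agg}}(\rho_r)\le\liminf_\epsilon\calD_\epsilon(\rho_r^\epsilon)$ for a.e.\ $r\in[s,t]$, and obtain the latter as follows: along a sub-subsequence realising the pointwise $\liminf$ (which, being finite, has $\calD$ bounded), the previous step yields $g_r^{\epsilon_k}\rightharpoonup^* g_r$, so the joint weak-$*$ lower semicontinuity of $\calR$ gives $\calD_{\text{agg}}(\rho_r)=\calR(\rho_r,g_r)\le\liminf_k\calR(\rho_r^{\epsilon_k},g_r^{\epsilon_k})=\liminf_k\calD_{\epsilon_k}(\rho_r^{\epsilon_k})$. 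The only genuinely delicate point is the uniform convergence $\nabla\mathsf Q(\rho^\epsilon)\to\nabla\mathsf Q(\rho)$; the vanishing of the $\epsilon\nabla u^\epsilon$ contribution is the one-line integration by parts above, using nothing but that each $\rho_t^\epsilon$ is a probability measure, and the remaining steps are routine bookkeeping with the (standard) weak-$*$ lower semicontinuity of the Benamou--Brenier functional $\calR$.
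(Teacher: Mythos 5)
Your proof is correct and follows essentially the same route as the paper's: integration by parts to show the $\epsilon\nabla u^\epsilon$ contribution is $O(\epsilon)$, uniform convergence of $\nabla\mathsf{Q}(\rho_t^\epsilon)$ to $\nabla\mathsf{Q}(\rho_t)$ from the $C^1$ assumption on $W$, identification of the limit by testing against $C_c^1$ vector fields, and then Fatou's lemma combined with the weak-$*$ lower semicontinuity of $\calR$. You are in fact slightly more careful than the paper in supplying the uniform total-variation bound $|g_t^\epsilon|(\Omega)\le(2\calD_\epsilon(\rho_t^\epsilon))^{1/2}$ needed to upgrade distributional convergence to genuine weak-$*$ convergence in $\calM(\Omega;\R^d)$, and in spelling out the Arzel\`a--Ascoli argument behind the uniform convergence of $\nabla W*\rho_t^\epsilon$, which the paper leaves implicit.
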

\begin{proof}
	Let $\varphi\in\calC_c^1(\Omega;\R^d)$ be arbitrary and $t\in[0,T]$. Then
	\[
		\langle \varphi,g_t^\epsilon\rangle = \int_\Omega \varphi\cdot\bigl(\epsilon \nabla u_t^\epsilon + u_t^\epsilon\nabla \mathsf{Q}(\rho_t^\epsilon) \bigr)\dd x = - \epsilon \int_\Omega \text{div} \varphi\,\dd\rho_t^\epsilon + \int_\Omega \varphi\cdot\nabla\mathsf{Q}(\rho_t^\epsilon)\dd\rho_t^\epsilon,
	\]
	and therefore
	\begin{align*}
		|\langle \varphi,g_t^\epsilon - g_t\rangle | \le \epsilon \|\text{div}\varphi\|_{\sup} + \|\varphi\|_{\sup} \|\nabla \mathsf{Q}(\rho_t^\epsilon)-\nabla \mathsf{Q}(\rho_t)\|_{\sup} + |\langle \varphi\cdot\nabla \mathsf{Q}(\rho_t),\rho_t^\epsilon-\rho_t\rangle|
	\end{align*}
	From the assumptions placed on the potentials $V$ and $W$, one easily deduces the uniform convergence $\|\nabla \mathsf{Q}(\rho_t^\epsilon)-\nabla \mathsf{Q}(\rho_t)\|_{\sup} \to 0$ as $\epsilon\to 0$. Clearly, the other terms also converge to zero.
	
	Using the weak-$*$ lower semicontinuity of $\calR$, we then obtain
	\[
		\calR(\rho_t,g_t) \le \liminf_{\epsilon\to 0} \calD_\epsilon(\rho_t^\epsilon)\qquad\text{for every $t\in[0,T]$}.
	\]
	Since $\calD_\epsilon(\rho_t^\epsilon)\ge 0$ for $t\in[0,T]$, an application of Fatou's lemma then yields the result.
\end{proof}

Following the same strategy as in the previous sections, we obtain a compactness result for the solutions of \eqref{eq:drift-aggregation-diffusion-equation}.

\begin{lemma}\label{lem:compactness-continuous-epsilon} Let a family of pairs $\{(\rho^{\epsilon}, j^{\epsilon})\}_{\epsilon>0}\subset\mathcal{CE} (0, T)$ satisfying
    \[
    	c_0\coloneq\sup_{\epsilon>0}\int_0^T \calR_{\epsilon}(\rho_t^\epsilon, j_t^\epsilon) \dd t<\infty.
    \]
    Then there exist a limit pair $(\rho, j) \in \mathcal{CE}(0, T)$  and a (not relabelled) subsequence such that
    \begin{gather*}
        \rho^{\epsilon}_t \rightharpoonup^* \rho_t \quad \text{weakly-$*$ in $\calP(\Omega)$\; for all $t\in [0,T]$},\\
       \int_\cdot j_t^\epsilon\dd t \eqcolon J^{\epsilon} \rightharpoonup^* J=\int_\cdot j_t\dd t\quad\text{weakly-$*$ in $\calM([0,T]\times\Omega;\R^d)$}.
    \end{gather*}
\end{lemma}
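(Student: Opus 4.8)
The plan is to follow the same two-step scheme used for the discrete case in Lemmas~\ref{lem:compactness-flux-epsilon}–\ref{lem:compactness-measure-epsilon}, but now working on the continuous state space $\Omega$. First I would establish a total-variation bound for the reconstructed flux measures $J^\epsilon = \int_\cdot j_t^\epsilon\dd t$. By Legendre duality, for every $\beta\in\R$ and every vector field $\xi\in C_b(\Omega;\R^d)$ one has the lower bound $\calR(\rho_t^\epsilon,j_t^\epsilon)\ge \langle \xi,j_t^\epsilon\rangle - \calR^*(\rho_t^\epsilon,\xi)$, and choosing $\xi = \beta\,\mathrm{sign}$-type test fields together with $\calR^*(\rho,\xi)=\tfrac12\int|\xi|^2\dd\rho$ and $\rho_t^\epsilon\in\calP(\Omega)$ gives, after integrating over a measurable set $A\subset[0,T]$,
\[
	\int_0^T \calR(\rho_r^\epsilon,j_r^\epsilon)\dd r \ge \beta\,|J^\epsilon|(A\times\Omega) - \tfrac{1}{2}\beta^2\,\calL^1(A).
\]
Optimizing in $\beta$ yields $|J^\epsilon|(A\times\Omega)\le \sqrt{2c_0\,\calL^1(A)}$, hence in particular $\sup_{\epsilon>0}|J^\epsilon|([0,T]\times\Omega)<\infty$, so by Banach–Alaoglu there is a subsequence with $J^\epsilon\rightharpoonup^* J$ weakly-$*$ in $\calM([0,T]\times\Omega;\R^d)$. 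The estimate $|J^\epsilon|(A\times\Omega)\le\sqrt{2c_0\calL^1(A)}$ also gives equi-absolute-continuity of $t\mapsto|\hat\jmath_t^\epsilon|(\Omega)$ in the sense needed to disintegrate the limit as $J=\int_\cdot j_t\dd t$ for a Borel family $(j_t)_{t\in[0,T]}\subset\calM(\Omega;\R^d)$, exactly as in Lemma~\ref{lemma:compactness-for-flux-tilted}.

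Second I would upgrade this to compactness of the curves $\{\rho^\epsilon\}$. Testing the continuity equation \eqref{eq_CE_distr} with $\varphi\in C_c^1(\R^d)$ and using the flux bound from the first step,
\[
	|\langle\varphi,\rho_t^\epsilon\rangle - \langle\varphi,\rho_s^\epsilon\rangle| = \Big|\int_s^t\langle\nabla\varphi,j_r^\epsilon\rangle\dd r\Big| \le \|\nabla\varphi\|_{\sup}\,|J^\epsilon|([s,t]\times\Omega) \le \|\nabla\varphi\|_{\sup}\sqrt{2c_0\,|t-s|},
\]
which shows $t\mapsto\rho_t^\epsilon$ is uniformly (in $\epsilon$) $\tfrac12$-Hölder continuous in a metric metrizing weak-$*$ convergence on $\calP(\Omega)$ (e.g. a bounded-Lipschitz or dual-$C_c^1$ metric). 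Since $\calP(\Omega)$ is weak-$*$ compact ($\Omega$ bounded), Arzelà–Ascoli gives a (not relabelled) subsequence and a limit curve $\rho\in\calC([0,T];\calP(\Omega))$ with $\rho_t^\epsilon\rightharpoonup^*\rho_t$ for every $t\in[0,T]$. Passing to the limit in \eqref{eq_CE_distr} — the left-hand side converges by pointwise-in-$t$ weak-$*$ convergence, the right-hand side by $J^\epsilon\rightharpoonup^*J$ applied to the fixed test function $(r,x)\mapsto\Ind_{[s,t]}(r)\nabla\varphi(x)$ — shows $(\rho,j)$ satisfies \eqref{eq:CE}, and the finiteness of the action of $(\rho,j)$ follows from the weak-$*$ lower semicontinuity of $\calR$ against the uniform bound $c_0$, so $(\rho,j)\in\mathcal{CE}(0,T)$.

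The only genuinely delicate point is the time-pointwise (rather than merely a.e.-in-time) weak-$*$ convergence $\rho_t^\epsilon\rightharpoonup^*\rho_t$: this is precisely what the equi-Hölder estimate plus Arzelà–Ascoli buys us, and it is essential because the liminf inequality for $\calD_\epsilon$ established in Lemma~\ref{lem:FI-vanishing-diffusion-continuous} is formulated pointwise in $t$. Everything else is routine and parallels the discrete arguments of Section~\ref{sec:vanishing-diffusion}; no new assumptions on $\Omega$ or the potentials beyond those already in force are needed.
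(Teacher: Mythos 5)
Your proposal is correct and follows essentially the same route as the paper: a uniform $L^2$-in-time bound on $t\mapsto|j_t^\epsilon|(\Omega)$ from the action bound (the paper gets it via Jensen/Cauchy--Schwarz, you via Legendre duality -- the same estimate), Banach--Alaoglu plus disintegration for the fluxes, and an equi-H\"older estimate in a metric metrizing weak-$*$ convergence (the paper uses $W_1$ via Kantorovich--Rubinstein) followed by Arzel\`a--Ascoli for the curves. Your explicit verification that the limit pair lies in $\mathcal{CE}(0,T)$ is a small addition the paper leaves implicit.
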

\begin{proof}
	An application of Jensen's inequality immediately yields
	\begin{align*}
	\sup_{\epsilon>0}\||j_\cdot^\epsilon|(\Omega)\|_{L^2((0,T))}^2 \le 2\sup_{\epsilon>0}\int_0^T\calR(\rho_t^\epsilon,j_t^\epsilon)\dd t = 2 c_0.
\end{align*}
In particular, the sequence $\{t\mapsto |j_t^\epsilon|(\Omega)\}_{\epsilon>0}$ is equi-integrable, and the weak-$*$ compactness of $\{J^\epsilon\}_{\epsilon>0}$ can be proven as in Lemma~\ref{lemma:compactness-for-flux-tilted}.

 We now prove the asserted weak-$*$ convergence for the sequence $\{\rho^\epsilon\}_{\epsilon>0}\subset \calC([0,T];\calP(\R^d))$. Since $(\rho^\epsilon,j^\epsilon)$ satisfies the continuity equation \eqref{eq:CE}, for any $\varphi\in\calC_c^1(\R^d)$ with $\|\nabla \varphi\|_{L^\infty}\le 1$:
    \begin{align*}
    	|\langle \varphi,\rho_t^\epsilon - \rho_s^\epsilon\rangle| = \left|\int_s^t \langle \nabla \varphi, j_r^\epsilon\rangle\dd r\right| \le \int_s^t |j_r^\epsilon|(\Omega)\dd r \le \sqrt{|t-s|}\||j_\cdot^\epsilon|(\Omega)\|_{L^2((0,T))}.
    \end{align*}
    Taking the supremum over Lipschitz functions $\varphi$ satisfying $\|\nabla \varphi\|_{L^\infty}\le 1$ then gives
    \[
    	W_1(\rho_t^\epsilon,\rho_s^\epsilon) \le c_0\sqrt{|t-s|}\qquad\text{for all $\epsilon>0$ and $[s,t]\subset[0,T]$},
    \]
    where $W_1$ is the 1-Wasserstein distance. The Ascoli-Arzel\'a theorem then provides the existence of a limit curve $\rho\in \calC([0,T];\calP(\Omega))$ and a subsequence such that the convergence holds.
\end{proof}

We now conclude with the proof of Theorem~\ref{th:vanishing-diffusion-continuous}.

\begin{proof}[Proof of Theorem~\ref{th:vanishing-diffusion-continuous}]

Consider a family $\{(\rho^{\epsilon}, j^{\epsilon})\}_{\epsilon>0}$ of gradient flow solutions to \eqref{eq:drift-aggregation-diffusion-equation} according to Definition~\ref{def_GF_solution}. Lemma~\ref{lem:compactness-continuous-epsilon} provides the existence of a subsequential limit pair $(\rho, j) \in \mathcal{CE} (0, T)$ and the convergence specified in Theorem~\ref{th:vanishing-diffusion-continuous}(1).

To show the liminf inequality for the energy-dissipation functionals from assertion (2), we begin by noticing that 
\[
	\int_s^t \calR(\rho_r^\epsilon,j_r^\epsilon)\dd r = \frac{1}{2}\iint_{[s,t]\times \Omega} \left| \frac{\dd J^\epsilon}{\dd R^\epsilon} \right|^2 \dd R^\epsilon,\qquad R^\epsilon = \int_\cdot \rho_t^\epsilon\dd t,
\]
where the right-hand side is jointly weakly-$*$ lower semicontinuous as a functional on $\calM([s,t]\times\Omega)\times \calM([s,t]\times\Omega;\R^d)$. Since $(R^\epsilon,J^\epsilon)\rightharpoonup^* (R,J)$ weakly-$*$ in $\calM([s,t]\times\Omega)\times \calM([s,t]\times\Omega;\R^d)$ with $R = \int_\cdot \rho_t\dd t$ and $J=\int_\cdot j_t\dd t$, we then conclude that
\[
	\liminf_{\epsilon\to 0}\int_s^t \calR(\rho_r^\epsilon,j_r^\epsilon)\dd r \ge \frac{1}{2}\iint_{[s,t]\times \Omega} \left| \frac{\dd J}{\dd R} \right|^2 \dd R = \int_s^t \calR(\rho_r,j_r)\dd r.
\]
Together with Lemma~\ref{lem:FI-vanishing-diffusion-continuous} and the fact that $\calE_{\text{agg}}\le \calE_\epsilon$, we easily deduce the asserted liminf inequality $\calI_{\text{agg}}^{[s,t]}(\rho,j) \le \liminf_{\epsilon\to 0}\calI_{\epsilon}^{[s,t]}(\rho^\epsilon, j^\epsilon)=0$ for every $[s,t]\subset[0,T]$.

Finally, the chain rule \cite[E.\ Chain rule in Section~10.1.2]{ambrosio2008gradient} yields $\calI_{\text{agg}}^{[s,t]}(\rho,j) \geq 0$ for every $[s,t]\subset[0,T]$. Therefore, the limit pair $(\rho, j)$ is an $(\calE_{\text{agg}}, \calR, \calR^*)$-gradient flow solution of \eqref{eq:aggregation} in the sense of Definition~\ref{def_GF_solution}
\end{proof}

\section{From the Upwind Scheme to the Aggregation Equation}\label{sec:upwind-to-aggregation}

In this section, we complete the commutative diagram in Figure~\ref{diagram:with-theorems} by studying the variational convergence of the upwind scheme \eqref{eq:upwind-scheme} to the aggregation equation \eqref{eq:aggregation}. We mentioned earlier that we could not consider general tessellations in this section, thus, we restrict to Cartesian grids. Moreover, we assume \eqref{ass:smooth-potential} for the interaction potential $W$. On the other hand, we can handle any initial data $\rho_{\text{in}}^h\in \calP(\calT^h)$ satisfying $\hat{\rho}^h_\text{in} \rightharpoonup^* \rho_\text{in}$ weakly-$*$ in $\calP(\Omega)$ without any additional assumptions.

We work with $(\calE_{\text{up},h}, \calR_{\text{up},h}, \calR_{\text{up},h}^*)$-generalized gradient flow solutions of the upwind scheme \eqref{eq:upwind-scheme}, where $\calE_{\text{up},h}$, $\calR_{\text{up},h}$, and $\calR_{\text{up},h}^*$ are defined in \eqref{eq:upwind:energy}, \eqref{eq:upwind:dissipation}, and \eqref{eq:upwind:dual}, respectively. The strategy should be familiar to the reader by now. We begin with the necessary compactness result in Lemma~\ref{lem:compactness-upwind}. The convergence of the dual dissipation potential $\calR_{\text{up},h}^*$ and, consequently, the Fisher information $\calD_{\text{up},h}$ given in \eqref{eq:upwind:fisher}  is established in Theorem~\ref{th:fisher-upwind-to-aggregation}. We conclude this section with the proof of Theorem~\ref{th:upwind-to-aggregation}.

\medskip
We begin this section with a compactness result.

\begin{lemma}\label{lem:compactness-upwind}
	The family $\{J^h\}_{h>0}$ is weakly-* compact in $\calM((0, T)\times\Omega; \R^d)$ and the family $\{t\mapsto |\hat{\jmath}^h_t|\}_{h>0}$ is equi-integrable. In particular, there exists a (not relabelled) subsequence of $\{ (\hat{\rho}^h, \hat{\jmath}^h) \}_{h>0}$ and a pair $(\rho, j)\in \mathcal{CE}(0,T)$ such that
    \begin{gather*}
        \hat{\rho}^h_t \to \rho_t\quad\text{weakly-$*$ in $\calP(\Omega)$\; for all $t \in [0, T]$},\\
      \int_{\cdot} \hat\jmath^h_t \dd t \eqcolon J^h \rightharpoonup^* J=\int_\cdot j_t \dd t\quad\text{weakly-* in $\calM((0, T)\times\Omega; \R^d)$.}
    \end{gather*}
\end{lemma}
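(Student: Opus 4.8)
The plan is to mirror the structure of the proof of Lemma~\ref{lemma:compactness-for-flux-tilted}, but now using the explicit (and simpler) form of the upwind dissipation potential $\calR_{\text{up},h}$ in \eqref{eq:upwind:dissipation} and the fact that GGF-solutions of the upwind scheme satisfy the energy-dissipation balance $\calI_{\text{up},h}^{[0,T]}(\rho^h,j^h)=0$, which in particular gives the uniform action bound $\sup_{h>0}\int_0^T \calR_{\text{up},h}(\rho^h_t,j^h_t)\dd t \le \sup_{h>0}\calE_{\text{up},h}(\rho^h_\text{in}) < \infty$ (here one uses nonnegativity of $W$ and boundedness from below of $V$ so that $\calE_{\text{up},h}\ge \inf V$). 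First I would estimate the reconstructed flux: by \cite[Lemma~4.1]{hraivoronska2022diffusive} we have $|\sigma^h_{K|L}|(\Omega)\le 2dh$, so $|\hat\jmath^h_t|(\Omega)\le 2dh\sum_{(K,L)\in\Sigma^h}|j^h_{K|L}(t)|$. Splitting $j^h_{K|L}=j^{h,+}_{K|L}-j^{h,-}_{K|L}$ and applying Cauchy--Schwarz against the weights $\tau^h_{K|L}u^h_K$ resp. $\tau^h_{K|L}u^h_L$ gives
\[
	\sum_{(K,L)\in\Sigma^h}|j^h_{K|L}(t)| \le \Bigl(\calR_{\text{up},h}(\rho^h_t,j^h_t)\Bigr)^{1/2}\Bigl(\sum_{(K,L)\in\Sigma^h}\tau^h_{K|L}(u^h_K+u^h_L)\Bigr)^{1/2}.
\]
Using the non-degeneracy bound $\sum_{L\in\calT^h_K}h^2\tau^h_{K|L}\le C_\tau|K|$ (valid under \eqref{ass:tessellation}; it also follows from the Cartesian assumption directly) and $\sum_K u^h_K|K|=1$, the second factor is $O(h^{-1})$, hence $|\hat\jmath^h_t|(\Omega)\le C\,(\calR_{\text{up},h}(\rho^h_t,j^h_t))^{1/2}$ with $C$ independent of $h$.

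From this pointwise-in-$t$ estimate two things follow. Integrating and using Cauchy--Schwarz in time, $\int_0^T|\hat\jmath^h_t|(\Omega)\dd t \le C\sqrt{T}\,(\int_0^T\calR_{\text{up},h}(\rho^h_t,j^h_t)\dd t)^{1/2}$, which is bounded uniformly in $h$; this gives $\sup_{h>0}|J^h|((0,T)\times\Omega)<\infty$, so by weak-$*$ compactness of bounded sequences in $\calM((0,T)\times\Omega;\R^d)$ we extract a subsequence with $J^h\rightharpoonup^* J$. For equi-integrability of $\{t\mapsto|\hat\jmath^h_t|(\Omega)\}_{h>0}$, observe that $|\hat\jmath^h_t|(\Omega)^2\le C^2\calR_{\text{up},h}(\rho^h_t,j^h_t)$, so $\sup_{h>0}\int_0^T|\hat\jmath^h_t|(\Omega)^2\dd t<\infty$; an $L^2$-bound on a family of nonnegative functions on a finite interval gives equi-integrability (de la Vallée-Poussin with $\Theta(s)=s^2$). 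Equi-integrability of $\{t\mapsto|\hat\jmath^h_t|(\Omega)\}$ together with $J^h\rightharpoonup^* J$ then forces $J$ to be of the form $J=\int_\cdot j_t\dd t$ for some Borel family $(j_t)_{t\in[0,T]}\subset\calM(\Omega;\R^d)$ — this disintegration step is standard and identical to the one at the end of Lemma~\ref{lemma:compactness-for-flux-tilted}.

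For the curve $\{\hat\rho^h\}_{h>0}$, I would use the discrete continuity equation \eqref{eq:theory:CE-discrete-distr} tested against $\varphi^h_K=\varphi(x_K)$ for $\varphi\in\calC^1_c(\R^d)$ with $\|\nabla\varphi\|_\infty\le 1$: then $|\dnabla\varphi^h(K,L)|=|\varphi(x_L)-\varphi(x_K)|\le|x_L-x_K|$, and on a Cartesian grid $|x_L-x_K|=h$ while $\tau^h_{K|L}=h^{d-1}/h=h^{d-2}$, so
\[
	|\langle\varphi,\hat\rho^h_t-\hat\rho^h_s\rangle| \le \int_s^t\sum_{(K,L)\in\Sigma^h}|x_L-x_K|\,|j^h_{K|L}(r)|\dd r \le h\int_s^t\sum_{(K,L)}|j^h_{K|L}(r)|\dd r.
\]
Combining with the flux estimate above ($\sum_{(K,L)}|j^h_{K|L}(r)|\le C h^{-1}(\calR_{\text{up},h})^{1/2}$) and Cauchy--Schwarz in time yields $W_1(\hat\rho^h_t,\hat\rho^h_s)\le C\sqrt{|t-s|}$ uniformly in $h$, exactly as in Lemma~\ref{lem:compactness-continuous-epsilon}. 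The Ascoli--Arzelà theorem on $\calC([0,T];(\calP(\Omega),W_1))$ (recall $\Omega$ bounded, so $\calP(\Omega)$ is $W_1$-compact) then gives a subsequence and a limit $\rho\in\calC([0,T];\calP(\Omega))$ with $\hat\rho^h_t\rightharpoonup^*\rho_t$ for every $t$. Finally, passing to the limit in \eqref{eq:theory:CE-discrete-distr} (the left side converges by the just-established convergence of $\hat\rho^h_t$; the right side converges because $J^h\rightharpoonup^* J=\int_\cdot j_t\dd t$ and, by the choice of the $\sigma^h_{K|L}$ in \eqref{eq:lifted-pair} from \cite[Lemma~4.1]{hraivoronska2022diffusive}, $(\hat\rho^h,\hat\jmath^h)$ satisfies the continuous \eqref{eq:CE}) shows $(\rho,j)\in\mathcal{CE}(0,T)$. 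The only mildly delicate point is the disintegration $J=\int_\cdot j_t\dd t$, which I would simply cite from the analogous step in Lemma~\ref{lemma:compactness-for-flux-tilted}; everything else is a routine adaptation.
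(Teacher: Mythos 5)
Your proposal is correct and follows essentially the same route as the paper: the paper's (very terse) proof likewise derives the bound $\||\hat\jmath_\cdot^h|(\Omega)\|_{L^2((0,T))}^2\lesssim \sup_{h>0}\int_0^T\calR_{\text{up},h}(\rho^h_t,j^h_t)\dd t$ by the duality/Cauchy--Schwarz argument of Lemma~\ref{lemma:compactness-for-flux-tilted} adapted to the quadratic upwind dissipation, and then obtains the pointwise weak-$*$ convergence of $\hat\rho^h_t$ by mimicking the $W_1$-equicontinuity and Ascoli--Arzel\`a argument of Lemma~\ref{lem:compactness-continuous-epsilon}. Your write-up simply makes explicit the details the paper delegates to those two earlier lemmas, and the quantitative estimates match.
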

\begin{proof}
    The weak-$*$ compactness of $\{J^h\}_{h>0}$ and equi-integrability of the family $\{t\mapsto |\hat{\jmath}^h_t|\}_{h>0}$ can be proven as in Lemma~\ref{lemma:compactness-for-flux-tilted}. Indeed, using the dissipation potential $\calR_{\text{up},h}$ (cf.\ \eqref{eq:upwind:dissipation}) instead, we obtain
    \begin{align*}
    	\sup_{h>0}\||\hat\jmath_\cdot^h|(\Omega)\|_{L^2((0,T))}^2\le 2C_\tau d^2 \sup_{h>0}\int_0^T \calR_{\text{up},h}(\rho_t^h,j_t^h) \dd t =: c_0 <\infty.
    \end{align*}
    For the pointwise weak-$*$ convergence of $\{\hat{\rho}_t^h\}_{h>0}$, we simply mimic the proof of Lemma~\ref{lem:compactness-continuous-epsilon}.
\end{proof}

\begin{theorem}\label{th:fisher-upwind-to-aggregation}
    Let $\{ (\calT^h, \Sigma^h)\}_{h>0}$ be a family of  Cartesian tessellations with edge-length $h>0$. Let the family $\{ \rho^h\in\calP(\calT^h) \}_{h>0}$ satisfy $\hat{\rho}^h \rightharpoonup^* \rho$ weakly-$*$ in $\calP(\Omega)$. If the family of discrete functions $\{\varphi^h\in \calB(\calT^h)\}_{h>0}$ is such that for some $\varphi \in C_b^1(\Omega)$:
    $$
        \dnabla \varphi^h(K,L) = \nabla \varphi(x_K) \cdot (x_L - x_K) + o(h),
    $$
    then
    $$
        \lim_{h\to 0} \calR_{\text{up},h}^*(\rho^h, \dnabla\varphi^h) = \frac{1}{2} \int_\Omega |\nabla \varphi(x)|^2 \rho(\dd x).
    $$
    
    Consequently, if the interaction potential $W$ satisfies assumption \eqref{ass:smooth-potential}, then
    $$
        \lim_{h\to 0} \calD_{\text{up},h}(\rho^h) = \frac{1}{2} \int_\Omega \left| \nabla \mathsf{Q} (\rho) \right|^2 \dd\rho,
    $$
    with $\mathsf{Q}(\rho)=\nabla V + \nabla W \ast \rho$.
\end{theorem}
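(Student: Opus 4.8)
The plan is to evaluate $\calR^*_{\text{up},h}(\rho^h,\dnabla\varphi^h)$ almost explicitly, using heavily that the mesh is Cartesian, and then to pass to the limit with the weak-$*$ convergence $\hat\rho^h\rightharpoonup^*\rho$. The first move is a symmetrisation: since $\Sigma^h$ consists of \emph{ordered} pairs, $\tau^h_{K|L}=\tau^h_{L|K}$ and $\dnabla\varphi^h(L,K)=-\dnabla\varphi^h(K,L)$, the change of variables $(K,L)\mapsto(L,K)$ merges the two halves of the defining sum, giving
\[
	\calR^*_{\text{up},h}(\rho^h,\xi^h)=\frac12\sum_{K\in\calT^h}u^h_K\sum_{L\in\calT^h_K}\tau^h_{K|L}\,\bigl((\xi^h_{K|L})^+\bigr)^2 ,
\]
and, in passing, that $\xi^h\mapsto\calR^*_{\text{up},h}(\rho^h,\xi^h)$ is even; the latter will be used for the Fisher information.

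On a Cartesian grid of edge-length $h$ one has $|K|=h^d$, $\tau^h_{K|L}=h^{d-2}$ and $x_L-x_K=h\,\mathrm{n}_{KL}$ with $\mathrm{n}_{KL}\in\{\pm e_1,\dots,\pm e_d\}$. Inserting the hypothesis $\dnabla\varphi^h(K,L)=h\,\nabla\varphi(x_K)\cdot\mathrm{n}_{KL}+o(h)$ — the $o(h)$ being uniform over pairs because $s\mapsto s^+$ is $1$-Lipschitz and $\nabla\varphi$ is bounded — and using $u^h_K h^d=\rho^h(K)$, $\#\calT^h_K\le 2d$, $\sum_K\rho^h(K)=1$ to absorb errors into $o(1)$, I obtain
\[
	\calR^*_{\text{up},h}(\rho^h,\dnabla\varphi^h)=\frac12\sum_{K\in\calT^h}\rho^h(K)\sum_{L\in\calT^h_K}\bigl((\nabla\varphi(x_K)\cdot\mathrm{n}_{KL})^+\bigr)^2+o(1).
\]
The structural heart is then the elementary identity: for a cell $K$ having all $2d$ neighbours and any $v\in\R^d$, $\sum_{L\in\calT^h_K}((v\cdot\mathrm{n}_{KL})^+)^2=\sum_{i=1}^d\bigl(((v_i)^+)^2+((v_i)^-)^2\bigr)=|v|^2$. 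Cells lacking some neighbour lie in a layer $\{\mathrm{dist}(\cdot,\partial\Omega)<Ch\}$, for which the inner sum is still $\le\|\nabla\varphi\|_\infty^2$ and whose total $\hat\rho^h$-mass tends to $0$ (weak-$*$ convergence, $\rho$ not charging $\partial\Omega$), so they contribute $o(1)$. Finally $\sum_K\rho^h(K)|\nabla\varphi(x_K)|^2=\int_\Omega g^h\,\dd{\hat\rho^h}$ with $g^h\coloneq\sum_K|\nabla\varphi(x_K)|^2\Ind_K\to|\nabla\varphi|^2$ uniformly, so $\calR^*_{\text{up},h}(\rho^h,\dnabla\varphi^h)\to\tfrac12\int_\Omega|\nabla\varphi|^2\,\dd\rho$, which is the first assertion.

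For the Fisher information I would apply the first assertion with $\varphi\coloneq\mathsf{Q}(\rho)=V+W*\rho\in C^1_b(\Omega)$ and $\varphi^h_K\coloneq\mathsf{Q}^{h,\rho}_K$, so that $\dnabla\varphi^h(K,L)=q^h_{K|L}$ and, since $\calE'_{\text{up},h}(\rho^h)(K)=\mathsf{Q}^{h,\rho}_K$, also $\calD_{\text{up},h}(\rho^h)=\calR^*_{\text{up},h}(\rho^h,-\dnabla\calE'_{\text{up},h}(\rho^h))=\calR^*_{\text{up},h}(\rho^h,-q^h)=\calR^*_{\text{up},h}(\rho^h,q^h)$ by the evenness above. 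It then remains to verify $q^h_{K|L}=\nabla\mathsf{Q}(\rho)(x_K)\cdot(x_L-x_K)+o(h)$ uniformly: from $W\in C^1(\R^d)\cap\mathrm{Lip}(\R^d)$ one has $W(x_L-z)-W(x_K-z)=\nabla W(x_K-z)\cdot(x_L-x_K)+o(h)$ uniformly in $z\in\overline\Omega$; summing against $\rho^h$ (with $\sum_M\rho^h(M)=1$ controlling the remainder) and the analogous expansion of $V$, then recognising $\sum_M\rho^h(M)\nabla W(x_K-x_M)=\nabla(W*\hat\rho^h)(x_K)+o(1)$ (Riemann-sum error, uniform continuity of $\nabla W$) and $\nabla(W*\hat\rho^h)\to\nabla(W*\rho)$ uniformly on $\overline\Omega$ (equicontinuity of $\{\nabla W(x-\cdot)\}_{x\in\overline\Omega}$ plus $\hat\rho^h\rightharpoonup^*\rho$), gives the expansion. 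Hence $\calD_{\text{up},h}(\rho^h)\to\tfrac12\int_\Omega|\nabla\mathsf{Q}(\rho)|^2\,\dd\rho$.

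The main obstacle is really that the identity $\sum_L((v\cdot\mathrm{n}_{KL})^+)^2=|v|^2$ is special to the Cartesian geometry: on a general admissible mesh the upwind structure produces (after inserting $\tau^h_{K|L}$ and the face normals) a $v$-dependent quadratic form that is \emph{not} $|v|^2$, and because of the positive parts it cannot be repaired with the diffusion tensor $\T$ of Proposition~\ref{prop:tensor}; this is precisely why the statement is confined to Cartesian grids. The two secondary points needing care are the boundary layer (handled via weak-$*$ convergence and $\rho(\partial\Omega)=0$) and the uniform first-order expansion of $q^h$ above, which under the present hypotheses — only weak-$*$ convergence of $\hat\rho^h$, but $W$ of class $C^1$ — replaces the argument of Lemma~\ref{lem:properties-potentials}.
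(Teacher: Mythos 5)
Your proposal follows essentially the same route as the paper's proof: the same symmetrisation to $\tfrac12\sum_{(K,L)}u^h_K\tau^h_{K|L}\,((\xi^h_{K|L})^+)^2$, the same use of the Lipschitz continuity of $s\mapsto s^+$ to replace $\dnabla\varphi^h(K,L)$ by $\nabla\varphi(x_K)\cdot(x_L-x_K)$ up to $o(h)$, the same Cartesian identity (your $\sum_{L}((v\cdot \mathrm{n}_{KL})^+)^2=|v|^2$ is exactly the paper's observation that the upwind-selected tensor $\sum_L\tau^h_{K|L}(x_L-x_K)\otimes(x_L-x_K)\,\imath^\varphi_K(L)$ equals $|K|\,\Id$), and the same final passage to the limit via uniform continuity of $\nabla\varphi$ and weak-$*$ convergence; for the Fisher information both arguments amount to applying the first assertion with $\varphi=\mathsf{Q}(\rho)$ and using the uniform convergence $\nabla\mathsf{Q}(\hat\rho^h)\to\nabla\mathsf{Q}(\rho)$. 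One slip to correct: $\xi\mapsto\calR^*_{\text{up},h}(\rho^h,\xi)$ is \emph{not} even on antisymmetric arguments --- replacing $\xi$ by $-\xi$ in the symmetrised form and relabelling gives $\tfrac12\sum u^h_L\tau^h_{K|L}((\xi_{K|L})^+)^2$, which differs from $\tfrac12\sum u^h_K\tau^h_{K|L}((\xi_{K|L})^+)^2$ whenever the density is non-constant (the upwind structure genuinely distinguishes the two signs at fixed $h$). This does not damage your conclusion, because $\calD_{\text{up},h}$ is by definition $\calR^*_{\text{up},h}(\rho^h,q^h)$ in \eqref{eq:upwind:fisher} and the limit $\tfrac12\int|\nabla\varphi|^2\dd\rho$ is invariant under $\varphi\mapsto-\varphi$, but the evenness should not be invoked as a step. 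Two further remarks: your re-derivation of $q^h_{K|L}=\nabla\mathsf{Q}(\rho)(x_K)\cdot(x_L-x_K)+o(h)$ under weak-$*$ convergence only (using $W\in C^1$) is a genuine improvement in rigour, since Lemma~\ref{lem:properties-potentials} is proved under $L^1$ convergence of densities with finite entropy and is not directly applicable in this section; and your boundary-layer estimate tacitly uses $\rho(\partial\Omega)=0$, which is not guaranteed for aggregation dynamics with no-flux boundary --- the paper silently ignores boundary cells altogether, so this is a shared, not a new, delicacy.
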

\begin{proof}
    Using symmetry, we rewrite the functional as
    \begin{align*}
        \calR_{\text{up},h}^*(\rho^h, \dnabla\varphi^h) &= \frac{1}{4} \sum_{(K,L)\in\Sigma^h} \Big( u^h_K \big|(\dnabla \varphi^h(K,L))^+ \big|^2 + u^h_L \big| (\dnabla \varphi^h(K,L))^- \big|^2 \Big)\tau_{K|L}^h \\
        &= \frac{1}{2} \sum_{(K,L)\in\Sigma^h} \big| (\dnabla \varphi^h(K,L))^+ \big|^2 u^h_K \tau_{K|L}^h.
    \end{align*}
    Since the mapping $\R \ni q \mapsto q^+$ is Lipschitz, we have that
    $$
        (\dnabla \varphi^h(K,L))^+ = \bigl(\nabla \varphi(x_K) \cdot (x_L - x_K)\bigr)^+ + o(h).
    $$
    Inserting this expression into the functional yields
    \begin{align*}
        \calR_{\text{up},h}^*(\rho^h, \dnabla\varphi^h) &= \frac{1}{2}\sum_{(K,L)\in\Sigma^h} \tau_{K|L}^h u^h_K\left| \bigl( \nabla \varphi (x_K) \cdot (x_L - x_K) \bigr)^+ \right|^2    + o(h^2) \sum_{(K,L)\in\Sigma^h}  \frac{\tau_{K|L}^h}{|K|}\rho^h_K \\
        &= \frac{1}{2}\sum_{K\in\calT^h} \Big \langle \nabla \varphi (x_K), \sum_{L\in\calT^h_K} \tau_{K|L}^h (x_L - x_K) \otimes (x_L - x_K)\, \imath^\varphi_K(L) \nabla \varphi (x_K) \Big \rangle \,u^h_K + o(1), 
    \end{align*}
    where we set
    $$
        \imath^\varphi_K \coloneq \Ind_{\{ M\in\calT^h: \nabla  \varphi (x_K) \cdot (x_M - x_K) > 0 \}} + \frac{1}{2} \Ind_{\{ M\in\calT^h:\nabla  \varphi (x_K) \cdot (x_M - x_K) = 0 \}}
    $$
    The indicator $\imath^\varphi_K$ means that for any cell $K\in\calT^h$ the sum goes only over the faces $(K|L)$ for which $\nabla \varphi (x_K) \cdot (x_L - x_K) > 0$. For the Cartesian grid, all the neighboring cells $\calT^h_K$ can be grouped in pairs $M, L \in \calT^h_K$  such that $x_L - x_K = - (x_M - x_K)$ and $x_L - x_K = \pm h e_i$ for some basis vector $e_i$, $i \in \left\{ 1, \dots, d \right\}$. We illustrate this idea in Figure~\ref{fig_explanation} below.
    \begin{figure}[ht]
    \centering
    \includegraphics[width=0.35\linewidth, height=5cm]{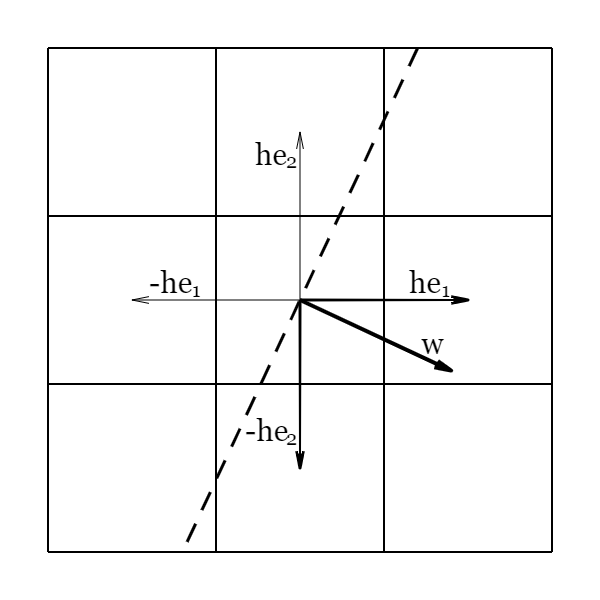} 
    \caption{Consider the 2-d Cartesian grid. Let the central cell be the cell $K$ and $w$ be a vector starting at $x_K$. Then the line orthogonal to $w$ (dashed line) splits the set of vectors $\{(x_L - x_K)\}_{L\in\calT^h_K}$ into two groups: one such that $w\cdot (x_L - x_K) > 0$ (i.e.\ $\{he_1, -he_2\}$) and the other such that $w\cdot (x_L - x_K) < 0$, (i.e, $\{-he_1, he_2\}$).}
    \label{fig_explanation}
    \end{figure}

    This means that for any $\nabla\varphi(x_K)$ that is not parallel to any basis vector $\{e_i\}_{i=1}^d$, the indicator $\imath^\varphi_{K}$ "chooses" all the basis vectors with either plus or minus sign. Hence, the tensor takes the form
    $$
        \sum_{L\in\calT^h_K} \tau_{K|L}^h (x_L - x_K) \otimes (x_L - x_K) \, \imath_K^{\varphi}(L) = h^d \sum_{i=1}^d e_i \otimes e_i = |K| \Id.
    $$
    If $\nabla\varphi(x_K)$ is parallel to some $e_i$ for some $i\in\{1,\dots,d\}$, then $\imath^\varphi_K$ includes both $he_i$ and $-he_i$ with the coefficient $1/2$, which does not change the form of the tensor.
    
    The expression above then simplifies to 
    $$
        \calR_{\text{up},h}^*(\rho^h, \dnabla\varphi^h) = \frac{1}{2} \sum_{K\in\calT^h} \left| \nabla \varphi (x_K) \right|^2 |K| u^h_K + o(1).
    $$
    Since $\nabla \varphi$ is uniformly continuous on $\Omega$, it holds that
    $$
        \left| \nabla \varphi (x_K) \right|^2 = \intbar_K \left| \nabla \varphi (x) \right|^2 \dd x + o(1).
    $$
    Therefore, the functional admits an integral form
    $$
        \calR_h(\rho^h, \dnabla\varphi^h) = \frac{1}{2} \int_\Omega |\nabla \varphi(x)|^2 \hat{\rho}^h(\dd x) + o(1)\quad \xrightarrow{h\to 0}\quad \frac{1}{2} \int_\Omega |\nabla \varphi(x)|^2 \rho(\dd x).
    $$
    
    As for the convergence of the Fisher information, the assumptions on $V$ and $W$ give
    \[
    	|\nabla \mathsf{Q} (\hat{\rho}^h) (x_K)|^2 = \intbar_K |\nabla \mathsf{Q} (\hat{\rho}^h) (x)|^2\dd x + o(1),
    \]
    and therefore,
    \begin{align*}
        \calD_{\text{up},h}(\rho^h) &= \frac{1}{2} \int_\Omega \left| \nabla \mathsf{Q} (\hat{\rho}^h) (x) \right|^2 \hat{\rho}^h (\dd x) + o(1).
    \end{align*}
    The assertion then follows from the weak-$*$ convergence $\hat{\rho}^h \rightharpoonup^* \rho$ in $\calP(\Omega)$ and the uniform convergence $\nabla \mathsf{Q} (\hat{\rho}^h) \to \nabla \mathsf{Q}(\rho)$ in $\calC(\overline\Omega)$.
\end{proof}

\begin{proof}[Proof of Theorem~\ref{th:upwind-to-aggregation}]

Consider a family $\{(\rho^h, j^h)\}_{h>0}$ of GGF-solutions to the upwind scheme \eqref{eq:upwind-scheme} according to Definition~\ref{def_GGF_solution} and the generalized gradient structure obtained as the EDP limit in Section~\ref{sec:vanishing-diffusion}. Let $\{(\hat\rho^h, \hat\jmath^h)\}_{h>0}$ be defined as in \eqref{eq:lifted-pair}. Then, the existence of a subsequential limit pair $(\rho, j) \in \mathcal{CE}(0,T)$ and the convergence specified in Theorem~\ref{th:upwind-to-aggregation}(1) follow from Lemma~\ref{lem:compactness-upwind}.
    
The convergence of the Fisher information is proven in Theorem~\ref{th:fisher-upwind-to-aggregation}. The liminf inequality for the dissipation potential follows from the limit of the dual dissipation potential shown in Theorem~\ref{th:fisher-upwind-to-aggregation} and a duality argument from \cite[Theorem~6.2(i)]{hraivoronska2023diffusive}. In this way, the assertion (2) is proven and it immediately follows that $\calI_{\text{agg}}^{[s,t]}(\rho, j) \leq \liminf_{h\to 0} \calI_{\text{up},h}^{[s,t]}(\rho^h, j^h)= 0$ for every $[s,t]\subset[0,T]$. On the other hand, the chain rule \cite[E.\ Chain rule in Section~10.1.2]{ambrosio2008gradient} yields $\calI_{\text{agg}}^{[s,t]}(\rho,j) \geq 0$ for every $[s,t]\subset[0,T]$. Therefore, the limit pair $(\rho, j)$ is an  $(\calE_{\text{agg}}, \calR, \calR^*)$-gradient flow solution of \eqref{eq:aggregation} in the sense of Definition~\ref{def_GF_solution}.
\end{proof}

\appendix

\section{The \texorpdfstring{$\cosh$}{cosh}-gradient structure, its tilt-dependence and its de-tilting}\label{appendix:cosh-tilting}

We link in this appendix a different possible gradient structure based on dissipation potentials based on the `$\cosh$' to the gradient structure defined by the dual dissipation potential~\eqref{eq:tilted-Rstar} and discussed in Section~\ref{sec:gf-SG}. We will see in Section~\ref{sec:cosh-structure-SG} that in the `$\cosh$' case, the edge activity $\vartheta^{h,\rho}$ depends on the potentials $V^h$, $W^h$ and hence also $\rho^h$. The latter dependency would enforce stronger assumptions on the tesselation. Additionally, we point out that those dependencies are scaled by factors of $\epsilon^{-1}$ and hence become singular in the vanishing diffusion limit $\epsilon\to 0$. Hence, one expects additional complications in proving EDP convergence for such a gradient structure.

Also from the modeling point of view, this dependence of the dissipation potential on the driving energy can be considered a drawback and unphysical, which we discuss in Section~\ref{sec:cosh-structure-tilt-dependence}. An in-depth discussion of tilt-dependent gradient systems, where changes in the driving energy can lead to changes in the dissipation potential, is carried out in \cite{peletier2023cosh}. Fortunately for the Scharfetter--Gummel scheme, it is possible to derive a tilt-independent gradient structure, which is better suited for proving EDP convergence uniform in the diffusivity $\epsilon\ll 1$. We present a de-tilting of the `$\cosh$' dissipation potential towards a tilt-independent dissipation potential in Section~\ref{sec:tilted-gradient-structure}.

\subsection{The \texorpdfstring{$\cosh$}{cosh}-gradient structure}\label{sec:cosh-structure-SG}

We show that the Scharfetter--Gummel scheme \eqref{eq:SG} defines a random walk that possesses a `cosh' gradient structure by following a similar construction of~\cite{hraivoronska2023diffusive}.
To handle the nonlocal aggregation part in the energy, we follow the strategy introduced in~\cite{EFLS2016} and define a \emph{local equilibrium} to arrive at a suitable gradient flow formulation incorporating, such that the scheme would indeed fit into the frame developed in~\cite{hraivoronska2023diffusive}.
From the discrete energy $\calE_h$ given in \eqref{eq:energy-with-interaction}, we identify the discrete gradient of its variational derivative as
\begin{equation}\label{eq:discrete_energy_variation}
	\dnabla \calE'_{\epsilon,h}(\rho^h) = \epsilon \dnabla \bigl(\log \rho^h - \log\pi^{\epsilon,h,\rho} \bigr),
\end{equation}
with 
\begin{equation}\label{eq:discrete_energy_variation_2}
	\pi^{\epsilon,h,\rho}_K = \frac{\abs{K} e^{- \mathsf{Q}_K^{h,\rho}/\epsilon}}{Z^{\epsilon,h,\rho}},\qquad \mathsf{Q}_K^{h,\rho} = V_K^h + \sum_{M\in \calT^h} W_{KM}^h \rho_M^h,
\end{equation}
and $Z^{\epsilon,h,\rho} = \sum_{K\in \calT^h} \abs{K}  e^{-\mathsf{Q}_K^{h,\rho}/\epsilon}$ is the normalization such that $\pi^{\epsilon,h,\rho} \in \calP(\calT^h)$. We note that the contribution of the variation of $Z^{\epsilon,h,\rho}$ in~\eqref{eq:discrete_energy_variation} cancels due to the discrete gradient.

The `cosh' dual dissipation potential is given for all $\rho^h \in \calP(\calT^h)$ and $\xi^h \in \calB(\Sigma^h)$ by
\begin{equation}\label{eq:def:cosh:Rstar}
    \overline\calR_{\epsilon,h}^*(\rho^h, \xi^h) = \frac{1}{2} \sum_{(K,L)\in\Sigma^h} \Psi^*_\epsilon (\xi^h_{K|L}) \sqrt{\bar u^h_K \bar u^h_L} \, \kappa^{\epsilon,h,\rho}_{K|L} \pi^{\epsilon,h,\rho}_K, \qquad \bar u_K^h = \frac{\rho_K^h}{\pi^{\epsilon,h,\rho}_K} ,
\end{equation}
where $\Psi_\epsilon^*(s) = 4 \epsilon^2 (\cosh(s/2 \epsilon) - 1)$.
The idea is then to choose a jump kernel $\kappa^{\epsilon,h,\rho} : \Sigma^h \to [0, \infty)$ in such a way that it satisfies the \emph{local detailed balance condition}
\begin{equation}\label{eq:local:DBC}
 \kappa^{\epsilon,h,\rho}_{K|L} \pi^{\epsilon,h,\rho}_K = \kappa^{\epsilon,h,\rho}_{L|K} \pi^{\epsilon,h,\rho}_L \qquad\text{ for all } (K,L)\in\Sigma^h \text{ and all } \rho^h \in \calP(\calT^h) .
\end{equation}
and allows representing the flux in the gradient form \eqref{eq:flux-force-SG}.

One possibility is to define the jump kernel as
\begin{equation}\label{eq:SG-jump-kernel}
    \kappa^{\epsilon,h,\rho}_{K|L} \coloneq \frac{1}{|K|}\frac{\tau_{K|L}^h}{\exp\bra[\big]{-\mathsf{Q}_K^{h,\rho}/\epsilon}} \frac{2\,q_{K|L}^h / \epsilon}{\exp(\mathsf{Q}^{h,\rho}_L / \epsilon) - \exp(\mathsf{Q}^{h,\rho}_K / \epsilon)}, \qquad (K,L)\in\Sigma^h,
\end{equation}
where we recall that $\tau_{K|L}^h\coloneq |(K|L)| / |x_L - x_K|$ is the transmission coefficient and 
\begin{equation}\label{eq:q-for-aggregation}
	q_{K|L}^h \coloneq V^h_L - V^h_K + \sum_{M\in\calT^h} \rho^h_M (W^h_{ML} - W^h_{MK}) = \mathsf{Q}_L^{h,\rho}-\mathsf{Q}_K^{h,\rho},\qquad (K,L)\in \Sigma^h.
\end{equation}
Notice that the pair $(\kappa^{\epsilon,h,\rho}, \pi^{\epsilon,h,\rho})$ satisfies the local detailed balance condition~\eqref{eq:local:DBC}, since $\tau_{K|L}^h = \tau_{L|K}^h$ and $q_{K|L}^h = - q_{L|K}^h$.
The {\em edge conductivity} is then given by
\begin{equation}\label{eq:theta-cosh}
    \vartheta^{\epsilon,h,\rho}_{K|L} \coloneq  \frac{\tau_{K|L}^h}{Z^{\epsilon,h,\rho}} \frac{2\, q_{K|L}^h / \epsilon}{\exp(\mathsf{Q}^{h,\rho}_L / \epsilon) - \exp(\mathsf{Q}^{h,\rho}_K / \epsilon)}.
\end{equation}
The kernel defined in \eqref{eq:SG-jump-kernel} satisfies the bound
\begin{equation}\label{eq:scheme-cosh:bounded}
	\sup_{h>0} \sup_{K\in\calT^h} h^2 \sum_{L\in\calT^h_K} \kappa^{\epsilon,h,\rho}_{K|L} < \infty,\qquad \text{where}\;\;\calT^h_K\coloneq\bigl\{L\in\calT^h: (K,L)\in \Sigma^h\bigr\},
\end{equation}
provided $\{(\calT^h, \Sigma^h)\}_{h>0}$ satisfy \eqref{ass:tessellation}. Indeed,  for any $(K,L)\in\Sigma^h$, it holds that 
\begin{align*}
	\kappa^{\epsilon,h,\rho}_{K|L} = \frac{|(K|L)|}{|K||x_K-x_L|} \frac{2\, q_{K|L}^h/\epsilon}{\exp(q_{K|L}^h/\epsilon) - 1  }
    \leq \frac{C_{d-1} h^{d-1}}{C_d \zeta^{d+1} h^{d+1}} \Big(2 - \frac{q_{K|L}^h}{\epsilon} + o(h) \Big) = O(h^{-2}).
\end{align*}
It is not difficult to see that the non-degeneracy assumption~\eqref{ass:tessellation} implies that \cite{hraivoronska2023diffusive}
\[
	\sup_{h>0} \sup_{K\in\calT^h} \#\calT^h_K < \infty,
\]
and thus also the asserted bound~\eqref{eq:scheme-cosh:bounded}. 
However, we point out that the bound is non-uniform in $\epsilon\ll 1$. 

\medskip
To apply the strategy from~\cite{hraivoronska2023diffusive} directly, it is left to show that the choice of $\kappa^{\epsilon,h,\rho}$ in~\eqref{eq:SG-jump-kernel} indeed gives rise to the Scharfetter--Gummel flux~\eqref{eq:SG-flux}.

\begin{lemma}\label{lem:flux-force-for-cosh}
    For any $\rho^h\in\calP(\calT^h)$, $K\in\calT^h$, and $(K,L)\in\Sigma^h$, we have the identity \eqref{eq:flux-force-SG},
    where $\calJ^{h,\rho}$ is the Scharfetter--Gummel flux given in \eqref{eq:SG-flux} and $\overline\calR_{\epsilon,h}^*$ is the `cosh' dual dissipation potential with edge conductivity $\vartheta^{\epsilon,h,\rho}$ defined in \eqref{eq:theta-cosh}. 
    
    In particular, the Scharfetter--Gummel scheme \eqref{eq:SG} possesses the `cosh' gradient flow structure with \eqref{eq:energy-with-interaction} as the driving energy.
\end{lemma}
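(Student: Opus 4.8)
The plan is to verify the force--flux identity \eqref{eq:flux-force-SG} by a direct computation and then to read off the `cosh' gradient structure from the general framework of Section~\ref{sec:discrete-structures}. Concretely, I would (i) make the discrete driving force $-\dnabla\calE'_{\epsilon,h}(\rho^h)$ explicit, (ii) compute $D_2\overline\calR^*_{\epsilon,h}$, and (iii) substitute the force and simplify using elementary $\cosh$/$\sinh$ and Bernoulli-function identities until the Scharfetter--Gummel flux \eqref{eq:SG-flux} appears.

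For (i): differentiating the closed-form energy \eqref{eq:energy-with-interaction} in $\rho^h_K$, with $\phi'(s)=\log s$ and using the symmetry $W^h_{KL}=W^h_{LK}$ (which merges the two sums produced by the quadratic interaction term), gives $\calE'_{\epsilon,h}(\rho^h)_K=\epsilon\log u^h_K+\mathsf{Q}^{h,\rho}_K$ with $\mathsf{Q}^{h,\rho}_K$ as in \eqref{eq:discrete_energy_variation_2}. Rewriting $\epsilon\log u^h_K+\mathsf{Q}^{h,\rho}_K=\epsilon\bigl(\log\rho^h_K-\log\pi^{\epsilon,h,\rho}_K\bigr)-\epsilon\log Z^{\epsilon,h,\rho}$ yields \eqref{eq:discrete_energy_variation} up to the $K$-independent constant $\epsilon\log Z^{\epsilon,h,\rho}$, which is annihilated by $\dnabla$; hence $-\dnabla\calE'_{\epsilon,h}(\rho^h)(K,L)=\epsilon\log(\bar u^h_K/\bar u^h_L)$ with $\bar u^h_K=\rho^h_K/\pi^{\epsilon,h,\rho}_K$. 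Along the way I would record that $\kappa^{\epsilon,h,\rho}_{K|L}\pi^{\epsilon,h,\rho}_K=\vartheta^{\epsilon,h,\rho}_{K|L}$ with $\vartheta^{\epsilon,h,\rho}$ as in \eqref{eq:theta-cosh}, and that $(\kappa^{\epsilon,h,\rho},\pi^{\epsilon,h,\rho})$ satisfies the local detailed balance \eqref{eq:local:DBC}, both being immediate from $\tau^h_{K|L}=\tau^h_{L|K}$ and $q^h_{K|L}=-q^h_{L|K}$.

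For (ii)--(iii): since $\Psi^*_\epsilon(s)=4\epsilon^2(\cosh(s/2\epsilon)-1)$ is even and convex with $(\Psi^*_\epsilon)'(s)=2\epsilon\sinh(s/2\epsilon)$, differentiating the $(K,L)$-summand of \eqref{eq:def:cosh:Rstar} in $\xi^h_{KL}$ (the prefactor $1/2$ cancelling the factor $2$ from $(\Psi^*_\epsilon)'$) gives $D_2\overline\calR^*_{\epsilon,h}(\rho^h,\xi^h)(K,L)=\epsilon\sinh(\xi^h_{KL}/2\epsilon)\sqrt{\bar u^h_K\bar u^h_L}\,\vartheta^{\epsilon,h,\rho}_{K|L}$. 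Inserting $\xi^h_{KL}=\epsilon\log(\bar u^h_K/\bar u^h_L)$ and using $\sinh\bigl(\tfrac12\log(a/b)\bigr)=\tfrac{a-b}{2\sqrt{ab}}$ cancels the square root and leaves $\tfrac{\epsilon}{2}(\bar u^h_K-\bar u^h_L)\vartheta^{\epsilon,h,\rho}_{K|L}=\tfrac{\epsilon}{2}\bigl(\rho^h_K\kappa^{\epsilon,h,\rho}_{K|L}-\rho^h_L\kappa^{\epsilon,h,\rho}_{L|K}\bigr)$, the last equality using $\bar u^h_K\pi^{\epsilon,h,\rho}_K=\rho^h_K$ together with local detailed balance. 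Finally, substituting $q^h_{K|L}=\mathsf{Q}^{h,\rho}_L-\mathsf{Q}^{h,\rho}_K$ into \eqref{eq:SG-jump-kernel} factors $e^{\mathsf{Q}^{h,\rho}_K/\epsilon}$ out of the denominator and, since $\Bern(s)=s/(e^s-1)$, identifies $\kappa^{\epsilon,h,\rho}_{K|L}=\tfrac{2\tau^h_{K|L}}{|K|}\Bern(q^h_{K|L}/\epsilon)$ and symmetrically $\kappa^{\epsilon,h,\rho}_{L|K}=\tfrac{2\tau^h_{K|L}}{|L|}\Bern(-q^h_{K|L}/\epsilon)$; plugging these in and recalling $u^h_K=\rho^h_K/|K|$ reproduces exactly $\calJ^{h,\rho}_{K|L}$, which is \eqref{eq:flux-force-SG}. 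The `in particular' assertion is then immediate: $\Psi^*_\epsilon$ is convex, lower semicontinuous, nonnegative and vanishes at $0$, so $\overline\calR^*_{\epsilon,h}(\rho^h,\cdot)$ is an admissible dual dissipation potential, and \eqref{eq:flux-force-SG} exhibits \eqref{eq:SG} as the gradient flow of $(\calE_{\epsilon,h},\overline\calR_{\epsilon,h},\overline\calR^*_{\epsilon,h})$, with $\overline\calR_{\epsilon,h}$ the Legendre dual of $\overline\calR^*_{\epsilon,h}$ in its second slot.

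I do not expect a genuine obstacle here — this is a verification rather than a theorem — but the one point that warrants care is that the local equilibrium $\pi^{\epsilon,h,\rho}$ depends on $\rho^h$. This dependence is harmless precisely because $\calE'_{\epsilon,h}$ is computed from the explicit energy \eqref{eq:energy-with-interaction}, in which $\pi^{\epsilon,h,\rho}$ does not appear, so it is never differentiated; the only residue is the additive constant in step (i) that $\dnabla$ discards. The remaining work is the careful matching of the hyperbolic algebra against $\Bern(s)=s/(e^s-1)$, together with the symmetrizations supplied by the symmetry of $W$ and by local detailed balance.
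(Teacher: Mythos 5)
Your proposal is correct and follows essentially the same route as the paper: both arguments reduce the two sides of \eqref{eq:flux-force-SG} to the common expression $\tfrac{\epsilon}{2}\bigl(\bar u^h_K-\bar u^h_L\bigr)\vartheta^{\epsilon,h,\rho}_{K|L}$, using the identity $\Bern(q^h_{K|L}/\epsilon)e^{-\mathsf{Q}^{h,\rho}_K/\epsilon}=q^h_{K|L}/\bigl(\epsilon(e^{\mathsf{Q}^{h,\rho}_L/\epsilon}-e^{\mathsf{Q}^{h,\rho}_K/\epsilon})\bigr)$ on the flux side and $\sinh\bigl(\tfrac12\log(a/b)\bigr)\sqrt{ab}=\tfrac{a-b}{2}$ on the dissipation side; you merely traverse the chain in the opposite direction (force $\to$ flux rather than flux $\to$ force). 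Your extra remarks — that \eqref{eq:discrete_energy_variation} holds only up to the $K$-independent constant $\epsilon\log Z^{\epsilon,h,\rho}$ annihilated by $\dnabla$, and that the $\rho$-dependence of $\pi^{\epsilon,h,\rho}$ is never differentiated — are correct and slightly more careful than the paper's statement.
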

\begin{proof}
We begin by rewriting the Scharfetter--Gummel flux in~\eqref{eq:SG-flux} using the density $\bar u^h = \dd\rho^h / \dd\pi^{\epsilon,h,\rho}$ with the reference measure $\pi^{h,\rho}$ depending on $\mathsf{Q}^{h,\rho}$:
\begin{equation}\label{eq:SG-flux-V-dependent}
   \calJ_{K|L}^{h,\rho} = \frac{\epsilon \tau_{K|L}^h}{Z^{h,\rho}} \left( \Bern( q_{K|L}^h / \epsilon) \,\bar u^h_K e^{-\mathsf{Q}^{h,\rho}_K/ \epsilon} - \Bern(- q_{K|L}^h / \epsilon )\, \bar u^h_L e^{-\mathsf{Q}^{h,\rho}_L/ \epsilon} \right).
\end{equation}
The expression \eqref{eq:SG-flux-V-dependent} can be simplified, since
\begin{align*}
    \Bern(q_{K|L}^h / \epsilon)\exp(-\mathsf{Q}^{h,\rho}_K / \epsilon)
    = \frac{q_{K|L}^h \exp(-\mathsf{Q}^{h,\rho}_K / \epsilon)}{\epsilon \left( \exp(q_{K|L}^h / \epsilon) - 1 \right)}
    = \frac{ q_{K|L}^h}{\epsilon \bra[\big]{\exp(\mathsf{Q}^{h,\rho}_L / \epsilon) - \exp(\mathsf{Q}^{h,\rho}_K / \epsilon) }}
\end{align*}
and, similarly, 
$$
    \Bern(-q_{K|L}^h / \epsilon) \exp(-\mathsf{Q}^{h,\rho}_L / \epsilon) = \frac{ q_{K|L}^h}{\epsilon \bra[\big]{\exp(\mathsf{Q}^{h,\rho}_L / \epsilon) - \exp(\mathsf{Q}^{h,\rho}_K / \epsilon) }},
$$
therefore
$$
    \calJ_{K|L}^{h,\rho} = \frac{\tau_{K|L}^h}{Z^{h,\rho}} \frac{ q_{K|L}^h}{\exp(\mathsf{Q}^{h,\rho}_L / \epsilon) - \exp(\mathsf{Q}^{h,\rho}_K / \epsilon)} \bra*{ \bar u^h_K - \bar u^h_L } = \frac{\epsilon}{2}\bra*{ \bar u^h_K - \bar u^h_L } \vartheta_{K|L}^{\epsilon,h,\rho} .
$$
On the other hand, we note that for every $(K,L)\in\Sigma^h$ and $\xi^h\in \calB(\Sigma^h)$:
\[
  D_2\overline\calR^*_{\epsilon,h}(\rho^h,\xi^h)(K,L) = \epsilon \sinh\bra*{\frac{\xi^h_{K|L}}{2\epsilon}} \sqrt{\bar u^h_K  \bar u^h_L}  \,\vartheta^{\epsilon,h,\rho}_{K|L}.
\]
Recall from~\eqref{eq:discrete_energy_variation} and~\eqref{eq:def:cosh:Rstar} that $\dnabla \calE'_{\epsilon,h}(\rho^h) = \epsilon \dnabla \log(\bar u^h)$. Inserting $\xi^h = - \dnabla \calE'_{\epsilon,h}(\rho^h)$, we obtain
 \begin{align*}
     D_2 \overline\calR_{\epsilon,h}^* \big(\rho^h, - \dnabla \calE'_{\epsilon,h}(\rho^h)\bigr) (K, L) &= \epsilon \sinh\bra[\bigg]{\frac{1}{2} \log \frac{\bar u^h_K}{\bar u^h_L}} \sqrt{\bar u^h_K \bar u^h_L} \,\vartheta^{\epsilon,h,\rho}_{K|L} = \calJ_{K|L}^{h,\rho},
 \end{align*}
 i.e.\ identity \eqref{eq:flux-force-SG} holds as asserted.
 \end{proof}

\begin{remark}\label{rem:SG-cosh}
Since the classical Scharfetter--Gummel scheme has the `cosh' gradient-flow formulation, one can ask if it is possible to use the framework of \cite{hraivoronska2023diffusive} to prove the convergence. The necessary assumptions on the invariant measure $\pi^{\epsilon,h,\rho}$ and the jump intensities $\kappa^{\epsilon,h,\rho}$ hold true based on the notion of local detailed balance as defined in~\eqref{eq:local:DBC}. However, the zero-local-average assumption 
\begin{equation}\label{assumption_local_average}
    \sum_{L\in\calT^h_K} \vartheta^{\epsilon,h,\rho}_{K|L} (x_K - x_L) = 0\qquad\text{for all $K\in\calT^h$ with $\overline{K} \cap \partial\Omega = \emptyset$ does not hold.}
\end{equation}
In addition, the nonlinear dependency of $\vartheta^{\epsilon,h,\rho}$ on $\rho$ seems to make satisfying~\eqref{assumption_local_average}, even only asymptotically, very hard and may require strong assumptions on the tessellations to work around. 

As a last remark, we emphasize that the edge conductivity $\vartheta^{\epsilon,h,\rho}$ defined in \eqref{eq:theta-cosh} depends non-uniformly on the diffusion parameter $\epsilon>0$, which makes it difficult to pass to the limit  $\epsilon \to 0$.
\end{remark}

\subsection{The tilt-dependence of the \texorpdfstring{$\cosh$}{cosh}-gradient structure}
\label{sec:cosh-structure-tilt-dependence}

The disadvantages of the `cosh' gradient structure mentioned earlier are due to tilt-dependence as defined in~\cite{peletier2023cosh}. To clarify this further, we decompose the free energy into entropy and potential energies by writing
\begin{equation}\label{eq:def:discrete_free_energy}
	\calE_{\epsilon,h}(\rho^h) = \epsilon \calS_h(\rho^h) + \calV_h^V(\rho^h) + \calW_h^W(\rho^h),
\end{equation}
where $V^h:\calT_h \to \R$ and $W^h:\calT_h\times \calT_h\to \R$ symmetric are given and we set
\begin{gather*}
	\calS_h(\rho_h) \coloneq \sum_{K\in\calT^h} \phi(u^h_K )|K|  , \qquad\text{ where } \qquad u^h_K \coloneq \frac{\rho^h_K}{|K|} ; \\
	\calV_h^V(\rho^h) \coloneq \sum_{K\in\calT^h} V^h_K \rho^h_K \qquad\text{and}\qquad \calW_h^W(\rho^h) \coloneq \frac{1}{2} \sum_{K, L\in\calT^h\times\calT^h} W^h_{KL} \rho^h_K \rho^h_L .
\end{gather*}
Then, we can provide a gradient structure for the Scharfetter--Gummel scheme for all possible potential energies $V^h$ and interaction energies $W^h$ altogether by introducing the set of {\em tilts}
\begin{equation}\label{eq:def:tilts}
	\calF_h \coloneq \set*{ \calV_h^V + \calW_h^W \,\Big|\, V^h : \calT^h \to \R ,\; W^h: \calT^h\times \calT^h \to \R \text{ symmetric}} . 
\end{equation}
We can then recast Lemma~\ref{lem:flux-force-for-cosh} as a derivation of a gradient structure with tilting~\cite[Definition 1.16]{peletier2023cosh} of the type $(\calT^h,\Sigma^h,\dnabla,\calS_h,\overline\calR_{\epsilon,h},\calF_h)$. By recalling that for $\calV_h^V + \calW_h^W \in \calF_h$, we find $\mathsf{Q}^{h,\rho} = (\calV_h^V)'(\rho^h) + (\calW_h^W)'(\rho^h)$ as defined in~\eqref{eq:discrete_energy_variation_2} and obtain from~\eqref{eq:def:cosh:Rstar} the dissipation potential with tilting defined by
\begin{equation}\label{eq:def:cosh:R}
	\overline\calR_{\epsilon,h}(\rho^h,j^h;\calV_h^V + \calW_h^W) \coloneq \frac{1}{2}\sum_{(K,L)\in \Sigma^h} \Psi_\epsilon\Biggl(\frac{j_{K|L}^h}{\sqrt{\bar u^h_K \bar u^h_L} \vartheta_{K|L}^{\epsilon,h,\rho}}\Biggr)\sqrt{\bar u^h_K \bar u^h_L} \vartheta_{K|L}^{\epsilon,h,\rho},\qquad \bar u_K^h = \frac{\rho_K^h}{\pi^{\epsilon,h,\rho}}.
\end{equation}
In particular, it depends on the potential energies $V^h,W^h$ through $\vartheta^{\epsilon,h,\rho}$ defined in~\eqref{eq:theta-cosh} and hence is tilt-dependent. Its undesirable properties explained in Remark~\ref{rem:SG-cosh} are a direct consequence of the dependency of the gradient structure on the potentials and the diffusivity $\epsilon>0$.

\subsection{Detilting of the \texorpdfstring{$\cosh$}{cosh}-gradient structure}
\label{sec:tilted-gradient-structure}

In this section, we deduce the dual dissipation potential~\eqref{eq:tilted-Rstar} from the one including the `$\cosh$' in~\eqref{eq:def:cosh:Rstar} by a de-tilting construction explained in~\cite[Remark 1.17]{peletier2023cosh}. 
In this way, we prove the following Lemma. 
\begin{lemma}\label{lem:flux-force-for-tilt}
	The Scharfetter--Gummel with flux-force relation~\eqref{eq:SG-flux} is induced by a gradient structure with tilting $(\calT^h,\Sigma^h,\dnabla, \calS_h,\calR_{\epsilon,h},\calF_h)$ with tilt set~$\calF_h$ given in~\eqref{eq:def:tilts}. Moreover, the dissipation potential~$\calR_{\epsilon,h}$ is tilt-independent and given by
	\begin{equation}\label{eq:tilted-R}
		\calR_{\epsilon,h}(\rho^h, j^h) = 2\sum_{(K,L)\in \Sigma^h} \tau_{K|L}^h \,\alpha_\epsilon\bra*{ u^h_K ,u^h_L, \frac{j^h_{K|L}}{\tau_{K|L}^h}} , \qquad u_K^h\coloneq \frac{\rho_K^h}{|K|}, 
	\end{equation}
	where $\alpha_\epsilon$ is the Legendre dual of $\alpha_\epsilon^*$ given in~\eqref{eq:def:alpha*} with respect to the third variable.
\end{lemma}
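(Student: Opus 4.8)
The plan is to verify directly that the dual dissipation potential $\calR_{\epsilon,h}^*$ defined in~\eqref{eq:tilted-Rstar} reproduces the Scharfetter--Gummel flux through the force-flux relation~\eqref{eq:flux-force-SG}, and then to package this into the language of gradient structures with tilting. The first and main computational step is to establish the kinetic relation~\eqref{eq:SG:KR}: starting from the force $\xi^h_{K|L} = -\dnabla\calE'_{\epsilon,h}(\rho^h)(K,L) = -(\epsilon\log(u^h_L/u^h_K) + q^h_{K|L})$, which uses the identification of $\calE'_{\epsilon,h}$ in~\eqref{eq:discrete_energy_variation}, I would solve for $q^h_{K|L}$ as in~\eqref{eq:qKL:energy} and substitute into the Scharfetter--Gummel flux~\eqref{eq:SG-flux}. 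Writing $a = u^h_K e^{-\xi^h_{K|L}/2\epsilon}$ and $b = u^h_L e^{\xi^h_{K|L}/2\epsilon}$, the two Bernoulli terms $\Bern(q^h_{K|L}/\epsilon)u^h_K$ and $\Bern(-q^h_{K|L}/\epsilon)u^h_L$ should collapse, using $\Bern(s) = s/(e^s-1)$ and $q^h_{K|L}/\epsilon = \log(a/b)$, into an expression proportional to $(a-b)/(\log a - \log b)$ times a hyperbolic factor; recognizing $\Lambda(a,b)$ and $\sinh(\xi^h_{K|L}/2\epsilon)$ in the result yields~\eqref{eq:SG:KR} with the harmonic-logarithmic mean $\Lambda_H$ from~\eqref{eq:def:HarmLogMean}. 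This is essentially the computation already recorded in~\cite[(2.23)]{schlichting2022scharfetter}, so I would cite it and only sketch the simplification.

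Next I would check that the right-hand side of~\eqref{eq:SG:KR} is indeed $D_2\calR_{\epsilon,h}^*(\rho^h,\xi^h)(K,L)$ for $\calR_{\epsilon,h}^*$ as in~\eqref{eq:tilted-Rstar}. Since $\calR_{\epsilon,h}^*(\rho^h,\xi^h) = 2\sum_{(K,L)\in\Sigma^h}\tau^h_{K|L}\,\alpha^*_\epsilon(u^h_K,u^h_L,\xi^h_{K|L}/2)$, differentiating in $\xi^h_{K|L}$ and using the defining integral~\eqref{eq:def:alpha*}, namely $\partial_\xi\alpha^*_\epsilon(a,b,\xi) = \epsilon\sinh(\xi/\epsilon)\Lambda_H(ae^{-\xi/\epsilon},be^{\xi/\epsilon})$, together with the symmetry convention $\tau^h_{K|L}=\tau^h_{L|K}$ and the factor-of-two and argument-halving bookkeeping (each unordered pair contributes through both orientations, and the chain rule produces a $1/2$ from the $\xi^h_{K|L}/2$ slot, canceling the leading $2$), reproduces exactly the right-hand side of~\eqref{eq:SG:KR} after accounting for the volume factor $|K|$ via $\rho^h_K = u^h_K|K|$. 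Here I would invoke Lemma~\ref{lem:properties-alpha} for the differentiability and the explicit derivative formula for $\alpha^*_\epsilon$, so that this step reduces to careful bookkeeping rather than new analysis.

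For the tilt-independence and the identification of $\calR_{\epsilon,h}$, the key observation is that neither $\alpha^*_\epsilon$ nor the transmission coefficients $\tau^h_{K|L}$ depend on the potentials $V^h,W^h$: the potentials enter only through the force $\xi^h = -\dnabla\calE'_{\epsilon,h}(\rho^h)$, i.e.\ through the driving energy, exactly as required by the de-tilting construction of~\cite[Remark~1.17]{peletier2022cosh}. Thus writing $\calE_{\epsilon,h} = \epsilon\calS_h + \calV_h^V + \calW_h^W$ and varying $\calV_h^V + \calW_h^W$ over the tilt set $\calF_h$ from~\eqref{eq:def:tilts} leaves $\calR_{\epsilon,h}^*$ unchanged, which is precisely the assertion that $(\calT^h,\Sigma^h,\dnabla,\calS_h,\calR_{\epsilon,h},\calF_h)$ is a gradient structure with tilting in the sense of~\cite[Definition~1.16]{peletier2022cosh} and that $\calR_{\epsilon,h}$ is tilt-independent. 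The primal dissipation potential~\eqref{eq:tilted-R} is then obtained by Legendre duality in the flux variable: since $\calR_{\epsilon,h}^*$ is a sum over edges of $2\tau^h_{K|L}\,\alpha^*_\epsilon(u^h_K,u^h_L,\cdot/2)$, its convex conjugate in $j^h$ is the corresponding sum of $2\tau^h_{K|L}\,\alpha_\epsilon(u^h_K,u^h_L,j^h_{K|L}/\tau^h_{K|L})$ — the scaling $\tau\alpha^*(\cdot/2)\leftrightarrow\tau\alpha(\cdot/\tau)\cdot 2$ following from the general rule $(c\,g(\cdot/2))^* = c\,g^*(2\cdot/c)$ applied termwise — which is exactly~\eqref{eq:tilted-R}.

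The main obstacle is the first step: verifying the algebraic collapse in~\eqref{eq:SG:KR} and, more importantly, getting every constant right in the normalization chain $\calR_{\epsilon,h}^*\ \to\ D_2\calR_{\epsilon,h}^*\ \to\ \calJ^{h,\rho}_{K|L}$. The factor-of-$2$ in~\eqref{eq:tilted-Rstar}, the half-argument $\xi^h_{K|L}/2$, the scaling identity $\alpha^*_\epsilon(a,b,\xi) = \epsilon^2\alpha^*_1(a,b,\xi/\epsilon)$ in~\eqref{eq:def:alpha*}, and the volume factors $|K|$ versus $u^h_K$ all interact, and a sign or factor error anywhere breaks the identification; the cleanest route is to work at the level of the single function $\alpha^*_\epsilon$, establish $\partial_\xi\alpha^*_\epsilon(a,b,\xi)=\epsilon\sinh(\xi/\epsilon)\Lambda_H(ae^{-\xi/\epsilon},be^{\xi/\epsilon})$ once and for all (this is immediate from~\eqref{eq:def:alpha*} by the fundamental theorem of calculus, and is part of Lemma~\ref{lem:properties-alpha}), and then match it against the closed-form of the Scharfetter--Gummel flux.
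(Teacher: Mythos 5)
Your proposal is correct in substance, but it takes a genuinely different route from the paper's own proof. You verify the kinetic relation~\eqref{eq:SG:KR} directly by substituting~\eqref{eq:qKL:energy} into the Scharfetter--Gummel flux~\eqref{eq:SG-flux}, match the result against $D_2\calR_{\epsilon,h}^*$ via the explicit derivative $\partial_\xi\alpha_\epsilon^*(a,b,\xi)=\epsilon\sinh(\xi/\epsilon)\Lambda_H(ae^{-\xi/\epsilon},be^{\xi/\epsilon})$, argue tilt-independence from the observation that $\tau_{K|L}^h$ and $\alpha_\epsilon^*$ contain no trace of $V^h,W^h$, and recover $\calR_{\epsilon,h}$ by termwise Legendre duality. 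The paper instead treats exactly this direct derivation as the ``ad-hoc'' motivation given \emph{before} the lemma (citing \cite[(2.23)]{schlichting2022scharfetter}), and its actual proof is a de-tilting computation: it verifies the identity $D_2\calR_{\epsilon,h}^*(\rho^h,\xi^h)(K,L)=D_2\overline\calR_{\epsilon,h}^*(\rho^h,\xi^h;-\xi^h-\epsilon\dnabla\calS_h(\rho))(K,L)$ relating the new potential to the `cosh' potential~\eqref{eq:def:cosh:Rstar}, and then invokes \cite[Remark~1.17]{peletier2022cosh} to conclude both the ``gradient structure with tilting'' claim and the form of~$\calR_{\epsilon,h}$. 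What the paper's route buys is that the formal tilting statement comes packaged from the cited framework once the single identity~\eqref{eq:detilt} is checked; what your route buys is self-containedness and the fact that your verification of the force--flux relation is carried out for an arbitrary $q_{K|L}^h$, i.e.\ uniformly over the tilt set $\calF_h$, which is what the ``with tilting'' assertion requires — you should state that uniformity explicitly rather than leave it implicit. You also correctly identify the real risk, namely the constants: note that the paper's own display~\eqref{eq:SG:KR} carries a prefactor $|K|$ where the computation from~\eqref{eq:SG-flux} produces $\tau_{K|L}^h$ (up to the orientation-doubling convention for sums over $\Sigma^h$), so the factor bookkeeping you flag is genuinely the delicate point and must be done once, carefully, at the level of a single edge.
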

\begin{proof}
	First, we make the tilt-dependence of the `$\cosh$'-dual dissipation potential $\overline\calR_h^*$ from~\eqref{eq:def:cosh:Rstar} explicit, for which use the primal dissipation potential defined in~\eqref{eq:def:cosh:R} and can rewrite~\eqref{eq:def:cosh:Rstar} as
	\begin{equation*}
		\overline\calR^*_{\epsilon,h}(\rho^h,\xi^h; \calV_h^V+\calW_h^W) =
		\frac{1}{2} \sum_{(K,L)\in\Sigma^h} \Psi^*_\epsilon(\xi^h_{K|L})\, \sqrt{\bar u^h_K \bar u^h_L} \vartheta_{K|L}^{\epsilon,h,\rho},\qquad \bar u_K^h = \frac{\rho_K^h}{\pi_K^{\epsilon,h,\rho}}.
	\end{equation*}
	Note, that the tilt-dependence comes through $\vartheta^{\epsilon,h,\rho}$ in terms of $\mathsf{Q}^{h,\rho}$ defined in~\eqref{eq:theta-cosh} and ~\eqref{eq:discrete_energy_variation_2}, respectively. Specifically, we fix $(K,L)\in \Sigma^h$ and identify $q_{K|L}^h = \dnabla\mathsf{Q}^{h,\rho}$ in $\vartheta^{\epsilon,h,\rho}_{K|L}$ from~\eqref{eq:theta-cosh} to obtain
	\begin{equation*}
		\sqrt{\bar u^h_K \bar u^h_L} \vartheta_{K|L}^{\epsilon,h,\rho} = \tau_{K|L}^h \sqrt{u_K^h u_L^h} \frac{\dnabla \mathsf{Q}^{h,\rho}_{K|L}}{\exp\bra[\big]{\dnabla \mathsf{Q}^{h,\rho}_{K|L}/(2\epsilon)}-\exp\bra[\big]{-\dnabla \mathsf{Q}^{h,\rho}_{K|L}/(2\epsilon)}} \,.
	\end{equation*}
	In this way, we can write	
	\begin{equation*}
		\overline\calR^*_{\epsilon,h}(\rho^h,\xi^h; \calV_h^V+\calW_h^W) = \frac{1}{2} \sum_{(K,L)\in\Sigma^h} \overline{\mathsf{R}}^*_{\epsilon,h}(K|L,\rho^h_K,\rho^h_L,\xi^h_{K|L};\mathsf{Q}^{h,\rho}_K - \mathsf{Q}^{h,\rho}_L) \,,
	\end{equation*}
	with
	\begin{equation}\label{eq:def:cosh:edge:R*}
		\overline{\mathsf{R}}^*_{\epsilon,h}(K|L;\rho^h_K,\rho^h_L;\xi^h_{K|L};q_{K|L}^h)  \coloneq \Psi^*_\epsilon(\xi^h_{K|L})\, \tau_{K|L}^h \sqrt{ u_K^h u_L} \frac{q_{K|L}^h}{e^{q_{K|L}^h/(2\epsilon)}- e^{-q_{K|L}^h/(2\epsilon)}}\,.
	\end{equation}
	Following the construction from~\cite[Remark 1.17]{peletier2023cosh}, we can make use of the fact that the solution is given for the specific force $\xi_{K|L}^h=- \epsilon\dnabla \log \rho^h(K,L)-q_{K|L}^h$, which is the negative discrete gradient of the discrete free energy from~\eqref{eq:def:discrete_free_energy}. With this, one can define an \emph{evolution-equivalent} dissipation potential $\tilde{\mathsf{R}}^*_{\epsilon,h}$ (cf.~\cite[Eqn.\ (1.64)]{peletier2023cosh}) by first differentiating the dual dissipation potential $\overline{\mathsf{R}}^*_{\epsilon,h}$ in~\eqref{eq:def:cosh:edge:R*} w.r.t.\ the $\xi$-variable denoted with $D_3$, then use the substitution for $q_{K|L}^h$ along solutions of the scheme and finally integrate w.r.t.\ the $\xi$-variable again to obtain
	\begin{equation*}
		\tilde{\mathsf{R}}^*_{\epsilon,h}(K|L;\rho^h_K,\rho^h_L;\Xi^h_{K|L})  \coloneq \int_0^{\Xi^h_{K|L}} D_3 \overline{\mathsf{R}}^*_{\epsilon,h}(K|L;\rho^h_K,\rho^h_L;\xi^h_{K|L};-\xi_{K|L}^h- \epsilon\dnabla \log \rho^h(K,L)) \dd \xi_{K|L}^h .
	\end{equation*}
	Instead of calculating the integral, we check that $\tilde{\mathsf{R}}^*_{\epsilon,h}$ agrees with $D_2 \calR_{\epsilon,h}^*(\rho^h,\xi^h)$ from~\eqref{eq:tilted-Rstar}, which amounts to verifying the identity
	\begin{equation}\label{eq:detilt}
		D_2 \calR_{\epsilon,h}^*(\rho^h,\xi^h)(K,L) \stackrel{!}{=} D_2\overline\calR_{\epsilon,h}^*\bra*{\rho^h,\xi^h; -\xi^h-\epsilon\dnabla \calS_h(\rho)}(K,L) . 
	\end{equation}
	By substituting once more $\dnabla \mathsf{Q}^{h,\rho}_{K|L} = q_{K|L}^h = -\xi_{K|L}^h- \epsilon\dnabla \log \rho^h(K,L)$, which amounts in using the identity~\eqref{eq:qKL:energy}, we observe that
	\begin{align*}
		\MoveEqLeft D_2\overline\calR_{\epsilon,h}^*\bra*{\rho^h,\xi^h; - \xi^h -\dnabla \calS_h(\rho)}(K,L)\\
		&= \epsilon \tau_{K|L}^h \sinh\bra[\bigg]{\frac{\xi^h_{K|L}}{2\epsilon}} \sqrt{u_K^h u_L^h}\frac{\log\bra[\big]{u^h_K e^{-\xi^h_{K|L} / 2\epsilon}} - \log\bra[\big]{u^h_L e^{-\xi^h_{K|L} / 2\epsilon} }}{e^{-\xi^h_{K|L}/2\epsilon -\dnabla\log \sqrt{u^h}(K,L) }-e^{\xi_{K|L}/2\epsilon+\dnabla\log \sqrt{u^h}(K,L)}} \\
		&= \alpha_\epsilon\bra*{u^h_K,u^h_L, \xi^h_{K|L}/2} = D_2\calR_{\epsilon,h}^*(\rho^h,\xi^h)(K,L),
	\end{align*}
	which verifies the claimed identity~\eqref{eq:detilt}. Therefore, the solution property is a consequence of~\eqref{eq:SG:KR} and the remaining statements about the tilt-independence in Lemma~\ref{lem:flux-force-for-tilt} follow by construction as argued in~\cite[Remark 1.17]{peletier2023cosh}.
\end{proof}

\section{Properties of the Tilted Dual Dissipation Potential}
\label{appendix:function-alpha}

The following lemma contains some properties and an integral representation of the harmonic-logarithm mean $\Lambda_H$ introduced in~\eqref{eq:def:HarmLogMean}. 

\begin{definition}[Mean]
	A function $M:\R_+ \times \R_+ \to \R_+$ is a \emph{mean} if it is
	\begin{enumerate}
		\item positively one-homogeneous: $M(\lambda s,\lambda t) = \lambda M(s,t)$ for all $s,t\in \R_+$ and $\lambda >0$;
		\item bounded by $\min\set*{s,t}\leq M(s,t)\leq \max\set*{s,t}$ for all $s,t\in \R_+$;
		\item jointly concave.
	\end{enumerate}
\end{definition}
\begin{lemma}[Harmonic-logarithmic mean]\label{lem:harm_log_mean}
	The \emph{logarithmic mean} $\Lambda: \R_+ \times \R_+ \to \R_+$,
	\[ 
	\Lambda(s,t) = \int_0^1 s^\tau t^{1-\tau} \dd \tau = 
	\begin{cases}
		\frac{s-t}{\log s - \log t}  , & s\ne t ;\\
		s , & s=t . 
	\end{cases}
	\]
	is a mean between the geometric and arithmetic mean
	\[
	  \sqrt{st } \leq \Lambda(s,t) \leq  \frac{s+t}{2} ,
	\]
	with derivatives bounded
	\[
	\partial_1 \Lambda(s,t) = \partial_2\Lambda(t,s) \qquad\text{and}\qquad \partial_1 \Lambda(s,t) = \frac{\Lambda(s,t)(s-\Lambda(s,t))}{s(s-t)} \ . 
	\]
	The \emph{harmonic-logarithmic mean} $\Lambda_H : \R_+ \times \R_+ \to \R_+$ defined by 
	\[
	\Lambda_H(s,t) = \frac{1}{\Lambda\bra*{1/s, 1/t}}  = \frac{st}{\Lambda(s, t)}
	\] 
	is a mean between the harmonic and geometric mean
	\[
	  \frac{2}{\frac{1}{s}+ \frac{1}{t}} \leq \Lambda_H(s,t) \leq \sqrt{st} 
	\]	
	with the integral representations
	\[
	\Lambda_H(a,b) = \int_0^1 \frac{\dd\tau}{\tau/s+ (1-\tau)/t} = \int_0^\infty \frac{s\, t\dd\tau}{(\tau +s) (\tau+t)} 
	\]
	and derivatives
	\[
	\partial_1 \Lambda_H(s,t)=\partial_2 \Lambda_H(t,s) = \frac{t\left( \Lambda(s, t) - t \right)}{\Lambda(s, t)}.
	\]
\end{lemma}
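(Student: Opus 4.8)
The plan is to establish all the assertions by reducing everything to properties of the single-variable function $\ell(r) := \int_0^1 r^\tau\,\dd\tau = (r-1)/\log r$ for $r>0$, since $\Lambda(s,t) = t\,\ell(s/t)$ by the one-homogeneity, and then reading off the harmonic-logarithmic statements by the substitution $s\mapsto 1/s$, $t\mapsto 1/t$. First I would verify the closed form of $\Lambda$: the integral $\int_0^1 s^\tau t^{1-\tau}\,\dd\tau = t\int_0^1 (s/t)^\tau\,\dd\tau$ is elementary and gives $(s-t)/(\log s-\log t)$ for $s\ne t$, with the value $s$ at $s=t$ by continuity (l'Hôpital or the $r\to 1$ limit of $\ell$). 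That $\Lambda$ is a mean in the sense of the definition above — one-homogeneous (clear from the integral), bounded between $\min$ and $\max$ (monotonicity of $\tau\mapsto s^\tau t^{1-\tau}$ in the relevant sense, or directly: $s^\tau t^{1-\tau}$ lies between $\min\{s,t\}$ and $\max\{s,t\}$ pointwise in $\tau$), and jointly concave (the integrand $(s,t)\mapsto s^\tau t^{1-\tau}$ is jointly concave for each fixed $\tau\in[0,1]$ as a geometric-type function, and concavity is preserved under integration) — follows in each case directly from the integral representation, which is the cleanest route.

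Next, the inequality $\sqrt{st}\le \Lambda(s,t)\le (s+t)/2$: the upper bound is Jensen applied to the concave function $\tau\mapsto s^\tau t^{1-\tau}$... actually more simply, $s^\tau t^{1-\tau}\le \tau s+(1-\tau)t$ by weighted AM--GM, integrate over $\tau\in[0,1]$ to get $\Lambda(s,t)\le (s+t)/2$; for the lower bound, $\int_0^1 s^\tau t^{1-\tau}\,\dd\tau \ge \exp\!\big(\int_0^1 (\tau\log s+(1-\tau)\log t)\,\dd\tau\big) = \exp\!\big(\tfrac12\log s+\tfrac12\log t\big)=\sqrt{st}$ by Jensen for the exponential. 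For the derivative formulas, I would differentiate the closed form directly: with $s\ne t$, $\partial_1\Lambda(s,t) = \partial_1\!\big[(s-t)/(\log s-\log t)\big]$, and a short computation rearranges this into $\Lambda(s,t)\big(s-\Lambda(s,t)\big)/\big(s(s-t)\big)$; the symmetry $\partial_1\Lambda(s,t)=\partial_2\Lambda(t,s)$ is immediate from $\Lambda(s,t)=\Lambda(t,s)$. The $s=t$ case is handled by continuity of the expression (its limit as $s\to t$ exists and equals $1/2$, which one checks via Taylor expansion of $\ell$ near $1$).

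For the harmonic-logarithmic mean, everything transfers mechanically. From $\Lambda_H(s,t) = 1/\Lambda(1/s,1/t) = st/\Lambda(s,t)$ (the second equality uses $\Lambda(1/s,1/t) = \Lambda(s,t)/(st)$, itself a one-line consequence of the closed form), the mean property of $\Lambda_H$ follows: one-homogeneity and the bound $2/(1/s+1/t)\le \Lambda_H(s,t)\le\sqrt{st}$ come from inverting the chain $\tfrac{1}{\sqrt{st}}\ge \tfrac{1}{\Lambda(1/s,1/t)}$... more carefully, apply $\sqrt{\cdot}\le\Lambda\le (\cdot+\cdot)/2$ at the reciprocals and invert, using that $x\mapsto 1/x$ is decreasing; concavity of $\Lambda_H$ requires a small extra argument since reciprocating a concave function need not preserve concavity, but here I would use the first integral representation $\Lambda_H(s,t) = \int_0^1 \big(\tau/s+(1-\tau)/t\big)^{-1}\,\dd\tau$ (obtained by substituting $1/s,1/t$ into the definition of $\Lambda$ as an integral, then... actually it is cleaner to just verify this identity directly and note that $(s,t)\mapsto (\tau/s+(1-\tau)/t)^{-1}$ is the harmonic-type mean which is jointly concave for each $\tau$, so integration gives joint concavity). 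The second integral representation $\Lambda_H(s,t)=\int_0^\infty st\,\dd\tau/((\tau+s)(\tau+t))$ follows by evaluating the partial-fraction decomposition $st/((\tau+s)(\tau+t)) = \tfrac{st}{t-s}\big(\tfrac{1}{\tau+s}-\tfrac{1}{\tau+t}\big)$ and integrating to get $\tfrac{st}{t-s}\log(t/s) = st/\Lambda(s,t)$. Finally, $\partial_1\Lambda_H(s,t) = t(\Lambda(s,t)-t)/\Lambda(s,t)$ follows from $\Lambda_H = st/\Lambda$ and the quotient rule together with the already-derived $\partial_1\Lambda$; the symmetry $\partial_1\Lambda_H(s,t)=\partial_2\Lambda_H(t,s)$ is again symmetry of $\Lambda_H$.

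The only mildly delicate point — the "main obstacle," though it is minor — is the joint concavity of $\Lambda_H$, since concavity is not preserved under $x\mapsto 1/x$; this is why I would route that claim through the integral representation $\Lambda_H(s,t)=\int_0^1(\tau/s+(1-\tau)/t)^{-1}\,\dd\tau$ rather than through $\Lambda_H = 1/\Lambda$. Everything else is a matter of carefully differentiating the explicit formula for $\Lambda$ and checking the $s=t$ limits, which are routine.
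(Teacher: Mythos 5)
The paper does not actually prove this lemma: its ``proof'' is a one-line citation to a reference on the logarithmic mean. Your proposal is therefore a genuinely different (and more useful) route, namely a self-contained elementary verification. The argument is essentially correct throughout: the closed form, the one-homogeneity, the pointwise bounds $\min\le s^\tau t^{1-\tau}\le\max$, the AM--GM/Jensen derivation of $\sqrt{st}\le\Lambda\le(s+t)/2$, the quotient-rule computation of $\partial_1\Lambda$, the transfer to $\Lambda_H$ via $\Lambda(1/s,1/t)=\Lambda(s,t)/(st)$, and both integral representations all check out. You also correctly identify the one genuinely delicate point --- that joint concavity does not survive $x\mapsto 1/x$ --- and resolve it the right way, by verifying $\Lambda_H(s,t)=\int_0^1(\tau/s+(1-\tau)/t)^{-1}\,\dd\tau$ directly and using joint concavity of the weighted harmonic mean under the integral. (Note that the second representation $\int_0^\infty st\,\dd\tau/((\tau+s)(\tau+t))$ would \emph{not} serve this purpose, since its integrand is not jointly concave, so your choice of which representation to route concavity through matters.)

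One concrete issue: you assert that the quotient rule applied to $\Lambda_H=st/\Lambda$ together with $\partial_1\Lambda(s,t)=\Lambda(s-\Lambda)/(s(s-t))$ yields the stated formula $\partial_1\Lambda_H(s,t)=t(\Lambda(s,t)-t)/\Lambda(s,t)$. If you actually carry out that computation you get
\[
\partial_1\Lambda_H(s,t)=\frac{t}{\Lambda}-\frac{st}{\Lambda^2}\,\partial_1\Lambda
=\frac{t}{\Lambda}\Bigl(1-\frac{s-\Lambda}{s-t}\Bigr)
=\frac{t\,(\Lambda(s,t)-t)}{\Lambda(s,t)\,(s-t)},
\]
which differs from the displayed formula by the factor $(s-t)$ in the denominator. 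A homogeneity check confirms the version in the lemma cannot be right: $\Lambda_H$ is positively one-homogeneous, so $\partial_1\Lambda_H$ must be zero-homogeneous, whereas $t(\Lambda-t)/\Lambda$ is one-homogeneous. So the statement contains a typo, and your proof as written papers over it by claiming the computation ``rearranges into'' the printed expression. You should either correct the formula to $t(\Lambda(s,t)-t)/(\Lambda(s,t)(s-t))$ (equivalently $t(s-\Lambda_H(s,t))/(s(s-t))$) or at least record the discrepancy; since the derivative formula is not used elsewhere in the paper in this form, the correction is harmless.
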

\begin{proof}
	See, for instance~\cite{Bhatia2008} for many properties of the logarithmic mean, from which the analogous ones of the harmonic-logarithmic mean follow.
\end{proof}
The tilt-independent dual dissipation potential $\calR_{\epsilon,h}^*$ in~\eqref{eq:tilted-Rstar} is given in terms of the function $\alpha^*_\epsilon$ defined in~\eqref{eq:def:alpha*}, which we recall here for convenience 
\begin{equation*}
    \alpha_\epsilon^*(a, b, \xi) = \epsilon \int_0^\xi \sinh\left( \frac{x}{\epsilon} \right) \Lambda_H (a e^{-x/\epsilon}, b e^{x/\epsilon}) \dd x= \epsilon^2 \alpha_1 \left(a, b, \frac{\xi}{\epsilon} \right).
\end{equation*}
Below we prove useful properties of $\alpha_\epsilon^*$.
\begin{lemma}\label{lem:properties-alpha}
The function $\alpha_\epsilon^*:\R_+\times\R_+\times\R\to \R_+$ in~\eqref{eq:def:alpha*} has the following useful properties:
    \begin{enumerate}[label=(\alph*)]
        \item $\alpha_\epsilon^* (a, b, \xi)$ is convex in $\xi$ for fixed $a,b>0$, with $\min\set*{a,b} \leq \partial_{\xi}^2 \alpha_\epsilon^* (a, b, \xi) \leq \max\set*{a,b}$;
        \item  
        $\alpha_\epsilon^* (a, b, \xi)$ is positively one-homogeneous and jointly concave in $(a,b)$ for fixed $\xi$;
        \item \label{alpha-cosh} $\alpha_\epsilon^*$ satisfies the following bound:
        $$
        \alpha_\epsilon^* (a, b, \xi) \leq \epsilon^2 \sqrt{ab} \bigg( \cosh\bigg( \bigg| \frac{\xi}{\epsilon} \bigg| \bigg) - 1 \bigg)= \frac{1}{4}\sqrt{ab}\,\Psi^*(2\xi).
    $$
    Moreover, the expansion for $\abs{\xi}\ll 1$ is given by
        $$
  	        \alpha_\epsilon^*(a,b,\xi) = \Lambda_H(a,b) \frac{\xi^2}{2} + O\bra*{\frac{\abs{\xi}^3}{\epsilon}};
        $$
        \item \label{alpha-epsilon} It holds that
        $$
            \alpha_\epsilon^*(a,b,\xi) \to \frac{1}{2} \big( a (\xi^+)^2 + b (\xi^-)^2 \big) \eqcolon \alpha_0^*(a,b,\xi) \qquad\text{as } \epsilon \to 0 \:,
        $$
        where $\xi^\pm$ is the positive and negative part of $\xi$, respectively. Moreover,
        $$
            | \alpha_\epsilon^*(a,b,\xi) - \alpha_0^* (a,b,\xi) | = O(C_{a, b, \xi} \, \epsilon),
        $$
        where the constant $C_{a, b, \xi} < \infty$ depends on $a, b, \xi$.
        \item The function $\beta_\epsilon: \R_+\times \R_+\to \R_+$ defined for the argument $\xi = - \epsilon \log \sqrt{b/a}$ in $\alpha_\epsilon^*$ has the representation
        \begin{align*}
            \beta_\epsilon(a, b) \coloneq \alpha_\epsilon^* (a, b, -\epsilon \log \sqrt{b/a})
            &= \frac{\epsilon^2}{4} \int_a^b \frac{ab}{z} \left[ \frac{1}{\Lambda(z,a)} - \frac{1}{\Lambda(z,b)} \right] \dd z;
        \end{align*}
        
    \item \label{item:beta:bound}  The function $\beta_\epsilon:\R_+\times\R_+\to\R_+$ defined in (e) is jointly convex, continuous with
		$$
		\beta_\epsilon (a, 0) \coloneq  \frac{\epsilon^2}{4} \frac{\pi^2}{6} a \quad \text{and, symmetrically, } \quad
		\beta_\epsilon (0, b) \coloneq  \frac{\epsilon^2}{4} \frac{\pi^2}{6} b,
		$$
		and satisfies the following bounds:
		$$
		\frac{\epsilon^2}{4} (\sqrt{a}-\sqrt{b})^2 \leq \frac{\epsilon^2}{4} \frac{\bra*{a-b}^2}{a+b} \leq \beta_\epsilon(a, b) \leq \frac{\epsilon^2}{2} (\sqrt{a}-\sqrt{b})^2;
		$$
		Moreover, the function $\R_+ \times \R_+ \ni (a, b) \mapsto \beta_\epsilon (a^2, b^2)$ is differentiable.
    
		\item  \label{alpha-decomposition} The function $\alpha_\epsilon^*\bigl(a,b,-\epsilon \log\sqrt{b/a} + q / 2 \bigr)$ has the expansion
		\begin{align*}
			\alpha_\epsilon^*\left(a,b,-\epsilon \log\sqrt{b/a} + q/2 \right) &= \beta_\epsilon(a, b) + \frac{\epsilon}{4}(a-b)\,q + \frac{q^2}{4} \mathbb{h}_\epsilon (a, b, q)
		\end{align*}
		with 
		$$
		\mathbb{h}_\epsilon (a, b, q) \coloneq \int_0^1 \Bigl[a\, \mathfrak{h}\left(\lambda q/\epsilon\right) + b\,\mathfrak{h}\left(-\lambda q/\epsilon\right)\Bigr](1-\lambda)\dd\lambda,\qquad \mathfrak{h}(s) = \frac{1}{4}\frac{e^s-1-s}{\sinh^2(s/2)}.
		$$
    \end{enumerate}
  \end{lemma}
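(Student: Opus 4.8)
The plan is to derive everything from a single closed form for the $\xi$-derivatives of $\alpha_\epsilon^*$. By the scaling identity in \eqref{eq:def:alpha*}, $\alpha_\epsilon^*(a,b,\xi)=\epsilon^2\alpha_1^*(a,b,\xi/\epsilon)$, so it suffices to argue for $\epsilon=1$ and rescale at the end. Differentiating the defining integral gives $\partial_\xi\alpha_\epsilon^*(a,b,\xi)=\epsilon\sinh(\xi/\epsilon)\,\Lambda_H(ae^{-\xi/\epsilon},be^{\xi/\epsilon})$; writing $\Lambda_H=st/\Lambda$ and using the elementary identities $ae^{-x}-be^{x}=2\sqrt{ab}\sinh\!\big(\tfrac12\log(a/b)-x\big)$, $\sqrt{ab}\sinh\!\big(\tfrac12\log(a/b)\big)=\tfrac{a-b}{2}$, $\sqrt{ab}\cosh\!\big(\tfrac12\log(a/b)\big)=\tfrac{a+b}{2}$, one obtains after simplification
\[
  \partial_\xi\alpha_\epsilon^*(a,b,\xi)=\epsilon\Big(\tfrac{a-b}{2}\,\mu\coth\mu-\tfrac{a+b}{2}\,\mu\Big),\qquad \mu=\mu(\xi)=\tfrac12\log(a/b)-\xi/\epsilon,
\]
and hence, since $\tfrac{d}{d\mu}(\mu\coth\mu)=\psi(\mu):=\tfrac{\sinh\mu\cosh\mu-\mu}{\sinh^2\mu}$,
\[
  \partial_\xi^2\alpha_\epsilon^*(a,b,\xi)=\tfrac{a+b}{2}-\tfrac{a-b}{2}\,\psi(\mu).
\]
(Alternatively these follow by differentiating under the integral sign and using the derivative formulas for $\Lambda$, $\Lambda_H$ in Lemma~\ref{lem:harm_log_mean}.) These two displays are the workhorse; I will also record that $\psi$ is odd, $\psi(0)=0$, and $\psi'(\mu)=\tfrac{2(\mu\coth\mu-1)}{\sinh^2\mu}\ge0$, so $\psi$ is increasing and $|\psi|\le1$ — the last being equivalent to the elementary bounds $1-e^{-2|\mu|}\le2|\mu|$ and $e^{2|\mu|}-1\ge2|\mu|$.

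With this in hand, (a) is immediate: convexity in $\xi$ and $\min\{a,b\}\le\partial_\xi^2\alpha_\epsilon^*\le\max\{a,b\}$ both follow from $|\psi|\le1$. For (b), for each fixed $x$ the map $(a,b)\mapsto\Lambda_H(ae^{-x/\epsilon},be^{x/\epsilon})$ is positively one-homogeneous and jointly concave (a mean composed with a linear map), the scalar weight $\operatorname{sign}(\xi)\sinh(x/\epsilon)$ is nonnegative on the range of integration, and integration preserves one-homogeneity and concavity. For (c), the upper bound comes from $\Lambda_H(s,t)\le\sqrt{st}$, which gives $\Lambda_H(ae^{-x/\epsilon},be^{x/\epsilon})\le\sqrt{ab}$ and hence $\alpha_\epsilon^*(a,b,\xi)\le\epsilon^2\sqrt{ab}(\cosh(|\xi|/\epsilon)-1)=\tfrac14\sqrt{ab}\,\Psi_\epsilon^*(2\xi)$, while the small-$\xi$ expansion is Taylor's theorem using $\partial_\xi\alpha_\epsilon^*(a,b,0)=0$, $\partial_\xi^2\alpha_\epsilon^*(a,b,0)=\Lambda_H(a,b)$ (read off from the workhorse with $\coth\big(\tfrac12\log(a/b)\big)=\tfrac{a+b}{a-b}$), and the boundedness of $\partial_\xi^3\alpha_\epsilon^*$. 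For (d), the closed form shows $\partial_\xi\alpha_\epsilon^*(a,b,\xi)=a\xi-\tfrac{a\epsilon}{2}\log(a/b)+O(e^{-2\xi/\epsilon})$ as $\xi\to+\infty$, and the symmetric statement with $b$ as $\xi\to-\infty$; integrating yields $\alpha_1^*(a,b,t)-\alpha_0^*(a,b,t)=O(1+|t|)$, and rescaling gives $|\alpha_\epsilon^*-\alpha_0^*|=O(\epsilon^2+\epsilon|\xi|)$.

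For (e), the argument $-\epsilon\log\sqrt{b/a}=\tfrac\epsilon2\log(a/b)$ is exactly the value $\xi_\ast$ at which $\mu=0$, and the change of variables $x=\tfrac\epsilon2\log(a/z)$ in the defining integral (with $z$ running from $a$ to $b$), using $ae^{-x/\epsilon}=\sqrt{az}$, $be^{x/\epsilon}=b\sqrt{a/z}$, $\sinh(x/\epsilon)=\tfrac{a-z}{2\sqrt{az}}$ and the one-homogeneity $\Lambda(\sqrt{az},b\sqrt{a/z})=\sqrt{a/z}\,\Lambda(z,b)$, produces the one-dimensional representation $\beta_\epsilon(a,b)=\tfrac{\epsilon^2}{4}\int_a^b\tfrac{b(z-a)}{z\,\Lambda(z,b)}\,dz$; an algebraic rearrangement, combined with the symmetric representation obtained by swapping $a\leftrightarrow b$, yields the stated form. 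Item (g) is Taylor's theorem with integral remainder for $\xi\mapsto\alpha_\epsilon^*(a,b,\xi)$ about $\xi_\ast$: the constant term is $\beta_\epsilon(a,b)$, the linear term uses $\partial_\xi\alpha_\epsilon^*(a,b,\xi_\ast)=\tfrac\epsilon2(a-b)$ (the $\mu=0$ value, with $\lim_{\mu\to0}\mu\coth\mu=1$), and the remainder, after substituting $t=\tfrac q2\lambda$, equals $\tfrac{q^2}{4}\int_0^1(1-\lambda)\,\partial_\xi^2\alpha_\epsilon^*(a,b,\xi_\ast+\tfrac q2\lambda)\,d\lambda$; since at $\xi_\ast+\tfrac q2\lambda$ one has $\mu=-\tfrac{q\lambda}{2\epsilon}$, the Hessian formula gives $\tfrac{a+b}{2}+\tfrac{a-b}{2}\psi\!\big(\tfrac{q\lambda}{2\epsilon}\big)$, and the algebraic identity $\mathfrak h(s)=\tfrac12+\tfrac12\psi(s/2)$ — equivalently $e^{s}-1-s=(\cosh s-1)+(\sinh s-s)$ — rewrites this as $a\,\mathfrak h(\lambda q/\epsilon)+b\,\mathfrak h(-\lambda q/\epsilon)$, as claimed.

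Finally, for (f): the boundary value $\beta_\epsilon(a,0)=\tfrac{\epsilon^2}{4}\tfrac{\pi^2}{6}a$ comes from letting $b\to0$ in the representation of (e) after the substitution $z=be^{t}$, which turns the integral into $\tfrac{\epsilon^2}{4}a\int_0^{\log(a/b)}\tfrac{t}{e^{t}-1}\,dt\to\tfrac{\epsilon^2}{4}a\int_0^\infty\tfrac{t}{e^{t}-1}\,dt=\tfrac{\epsilon^2}{4}a\,\tfrac{\pi^2}{6}$; the upper bound $\beta_\epsilon(a,b)\le\tfrac{\epsilon^2}{2}(\sqrt a-\sqrt b)^2$ is (c) specialized to $\xi=\xi_\ast$ using $\cosh\big(\tfrac12\log(a/b)\big)=\tfrac{a+b}{2\sqrt{ab}}$; joint convexity and the differentiability of $(a,b)\mapsto\beta_\epsilon(a^2,b^2)$ follow from differentiating the representation in (e) under the integral sign, the endpoint contributions vanishing because that integrand carries the factor $(z-a)$. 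The step I expect to demand the most care is the lower bound $\beta_\epsilon(a,b)\ge\tfrac{\epsilon^2}{4}\tfrac{(a-b)^2}{a+b}$: naive curvature estimates such as $\beta_\epsilon\ge\tfrac12\min\{a,b\}\,\xi_\ast^2$, or even $\beta_\epsilon\ge\tfrac12\Lambda_H(a,b)\,\xi_\ast^2$ using $\partial_\xi^2\alpha_\epsilon^*\ge\Lambda_H(a,b)$ on $[0,\xi_\ast]$, degenerate as $\min\{a,b\}\to0$ while the true value stays bounded below there. I plan to prove it from the equivalent formula $\beta_\epsilon(a,b)=\epsilon^2\big[(a-b)\int_0^{\mu_0}\tfrac{u}{e^{2u}-1}\,du-\tfrac b2\mu_0^2\big]$ with $\mu_0=\tfrac12\log(a/b)$ (WLOG $a\ge b$), splitting into a near-diagonal regime, where the Taylor expansion of $\tfrac{u}{e^{2u}-1}$ at $u=0$ suffices, and a regime $\mu_0\gtrsim1$, where $\int_0^{\mu_0}\tfrac{u}{e^{2u}-1}\,du$ equals $\tfrac{\pi^2}{24}$ up to an exponentially small tail controlled by $e^{-2\mu_0}=b/a$; in each regime the claim reduces to a single-variable inequality in $r=a/b$.
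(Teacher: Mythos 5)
Your unified ``workhorse'' --- the closed forms $\partial_\xi\alpha_1^*(a,b,\xi)=\tfrac{a-b}{2}\mu\coth\mu-\tfrac{a+b}{2}\mu$ and $\partial_\xi^2\alpha_1^*=\tfrac{a+b}{2}-\tfrac{a-b}{2}\psi(\mu)$ with $\mu=\tfrac12\log(a/b)-\xi$ --- is correct and is a genuinely different organizing device from the paper's, which writes $\partial_\xi^2\alpha_1^*=a\,g(\tfrac ab e^{-2\xi})+b\,g(\tfrac ba e^{2\xi})$ with $g(x)=\tfrac{x\log x-x+1}{(x-1)^2}$ and uses $g(x)+g(1/x)=1$; the two are equivalent, and your version streamlines (a), (c), (g) and the evaluations at $\xi_*=-\epsilon\log\sqrt{b/a}$. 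Your treatment of (d) via the exponentially small correction to $\coth$ even delivers the quantitative rate $O(\epsilon^2+\epsilon|\xi|)$, which the paper's proof (a pointwise limit of the integrand) does not actually establish. One caveat on (e): the substitution $x=\tfrac\epsilon2\log(a/z)$ correctly yields $\beta_\epsilon(a,b)=\tfrac{\epsilon^2}{4}\int_a^b\tfrac{b(z-a)}{z\Lambda(z,b)}\dd z$, but neither this integrand nor its average with the $a\leftrightarrow b$ swap equals the stated integrand $\tfrac{ab}{z}\bigl[\Lambda(z,a)^{-1}-\Lambda(z,b)^{-1}\bigr]$ pointwise (check $a=1$, $b=2$, $z=3/2$); the equality holds only after an additional substitution such as $z\mapsto ab/z$ in part of the integral, so ``an algebraic rearrangement'' undersells that step.

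The genuine gap is the tight lower bound $\beta_\epsilon(a,b)\ge\tfrac{\epsilon^2}{4}\tfrac{(a-b)^2}{a+b}$ in (f), which you rightly flag as the delicate point but leave as a two-regime plan. Your identity $\beta_1(r,1)=(r-1)\int_0^{\mu_0}\tfrac{u}{e^{2u}-1}\dd u-\tfrac12\mu_0^2$ with $\mu_0=\tfrac12\log r$ is correct, but the near-diagonal regime is harder than the proposal suggests: setting $r=1+u$ one finds $\beta_1(1+u,1)=\tfrac{u^2}{8}-\tfrac{u^3}{16}+O(u^4)$ and also $\tfrac14\tfrac{u^2}{2+u}=\tfrac{u^2}{8}-\tfrac{u^3}{16}+O(u^4)$, so the inequality is saturated to \emph{third} order at the diagonal (as it must be, being an inequality between smooth functions that touch at $r=1$). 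A low-order Taylor expansion of $u/(e^{2u}-1)$ therefore cannot close the claim near $r=1$; one needs a fourth-order expansion with a sign-controlled remainder valid on the whole near-diagonal region, plus a nontrivial matching at the crossover to $\mu_0\gtrsim1$. The paper circumvents this degeneracy by comparing derivatives: $\partial_a\beta_1(a,1)=\int_1^a\tfrac{\log z}{z(z-1)}\dd z$ and $\partial_a\gamma(a)=\int_1^a\tfrac{2}{(z+1)^3}\dd z$ both vanish at $a=1$, and the pointwise comparison of the integrands reduces, after one more differentiation, to the explicit nonnegativity of $\tfrac{(z-1)^2(z^2+14z+1)}{z(z+1)^4}$. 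I would adopt that derivative-comparison argument (or an equally nondegenerate reduction) rather than the proposed expansion. Finally, both you and the paper are terse on the joint convexity of $\beta_\epsilon$: differentiating under the integral sign gives differentiability, not convexity, so that claim still needs its own argument.
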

  \begin{proof}
  \emph{(a)} From the representation of $\alpha^*_1$ in terms of the harmonic-logarithmic mean, it follows that
  $$
    \partial_\xi \alpha^*_1(a, b, \xi) = \sinh(\xi) \Lambda_H (ae^{-\xi}, b e^\xi) = \sinh(\xi) \frac{ab}{\Lambda (ae^{-\xi}, b e^\xi)}.
  $$
It also holds
\[
  \partial_\xi^2 \alpha^*_1(a,b,\xi) = \frac{a\, b}{\bra*{a e^{-\xi} - b e^{\xi}}^2} \bra*{ a (e^{-2\xi}-1)+ b (e^{2\xi} -1) + (a-b)\bra*{ \log\frac{e^{-\xi}}{b} - \log\frac{e^{\xi}}{a}} } \:,
\]
which can be rewritten with the help of the function
\[
  g(x) = \frac{x \log x - x +1}{(x-1)^2} 
\]
as 
\[
  \partial_\xi^2 \alpha^*_1(a,b,\xi) = a \, g\bra*{\frac{a}{b}e^{-2\xi}} + b \, g\bra*{ \frac{b}{a} e^{2\xi}} \:.
\]
The convexity follows now by observing that
\[
  \forall x\in [0,1] : 0\leq g(x) \leq 1 \quad\text{and}\quad g(x) + g(x^{-1}) = 1 
\]
and hence the bound
\[
  \min\{a,b\} \leq \partial_\xi^2 \alpha^*_1(a,b,\xi) \leq \max\{ a ,b \} \:,
\]
implying the convexity in $\xi$ for fixed $a,b>0$.

\emph{(b)} The positive one-homogeneity and joint concavity follow from the properties of $\Lambda_H$.

\emph{(c)} Let $\xi>0$. Using the inequality between the harmonic-logarithmic and geometric mean, we obtain
\begin{align*}
    \alpha_1 (a, b, \xi) = \int_0^\xi \sinh(x) \Lambda_H (a e^{-x}, b e^x) \dd x 
    \leq \int_0^\xi \sinh(x) \sqrt{ab} \dd x = \sqrt{ab} \big( \cosh(\xi) - 1 \big).
\end{align*}
If $\xi < 0$, then
\begin{align*}
    \alpha_1 (a, b, \xi) = \int_0^{|\xi|} \sinh(x) \Lambda_H(ae^x, be^{-x}) \dd x \leq \sqrt{ab} \big( \cosh(|\xi|) - 1 \big). 
\end{align*}
Combining the two cases and considering $\alpha_\epsilon$, we get
$$
    \alpha_\epsilon (a, b, \xi) \leq \epsilon^2 \sqrt{ab} \bigg( \cosh\bigg( \bigg| \frac{\xi}{\epsilon} \bigg| \bigg) - 1 \bigg).
$$

As for the asymptotic expansion, we obtain, by definition of $\alpha^*_1$, 
$$
    \alpha^*_1(a, b, \xi) = \partial_\xi^2 \alpha^*_1(a, b, \xi) |_{\xi=0} \frac{\xi^2}{2} + O\left( |\xi|^3 \right)
    = \Lambda_H (a, b) \frac{\xi^2}{2} + O\left( |\xi|^3 \right).
$$
Then it follows directly that
$$
    \alpha^*_\epsilon(a, b, \xi) = \epsilon^2 \alpha^*_1 \left(a, b, \frac{\xi}{\epsilon} \right) = \Lambda_H (a, b) \frac{\xi^2}{2} + O\left( \frac{|\xi|^3}{\epsilon} \right).
$$

\emph{(d)} We rewrite $\alpha^*_\epsilon$ as
\begin{align*}
    \alpha^*_\epsilon(a, b, \xi) &=\epsilon^2 \int_0^{\xi/\epsilon} \sinh(x) \Lambda_H (ae^{-x}, be^x) \dd x
    = \frac{\epsilon^2}{2} \int_0^{\xi/\epsilon} \big( \Lambda_H (a, be^{2x}) - \Lambda_H (ae^{-2x}, b) \big) \dd x \\
    &= \frac{\epsilon}{2} \int_0^\xi \big( \Lambda_H (a, be^{2x/\epsilon}) - \Lambda_H (ae^{-2x/\epsilon}, b) \big)\dd x.
\end{align*}
For $x > 0$, it holds that 
\begin{align*}
    \frac{\epsilon}{2} \Lambda_H (a, be^{2x/\epsilon})
    = \frac{\epsilon}{2} \frac{ab e^{2x/\epsilon}}{a - be^{2x/\epsilon}} \Big( \log \frac{a}{b} - \frac{2x}{\epsilon} \Big)
    =  \frac{ab}{a e^{-2x/\epsilon}- b} \Big( \frac{\epsilon}{2} \log \frac{a}{b} - x\Big)
    \xrightarrow{\epsilon \to 0} ax,
\end{align*}
and
\begin{align*}
    - \frac{\epsilon}{2} \Lambda_H (ae^{-2x/\epsilon}, b)
    = - \frac{ab}{a - b e^{2x/\epsilon}} \Big( \frac{\epsilon}{2} \log \frac{a}{b} - x\Big) \xrightarrow{\epsilon \to 0} 0.
\end{align*}
For $x < 0$, similarly, we obtain
$$
    \frac{\epsilon}{2} \Big( \Lambda_H (a, be^{2x/\epsilon}) - \Lambda_H (ae^{-2x/\epsilon}, b)  \Big) \xrightarrow{\epsilon \to 0} bx.
$$
Combining the two cases yields
$$
    \lim_{\epsilon\to 0} \alpha^*_\epsilon(a, b, \xi)
    = \Ind_{\xi > 0} \int_0^\xi ax \dd x  + \Ind_{\xi < 0} \int_0^\xi bx \dd x = \frac{1}{2} \big( a (\xi^+)^2 + b (\xi^-)^2 \big).
$$

\emph{(e)} Direct calculation shows
\begin{align*}
    \beta_\epsilon(a, b) &= \alpha^*_\epsilon \left(a, b, -\epsilon \log \sqrt{\frac{b}{a}} \right) 
    = \epsilon^2 \alpha^*_1 \left(a, b, \log \sqrt{\frac{a}{b}} \right) \\
    &= \epsilon^2 \int_0^{\log\sqrt{a/b}} \sinh(x) \Lambda_H(ae^{-x}, be^x) \dd x 
    = \frac{\epsilon^2}{4} \int_1^{a/b} \left(\sqrt{y} - \frac{1}{\sqrt{y}} \right) \frac{ab}{\Lambda\left( \frac{a}{\sqrt{y}}, b\sqrt{y} \right)} \frac{1}{y} \dd y \\
    &= \frac{\epsilon^2}{4} \int_1^{a/b} \frac{ab}{y} \left[ \frac{1}{\Lambda \left(a/y, b \right)} - \frac{1}{\Lambda \left(a, b y \right)} \right] \dd y 
    = \frac{\epsilon^2}{4} \int_a^b \frac{ab}{z} \left[ \frac{1}{\Lambda(z, a)} - \frac{1}{\Lambda(z, b)} \right] \dd y.
\end{align*}
      
\emph{(f)} The joint convexity of $\beta_\epsilon$ follows from \emph{(a)} and \emph{(b)}. It is clear that $\beta_\epsilon$ is continuously differentiable in $\R_+ \times \R_+$ since it is defined as an integral of a bounded continuous function. However, on the boundary $\{0\}\times [0, +\infty) \cup [0, +\infty) \times \{0\}$ some partial derivatives become $-\infty$. In the case of $(a, b) \mapsto \beta_\epsilon(a^2, b^2)$, the directional derivatives are continuous and bounded:
\begin{align*}
    0 \geq \partial_1 \beta_1 (a^2, 1) &= - 2a \int_{a^2}^1 \frac{\log z}{z (z - 1)} \dd z \geq - 2a \int_{a^2}^1 \frac{1}{z \sqrt{z}} \dd z = 4a \frac{1}{\sqrt{z}}\Big|_{a^2}^1 \\
    &= 4a \left( 1 - \frac{1}{a} \right) = 4 (a - 1) > -\infty.
\end{align*}

As for the bounds, we begin with the
 \textit{upper bound.} Using the inequality that the harmonic-logarithmic mean is less or equal to the geometric mean yields
\begin{align*}
    \beta_\epsilon (a, b)
         \leq \epsilon^2 \sqrt{ab} \int_0^{-\log \sqrt{b/a}} \sinh(x) \dd x 
         &= \epsilon^2 \sqrt{ab} \left( \cosh\left( -\log \sqrt{b/a} \right) - 1 \right) \\
         &=  \frac{\epsilon^2}{2}\left( \sqrt{a} - \sqrt{b} \right)^2.
\end{align*}
     
     \textit{Tight lower bound.} Since $\beta_1$ is positively one-homogeneous it is enough to prove that
     $$
        \beta_1(a, 1) \geq \gamma(a) \coloneq \frac{1}{4}
     \frac{(a-1)^2}{a+1} \qquad \forall a\geq 0.
     $$
     For $a = 0$ the inequality holds, since $\beta_1(0, 1) = \frac{1}{4} \frac{\pi^2}{6} \geq \frac{1}{4} = \gamma(0)$. It is left to consider $a > 0$.
     
     We notice that $\beta_1(1, 1) = 0 = \gamma(1)$. Now we aim to compare the derivatives $\partial_a \beta_1(a,1) $ and $\partial_a \gamma(a)$ for $a\in (0,1)$ and $a\in(1,\infty)$. The derivative of $\gamma$ is
     $$
        \partial_a \gamma(a) = \frac{1}{4} \frac{(a-1)(a+3)}{(a+1)^2} = \int_1^a \frac{2}{(z+1)^3} \dd z
     $$
     We use the representation of $\beta_1$ from \emph{(e)} and apply the change of variables $y = z/a$ in the first part of the integral
     \begin{align*}
         \partial_a \beta_1(a, 1) 
         &= \frac{1}{4} \partial_a \left[ \int_1^{1/a} \frac{1}{y \Lambda(y,1)} \dd y - a \int_a^1 \frac{1}{z \Lambda(z,1)} \dd z \right] \\
         &= \frac{a}{\Lambda(1/a, 1)} \left( -\frac{1}{a^2} \right) - \int_a^1 \frac{1}{z \Lambda(z,1)} \dd z + \frac{1}{\Lambda(a, 1)} \\
         &= \int_1^a \frac{1}{z \Lambda(z,1)} \dd z
         = \int_1^a \frac{\log z}{z (z - 1)} \dd z .
     \end{align*}
     Therefore,
     $$
        \partial_a \left(\beta_1(a,1) - \gamma(a) \right) = \int_1^a \left[ \frac{\log z}{z (z - 1)}  - \frac{2}{(z+1)^3} \right] \dd z.
     $$
     We are left to show that the integrand is positive, and then the bound follows. For $z>1$, the integrand is positive, if and only if
     $$
        \log z \geq \frac{8z(z-1)}{(z+1)^3},
     $$
     which can be shown again by comparing the derivatives
     \begin{align*}
         \frac{1}{z} - 8\frac{-z^2 + 4z -1}{(z+1)^4} = \frac{(z-1)^2 (z^2 + 14z + 1)}{z (z+1)^4} > 0 \qquad \forall z >1.
     \end{align*}
    
\textit{Rough lower bound.} This lower bound follows from the inequality between the geometric and arithmetic means
$$
    \frac{(a-b)^2}{a+b} = \left( \sqrt{a} - \sqrt{b} \right)^2 \left( 1 + \frac{2\sqrt{ab}}{a +b} \right) \geq 2 \left( \sqrt{a} - \sqrt{b} \right)^2.
$$
        
\emph{(g)} We apply the second-order Taylor expansion for a function $f$:
\[
    f(y) = f(x) + f'(x)(y-x) + (y-x)^2\int_0^1 f''((1-\lambda)x + \lambda y)(1-\lambda)\dd\lambda
\]
to expand the function $\alpha^*_\epsilon$, obtaining
\begin{align*}
    \alpha^*_\epsilon \Big(a,b,-\epsilon \log\sqrt{\frac{b}{a}} + \frac{q}{2} \Big) &= \alpha^*_\epsilon \Big(a,b,-\epsilon \log\sqrt{\frac{b}{a}}  \Big) + \frac{q}{2} \,\partial_\xi(\alpha^*_\epsilon) \Big(a,b,-\epsilon \log\sqrt{\frac{b}{a}} \Big) \\
    &\hspace{1cm}+ \frac{q^2}{4} \int_0^1 (\partial_\xi^2\alpha^*_\epsilon) \Big (a,b,-\epsilon \log\sqrt{\frac{b}{a}} + 
    \lambda \frac{q}{2} \Big)(1-\lambda)\dd\lambda.
\end{align*}
After some manipulation, we find that
\begin{align*}
    (\partial_\xi\alpha^*_\epsilon)\Big(a,b,-\epsilon \log\sqrt{\frac{b}{a}} \Big) &= \frac{\epsilon}{2}(a-b),\\
(\partial_\xi^2\alpha^*_\epsilon) \Big(a,b,-\epsilon \log\sqrt{\frac{b}{a}} + 
    \frac{q}{2} \Big) &= a\, \mathfrak{h} \Big(\frac{q}{\epsilon} \Big) + b\,\mathfrak{h}\left(-\frac{q}{\epsilon}\right),
\end{align*}
with
\[
    \mathfrak{h}(s) = \frac{1}{4}\frac{e^s-1-s}{\sinh^2(s/2)}.
\]
Hence,
\begin{align*}
    \alpha^*_\epsilon \Big(a,b,-\epsilon \log\sqrt{\frac{b}{a}} + \frac{q}{2} \Big) &= \beta_\epsilon (a,b) + \frac{\epsilon}{4}(a-b)\,q \\
    &\hspace{1cm} + \frac{q^2}{4} \int_0^1 \Bigl[a\, \mathfrak{h}\left(\lambda q/\epsilon\right) + b\,\mathfrak{h}\left(-\lambda q/\epsilon\right)\Bigr](1-\lambda)\dd\lambda,
\end{align*}
therewith concluding the proof.
\end{proof}

\bibliographystyle{abbrv}
\bibliography{ref}

\end{document}